\def\@biblabel#1{#1.}
\newcommand{\loc}{\mathrm{loc}}
\newcommand{\uno}{Id}
\newcommand{\ld}{\text{\tiny\ensuremath{\bullet}}}
\newcommand{\dDl}{\Dl^{\ld}}
\newcommand{\dir}{\mathrm{dir}}
\newcommand{\per}{\mathrm{per}}
\newcommand{\spi}[1]{\lin{#1}_{\Ic}}
\newcommand{\spii}[1]{\lin{#1}}
\newcommand{\omk}{\om^{\kdv}}	
\newcommand{\omr}{\om^{*}}		
\newcommand{\Hkc}{\Hc^{\kdv}}
\newcommand{\Hk}{H^{\kdv}}
\newcommand{\Hn}{H^{*}}
\newcommand{\Wp}{\Ws}
\newcommand{\w}[1]{\lin{#1}}
\renewcommand{\th}{\theta}
\DeclareMathOperator{\arccosh}{arccosh}
\DeclareMathOperator{\Iso}{Iso}
\DeclareMathOperator{\spec}{spec}
\let\Id\undefined
\let\uno\undefined
\DeclareMathOperator{\Id}{Id}
\DeclareMathOperator{\uno}{Id}
\DeclareMathOperator{\Null}{Null}
\newcommand{\spanx}{\mathrm{span}}
\renewcommand{\kdv}{\mathrm{kdv}}
\newcommand{\KdV}{\text{KdV}\xspace}
\numberwithin{equation}{section}
\title{On the convexity of the KdV Hamiltonian}
\author{T. Kappeler \footnote{Supported in part by the Swiss National Science Foundation},
        A. Maspero  \footnote{Supported in part by the Swiss National Science Foundation},
        J.C. Molnar \footnote{Supported in part by the Swiss National Science Foundation},
        P. Topalov  \footnote{Supported in part by NSF DMS-0901443.}}
\date{\today}
\begin{document}

\maketitle


\begin{abstract}
We prove that the nonlinear part $H^{*}$ of the KdV Hamiltonian $H^{kdv}$, when expressed in action variables $I = (I_{n})_{n\ge 1}$, extends to a real analytic function on the positive quadrant $\ell^2_+(\N)$ of $\ell^{2}(\N)$ and is strictly concave near $0$. As a consequence, the differential of $H^{*}$ defines a local diffeomorphism near $0$ of $\ell_{\C}^{2}(\N)$.


Furthermore we prove that the Fourier-Lebesgue spaces $\Ls^{s,p}$ with $-1/2 \le s \le 0$ and $2 \le p < \infty$, admit global KdV-Birkhoff coordinates.
In particular, it means that $\ell^2_+(\N)$ is the space of action variables of the underlying phase space $\Ls^{-1/2,4}$ and that the KdV equation is globally in time $C^0$-well-posed on $\Ls^{-1/2,4}$.

\paragraph{Keywords.} convexity, KdV Hamiltonian, Korteweg-de Vries equation, integrable PDEs

\paragraph{AMS Subject classification.} 37K10 (primary) 35Q53, 34B24 (secondary)
%

\end{abstract}

\section{Introduction}
\label{section1}

The Korteweg-de Vries (KdV) equation
\begin{align}
\label{kdv.1}
   & \partial_t u  = -\partial_x^3 u + 6 u \partial_x u 
\end{align}
on the circle $\T = \R/\Z$ is well known to be a Hamiltonian PDE. When considered on the Sobolev spaces $H^{s} \equiv H^{s}(\T,\R)$, $s\ge 1$, it can be written in the form
\[
  \partial_{t}u = \partial_{x}\partial_{u}\Hkc,
\]
where $\partial_{u}\Hkc$ denotes the $L^{2}$-gradient of the \KdV-Hamiltonian
\[
  \Hkc(u) \defl \int_0^1 \left(\frac{1}{2} \left(\partial_x u \right)^2 + u^3 \right) \, \dx
\]
and $\partial_{x}\partial_{u}\Hkc$ is the Hamiltonian vector field defined by the Poisson bracket
\begin{equation}
  \label{pbr}
  \pbr{F, G} \defl \int_0^1 \partial_u F \, \partial_x \partial_u G \, \dx.
\end{equation}
The main purpose of this paper is to study convexity properties of the KdV Hamiltonian. Note that the mean $u_{0} \defl \int_{0}^{1} u(x)\,\dx$ is a Casimir of the bracket~\eqref{pbr} and that the level sets
\[
  H_{c}^{s} \defl \setdef{u\in H^{s}}{u_{0} = c},\qquad c\in \R,
\]
are symplectic leaves. We concentrate on the leaf $H_{0}^{s}$ only as our results can be easily extended to any other leaf. To state them we first need to describe in some detail the property of the \KdV equation of being an integrable PDE. For any $s\in\R$ and $1\le p < \infty$, let
\[
  \ell^{s,p}_{0,\C} \defl \setdef{z=(z_{n})_{n\in\Z}\subset\C}{z_{0} = 0,\quad \n{z}_{s,p} < \infty},
\]
where
\begin{equation}
  \n{z}_{s,p} \defl \p*{ \sum_{n\in\Z} \lin{n}^{ps}\abs{z_{n}}^{p} }^{1/p},\qquad 
  \lin{n} \defl 1 + \abs{n},
\end{equation}
and denote by $\ell^{s,p}_{0}$ the real subspace $\setdef{(z_{n})_{n\in\Z}\in \ell_{0,\C}^{s,p}}{z_{-n} = \ob{z_{n}} \forall n\ge 1}$ of $\ell_{0,\C}^{s,p}$. The spaces $\ell^{s,p}(\N) = \ell^{s,p}(\N,\R)$ and $\ell^{s,p}_{\C}(\N) = \ell^{s,p}(\N,\C)$ are defined in an analogous way. Note that the Sobolev spaces $H_{0}^{s}$ can then be described by
\[
  H_{0}^{s} = \setdef{u\in \Sc_{\C}'}{(u_{n})_{n\in\Z}\in \ell_{0}^{s,2}},
\]
where $u_{n} = \spi{u,\e^{\ii 2n\pi x}}$, $n\in\Z$, and $\spi{\cdot,\cdot}$ denotes the $L^{2}$-inner product on $L^{2}(\T,\C)$, $\spi{f,g} = \int_{0}^{1} f(x)\ob{g(x)}\,\dx$ extended by duality to a pairing of the Schwartz space $\Sc_{\C} = \setdef{f\in C^{\infty}(\R,\C)}{f(x+1) = f(x)\forall x\in\R}$ and its dual $\Sc_{\C}'$. It is shown in \cite{Kappeler:2003up,Kappeler:2005fb} that on $H_{0}^{-1}$ and by restriction, on $H_{0}^{s}$, $s > -1$, the \KdV equation admits canonical real analytic coordinates $x_{n}$, $y_{n}$, $n\ge 1$, so that the initial value problem of the \KdV equation, when expressed in these coordinates, can be solved by quadrature (cf. \cite{Kappeler:2006fr}). The coordinates $x_{n}$, $y_{n}$ are canonical in the sense that $\pbr{x_{n},y_{n}} = \int_{0}^{1} \partial_{u}x_{n}\partial_{x}\partial_{u}y_{n}\,\dx = 1$ for any $n\ge 1$ whereas all other brackets between coordinate functions vanish. For our purposes it is convenient to introduce the complex coordinates
\[
  z_{n}  = \frac{x_{n}-\ii y_{n}}{\sqrt{2}},\quad
  z_{-n} = \frac{x_{n}+\ii y_{n}}{\sqrt{2}},\quad 
  \forall n\ge 1,\qquad
  z_{0} = 0,
\]
referred to as (complex) Birkhoff coordinates.

In more detail, the above result says the following: there exists a complex neighborhood $\Wp$ of $H_{0}^{-1}$ in $H_{0}^{-1}(\T,\C)$ so that $\Phi(u) \defl (z_{n}(u))_{n\in\Z}$ is an analytic map from $\Ws$ to $\ell_{0,\C}^{-1/2,2}$ with $\Phi(0) = 0$ satisfying the property that for any $s\ge -1$, the restriction of $\Phi$ to $H_{0}^{s}$ is a canonical real analytic diffeomorphism onto $\ell_{0}^{s+1/2,2}$. The actions $I = (I_{n})_{n\ge 1}$ defined by $I_{n} = z_{n}z_{-n} \ge 0$, $n\ge 1$, then take values on the positive quadrant $\ell_{+}^{2s+1,1}(\N)$ of $\ell^{2s+1,1}(\N,\R)$,
\[
  \ell_{+}^{2s+1,1}(\N) = \setdef{(r_{n})_{n\ge 1}\in \ell^{2s+1,1}(\N,\R)}{r_{n}\ge 0 \forall n\ge 1}.
\]
In addition, $\Phi$ has the property that the \KdV Hamiltonian $\Hkc\colon H_{0}^{1}\to \R$, when expressed in the coordinates $(z_{n})_{n\in\Z}$, is a function of the actions $(I_{n})_{n\ge 1}$ alone. By \cite{Kappeler:2003up} this function, denoted by $\Hk$, is a real analytic map, $\Hk\colon \ell^{3,1}_{+}(\N)\to \R$ and according to \cite[Corollary 15.2]{Kappeler:2003up} admits an expansion at $I=0$ of the form
\begin{equation}
  \label{exp-H-kdv}
  \sum_{n\ge 1} 8n^{3}\pi^{3}I_{n} - 3\sum_{n\ge 1} I_{n}^{2} + \dotsb
\end{equation}
where the dots stand for higher order terms in $I$. In particular, at $I=0$
\begin{equation}
  \label{diff.H.0}
  \left.\partial_{I_n} \Hk\right\vert_{I=0} = 8 \pi^3 n^3, \qquad
  \left.\partial_{I_n}\partial_{I_m} \Hk\right\vert_{I=0} = - 6 \dl_{nm}, \quad \forall n,m \ge 1.
\end{equation}
See Section~\ref{section2} for further details on these results.
The main aim of this paper is to prove a conjecture of Korotyaev and Kuksin~\cite{Korotyaev:2011tw}, motivated by the perturbation theory of the KdV equation. It says that the nonlinear part of the KdV Hamiltonian
\begin{equation}
  \Hn(I) \defl \Hk(I) - \sum_{n \ge 1} 8 \pi^3 n^3 I_n
\end{equation}
extends analytically to a function on $\ell^2_+(\N)\equiv \ell^{0,2}_+(\N)$ and that it is strictly concave near $0$. Note that the KdV frequencies, defined as $\omk_n = \partial_{I_n} \Hk$, differ from $\omr_n \defl \partial_{I_n} \Hn$ by a constant,
\[
  \omk_n(I) = 8 \pi^3 n^3 + \omr_n(I) \qquad \forall n \ge 1, \ \forall I \in \ell^{3,1}_+(\N).
\]
Furthermore, we recall that in \cite{Kappeler:2006fr} it was shown that the KdV frequencies $(\omk_{n})_{n\ge 1}$ analytically extend to a neighborhood $V_{0}$ of $\ell_{+}^{-1,1}(\N)$ in $\ell_{\C}^{-1,1}(\N)$ so that 
\begin{equation}
  \label{omn-asymptotics}
  \omk_{n}-8n^{3}\pi^{3} = O(1) \text{ as } n\to\infty \text{ locally uniformly on } V_{0}.
\end{equation}

\begin{thm}
\label{main} 
The nonlinear part $\Hn$ of the KdV Hamiltonian admits a real analytic extension to a complex neighborhood $V \subset \ell^2_\C(\N)$ of the positive quadrant $\ell^2_+(\N)$ of $\ell^2(\N)$. The map of the renormalized frequencies $\omr = \partial_{I} \Hn$, restricted to $V_{0}\cap V$ equals
\[
  V\cap V_{0} \to \ell^2_\C(\N) \ , \quad 
  I \mapsto \left( \omk_{n}(I)- 8n^{3}\pi^{3} \right)_{n \ge 1}.
\]
It vanishes at $I=0$ and its differential $\ddd_0 \omr$ at $I=0$ is given by $-6 \uno_{\ell_{\C}^2(\N)}$. 
In particular near $I=0$ in $\ell_{\C}^{2}(\N)$, $\omr$ defines a local diffeomorphism and $\Hn$ is strictly concave in the sense that for $I$ in a (sufficiently small) neighborhood of $\, 0$ in $\ell^2_+(\N)$
\begin{equation}
  \label{convexity} 
  \ddd^2_I \Hn(J, J) \le - \sum_{n \ge 1} J_n^2 \qquad \forall J
   = \left( J_n \right)_{n \ge 1} \in \ell^2(\N).
\end{equation}
\end{thm}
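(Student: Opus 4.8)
The plan is to work at the level of the gradient $\omr = \partial_I \Hn$ and to recover $\Hn$ by integration. On the smaller space $\ell^{3,1}_+(\N)$, where $\Hk$ is real analytic, the very definition of $\Hn$ gives
\begin{equation*}
  \omr_n = \partial_{I_n} \Hn = \partial_{I_n} \Hk - 8\pi^3 n^3 = \omk_n - 8\pi^3 n^3, \qquad n \ge 1,
\end{equation*}
so the first and decisive task is to show that the map $I \mapsto \bigl( \omk_n(I) - 8\pi^3 n^3 \bigr)_{n\ge 1}$ extends to a real analytic map $\omr \colon V \to \ell^2_\C(\N)$ on a suitable neighborhood $V$ of $\ell^2_+(\N)$ in $\ell^2_\C(\N)$. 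Since $\ell^2(\N)$ embeds into $\ell^{-1,1}(\N)$ by Cauchy--Schwarz, a small neighborhood of $\ell^2_+(\N)$ lies inside the domain $V_0$ on which $\omk$ is already known to be analytic, so $\omr$ is defined pointwise there; what must be upgraded is (i) the value space, from the mere $O(1)$ bound~\eqref{omn-asymptotics} to genuine $\ell^2$-summability of $(\omr_n)_{n\ge 1}$, and (ii) the analyticity, now in the weaker $\ell^2$-topology.

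I expect steps (i)--(ii) to be the main obstacle. The bound~\eqref{omn-asymptotics} alone is far too weak for membership in $\ell^2_\C(\N)$; one needs $\omr_n \to 0$ at an $\ell^2$-rate. The expansion~\eqref{exp-H-kdv}--\eqref{diff.H.0} suggests the heuristic $\omr_n = -6 I_n + O(\n{I}_{0,2}^2)$, whose leading term lies in $\ell^2$ precisely when $I \in \ell^2$; the real work is to control the remainder locally uniformly in the $\ell^2$-norm and to verify Fr\'echet analyticity of $\omr$ as an $\ell^2_\C(\N)$-valued map. I would do this through the spectral representation of the KdV frequencies and the analytic dependence of the relevant gap quantities on the actions, carrying over and sharpening the estimates of~\cite{Kappeler:2006fr}; analyticity would then follow from the standard criterion that a locally bounded, coordinatewise analytic map into a Banach space is analytic.

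Granting the analytic frequency map $\omr \colon V \to \ell^2_\C(\N)$, the next step is to recover its potential. On $\ell^{3,1}_+(\N)$ the field $\omr = \partial_I \Hn$ is a gradient, hence its differential $\ddd_I \omr$ is symmetric there; by analyticity and the density of $\ell^{3,1}_+(\N)$ this symmetry persists on all of $V$, so $\omr$ is closed. Shrinking $V$ if necessary to a neighborhood star-shaped with respect to $0$ (possible since $\ell^2_+(\N)$ is a cone), I then set
\begin{equation*}
  \Hn(I) \defl \int_0^1 \sum_{n\ge 1} \omr_n(tI)\, I_n \, \mathrm{d}t,
\end{equation*}
a well-defined real analytic function on $V$ whose gradient is $\omr$ and which extends the original $\Hn$; this establishes both the claimed analytic extension and the identification of $\omr = \partial_I \Hn$ with the shifted frequencies on $V \cap V_0$.

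It remains to read off the local behavior at $0$. By~\eqref{diff.H.0},
\begin{equation*}
  \partial_{I_m} \omr_n \big\vert_{I=0} = \partial_{I_m}\partial_{I_n} \Hn \big\vert_{I=0} = \partial_{I_m}\partial_{I_n} \Hk \big\vert_{I=0} = -6\, \dl_{nm},
\end{equation*}
so $\ddd_0 \omr = -6\, \uno_{\ell^2_\C(\N)}$ and $\omr(0)=0$ by~\eqref{exp-H-kdv}. Since $-6\, \uno$ is an isomorphism of $\ell^2_\C(\N)$ and $\omr$ is real analytic, the analytic inverse function theorem makes $\omr$ a local diffeomorphism near $0$. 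Finally $\ddd^2_0 \Hn = \ddd_0 \omr = -6\, \uno$ gives $\ddd^2_0 \Hn(J,J) = -6 \sum_{n\ge 1} J_n^2$, which beats the target $-\sum_{n\ge 1} J_n^2$ by a definite margin; since $I \mapsto \ddd^2_I \Hn$ is continuous (by analyticity of $\Hn$), the correction $\ddd^2_I \Hn - \ddd^2_0 \Hn$ has operator norm at most $5$ for $I$ in a small enough neighborhood of $0$ in $\ell^2_+(\N)$, whence $\ddd^2_I \Hn(J,J) \le -\sum_{n\ge 1} J_n^2$ for all $J \in \ell^2(\N)$, which is~\eqref{convexity}.
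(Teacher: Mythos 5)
There is a genuine gap, and it sits exactly where you yourself locate ``the real work.'' Your plan is organized around the gradient: first extend $I\mapsto(\omk_n(I)-8\pi^3n^3)_{n\ge1}$ to a real analytic $\ell^2_\C(\N)$-valued map on an $\ell^2_\C(\N)$-neighborhood of $\ell^2_+(\N)$, then integrate along rays to recover $\Hn$. The integration step, the symmetry/closedness argument, and the endgame (reading off $\omr(0)=0$ and $\ddd_0\omr=-6\uno$ from \eqref{exp-H-kdv}--\eqref{diff.H.0}, invoking the inverse function theorem, and getting \eqref{convexity} from continuity of the second differential) are all sound and coincide with what the paper does at the end of Section~\ref{section4}. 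But the first step is asserted, not proved: you note that \eqref{omn-asymptotics} only gives $\omr_n=O(1)$, that one needs $\ell^2$-decay and Fr\'echet analyticity in the $\ell^2$-topology, and then propose to obtain this by ``carrying over and sharpening the estimates of \cite{Kappeler:2006fr}.'' No mechanism is given for why $(\omr_n(I))_{n\ge1}$ should lie in $\ell^2_\C(\N)$ and depend analytically on $I$ in the coarse $\ell^2$-topology; the heuristic $\omr_n=-6I_n+O(\n{I}_{0,2}^2)$ is a restatement of the conclusion (it is automatic \emph{once} one has an analytic $\ell^2$-valued map vanishing at $0$ with differential $-6\uno$), not an argument for it. This is the entire analytic content of the theorem.

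For comparison, the paper does not sharpen the frequency asymptotics directly. It derives, via residue calculus applied to the Laurent expansion of $F^4(\lm)$ (Lemma~\ref{F-exp}), the representation $\Hkc=\sum_{n\ge1}8n^3\pi^3I_n+\sum_{n\ge1}8n\pi\Rc_n$ for finite gap potentials, where each $\Rc_n=\frac1\pi\int_{\Gm_n}F_n^3\,\dlm$ depends only on the periodic spectrum, hence on the actions alone. The decisive quantitative inputs are $\Rc_n=O(\gm_n^4/n^3)$ (Lemma~\ref{Rn-prop}), the comparison $\gm_n^2/n=O(I_n)$ from Theorem~\ref{bm.H-1}, and the new eigenvalue asymptotics of Section~\ref{section3} (Theorem~\ref{fw:per}), which give $8n\pi R_n=O(I_n^2)$ locally uniformly; absolute, locally uniform convergence of $\sum_n 8n\pi R_n$ on an $\ell^2_\C(\N)$-neighborhood of $\ell^2_+(\N)$ then yields the analytic extension of $\Hn$ directly, and $\omr$ is obtained by differentiating the sum rather than the other way around. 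None of this machinery, or a substitute for it, appears in your proposal, so the extension claim remains unestablished.
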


\begin{rem}
Due to a mismatch of norms, it is difficult to put identity \eqref{diff.H.0} in the setup with the space $\ell^{3,1}_+(\N)$ to use for studying perturbations of the KdV equation. 
The space $\ell_{+}^{2}(\N)$ is the appropriate one to express the strict concavity of $\Hn$ -- see \cite{Guan:2014jc}. In combination with results from~\cite{Korotyaev:2011tw} Theorem~\ref{main} implies that $\ddd_{I}^{2}\Hn \le 0$ on all of $\ell^{2}_{+}(\N)$.
\end{rem}

The second main result of this paper says that the Birkhoff coordinates can be analytically extended to a class of Fourier-Lebesgue spaces. To state our result precisely, denote for $s \in \R$ and $1 \le p < \infty$ by $\Ls_{0,\C}^{s,p} \subset \Sc_\C'$ the Fourier-Lebesgue space of distributions on $\T$ given by
\[
  \Ls_{0,\C}^{s,p} \defl \setdef{ u \in \Sc_\C' }{ (u_n)_{n \in \Z} \in \ell^{s,p}_{0,\C} } 
\]
and by $\Ls_{0}^{s,p} \subset \Ls_{0,\C}^{s,p}$ the real subspace $\Ls_{0}^{s,p} = \setdef{ u \in \Ls_{0,\C}^{s,p} }{ u_{-n} = \ob{u_n} \ \forall n \ge 1 }$. Note that $\Ls_0^{s,2} = H_0^s$ and by the Sobolev embedding theorem, $\Ls_{0,\C}^{s,p} \hookrightarrow \Ls_{0,\C}^{-1,2}$ for any $-1/2 \le s \le 0$ and $1 \le p < \infty$. 

\begin{thm}
\label{main2} 
For any $-1/2 \le s \le 0$ and $2 \le p < \infty$, the restriction of $\Phi$ to $\Ls_{0}^{s,p}$ takes values in $\ell_{0}^{s + 1/2,p}$ and $\Phi\colon \Ls_{0}^{s,p} \to \ell_{0}^{s+1/2,p}$ is a real analytic diffeomorphism, and therefore provides global Birkhoff coordinates on $\Ls_{0}^{s,p}$.
\end{thm}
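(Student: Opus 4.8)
The plan is to leverage the inclusion $\Ls_{0}^{s,p}\hookrightarrow H_{0}^{-1}$, valid in the stated range, so that the Birkhoff map $\Phi$ and its inverse $\Psi\defl\Phi^{-1}$ are already available from the Sobolev theory recalled above, and then to sharpen the target and domain regularities using the one-smoothing property of their nonlinear parts. Writing $\Phi = \ddd_{0}\Phi + R$, where $\ddd_{0}\Phi$ is the differential at $u=0$, the first observation is that $\ddd_{0}\Phi$ is, up to a smoothing correction that can be absorbed into $R$, diagonal in the Fourier basis and acts as multiplication by weights comparable to $\lin{n}^{1/2}$ (intertwining $z_{\pm n}$ with $\hat u_{\pm n}$). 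Consequently $\ddd_{0}\Phi$ is a bounded linear isomorphism $\ell_{0}^{s,p}\to\ell_{0}^{s+1/2,p}$ on Fourier data for every $p$, not merely $p=2$, which is what produces the gain of $1/2$ in the target index uniformly in $p$.

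The main point concerns the remainder $R = \Phi - \ddd_{0}\Phi$. Here I would invoke the one-smoothing property of the KdV Birkhoff map: on bounded subsets of $H_{0}^{\sigma}$ with $\sigma > -1$, the map $R$ is real analytic with values one full derivative smoother than $\ddd_{0}\Phi$, that is into $\ell_{0}^{\sigma+3/2,2}$. A short Hölder estimate shows that the Fourier-Lebesgue spaces with $p<\infty$ embed slightly better than into $H_{0}^{-1}$, namely $\Ls_{0}^{s,p}\hookrightarrow H_{0}^{-1+\epsilon}$ for every $\epsilon\in(0,1/p)$; this lets me apply the smoothing at a base regularity $\sigma=-1+\epsilon$ strictly above $-1$, obtaining $R\colon \Ls_{0}^{s,p}\to\ell_{0}^{1/2+\epsilon,2}$. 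Since $-1/2\le s\le 0$ and $2\le p$, monotonicity in weight and index gives $\ell_{0}^{1/2+\epsilon,2}\subset\ell_{0}^{s+1/2,p}$, whence $\Phi$ maps $\Ls_{0}^{s,p}$ into $\ell_{0}^{s+1/2,p}$. The identical scheme applied to $\Psi=\ddd_{0}\Psi + S$, with $\ddd_{0}\Psi=(\ddd_{0}\Phi)^{-1}$ and the one-smoothing property of $S$, yields $\Psi\colon\ell_{0}^{s+1/2,p}\to\Ls_{0}^{s,p}$, after recording the companion embedding $\ell_{0}^{s+1/2,p}\hookrightarrow\ell_{0}^{-1/2,2}$ (again by Hölder, valid since $s+1/2\ge 0$).

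To upgrade these mapping properties to real analyticity in the finer topology, I would pass to the complexified maps on $\Wp\cap\Ls_{0,\C}^{s,p}$ and apply the standard criterion that a locally bounded map between complex Banach spaces all of whose components are analytic scalar functions is itself analytic. Local boundedness into $\ell_{0}^{s+1/2,p}$ follows from the uniform smoothing bounds combined with the fixed embeddings above, and analyticity of each component $u\mapsto z_{n}(u)$ is already known from analyticity of $\Phi$ on $\Wp$; restricting to the real subspaces gives real analyticity of $\Phi$ and of $\Psi$. Finally, trigonometric polynomials, equivalently finitely supported Fourier data, are dense in $\Ls_{0}^{s,p}$ and in $\ell_{0}^{s+1/2,p}$ for $p<\infty$ and lie in every $\ell^{\cdot,2}$-scale; hence $\Psi\circ\Phi=\Id$ and $\Phi\circ\Psi=\Id$ hold there by the Sobolev diffeomorphism property and extend by continuity, so that $\Phi\colon\Ls_{0}^{s,p}\to\ell_{0}^{s+1/2,p}$ is a real analytic diffeomorphism.

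I expect the crux to be the one-smoothing input itself: one needs the nonlinear part of the Birkhoff map, and of its inverse, to gain a full extra derivative uniformly on bounded sets down to base regularities arbitrarily close to $H_{0}^{-1}$. This near-endpoint control is delicate, and it is precisely because the gain is (close to) one that the critical case $s=0$ with $p$ large can be reached; a weaker smoothing exponent would fall short there. A secondary, more bookkeeping matter is the systematic verification of the anisotropic embeddings between the $\ell^{\sigma,2}$ and $\ell^{\sigma',p}$ scales via the Hölder estimates indicated above.
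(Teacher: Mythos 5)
Your overall architecture -- reduce everything to the linearization $\ddd_{0}\Phi$, which is diagonal and hence an isomorphism $\Ls_{0}^{s,p}\to\ell_{0}^{s+1/2,p}$ for every $p$, plus a smoothing estimate for the nonlinear remainder, plus a density argument for bijectivity -- is coherent, but it rests entirely on an input that is not available: the one-smoothing property of $R=\Phi-\ddd_{0}\Phi$ (and of the corresponding remainder of $\Phi^{-1}$) at base regularities $H_{0}^{-1+\ep}$ with $\ep$ arbitrarily close to $0$. You correctly identify this as the crux, but you cannot simply invoke it. The known $1$-smoothing results for the KdV Birkhoff map are proved on the Sobolev scale $H_{0}^{N}$ with $N\ge 0$; nothing of the sort is established near the endpoint $H_{0}^{-1}$, neither in this paper nor in the references it builds on, and proving it there would be at least as hard as the theorem itself. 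Since for $s=-1/2$ your embedding $\Ls_{0}^{s,p}\hookrightarrow H_{0}^{-1+\ep}$ forces $\ep<1/p$, you genuinely need the smoothing estimate in an arbitrarily small neighbourhood of the endpoint as $p\to\infty$; there is no room to retreat to a regime where the property is known. The same unproven input is needed a second time, for the inverse map, in order to make your density argument for $\Phi\circ\Psi=\Id$ and $\Psi\circ\Phi=\Id$ run: without $\Psi\colon\ell_{0}^{s+1/2,p}\to\Ls_{0}^{s,p}$ being well defined, surjectivity of $\Phi_{s,p}$ does not follow.

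For comparison, the paper avoids any smoothing statement for the map itself and instead uses the much weaker, componentwise bound $|z_{\pm n}|=O\bigl((|\gm_{n}|+|\mu_{n}-\tau_{n}|)/\sqrt{n}\bigr)$ from Theorem~\ref{bm.H-1}, combined with the new $\ell^{s,p}$-asymptotics of the spectral data $(\gm_{n})$ and $(\mu_{n}-\tau_{n})$ proved in Section~\ref{section3}; this is exactly the amount of decay that is provable at $H^{-1}$-type regularity and it already yields the mapping property, local boundedness, and hence analyticity of $\Phi_{s,p}$. Bijectivity is then obtained without ever constructing $\Phi^{-1}$ on the target space: the differential $\ddd_{q}\Phi_{s,p}$ is shown to be injective (by restriction from $H_{0}^{-1}$) and Fredholm of index zero (a compactness argument on finite gap potentials), hence invertible, and surjectivity of $\Phi_{s,p}$ is proved by transporting a hypothetical bad preimage along the flows of the angle vector fields $Y_{k}=\partial_{x}\partial_{q}\th_{k}$ into the neighbourhood where the local inverse exists, reaching a contradiction. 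If you want to salvage your route, you would have to supply a proof of the near-endpoint smoothing estimate, which amounts to redoing the spectral analysis of Section~\ref{section3} in any case.
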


\begin{rem}
Note that for any $u \in \Ls_0^{-1/2, 4}$, $\Phi(u) = (z_n)_{n \in \Z} \in \ell_0^{0,4}$ and hence the corresponding sequence of action variables $(I_n = z_n z_{-n})_{n \ge 1}$ is in $\ell_+^2(\N)$. Thus the space of action variables $\ell_+^2(\N)$ considered in Theorem \ref{main} stems from the underlying phase space $\Ls_0^{-1/2, 4}$ which admits global Birkhoff coordinates.

Although our principal interest lies in the space $\Ls_{0}^{-1/2,4}$, Theorem~\ref{main2} is stated for the family of spaces $\Ls_{0}^{s,p}$ with $-1/2\le s \le 0$, $2\le p < \infty$ since the proof of the more general result does not require any additional effort. With some more work, Theorem~\ref{main2} can be proved to hold for a larger family of spaces $\Ls_{0}^{s,p}$.
For any complex Borel measure $\mu$ on $\T$, the distribution $\mu- \mu(\T)$ is an example of an element of $\Ls_{0,\C}^{s,p}$ for any pair $-1/2\le s < 0$, $2\le p < \infty$ satisfying $\abs{s}p >1$. In particular, $\mu-\mu(\T)\in \Ls_{0,\C}^{-1/2,4}$.
\end{rem}

Theorem \ref{main2} has an immediate application to the initial value problem of the KdV equation. As already mentioned above it was shown in \cite{Kappeler:2006fr} that the KdV frequencies $(\omk_n(I))_{n\ge 1}$ analytically extend to a complex neighborhood of the positive quadrant $\ell_{+}^{-1,1}(\N)$, satisfying \eqref{omn-asymptotics}. In Birkhoff coordinates on $H_0^{-1}$, the KdV equation \eqref{kdv.1} takes the form
\[
  \dot z_n = \ii \omk_n z_n \ , \quad \dot z_{-n} = -\ii \omk_n z_{-n} \qquad \forall n \ge 1.
\]
We refer to \cite{Kappeler:2006fr} for a discussion of the notion of solutions of the initial value problem of KdV in spaces of distributions such as $H_0^{-1}$ and the one of $C^0$-well-posedness. Theorem \ref{main2} leads to the following extension of the results of \cite{Kappeler:2006fr}:

\begin{thm}
\label{main3} 
The initial value problem of the Korteweg-de Vries equation \eqref{kdv.1} is globally in time $C^0$-well-posed on the Fourier-Lebesgue spaces $\Ls_0^{s,p}$ with $-1/2 \le s \le 0$ and $2 \le p < \infty$.
\end{thm}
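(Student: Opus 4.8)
The strategy is to transport the explicit integration of the KdV flow in Birkhoff coordinates, carried out on $H_0^{-1}$ in \cite{Kappeler:2006fr}, to the Fourier--Lebesgue scale by means of the global Birkhoff map provided by Theorem~\ref{main2}. Fix $-1/2 \le s \le 0$ and $2 \le p < \infty$. By the Sobolev embedding $\Ls_0^{s,p} \hookrightarrow \Ls_0^{-1,2} = H_0^{-1}$ recalled above, every initial datum $u_0 \in \Ls_0^{s,p}$ lies in $H_0^{-1}$, so the evolution of \cite{Kappeler:2006fr} is available. In complex Birkhoff coordinates $z = \Phi(u)$ it reads, for $n \ge 1$,
\[
  z_n(t) = \e^{\ii \omk_n(I) t}\, z_n(0),\qquad z_{-n}(t) = \e^{-\ii \omk_n(I) t}\, z_{-n}(0),
\]
where the actions $I = (I_n)_{n\ge 1}$, $I_n = z_n(0)\,z_{-n}(0)$, are constants of motion and $\omk_n$ are the real analytic frequencies defined on a neighborhood of $\ell_+^{-1,1}(\N)$ in \cite{Kappeler:2006fr}, satisfying~\eqref{omn-asymptotics}. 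Since $\Phi(u_0) \in \ell_0^{-1/2,2}$ forces $I \in \ell_+^{-1,1}(\N)$, the frequencies are well defined along the orbit.

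First I would show that the flow preserves $\ell_0^{s+1/2,p}$ and is global. The multipliers $\e^{\pm\ii\omk_n(I) t}$ have modulus one, so for every fixed $t \in \R$ the map $z(0)\mapsto z(t)$ is an isometry of $\ell_0^{s+1/2,p}$, i.e. $\n{z(t)}_{s+1/2,p} = \n{z(0)}_{s+1/2,p}$. Hence, if $u_0 \in \Ls_0^{s,p}$, then $\Phi(u_0) \in \ell_0^{s+1/2,p}$ by Theorem~\ref{main2}, so $z(t)$ remains in $\ell_0^{s+1/2,p}$ and $u(t) \defl \Phi^{-1}(z(t))$ remains in $\Ls_0^{s,p}$ for all $t \in \R$; in particular the orbit is globally defined.

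Continuity of $t \mapsto u(t)$ then follows by dominated convergence. Writing
\[
  \n{z(t)-z(t')}_{s+1/2,p}^{p} = \sum_{n\ge 1} \lin{n}^{p(s+1/2)}\big(\abs{z_n(0)}^p + \abs{z_{-n}(0)}^p\big)\,\abs{\e^{\ii\omk_n(I)(t-t')}-1}^p,
\]
each summand tends to $0$ as $t'\to t$ and is majorized by $2^p\lin{n}^{p(s+1/2)}(\abs{z_n(0)}^p + \abs{z_{-n}(0)}^p)$, which is summable since $z(0)\in\ell_0^{s+1/2,p}$. Thus $t\mapsto z(t)$ is continuous into $\ell_0^{s+1/2,p}$, and composing with the continuous map $\Phi^{-1}$ of Theorem~\ref{main2} shows $t \mapsto u(t)$ is continuous into $\Ls_0^{s,p}$.

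It remains to establish $C^0$-well-posedness, i.e. joint continuity of the data-to-solution map $(t,u_0)\mapsto u(t)$, in the sense of \cite{Kappeler:2006fr}. Since $\Phi$ and $\Phi^{-1}$ are continuous by Theorem~\ref{main2}, it suffices to prove that the flow $(t,z)\mapsto z(t)$ on $\ell_0^{s+1/2,p}$ is continuous. The delicate point is the dependence of the rapidly growing frequencies on the data: if $z^{(k)}\to z$ in $\ell_0^{s+1/2,p}$, with actions $I^{(k)}$, then $I^{(k)}\to I$ in $\ell_+^{-1,1}(\N)$, and the local Lipschitz continuity of each $\omk_n$ on a neighborhood of $\ell_+^{-1,1}(\N)$ yields $\omk_n(I^{(k)})\to\omk_n(I)$ for every $n$. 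A further dominated-convergence argument — using $\abs{\e^{\ii\theta}-\e^{\ii\theta'}}\le 2$ as the $n$-uniform majorant together with the summability of $\lin{n}^{p(s+1/2)}\abs{z_n}^p$ — then gives convergence of the modulated sequences in $\ell_0^{s+1/2,p}$, and a routine $\varepsilon/3$ splitting combines this with the time continuity above to obtain joint continuity. I expect this last estimate, reconciling the pointwise-in-$n$ convergence of the modulated coefficients with the $\ell^p$-norm, to be the \emph{main technical obstacle}; the asymptotics~\eqref{omn-asymptotics} are precisely what keep the frequency perturbations controlled in the high-frequency tail. Globality is immediate, since all of the above holds for every $t\in\R$.
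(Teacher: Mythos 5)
Your proposal is correct and follows exactly the route the paper intends: Theorem~\ref{main2} supplies global Birkhoff coordinates on $\Ls_0^{s,p}$, and the explicit quadrature $z_{\pm n}(t)=\e^{\pm\ii\omk_n(I)t}z_{\pm n}(0)$ from \cite{Kappeler:2006fr}, together with the norm preservation by unimodular multipliers and the continuity of $\Phi$, $\Phi^{-1}$ and of the frequencies on a neighborhood of $\ell_+^{-1,1}(\N)$, yields global $C^0$-well-posedness. The paper states Theorem~\ref{main3} as an immediate consequence without writing out these details, so your argument matches its (implicit) proof.
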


Other applications concern the characterization of potentials in $H_{0}^{-1}$ which are in $\Ls_{0}^{s,p}$ for $-1/2\le s\le 0$, $2\le p < \infty$, in terms of the decay of the sequence of gap lengths of the spectrum as well as a spectral description of isospectral sets in $\Ls_{0}^{s,p}$ -- see Corollary~\ref{inv-gap} and Corollary~\ref{iso-q}.

\vspace{1em}

\noindent {\em Method of proof:} Inspired from the work in \cite{Korotyaev:2011tw} we derive a formula for the nonlinear part of the KdV Hamiltonian $\Hkc(u)$ for finite gap potentials,      using the asymptotics of the discriminant $\Dl(\lm,u)$ of the operator $-\partial_x^2 + u$ at $\lm = \infty$ and the residue calculus. 
It leads to a formula for $\Hkc(u) - \sum_{n=1}^{\infty} 8 \pi^3 n^3 I_n$ for smooth potentials in terms of the weighted sum $\sum_{n \ge 1} (8n\pi)\Rc_n$ of functionals $\Rc_n$, each of which is an expression involving only the discriminant $\Dl(\lm,u)$ -- or equivalently the periodic spectrum of $-\partial_x^2 + u$. Using that by \cite{Kappeler:2005fb}, $\Dl(\lm,u)$ can be analytically extended to $H_0^{-1}$ one shows that the same holds true for each functional $\Rc_n$. 
The asymptotics of the periodic eigenvalues of $-\partial_x^2 + u$, related to Theorem \ref{main2}, then imply that $\sum_{n \ge 1} (8n\pi)\Rc_n$ converges absolutely for $u$ in a complex neighborhood of $\Ls_0^{-1/2,4}$, yielding the claimed statement for $\Hn$. 
The starting point of the proof of Theorem \ref{main2} are the Birkhoff coordinates constructed in \cite{Kappeler:2005fb} on $H_0^{-1}$. The main ingredients are  asymptotic estimates of the periodic and Dirichlet eigenvalues in the case where $u$ is in $\Ls^{s,p}_{0,\C}$, $-1/2 \le s \le 0$, $2 \le p < \infty$.

\vspace{1em}

\noindent{\em Related works:} Using the conformal mapping theory together with the heights associated with the periodic eigenvalues of $-\partial_x^2 + u$, Korotyaev and Kuksin proved in \cite{Korotyaev:2011tw} that $\Hn$ can be continuously extended to $\ell_+^2(\N)$, leaving open the question if the KdV frequencies, which are the dynamically relevant quantities, can be extended to $\ell_+^2(\N)$ as well. Using different techniques, we succeed in answering this question. Actually we prove much more. Theorem \ref{main} and Theorem \ref{main2} together show that $\Hn$ and hence the KdV frequencies can be indeed extended analytically to $\ell_+^2(\N)$ and that $\ell_+^2(\N)$ is the space of action variables with underlying phase space $\Ls^{-1/2,4}_0$.

Using results on the \KdV frequencies established in \cite{Bikbaev:1993jl}, it was also shown in \cite{Korotyaev:2011tw} that $\Hn$ is concave on all of $\ell_{+}^{2}(\N)$ and hence Theorem~\ref{main} implies that $\ddd_{I}^{2}\Hn\le 0$ on $\ell_{+}^{2}(\N)$ as well. The problem whether $\Hn$ is \emph{strictly} concave on \emph{all} of $\ell_{+}^{2}(\N)$ is still open -- see \cite{Bikbaev:1993jl,Bobenko:1991fv,Kappeler:2003up,Krichever:1992uq} as well as the paper \cite{Henrici:2009ek}, concerned with the study of the frequencies of the Toda lattice.

Finally, we note that the initial value problem of the \KdV equation has been intensively studied -- see \cite{DispWiki} for a comprehensive account.

\vspace{1em}

\noindent{\em Organization of the paper:} In Section~\ref{section2} we introduce additional notation and review known results used throughout the paper. In Section~\ref{section3} we prove asymptotic estimates for the periodic and Dirichlet eigenvalues of $-\partial_x^2 + u$ for $u \in \Ls_{0,\C}^{s,p}$ with $-1/2\le s\le 0$ and $2 \le p < \infty$. They are used in the proofs of Theorem \ref{main} and Theorem \ref{main2} provided in Section~\ref{section4} and Section~\ref{section5} respectively.


\section{Preliminaries}
\label{section2}

In \cite{Kappeler:2005fb} it is shown that the Birkhoff map of the \KdV equation on $\T=\R/\Z$, defined and studied in detail in \cite{Kappeler:2003up}, can be analytically extended to a real analytic diffeomorphism
\[
  \Phi\colon H_{0}^{-1}\to \ell^{-1/2,2}.
\]
A key ingredient of the construction of the Birkhoff map and its extension to $H_{0}^{-1}$ are spectral quantities of the differential operator
\begin{equation}
  \label{Lq}
  L(q) = -\partial_{x}^{2} + q,
\end{equation}
which appears in the Lax pair formulation of the \KdV equation. In this section we review properties of $\Phi$ as well as  spectral properties of $L(q)$ used throughout the paper.

Let $q$ be a \emph{complex potential} in  $H^{-1}_{0,\C} \defl H_{0}^{-1}(\R/\Z,\C)$. In order to treat periodic and antiperiodic boundary conditions at the same time, we consider the differential  operator $L(q) = -\partial_{x}^{2} + q$, on $H^{-1}(\R/2\Z,\C)$ with domain of definition $H^{1}(\R/2\Z,\C)$. See Appendix~\ref{a:hill-op} for a more detailed discussion.
The spectral theory of $L(q)$, while classical for $q \in L_{0,\C}^{2}$, has been only fairly recently extended to the case  $q \in H^{-1}_{0,\C}$ -- see e.g. \cite{Djakov:2009fx,Kappeler:2001bi,Kappeler:2003vh,Korotyaev:2003gp,Savchuk:2003vl}
 and the references therein.
The spectrum of $L(q)$,  called the \emph{periodic spectrum of $q$} and denoted by $\spec L(q)$,  is discrete and the eigenvalues, when counted with their multiplicities and ordered lexicographically -- first by their real part and second by their imaginary part -- satisfy
\begin{align}
\label{asympt.eig.0}
  \lm_{0}^{+}(q) \lex \lm_{1}^{-}(q) \lex \lm_{1}^{+}(q) \lex \dotsb
          \ ,
  \qquad
  \lm_{n}^{\pm}(q) = n^{2}\pi^{2} + n\ell^2_n.
\end{align}
Furthermore, we define the \emph{gap lengths} $\gm_{n}(q)$ and the \emph{mid points} $\tau_{n}(q)$ by
\[
  \gm_{n}(q) \defl \lm_{n}^{+}(q)-\lm_{n}^{-}(q),
  \quad
  \tau_{n}(q) \defl \frac{\lm_n^+(q) + \lm_n^-(q)}{2},
  \qquad n\ge 1.
\]
For $q\in H_{0,\C}^{-1}$ we also consider the operator $L_{\dir}(q)$ defined as the operator $-\partial_{x}^{2} + q$ on $H^{-1}_{\dir}([0,1],\C)$ with domain of definition $H^{1}_{\dir}([0,1],\C)$. See Appendix~\ref{a:hill-op} as well as \cite{Djakov:2009fx,Kappeler:2001bi,Kappeler:2003vh,Korotyaev:2003gp,Savchuk:2003vl} for a more detailed discussion. The spectrum of $L_{\dir}(q)$ is called the \emph{Dirichelt spectrum of $q$}. It is also discrete and given by a sequence of eigenvalues $(\mu_{n})_{n\ge 1}$, counted with multiplicities, which when ordered lexicographically satisfies
\begin{align}
\label{asymt.mu.0}
  \mu_{1} \lex \mu_{2} \lex \mu_{2} \lex \dotsb,\qquad \mu_{n}  = n^{2}\pi^{2} + n\ell^2_n.
  \end{align}
Finally, define $\ell^{s,p}_\C(\N) \equiv \ell^{s,p}(\N, \C)$ to be the space of sequences $z=(z_n)_{n \ge 1} \subset \C$ with $\n{z}_{s,p}\defl \left( \sum_{n \ge 1} \langle n \rangle^{sp} \abs{z_n}^p\right)^{1/p}$ and denote by $\ell^{s,p}(\N) \equiv \ell^{s,p}(\N, \R)$ the real subspace of real sequences $z=(z_n)_{n \ge 1} \subset \R$ in $\ell^{s,p}_\C(\N)$.

In \cite{Kappeler:2005fb} the following asymptotic estimates are proved:

\begin{thm}[\cite{Kappeler:2005fb}]
\label{spec-W}
There exists a complex neighborhood $\Wp $ of $H_0^{-1}$ in $H_{0,\C}^{-1}$ so that the following holds:
\begin{renum}
\item
For any $q \in \Wp$, the sequence $(\gm_n(q))_{n \ge 1}$ is in $\ell_{\C}^{-1,2}(\N)$ and the map $\Wp \to \ell_{\C}^{-1,2}(\N)$, $q \mapsto (\gm_n(q))_{n \ge 1}$ is locally bounded.
\item
The mappings
\[
  q \mapsto (n^2\pi^2 - \tau_n(q))_{n \ge 1} \quad \mbox{ and } \quad
  q \mapsto (n^2\pi^2 - \mu_n(q))_{n \ge 1}
\]
are analytic from $\Wp$ into $\ell_{\C}^{-1,2}(\N)$.
\end{renum}
\end{thm}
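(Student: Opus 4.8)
The plan is to track how the periodic eigenvalues $\lm_n^\pm$, their gap lengths $\gm_n$ and mid-points $\tau_n$, and the Dirichlet eigenvalues $\mu_n$ cluster around the unperturbed values $n^2\pi^2$, which on $\R/2\Z$ are the eigenvalues of $-\partial_x^2$ (double for $n\ge1$, simple for $n=0$). The whole difficulty is that for $q\in H_{0,\C}^{-1}$ multiplication by $q$ is not bounded on $L^2$, so classical $L^2$-perturbation theory does not apply directly. I would therefore pass to the first-order (impedance) reformulation: writing $q=\sigma'$ with $\sigma=\sigma(q)\in L_{0,\C}^{2}$ depending linearly and boundedly on $q$ (so $\widehat\sigma(k)=\widehat q(k)/(2\pi\ii k)$ and $\|\sigma\|_{L^2}\asymp\|q\|_{H^{-1}}$), and introducing the quasi-derivative $y^{[1]}=y'-\sigma y$, the equation $-y''+qy=\lm y$ becomes a first-order system $Y'=\Omega(x,\lm)Y$ whose coefficient matrix $\Omega$ is built from $\sigma$ and $\sigma^2$ and is therefore integrable in $x$. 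In this framework (the one of the references cited after \eqref{Lq} and of Appendix~\ref{a:hill-op}) the fundamental solution $M(x,\lm)$, the discriminant $\Dl(\lm)=\operatorname{tr}M(1,\lm)$ and the entry $m_{12}(\lm)$ are entire in $\lm$ and analytic in $q$; here $\lm_n^\pm$ are the roots of $\Dl(\lm)^2=4$ and $\mu_n$ the roots of $m_{12}(\lm)=0$.

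First I would compare $M(x,\lm)$ with the free fundamental solution ($\sigma=0$), turning the system into a Volterra integral equation and solving it by Picard iteration. Setting $z=\sqrt\lm$, the iteration converges uniformly on the strips $\setdef{z}{\abs{\operatorname{Im}z}\le C,\ \abs{z-n\pi}\le\pi/2}$ and locally uniformly in $\sigma$, and every correction is an oscillatory integral against $\sigma$, the leading one being the resonant Fourier coefficient $\widehat\sigma(\pm n)=\int_0^1\sigma(x)\e^{\mp2\pi\ii nx}\,dx$. From this one reads off $\Dl(\lm)=2\cos z+O(\widehat\sigma(\pm n))+(\text{summably smaller remainder})$ and an analogous expansion for $m_{12}$.

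The decisive point is the change of scale between $z=\sqrt\lm$ and $\lm$. A standard localization (Rouché plus Weierstrass preparation) reduces $\Dl(\lm)^2=4$ near $z=n\pi$ to a quadratic whose two roots $z_n^\pm$ satisfy $z_n^+-z_n^-=O(\abs{\widehat\sigma(n)})+O(\abs{\widehat\sigma(-n)})+(\text{smaller})$, so the $z$-gaps form an $\ell^2$-sequence because $\sigma\in L^2$. Translating back through $\gm_n=\lm_n^+-\lm_n^-=(z_n^+-z_n^-)(z_n^++z_n^-)$ with $z_n^++z_n^-\approx 2n\pi$ produces the factor $n$: $\gm_n=O\!\left(n\,\widehat\sigma(\pm n)\right)$, which is exactly $\gm_n=n\ell^2_n$, i.e. $(\gm_n)_n\in\ell_{\C}^{-1,2}(\N)$ with the bound locally uniform in $q$ — this is (i). The mid-points obey $\tau_n-n^2\pi^2=\tfrac12\big((z_n^+)^2+(z_n^-)^2\big)-n^2\pi^2$, governed by the vertex of the same quadratic (a second-order/diagonal quantity, with no first-order contribution since $\widehat q(0)=0$), and lie in $\ell_{\C}^{-1,2}(\N)$ by the same mechanism; the Dirichlet eigenvalues $\mu_n$ are the simple roots of $m_{12}$ near $z=n\pi$ and are treated identically, giving $(n^2\pi^2-\mu_n)_n\in\ell_{\C}^{-1,2}(\N)$.

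Finally, the dichotomy between analyticity and mere local boundedness. The mid-point $\tau_n$ is the average of the two roots of $\Dl^2-4$ enclosed in a fixed small $z$-disc about $n\pi$, hence a \emph{symmetric} function of them; together with $\mu_n$ (a simple root of the $q$-analytic entire function $m_{12}$) these depend analytically on $q\in\Wp$ by the residue/implicit-function argument, and coordinatewise analyticity combined with the uniform $\ell_{\C}^{-1,2}(\N)$-bound upgrades to analyticity of the sequence-valued maps, since a locally bounded map into a Banach space that is analytic in each coordinate is analytic. By contrast $\gm_n=\lm_n^+-\lm_n^-$ is the \emph{antisymmetric} difference, equal to a square root of the discriminant of that quadratic, and it has branch points precisely at potentials where $\lm_n^+=\lm_n^-$, so only local boundedness can be asserted — which is exactly the form of (i) versus (ii). I expect the main obstacle to be making the Picard/Volterra expansion quantitative in the $L^2$ (equivalently $H^{-1}$) setting: one must show that every remainder beyond the leading resonant coefficient is summable after the $\lin n^{-1}$-weighting, uniformly in $n$ and locally uniformly in $q\in\Wp$. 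This is where the structural fact $\sigma\in L^2$ (rather than any higher smoothness) is used to the hilt, through Bessel's inequality and the Riemann–Lebesgue decay of the oscillatory integrals.
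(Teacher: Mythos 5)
The statement you are proving is quoted in the paper from \cite{Kappeler:2005fb} and is not proved there; the closest the paper comes is Section~\ref{section3}, where the strengthened $\Ls^{s,p}_{0,\C}$ versions (Theorems~\ref{fw:per} and~\ref{fw:dir}) are established. Your strategy is sound and would yield the theorem, but it takes a genuinely different route from the one the paper (and its Section~\ref{section3}) uses. You work on the ODE side: write $q=\sigma'$ with $\sigma\in L^2$, pass to the quasi-derivative first-order system, expand the transfer matrix by a Volterra/Picard iteration, and run Rouch\'e in the variable $z=\sqrt{\lm}$, recovering the factor $n$ in $\gm_n=n\ell^2_n$ from $\gm_n=(z_n^+-z_n^-)(z_n^++z_n^-)$. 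The paper instead works entirely on the Fourier side: it splits $\Ls^{s,p}_{\star,\C}=\Pc_n\oplus\Qc_n$, inverts $\Id-T_n(\lm)$ with $T_n(\lm)=VA_\lm^{-1}Q_n$ a contraction in shifted norms, reduces the eigenvalue problem to $\det B_n(\lm)=0$ for a $2\times2$ matrix, and applies Rouch\'e directly in $\lm$ on the strip $S_n$; the gap bound then comes from $\abs{\xi_{n,1}-\xi_{n,2}}\le\sqrt{6}\sup\abs{b_nb_{-n}}^{1/2}$ with $b_{\pm n}$ close to the Fourier coefficients $q_{\pm2n}$. The Fourier route avoids the square-root change of variables and the antiderivative $\sigma$ altogether, and — decisively for this paper — it estimates the error $b_{\pm n}-q_{\pm2n}$ directly in the weighted $\ell^{s,p}$ norms needed for the Fourier--Lebesgue scale; your route is closer to the classical Marchenko/P\"oschel--Trubowitz asymptotics and to the Savchuk--Shkalikov treatment of singular potentials, and gives pointwise control of eigenfunctions as a by-product. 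Two points in your sketch deserve emphasis: the summability, after the $\lin{n}^{-1}$ weighting, of everything beyond the resonant coefficient $\hat\sigma(\pm n)$ is the entire technical content (in the paper's scheme this is Lemma~\ref{Tn-est} and Lemma~\ref{cn-est}), and your explanation of the analyticity/local-boundedness dichotomy — $\tau_n$ and $\mu_n$ as symmetric functions or simple roots versus $\gm_n$ as a square root with branch points on $Z_n$ — is exactly right and matches why the theorem is stated asymmetrically.
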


By \cite{Kappeler:2005fb} the discriminant $\Dl(\lm,q)$ of $L(q)$ admits an analytic extension to  $\C\times \Wp$ and we have the product representation
\begin{equation}
  \label{Dl2}
  \Dl^2(\lm,q) - 4
   = 
  -4(\lm-\lm_{0}^{+})\prod_{m\ge 1} \frac{(\lm_m^+-\lm)(\lm_m^--\lm)}{m^4\pi^4}.
\end{equation}
The $\lm$-derivative $\dDl$ of the discriminant $\Dl$ is analytic on $\C\times\Ws$, too, and admits  the product representation
\begin{equation}
  \label{dDl2}
  \dDl(\lm)  = -\prod_{m\ge 1} \frac{\lm_{m}^{\ld}-\lm}{m^{2}\pi^{2}}
\end{equation}
where $(\lm_m^\ld)_{m \ge 1} \subset \C$ are ordered lexicographically
\[
\lm_1^\ld \lex \lm_2^\ld \lex \cdots \ , \quad \lm_n^\ld = n^2\pi^2 + n\ell_n^2 \ . 
\]
For each potential $q\in H_{0}^{-1}$ there exists a complex neighborhood $\Wp_{q}$ of $q$ in $H_{0,\C}^{-1}$ such that the closed intervals
\[
  G_{0} = \setdef{\lm_{0}^{+} + t}{-\infty < t \le 0},\qquad 
  G_{n} = [\lm_{n}^{-},\lm_{n}^{+}],\quad n\ge 1,
\]
are disjoint from each other for every $p\in \Wp_{q}$. By \cite{Kappeler:2005fb} there exist mutually disjoint neighborhoods $U_{n}\subsetneq \C$, $n\ge 0$, called \emph{isolating neighborhoods}, which satisfy:

\begin{renum}
\item $G_{n}$, $\lm_{n}^{\ld}$, and $\mu_{n}$ are contained in the interior of $U_{n}$ for every $p\in \Wp_{q}$,

\item there exists a constant $c \ge 1$ such that for $m\neq n$,
\[
  c^{-1}\abs{m^{2}-n^{2}} \le \dist(U_{n},U_{m}) \le c\abs{m^{2}-n^{2}},
\]

\item $U_{n} = \setdef{\lm\in\C}{\abs{\lm - n^{2}\pi^{2}} \le n/2}$ for $n$ sufficiently large.
\end{renum}

\noindent
In the sequel, for any $q \in \Wp$, $\Wp_{q}$ denotes a neighborhood of $q$ such that a common set of isolating neighborhoods for all $p\in \Wp_{q}$ exists. We  shrink $\Wp$, if necessary,  such that $\Wp$ is contained in the union of all $\Wp_{q}$ with $q\in H_{0}^{-1}$.

Following \cite{Kappeler:2005fb}, for $q \in \Wp$, one can define action variables for the $\KdV$ equation by
\begin{equation}
  \label{action}
  I_n
   = 
  \frac{1}{\pi}\int_{\Gm_n}
  \frac{\lm \dDl(\lm)}{\sqrt[c]{\Dl^2(\lm)-4}}
  \,\dlm,\qquad n\ge 1.
\end{equation}
Here  $\Gm_{n}$ denotes any sufficiently close counter clockwise oriented circuit around $G_{n}$, and the \emph{canonical root} $\sqrt[c]{\Dl^{2}(\lm)-4}$ is defined on $\C\setminus \bigcup_{\atop{n\ge 0}{\gm_{n}\neq 0}} G_{n}$ with $\gm_{0}= \infty$, where, for $q$ real, the sign of the root is determined by
\[
  \ii\sqrt[c]{\Dl^{2}(\lm)-4} > 0,\qquad \lm_{0}^{+} < \lm < \lm_{1}^{-} \ ,
\]
and for $q\in\Wp$ it is defined by continuous extension.

The  Dirichlet eigenvalues and the discriminant can be used to construct the angles $\th_k(q)$, $k\ge 1$, which are conjugated to the actions $I_n(q)$, $n\ge 1$, (cf. \cite[\S\;3.4]{Kappeler:2005fb}). 
For any given $k\ge 1$ the action $I_{k}$ is a real analytic function on $\Wp$ (cf. \cite[Proposition~3.3]{Kappeler:2005fb}), whereas the angle $\th_{k}$ is defined modulo $2\pi$ on $\Wp\setminus Z_{k}$ and is a real analytic function on $\Wp\setminus Z_{k}$ when considered modulo $\pi$.
Here
\begin{equation}
  \label{Zk}
  Z_k \defl \setdef{q\in \Wp}{\gm_k(q)=0} \ , \quad k \ge 1.
\end{equation}
It was shown in \cite[Proposition~4.3]{Kappeler:2005fb} that $Z_{k}\cap H_{0}^{-1}$ is a real analytic submanifold of $H_{0}^{-1}$ of codimension two.

By \cite[\S\;6]{Kappeler:2005fb} the following commutator relations hold for any $m,n\ge 1$
\begin{align}
  \label{e:commutators0}
  & \pbr{I_m,I_n} = 0, &&\mbox{on}\;\; \Wp,
  \qquad\qquad\qquad\qquad\qquad\qquad\qquad\qquad\qquad\qquad\\
  \label{e:commutators1}
  &\pbr{I_m,\th_n} = \dl_{nm}, &&\mbox{on}\;\; \Wp\setminus Z_n,\\
  \label{e:commutators2}
  & \pbr{\th_m,\th_n} = 0, && \mbox{on}\;\;\Wp\setminus(Z_m\cup Z_n).
\end{align}

For any $q\in H^{-1}_0\setminus Z_k$ with $k \ge 1$ define
\begin{equation}
  \label{e:z_k}
  z_k(q) \defl \sqrt[+]{I_k(q)}\, \e^{-\ii \th_k(q)} \ ,  \qquad 
  z_{-k}(q) \defl \sqrt[+]{I_k(q)}\, \e^{\ii \th_k(q)}.
\end{equation}
It is shown in \cite[\S\,5]{Kappeler:2005fb} that the mappings $H^{-1}_0\setminus Z_k\to{\C}$, $q\mapsto z_{\pm k}(q)$, analytically extend to a common neighborhood $\Wp$ of $H_0^{-1}$ in $H^{-1}_{0,\C}$.
The {\em Birkhoff map} is then defined as follows
\begin{equation}
  \label{Phi.def} 
  \Phi\colon \Wp \to \ell^{-1/2,2}_{0,\C}, \quad 
  q \mapsto \Phi(q) \defl \left( z_k(q)\right)_{k \in  \Z}
\end{equation}
with $z_0(q) = 0$.

\begin{thm}[\cite{Kappeler:2005fb}] 
\label{Th:main*}
The mapping $\Phi \colon \Wp \to\ell^{-1/2,2}_{0,\C}$ is analytic and satisfies the following properties:
\begin{renum}
  \item $\Phi$ is canonical in the sense that $\pbr{z_{n},z_{-n}} = \int_{0}^{1}\partial_{u}z_{n}\partial_{x}\partial_{u}z_{-n}\,\dx = \ii$ for all $n\ge 1$, whereas all other brackets between coordinate functions vanish.

  \item For any $s\ge-1$, the restriction $\Phi_s\equiv\Phi|_{H^s_0}$ is a map 
  $\Phi|_{H^s_0} \colon H^s_0\to\ell^{s+1/2,2}_0$ which is a bianalytic diffeomorphism.

  \item The \KdV Hamiltonian, when expressed in the new variables $\Hkc\circ\Phi^{-1}$, is defined on $\ell_{0}^{3/2,2}$ and depends on the action variables alone. In fact, it is a real analytic function of the actions on the positive quadrant $\ell_{+}^{3,1}(\N)$, introduced in the introduction.
\end{renum}
\end{thm}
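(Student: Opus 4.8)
The plan is to reduce each of the four assertions—analyticity of $\Phi$, the canonical relations (i), the bianalyticity (ii), and the form of the Hamiltonian (iii)—to properties of the spectral building blocks $I_k$ and $\th_k$ recorded in the Preliminaries, and then to promote the resulting pointwise and local statements to the global ones. \emph{Analyticity of $\Phi$.} I would argue coordinate-wise together with local boundedness. Each action $I_k$ is real analytic on $\Wp$: the circuit $\Gm_k$ in \eqref{action} can be fixed uniformly over a neighborhood $\Wp_q$ thanks to the isolating neighborhoods $U_k$, and the integrand $\lm\dDl(\lm)/\sqrt[c]{\Dl^2(\lm)-4}$ is analytic in $(\lm,q)$ on a neighborhood of $\Gm_k$ by the product representations \eqref{Dl2}, \eqref{dDl2}. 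The individual factors $z_{\pm k}=\sqrt[+]{I_k}\,\e^{\mp\ii\th_k}$ in \eqref{e:z_k} are a priori defined only off $Z_k$, since $\th_k$ lives modulo $\pi$ there; the first real task is to show $z_{\pm k}$ extends analytically across $Z_k$, where $I_k$ vanishes. I would do this locally on $U_k$ by rewriting $z_{\pm k}$ as a single contour integral on $\Gm_k$ whose integrand is analytic and whose value is insensitive to the collapse $\lm_k^-\to\lm_k^+$, thereby absorbing the root $\sqrt[+]{I_k}$ into the angle so that the product is manifestly analytic on all of $\Wp_q$.

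To pass from coordinate-wise analyticity to analyticity of the map I would invoke the standard criterion that a locally bounded map into a complex Banach space all of whose coordinate functions are analytic is itself analytic. It therefore suffices to bound $\n{\Phi(q)}_{-1/2,2}^2=\sum_{k\in\Z}\lin{k}^{-1}\abs{z_k}^2$ locally uniformly on $\Wp$. This is exactly where Theorem~\ref{spec-W} enters: evaluating \eqref{action} asymptotically on $U_n$ and using the analyticity of $q\mapsto(n^2\pi^2-\tau_n)$ and $q\mapsto(n^2\pi^2-\mu_n)$ yields the comparison $\abs{z_{\pm n}}^2=O(\gm_n^2/n)$ locally uniformly, so that $\sum_{k\in\Z}\lin{k}^{-1}\abs{z_k}^2=O\big(\sum_{k\ge1}\lin{k}^{-2}\gm_k^2\big)=O(\n{(\gm_n)}_{-1,2}^2)$, which is locally bounded by Theorem~\ref{spec-W}(i). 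The same comparison, run with the $s$-dependent gap estimate $(\gm_n)\in\ell^{s,2}(\N)$, shows $\Phi(H^s_0)\subset\ell^{s+1/2,2}_0$ for every $s\ge-1$; reality of $q$ forces $z_{-n}=\ob{z_n}$, so the image lands in the real subspace.

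\emph{Canonical relations (i).} Since $z_{\pm n}$ depends on the single pair $(I_n,\th_n)$ only, the bracket $\pbr{z_n,z_{-n}}$ reduces to the canonical bracket in these two variables, with $\pbr{I_n,\th_n}=1$ from \eqref{e:commutators1}; a one-line computation with \eqref{e:z_k} gives $\pbr{z_n,z_{-n}}=\ii$, while all mixed-index brackets vanish by \eqref{e:commutators0}--\eqref{e:commutators2}. These identities hold on $\Wp$ minus the relevant $Z_k$ and extend to all of $\Wp$ by the analytic continuation established above.

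\emph{Diffeomorphism (ii) and Hamiltonian (iii).} For fixed $s\ge-1$, injectivity of $\Phi|_{H^s_0}$ is the uniqueness statement of inverse spectral theory: the coordinates $(z_k)$ determine the periodic spectrum through \eqref{Dl2} and the angles through the Dirichlet data, which together pin down $q$ on its isospectral set. That $\Phi|_{H^s_0}$ is a local diffeomorphism follows by showing $\ddd_q\Phi$ is an isomorphism $H^s_0\to\ell^{s+1/2,2}_0$; at $q=0$ one checks directly that $\ddd_0\Phi$ is the suitably normalized Fourier transform, an isomorphism, and for general $q$ the canonical relations (i) make $\Phi$ a canonical map, which forces $\ddd_q\Phi$ to be injective, while comparison of $\ddd_q\Phi$ with the isomorphism $\ddd_0\Phi$ through the spectral asymptotics shows it is Fredholm of index zero; an injective index-zero Fredholm operator is an isomorphism. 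Finally I would establish properness of $\Phi|_{H^s_0}$ from the two-sided control of $I_n$, $\tau_n$, $\mu_n$, so that the proper local diffeomorphism onto the connected and simply connected Banach space $\ell^{s+1/2,2}_0$ is a global diffeomorphism; bianalyticity then follows from the analytic inverse function theorem. For (iii), a trace formula expresses $\Hkc$ through the periodic spectrum alone, which is constant along the isospectral tori and hence a function of the actions, and the trace-formula representation yields real analyticity on $\ell_{+}^{3,1}(\N)$. \emph{The main obstacle} I anticipate is precisely the global part of (ii)—the analytic extension of $z_{\pm k}$ across $Z_k$ together with the properness/surjectivity argument—resting in both cases on securing the two-sided, locally uniform comparison between the actions $I_n$ and the gap lengths $\gm_n$.
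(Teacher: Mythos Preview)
This theorem is not proved in the paper under review: it is stated with a citation to \cite{Kappeler:2005fb} and no proof is given. There is therefore nothing in the paper to compare your proposal against; the authors simply import the result as background in the Preliminaries section.

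That said, your sketch is broadly in line with how the result is established in \cite{Kappeler:2005fb} and \cite{Kappeler:2003up}: coordinate-wise analyticity plus local boundedness for the map, the commutator relations \eqref{e:commutators0}--\eqref{e:commutators2} for (i), a Fredholm-of-index-zero argument for the differential together with a global argument for (ii), and spectral invariance for (iii). Two small corrections. First, the locally uniform bound on $\abs{z_{\pm n}}$ involves both $\gm_n$ and $\mu_n-\tau_n$ (cf.\ Theorem~\ref{bm.H-1}(ii)), not $\gm_n$ alone; your estimate $\abs{z_{\pm n}}^2=O(\gm_n^2/n)$ drops the Dirichlet contribution, which is needed precisely because the angle $\th_n$ is built from Dirichlet data. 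Second, for surjectivity in (ii) the actual argument in \cite{Kappeler:2005fb,Kappeler:2008fl} (and reproduced in this paper for the $\Ls^{s,p}$ case, Lemma~\ref{Phi-onto}) is not a properness argument but rather flows along the Hamiltonian vector fields $Y_k=\partial_x\partial_q\th_k$ to reduce any target to a neighborhood of the origin where the local inverse applies. Your ``main obstacle'' intuition is correct in spirit---the two-sided comparison $I_n\sim\gm_n^2/(8n\pi)$ of Theorem~\ref{bm.H-1}(i) is indeed the linchpin---but the global step is carried out differently than you suggest.
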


We will also need the following result (cf. \cite[\S\,3]{Kappeler:2005fb}).

\begin{thm}[\cite{Kappeler:2005fb}]
\label{bm.H-1}
After  shrinking, if necessary,   the  complex neighborhood $\Wp$ of $H_0^{-1}$ in $H_{0,\C}^{-1}$ constructed above satisfies:
\begin{renum}
\item
The quotient $I_{n}/\gm_n^2$, defined on $H_0^{-1} \setminus Z_n$, extends analytically to $\Wp$ for any $n$. Moreover, for any $\ep >0$ and any $q \in \Wp$ there exists $n_0 \ge 1$ and an open neighborhood $\Wp_{q}$ of $q$ in $\Wp$ so that
\begin{equation}
  \label{In-gmn}
  \abs*{ 8 n\pi  \frac{I_n}{\gm_n^2} - 1 } \le \ep,\qquad 
  \forall n\ge n_{0}\; \forall p\in \Wp_{q}.
\end{equation}

\item
The Birkhoff coordinates $(z_n)_{n \in \Z}$ are analytic as maps from $\Wp$ into $\C$ and fulfill
locally uniformly in $\Wp$ and uniformly for $n \ge 1$, the estimate
\[
  |z_{\pm n}| = O \p*{ \frac{|\gm_n| + |\mu_n - \tau_n|}{\sqrt{n}} }.
\]

\item
For any $q\in \Wp$ and $n \ge 1$ one has $I_{n}(q) = 0$ if and only if $\gm_{n}(q) = 0$. In particular, $\Phi(0) = 0$. 
\end{renum} 
\end{thm}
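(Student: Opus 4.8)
The plan is to reduce everything to statement (i), the analytic heart of the theorem; statements (ii) and (iii) then follow from the resulting asymptotics of $I_n$ together with the constructions recalled above. For (i) I would start from the action integral \eqref{action} and use the product representations \eqref{Dl2} and \eqref{dDl2} to isolate, for each fixed $n$, the contribution of the $n$-th gap. Splitting off the factors with index $m=n$ gives
\[
  \Dl^2(\lm)-4 = \frac{(\lm_n^+-\lm)(\lm_n^--\lm)}{n^4\pi^4}\,\Psi_n(\lm),\qquad
  \dDl(\lm) = -\frac{\lm_n^\ld-\lm}{n^2\pi^2}\,\chi_n(\lm),
\]
where $\Psi_n$ and $\chi_n$ collect the remaining factors. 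By Theorem~\ref{spec-W} and the separation property of the isolating neighborhoods, $\Psi_n$ and $\chi_n$ are analytic, nonvanishing, and uniformly bounded above and below on $U_n$; in particular the only branch structure of the integrand inside $U_n$ is carried by $\sqrt{(\lm_n^+-\lm)(\lm_n^--\lm)}$, while $\chi_n/\sqrt{\Psi_n}$ is analytic across $G_n$ and of size $O(1/n)$. This already yields the analytic extension of $I_n/\gm_n^2$: the substitution $\lm=\tau_n+\frac{\gm_n}{2}w$ carries $\Gm_n$ into a fixed circuit $C$ around $[-1,1]$, turns the ratio $\dlm\,/\sqrt[c]{(\lm_n^+-\lm)(\lm_n^--\lm)}$ into $dw/\sqrt{w^2-1}$, and leaves an integrand depending analytically on $q$ and on $\gm_n$.

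To extract the leading order I would expand the analytic numerator in powers of $\gm_n$. Its zeroth-order part vanishes (when $\gm_n=0$ the critical point $\lm_n^\ld$ merges with $\tau_n$, so the factor $\lm_n^\ld-\lm$ is zero), and its first-order coefficient is proportional to $w$, hence odd; since $\oint_C w\,dw/\sqrt{w^2-1}=0$, the naive $O(\gm_n)$ contribution cancels and $I_n$ is genuinely of order $\gm_n^2$. The surviving quadratic coefficient is evaluated from the two elementary integrals $\oint_C dw/\sqrt{w^2-1}$ and $\oint_C w^2\,dw/\sqrt{w^2-1}$ (read off from the expansion at $w=\infty$) together with the curvature of $\lm_n^\ld$ in $\gm_n$. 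A logarithmic-derivative computation from \eqref{Dl2}--\eqref{dDl2}, using $\dDl(\lm_n^\ld)=0$ and the classical identity $\sum_{m\ne n}(m^2-n^2)^{-1}=\frac{3}{4n^2}$, gives $\lm_n^\ld-\tau_n=O(\gm_n^2/n^2)$, so this offset enters at the same order $\gm_n^2$ as the explicit $w^2$-term; combining the two with the normalization $\chi_n/\sqrt{\Psi_n}=O(1/n)$ produces the limit $8n\pi\,I_n/\gm_n^2\to1$. Finally, the $\ell_\C^{-1,2}$-asymptotics of $(\gm_n)_n$, $(n^2\pi^2-\tau_n)_n$ and $(n^2\pi^2-\mu_n)_n$ from Theorem~\ref{spec-W} give the uniform control of the tails of $\Psi_n,\chi_n$ and of the remainder of the contour integral needed to upgrade this to the locally uniform estimate \eqref{In-gmn}.

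Statement (iii) then follows: since $I_n=\gm_n^2\cdot(I_n/\gm_n^2)$ with $I_n/\gm_n^2$ analytic, $\gm_n=0$ forces $I_n=0$ (and $\Phi(0)=0$ because all gaps close at $q=0$), while \eqref{In-gmn} keeps $I_n/\gm_n^2$ within $\ep$ of $1/(8n\pi)\ne0$ for $n\ge n_0$, giving the converse for large $n$; for the finitely many remaining indices the equivalence holds on $H_0^{-1}$ by positivity of the actions and persists on $\Wp$ after shrinking. For (ii) I would combine the bound $\sqrt{I_n}=O(|\gm_n|/\sqrt n)$ from (i) with the explicit construction \eqref{e:z_k} of the Birkhoff coordinates, in which the phase is built from the Dirichlet datum $\mu_n$ and the gap endpoints; its analytic extension contributes the term $|\mu_n-\tau_n|/\sqrt n$, so that the separately analytic functions $z_{\pm n}$ obey the stated bound locally uniformly on $\Wp$ and uniformly in $n$.

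The main obstacle is the uniformity across the non-compact neighborhood $\Wp$ of $H_0^{-1}$: pinning down the leading constant $1$ at a single potential is a finite computation, but controlling the infinite products $\Psi_n,\chi_n$, the contour-integral remainder, and the offset $\lm_n^\ld-\tau_n$ simultaneously for all $n$ and locally uniformly in $q$ requires the full strength of the $\ell^{-1,2}$-estimates of Theorem~\ref{spec-W} and the quantitative separation of the isolating neighborhoods. A secondary delicate point is to track the chosen branch of $\sqrt[c]{\Dl^2-4}$ through the substitution so that the limit comes out $+1$.
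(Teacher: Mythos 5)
The paper does not prove Theorem~\ref{bm.H-1} at all: it is quoted from \cite{Kappeler:2005fb} (the text points to \S\,3 there), so there is no in-paper argument to compare yours against. What follows measures your sketch against the proof in that reference.

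Your plan for (i) is essentially the argument of the cited source: factor \eqref{Dl2}--\eqref{dDl2} into the $n$-th factor times the function $\chi_n$ of \eqref{chi.n.def}, rescale $\lm=\tau_n+\frac{\gm_n}{2}w$, and exploit parity of the rescaled integrand. One point you should make explicit: $\gm_n$ itself is \emph{not} an analytic function of $q$ on $\Wp$ (only $\gm_n^2$ and $\tau_n$ are --- compare Theorem~\ref{spec-W}, where the gap lengths are merely locally bounded while the midpoints are analytic), so an integrand ``depending analytically on $q$ and on $\gm_n$'' does not by itself give analyticity in $q$. The symmetry $w\mapsto-w$, $\gm_n\mapsto-\gm_n$, which you invoke only to kill the first-order term via $\oint_C w\,dw/\sqrt{w^2-1}=0$, is exactly what shows that \emph{all} odd powers of $\gm_n$ drop out, so that $I_n$ is analytic in $\gm_n^2$, $\tau_n$ and the remaining analytic spectral data; that is the actual mechanism behind the extension of $I_n/\gm_n^2$ across $Z_n$. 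Your deduction of (iii) is the standard one, except that for the finitely many $n<n_0$ you need the extension of $I_n/\gm_n^2$ to be nonvanishing \emph{on} $Z_n\cap H_0^{-1}$ (not merely that $I_n>0$ when $\gm_n\neq0$ for real $q$) before shrinking $\Wp$ does anything for you.

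The genuine gap is (ii). On the real subspace $\abs{\e^{\mp\ii\th_n}}=1$, so there $\abs{z_{\pm n}}=\sqrt{I_n}=O(\abs{\gm_n}/\sqrt n)$ and the term $\abs{\mu_n-\tau_n}$ never appears; the entire content of (ii) concerns the complex neighborhood, where $\th_n$ acquires an unbounded imaginary part as $q$ approaches $Z_n$ and the product $\sqrt{I_n}\,\e^{\mp\ii\th_n}$ in \eqref{e:z_k} must be bounded jointly, not factor by factor. The proof in \cite{Kappeler:2005fb} accomplishes this by rewriting $z_{\pm n}$ as a product of individually controlled analytic quantities --- a square root of the extended $I_n/\gm_n^2$, ratios of standard roots evaluated at $\mu_n$, and differences such as $\mu_n-\lm_n^{\pm}$ --- and it is from these last factors that $\abs{\mu_n-\tau_n}$ enters. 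Saying that ``the analytic extension of the phase contributes $\abs{\mu_n-\tau_n}/\sqrt n$'' names the answer but supplies no mechanism, and the shortcut through $\sqrt{I_n}$ alone cannot work off the real subspace.
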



\section{Asymptotic estimates for the periodic and Dirichlet eigenvalues}
\label{section3}

In this section we prove asymptotic estimates of the quantities $|\gamma_n|$ and $|\mu_n - \tau_n|$ appearing in the estimates of the Birkhoff coordinates of Theorem \ref{bm.H-1} (i) - (ii) in the case where  $q\in\Ls^{s,p}_{0,\C}$ with  $-1/2\le s\le 0$ and $2\le p < \infty$. They are  needed in the proofs of Theorem~\ref{main} and Theorem~\ref{main2}.

Recall from \eqref{asympt.eig.0} that for $q \in H^{-1}_{0,\C}$,  the sequence of gap lengths $(\gm_{n}(q))_{n\ge 1}$ is in $\ell^{-1,2}_{\C}(\N)$.
For $q \in \Ls^{s,p}_{0,\C}$, this sequence has the following stronger decay:

\begin{thm}
\label{fw:per}
For any $q\in\Ls_{0,\C}^{s,p}$ with $-1/2\le s \le 0$ and $2\le p < \infty$, one has $(\gm_{n}(q))_{n\ge 1}\in\ell_{\C}^{s,p}(\N)$ and the map
\[
  \Ls_{0,\C}^{s,p}\to \ell_{\C}^{s,p}(\N),\qquad q \mapsto (\gm_{n}(q))_{n\ge 1},
\]
is locally bounded.\fish
\end{thm}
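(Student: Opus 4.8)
The plan is to pin down the leading behavior of the gap length $\gm_n(q)$ in terms of the $n$-th Fourier coefficient of $q$ and to show that all lower-order contributions are negligible in $\ell^{s,p}_\C(\N)$. Since $s\ge-1/2$, the embedding $\Ls_{0,\C}^{s,p}\hookrightarrow H_{0,\C}^{-1}$ (recorded before Theorem~\ref{main2}) puts $q$ inside the $H^{-1}$-spectral theory, so that the isolating neighborhoods $U_n$, the product representation~\eqref{Dl2}, and the decay $(\gm_n(q))_{n\ge1}\in\ell^{-1,2}_\C(\N)$ of Theorem~\ref{spec-W}(i) are all at our disposal. What Theorem~\ref{spec-W} does \emph{not} give -- and what we must gain -- is the sharper weight and exponent $\ell^{s,p}_\C(\N)$; this gain will come exclusively from the defining property of $\Ls_{0,\C}^{s,p}$, namely that its Fourier coefficients already lie in $\ell^{s,p}$.

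First I would pass to $L^2(\R/2\Z,\C)$ and work in the Fourier basis $e_k=\e^{\ii k\pi x}$, $k\in\Z$, in which $-\partial_x^2 e_k=k^2\pi^2 e_k$ and the period-one potential $q=\sum_m q_m\e^{\ii 2m\pi x}$ couples $e_k$ only to $e_{k+2m}$ with matrix entry $q_m$. The double free eigenvalue $n^2\pi^2$ has eigenspace $\spanx\{e_n,e_{-n}\}$, and the entries coupling $e_n$ and $e_{-n}$ are precisely $\langle e_{-n},q e_n\rangle=q_{-n}$ and $\langle e_n,q e_{-n}\rangle=q_n$. A Lyapunov--Schmidt (Schur complement) reduction near $n^2\pi^2$, eliminating the off-resonant modes $e_k$, $k\ne\pm n$, through the resolvent of the complementary block, yields a $2\times2$ matrix whose off-diagonal entries are $q_{\pm n}+b_{\pm n}$ and whose diagonal entries differ by some $\dl_n$. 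The gap, governed by the splitting of this reduced matrix, obeys
\[
  \abs{\gm_n}\le \abs{q_n}+\abs{q_{-n}}+r_n,\qquad r_n\defl \abs{\dl_n}+\abs{b_n}+\abs{b_{-n}},
\]
where the corrections $\dl_n,b_{\pm n}$ are convolution sums of products of Fourier coefficients of $q$ weighted by the energy denominators
\[
  \frac{1}{n^2-k^2}=\frac{1}{(n-k)(n+k)} .
\]

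The decisive point is that these denominators contribute, away from the finitely many near-resonant indices $k\approx\pm n$ (which are handled separately), an extra factor of size $\lin{n}^{-1}$; hence, by the discrete Young and H\"older inequalities, the sequence $(r_n)_{n\ge1}$ lies in $\ell^{s,p}_\C(\N)$ -- in fact with strictly more decay than $(q_{\pm n})_{n\ge1}$. Since $q\mapsto(q_n)_{n\ge1}$ and $q\mapsto(q_{-n})_{n\ge1}$ are bounded from $\Ls_{0,\C}^{s,p}$ into $\ell^{s,p}_\C(\N)$ by the very definition of the norm, the displayed bound gives $(\gm_n)_{n\ge1}\in\ell^{s,p}_\C(\N)$. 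As every constant above depends on $q$ only through $\n{q}_{s,p}$ and is uniform on bounded sets, the map $q\mapsto(\gm_n(q))_{n\ge1}$ is locally bounded, as claimed.

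The step I expect to be genuinely delicate is making the reduction and the convolution bounds rigorous for a \emph{distributional} potential $q\in H^{-1}$: the Neumann series for the complementary resolvent does not converge naively, and one must first renormalize, writing $q=\sigma'$ with $\sigma$ the mean-zero antiderivative, for which $(\sigma_n)_n=(q_n/(2\pi\ii n))_n\in\ell^{s+1,p}$ with $s+1\ge1/2$, and run the analysis with the regularized operator and its high-frequency resolvent so as to obtain resolvent bounds uniform in $n$. This is exactly where the hypothesis $s\ge-1/2$ is used, and where the constructions of~\cite{Kappeler:2005fb} underlying~\eqref{Dl2} and the isolating neighborhoods $U_n$ enter to legitimize the two-mode reduction.
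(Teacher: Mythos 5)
Your proposal follows essentially the same route as the paper: a two-mode Lyapunov--Schmidt reduction near $n^2\pi^2$ to a $2\times2$ matrix whose off-diagonal entries are $q_{\pm 2n}$ plus corrections, localization of the two roots (the paper uses Rouch\'e's theorem), and the observation that the energy denominators $1/(n^2-k^2)$ make the correction sequence decay faster than $(q_{\pm 2n})_n$ in $\ell^{s,p}_\C(\N)$. The only divergence is in the step you flag as delicate: the paper does not renormalize via the antiderivative, but instead proves directly that $T_n(\lm)=VA_\lm^{-1}Q_n$ is a $1/2$-contraction on $\Ls_{\star,\C}^{s,p}$ in the norms shifted by $\pm n$ (Lemma~\ref{Tn-est}), using the smoothing $A_\lm^{-1}\colon\Ls_{\star,\C}^{s,p}\to\Ls_{\star,\C}^{s+2,p}$ and the Hilbert-type sums of Lemma~\ref{hilbert-sum}, so the Neumann series for $(\Id-T_n)^{-1}$ converges without any such renormalization.
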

Recall from \eqref{asympt.eig.0}--\eqref{asymt.mu.0} that for any $q \in H^{-1}_{0,\C},$ $(\tau_n(q) - \mu_n(q))_{n \ge 1}$ is in $\ell^{-1,2}_\C(\N)$. For $q$ in $\Ls^{s,p}_{0,\C}$, the decay of this sequence is again stronger:
\begin{thm}
\label{fw:dir}
For any $q\in\Ls_{0,\C}^{s,p}$ with $-1/2\le s \le 0$ and $2\le p < \infty$, one has $(\tau_{n}-\mu_{n}(q))_{n\ge 1}\in\ell_{\C}^{s,p}(\N)$ and the map
\[
  \Ls_{0,\C}^{s,p}\to \ell_{\C}^{s,p}(\N),\qquad q \mapsto (\tau_{n}(q)-\mu_{n}(q))_{n\ge 1},
\]
is locally bounded.\fish
\end{thm}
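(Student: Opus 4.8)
The plan is to run the argument in parallel with Theorem~\ref{fw:per}, exploiting that $\tau_{n}-\mu_{n}$ is governed, to leading order, by the same single Fourier coefficient that controls the gap length $\gm_{n}$. As orientation and to fix the decay rate, I would first dispose of the \emph{real} case: for real $q\in\Ls_{0}^{s,p}$ the Dirichlet eigenvalue is trapped in the closed $n$-th gap, $\lm_{n}^{-}\le\mu_{n}\le\lm_{n}^{+}$, so that $\abs{\tau_{n}-\mu_{n}}\le\gm_{n}/2$ and the membership $(\tau_{n}-\mu_{n})_{n\ge1}\in\ell_{\C}^{s,p}(\N)$ is immediate from Theorem~\ref{fw:per}. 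This inclusion has no analogue for complex $q$ and yields no control of the map, so the substance of the theorem is the complex, locally uniform statement; but it already identifies $\gm_{n}$ as the correct comparison scale. For the complex case I would localize: by the construction recalled in Section~\ref{section2}, for $q\in\Wp$ both $\mu_{n}$ and the pair $\lm_{n}^{\pm}$ (hence $\tau_{n}$) lie in the interior of the isolating neighborhood $U_{n}$, which for large $n$ is the disc $\setdef{\lm}{\abs{\lm-n^{2}\pi^{2}}\le n/2}$ and which satisfies the separation bound $\dist(U_{n},U_{m})\ge c^{-1}\abs{m^{2}-n^{2}}$.

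Using the analytic extension to $\C\times\Wp$ of the discriminant $\Dl$ and its derivative $\dDl$, together with the entire function $y_{\dir}(\cdot,q)$ whose zeros are the Dirichlet eigenvalues $\mu_{n}$ (analytic on $\C\times\Wp$ like $\Dl$, with its zeros localized in the $U_{n}$), I would represent both spectral quantities as residues over $\partial U_{n}$ and subtract, obtaining
\[
  \tau_{n}-\mu_{n}=\frac{1}{2\pi\ii}\oint_{\partial U_{n}}\lm\,\p*{\frac{\Dl\,\dDl}{\Dl^{2}-4}-\frac{\partial_{\lm}y_{\dir}}{y_{\dir}}}\dlm .
\]
Expanding the fundamental solutions of $-\partial_{x}^{2}+q=\lm$ along $\partial U_{n}$ in the exponential basis $\e^{\ii k\pi x}$ then splits the right-hand side into a term linear in the Fourier coefficients $\hat q_{\pm n}$ of $q$ — of the same size as the leading part $2\abs{\hat q_{n}}$ of $\gm_{n}$, and hence already controlled by Theorem~\ref{fw:per} — plus a remainder consisting of iterated convolution sums $\sum_{k_{1},k_{2},\dots}\hat q_{k_{1}}\hat q_{k_{2}}\cdots$ weighted by the resolvent denominators produced by the contour $\partial U_{n}$.

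The crux, and the step I expect to be the main obstacle, is to bound these remainder sums in $\ell_{\C}^{s,p}(\N)$, locally uniformly over the complex neighborhood, across the whole range $-1/2\le s\le0$, $2\le p<\infty$. The difficulty is genuinely one of low regularity: the $\ell^{-1,2}$-type bounds underlying Theorems~\ref{spec-W} and~\ref{bm.H-1} are too weak, so one must show directly that convolution of $\ell^{s,p}$-sequences against the kernels living on $\partial U_{n}$ returns an $\ell^{s,p}$-sequence with norm controlled by a power of $\n{q}_{s,p}$. I would establish this through a Young/Hölder-type convolution estimate in the weighted spaces $\ell^{s,p}$, using the gain $\dist(U_{n},U_{m})^{-1}\le c\abs{m^{2}-n^{2}}^{-1}$ from the separation of the isolating neighborhoods to sum over the intermediate frequencies, and then summing the resulting Neumann-type series for $q$ in a small ball and propagating to all of $\Ls_{0,\C}^{s,p}$ by the analyticity already available on $\Wp$. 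Combining the convolution bound on the remainder with the contribution of the linear term handled by Theorem~\ref{fw:per} then yields simultaneously the claimed membership $(\tau_{n}-\mu_{n})_{n\ge1}\in\ell_{\C}^{s,p}(\N)$ and the local boundedness of $q\mapsto(\tau_{n}-\mu_{n})_{n\ge1}$.
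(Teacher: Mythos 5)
Your overall direction is reasonable and the contour representation you write down is correct in principle (the residue of $\lm\,\Dl\dDl/(\Dl^{2}-4)$ over $\partial U_{n}$ gives $\tau_{n}$ and that of $\lm\,\partial_{\lm}y_{\dir}/y_{\dir}$ gives $\mu_{n}$, once one knows the eigenvalue count inside $U_{n}$ is stable on $\Wp_{q}$). But there are two genuine gaps. First, the step you yourself identify as the crux --- bounding the iterated convolution sums coming from the expansion of $y_{\dir}$ in $\ell_{\C}^{s,p}(\N)$, locally uniformly --- is only announced, and it is substantially harder than you suggest at this regularity: the Dirichlet determinant lives in the sine basis, so the coefficients that appear are the cosine pairings $\lin{q,\cos(k\pi x)}$ of a distribution $q$ with products $s_{n}s_{m}$, which for $q\in H^{-1}$-type regularity already require a duality/interpolation argument just to make sense of and to control in $\ell^{t,2}$ (cf.\ the paper's \eqref{q-cos-coeff}--\eqref{H-dir-per-est}); upgrading this to sharp, summable $\ell^{s,p}$ bounds on all orders of the expansion is precisely the work the theorem requires, and no estimate is supplied. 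Second, your mechanism for obtaining local boundedness --- prove the bound by a Neumann series ``for $q$ in a small ball'' and then ``propagate to all of $\Ls_{0,\C}^{s,p}$ by analyticity'' --- does not work: componentwise analyticity of each $\tau_{n}-\mu_{n}$ on $\Wp$ does not let you continue a sequence-space-valued bound from a neighborhood of $0$ to the whole space. The smallness must come from $n\ge n_{0}(q)$ with $n_{0}$ chosen locally uniformly in $q$, not from smallness of $\n{q}_{s,p}$; the finitely many remaining terms are then handled by analyticity of each component. As stated, your plan proves the theorem only near $q=0$.

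For comparison, the paper avoids expanding the Dirichlet determinant altogether. It transfers the problem to the periodic side, where the $2\times2$ reduction $B_{n}(\lm)$ and the coefficients $b_{\pm n}$ are already controlled in $\ell^{s,p}$ (Lemmas~\ref{Tn-est}--\ref{cn-est}): one builds a function $\tilde G_{n}$ in the two-dimensional root space $E_{n}$ of the periodic problem whose restriction to $[0,1]$ satisfies the Dirichlet boundary conditions, computes $(L-\lm_{n}^{+})\tilde G_{n}$ via the Jordan-block coefficient $\eta_{n}$ of Lemma~\ref{En-basis}, and projects with the Riesz projector $\Pi_{n,q}$ onto the Dirichlet eigenspace to get $\abs{\mu_{n}-\lm_{n}^{+}}\le (2+o(1))(\abs{\eta_{n}}+\abs{\gm_{n}})$ with $\abs{\eta_{n}}\lesssim\abs{\gm_{n}}+\abs{b_{n}(\lm_{n}^{+})}+\abs{b_{-n}(\lm_{n}^{+})}$. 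The only Dirichlet-specific input is the soft projector estimate $\n{\Pi_{n,q}-\Pi_{n}}_{L^{2}(\Ic)\to L^{\infty}(\Ic)}=o(1)$ of Lemma~\ref{proj-bound}; the sharp $\ell^{s,p}$ decay is inherited entirely from the periodic quantities already established in Theorem~\ref{fw:per}. If you want to salvage your route, you would need to carry out the convolution estimates for the cosine coefficients at the same level of precision as Lemma~\ref{Tn-est}, and replace the small-ball/analytic-continuation step by locally uniform thresholds in $n$.
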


In the remainder of this section we prove Theorem~\ref{fw:per} and Theorem~\ref{fw:dir}. For this purpose we adapt the methods developed in \cite{Kappeler:2001hsa,Poschel:2011iua,Djakov:2006ba} for potentials $q \in L^2$ to singular potentials, using results from \cite{Kappeler:2001bi} and \cite{Kappeler:2005fb}.

\subsection{Setup}

As in the introduction we extend the $L^{2}$-inner product of $L^{2}(\R/2\Z)$ by duality to $\Sc_{\C}'(\R/2\Z)\times \Sc_{\C}(\R/2\Z)$. For $s\in\R$ and $1\le p < \infty$ denote by $\Ls_{\star,\C}^{s,p}$ the space of periodic, complex valued distributions $f\in \Sc_{\C}'(\R/2\Z)$ so that the sequence of their  Fourier coefficients $f_{n} = \spii{f,e_{n}}$ is in the space $\ell^{s,p}_\C = \setdef{ z=(z_n)_{n \in \Z} \subset \C }{ \n{z}_{s,p} < \infty }$. We write 
\[
  \n{f}_{s,p}\defl  \n{(f_n)_{n \in \Z}}_{s,p} =  \p*{ \sum_{n\in\Z} \w{n}^{sp} \abs{f_{n}}^{p} }^{1/p}.
\]

In the sequel we will identify a potential $q\in \Ls_{0,\C}^{s,p}$ with the corresponding element $\sum_{n \in \Z} q_{n}e_{n}$ in $\Ls_{\star,\C}^{s,p}$ where $q_{n}$ is the $n$th Fourier coefficient of the potential obtained from $q$ by viewing it as a distribution on $\R/2\Z$ instead of $\R/\Z$, i.e., $q_{2n} = \spii{q, e_{2n}}$, whereas $q_{2n+1} = \spii{q, e_{2n+1}} = 0$ and $q_0 = \spii{q,1} = 0$. We denote by $V$ the operator of multiplication by $q$ with domain $H_{\C}^{1}(\R/2\Z)$. See Appendix~\ref{a:hill-op} for a detailed discussion of this operator as well as the operator $L(q)$ introduced in~\eqref{Lq}. When expressed in its Fourier series, the image $Vf$ of $f = \sum_{n\in\Z} f_{n}e_{n}\in H_{\C}^{1}(\R/2\Z)$ is the distribution $Vf = \sum_{n\in \Z}\p*{ \sum_{m\in \Z} q_{n-m}f_{m} }e_{n}\in H_{\C}^{-1}(\R/2\Z)$. To prove the asymptotic estimates of the gap lengths stated in Theorem~\ref{fw:per} we need to study the eigenvalue equation $L(q)f = \lm f$ for sufficiently large periodic eigenvalues $\lm$. For $q\in H_{0,\C}^{-1}$, the domain of $L(q)$ is $H_{\C}^{1}(\R/2\Z)$ and hence the eigenfunction $f$ is an element of this space. It is shown in Appendix~\ref{a:hill-op} that for $q\in \Ls^{s,p}_{0,\C}$ one has $f\in \Ls_{\star,\C}^{s+2,p}$ and $\partial_{x}^{2}f$, $Vf\in \Ls_{\star,\C}^{s,p}$. Note that for $q = 0$ and any $n\ge 1$, $\lm_{n}^{+}(0) = \lm_{n}^{-}(0) = n^{2}\pi^{2}$, and the eigenspace corresponding to the double eigenvalue $\lm_{n}^{+}(0) = \lm_{n}^{-}(0)$ is spanned by $e_{n}$ and $e_{-n}$. Viewing for $n$ large $L(q)-\lm_{n}^{\pm}(q)$ as a perturbation of $L(0)-\lm_{n}^{\pm}(0)$, we are led to decompose $\Ls_{\star,\C}^{s,p}$ into the direct sum $\Ls_{\star,\C}^{s,p} = \Pc_{n}\oplus\Qc_{n}$,
\[
  \Pc_{n} = \spanx\setd{e_{n},\, e_{-n}},\qquad
  \Qc_{n} = \overline{\spanx}\setdef{e_{k}}{k\neq \pm n}.
\]
The projections onto $\Pc_{n}$ and $\Qc_{n}$ are denoted by $P_{n}$ and $Q_{n}$, respectively. It is convenient to write the eigenvalue equation $Lf = \lm f$ in the form $A_{\lm}f = Vf$, where $A_{\lm}f = \partial_{x}^{2}f + \lm f$ and $V$ denotes the operator of multiplication with $q$. Since $A_{\lm}$ is a Fourier multiplier, we write $f = u+v$, where $u = P_{n}f$ and $v = Q_{n}f$, and decompose the equation $A_{\lm}f = V f$ into the two equations
\begin{equation}
  \label{P-Q-eqn}
  A_{\lm}u = P_{n}V(u+v),\qquad
  A_{\lm}v = Q_{n}V(u+v),
\end{equation}
referred to as $P$- and $Q$-equation. As $\lm_{n}^{\pm}(q) = n^{2}\pi^{2} + n\ell_{n}^{2}$, we conclude that for $n$ sufficiently large, $\lm_{n}^{\pm}(q)\in S_{n}$ where $S_{n}$ denotes the closed vertical strip
\[
  S_{n} \defl \setdef{\lm\in \C}{\abs{\Re \lm - n^{2}\pi^{2}} \le 12 n},\qquad n\ge 1.
\]
Note that $\setdef{\lm\in \C}{\Re \lm \ge 0}\subset \bigcup_{n\ge 1} S_{n}$. Given any $n\ge 1$, $u\in \Pc_{n}$, and $\lm\in S_{n}$, we derive in a first step from the $Q$-equation an equation for $w\defl Vv$ which for $n$ sufficiently large can be solved as a function of $u$ and $\lm$.
In a second step, for $\lm$ a periodic eigenvalue in $S_{n}$, we solve the $P$ equation for $u$ after having substituted in it the expression of $Vv$.
The solution of the $Q$-equation is then easily determined.
Towards the first step note that for any $\lm\in S_{n}$, $A_{\lm}\colon \Qc_{n}\cap \Ls_{\star,\C}^{s+2,p}\to \Qc_{n}$ is boundedly invertible as for any $k\neq n$,
\[
  \min_{\lm\in S_{n}}\abs{\lm-k^{2}\pi^{2}}
   \ge \min_{\lm\in S_{n}} \abs{\Re \lm - k^{2}\pi^{2}}
   \ge \abs{n^{2}-k^{2}} \ge 1.
\]
In order to derive from the $Q$-equation an equation for $Vv$, we apply to it the operator $VA_{\lm}^{-1}$ to get
\[
  Vv = VA_{\lm}^{-1}Q_{n}V(u+v) = T_{n}V(u+v),
\]
where
\[
  T_{n}\equiv T_{n}(\lm) \defl VA_{\lm}^{-1}Q_{n}
  \colon \Ls_{\star,\C}^{s,p}\to \Ls_{\star,\C}^{s,p}.
\]
It leads to the following equation for $w \defl Vv$
\begin{equation}
  \label{w-eqn}
  (\Id-T_{n}(\lm))w = T_{n}(\lm)Vu.
\end{equation}
To show that $\Id-T_{n}(\lm)$ is invertible, we introduce for any $l\in\Z$ the shifted norm of $f\in\Ls_{\star,\C}^{s,p}$,
\begin{equation*}
  \n{f}_{s,p;l} \defl \n{fe_{l}}_{s,p}
   = \p*{ \sum_{n\in\Z} \w{n+l}^{p} \abs{f_{n}}^{p} }^{1/p},
\end{equation*}
and denote by $\n{T_{n}}_{s,p;l}$ the corresponding operator norm.
Furthermore, we denote by $R_{N}f$, $N\ge 1$ the tail of the Fourier series of $f\in \Ls_{\star,\C}^{s,p}$,
\[
  R_{N}f = \sum_{\abs{n}\ge N} f_{n}e_{n}.
\]

\begin{lem}
\label{Tn-est}
Let $-1/2\le s \le 0$, $2\le p < \infty$, and $n\ge 1$ be given. For any $q\in\Ls_{0,\C}^{s,p}$ and $\lm\in S_{n}$,
\[
  T_{n}(\lm)\colon \Ls_{\star,\C}^{s,p}\to \Ls_{\star,\C}^{s,p}
\]
is a bounded linear operator satisfying the estimate
\[
  \n{T_{n}(\lm)}_{s,p;\pm n} \le c_{s,p}\p*{ \ep_{s,p}(n)\n{q}_{s,p} + \n{R_{n/2}q}_{s,p} },
\]
where $c_{s,p}$ is a constant depending only on $s$ and $p$ and $\ep_{s,p}$ is defined here as
\[
  \ep_{s,p}(n) \defl \begin{cases}
  \frac{1}{n^{1-\abs{s}}}, & 2 \le p < \frac{1}{\abs{s}},\\
  \frac{(\log \w{n})^{1-1/p}}{n^{1-\abs{s}}}, & p = \frac{1}{\abs{s}},\\
  \frac{1}{n^{1-2\abs{s}+1/p}}, & \frac{1}{\abs{s}} < p < \infty.
  \end{cases}\fish
\]
\end{lem}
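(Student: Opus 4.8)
The plan is to realize $T_n(\lm)$ as a matrix on the Fourier side and to remove the shift by conjugation. Since $A_\lm$ is the Fourier multiplier with symbol $\lm-k^2\pi^2$ and $V$ is multiplication by $q$, i.e.\ the convolution $(f_k)\mapsto\big(\sum_k q_{j-k}f_k\big)_j$ on Fourier coefficients, the operator $T_n(\lm)=VA_\lm^{-1}Q_n$ is given by the matrix with entries $q_{j-k}\,m_k$, where $m_k=(\lm-k^2\pi^2)^{-1}$ for $k\neq\pm n$ and $m_{\pm n}=0$. Because $\n{f}_{s,p;\pm n}=\n{fe_{\pm n}}_{s,p}$, the norm $\n{T_n(\lm)}_{s,p;\pm n}$ equals the operator norm on $\Ls_{\star,\C}^{s,p}$ of the conjugated operator obtained by sandwiching $T_n(\lm)$ between multiplication by $e_{\pm n}$ on the left and $e_{\mp n}$ on the right. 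Conjugation leaves $V$ unchanged and shifts the symbol to $\tilde m_j=m_{j\mp n}$. The purpose of this step is that it moves one of the two poles of the resolvent symbol to the origin, where $\abs{\tilde m_j}\le C/(n\abs j)$; the second pole sits at $j=\pm2n$ with $\abs{\tilde m_j}\le C/(n\abs{j\mp2n})$, away from the poles $\abs{\tilde m_j}\le C/\w{j\mp n}^2$, and everywhere $\abs{\tilde m_j}\le C/n$. All of these follow from $\lm\in S_n$ together with $\abs{\lm-k^2\pi^2}\ge\abs{n^2-k^2}$. By the symmetry $j\mapsto-j$ the two signs are treated identically, so I only discuss the shift $+n$, for which the origin pole lives on $\{\abs j<n/2\}$.

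Next I would split $q=(q-R_{n/2}q)+R_{n/2}q$ into its low-frequency part (Fourier support in $\abs l<n/2$) and its tail. The tail is the easy one: estimating the bilinear form by duality against $\phi\in\Ls_{\star,\C}^{-s,p'}$ (with $1/p+1/p'=1$), using submultiplicativity $\w{j-i}^{\abs s}\le\w j^{\abs s}\w i^{\abs s}$ and the decay of $\tilde m$, one finds this contribution is bounded by $c_{s,p}\n{R_{n/2}q}_{s,p}\n f$, in fact with an extra factor $n^{-(1-2\abs s)}$ to spare. This produces the second term of the estimate.

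The heart of the matter is the contribution of the low-frequency part near the origin pole, and this is where $\ep_{s,p}(n)$ is generated. Estimating again by duality against $\phi$ and using that the low-frequency support of $q$ confines $\w{j-i}^{\abs s}\le Cn^{\abs s}$ (so that a single power $\w j^{\abs s}$ survives), one bounds this contribution by
\[
  C\,n^{\abs s}\n q_{s,p}\Big(\sum_{0<\abs j<n/2}\abs{\tilde m_j}^{p'}\w j^{\abs{s}p'}\Big)^{1/p'}\n f_{s,p;+n},
  \qquad
  \sum_{0<\abs j<n/2}\abs{\tilde m_j}^{p'}\w j^{\abs{s}p'}\le\frac{C}{n^{p'}}\sum_{1\le j<n/2}j^{(\abs s-1)p'}.
\]
Evaluating the last sum is exactly what produces the three regimes: its summand has exponent $(\abs s-1)p'$, which is $<-1$, $=-1$, or $>-1$ precisely according as $p<1/\abs s$, $p=1/\abs s$, or $p>1/\abs s$. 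In the first case the sum is $O(1)$ and the prefactor yields $n^{-(1-\abs s)}$; in the borderline case it grows like $\log\w n$, producing the factor $(\log\w n)^{1/p'}=(\log\w n)^{1-1/p}$; in the last case it grows polynomially and gives $n^{-(1-2\abs s+1/p)}$. These are precisely the three branches of $\ep_{s,p}(n)$.

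What remains is to check that the other pieces of the low-frequency contribution -- the opposite pole at $j=\pm2n$, the middle band $\abs{j\mp n}<n/2$, and the far frequencies -- are dominated by $\ep_{s,p}(n)\n q_{s,p}$. The point here, and the main technical obstacle, is that for those frequencies the output is forced to sit at frequency of order $n$, so the weight $\w{\cdot}^s$ of order $n^{-\abs s}$ in the target norm (equivalently, the constraint on $\phi$ near frequency $\pm2n$) must be used to gain an extra factor $n^{-\abs s}$ in the duality bound; without this gain the opposite pole would contribute the larger quantity $n^{-(1-2\abs s)}$ and ruin the estimate. Keeping track of this gain amounts to a sharp, non-lossy product estimate in the Fourier--Lebesgue scale -- the underlying bilinear estimate being critical at $s=-1/2$ -- which is why the argument must avoid crude tools such as Peetre's inequality and instead balance the weights exactly through the dual exponent $p'$. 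Once these remaining contributions are seen to be $O(n^{-(1-\abs s)})$, which is $\le\ep_{s,p}(n)$ in every regime, the two asserted terms are assembled, and the case of the shift $-n$ follows by symmetry.
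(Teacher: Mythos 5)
Your proposal is correct and follows essentially the same route as the paper: the operator is estimated on the Fourier side, $q$ is split at frequency $n/2$ into a tail (producing the $\n{R_{n/2}q}_{s,p}$ term) and a low--frequency part, and H\"older's inequality with exponent $p'$ applied to the resolvent weights produces exactly the critical sum $\sum_{\abs{k}\neq n}\abs{n^{2}-k^{2}}^{-(1-\abs{s})p'}$ (the paper's Lemma~\ref{hilbert-sum}), whose three regimes are your three branches of $\ep_{s,p}(n)$. The only difference is bookkeeping: where you conjugate away the shift and treat the origin pole, the opposite pole, the middle band and the far frequencies case by case --- and the deferred checks do go through as you describe, each remaining region being $O(n^{-(1-\abs{s})})$ --- the paper keeps the shifted norms and disposes of all regions at once via the single weight inequality $\lin{m-k}\le \lin{n-k}\lin{m+n}$ on $\setd{\abs{m-k}<n/2}$, which is precisely the exact balancing you identify as the crux at the opposite pole.
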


\begin{proof}
Note that $A_{\lm}^{-1}\colon \Ls_{\star,\C}^{s,p}\to \Ls_{\star,\C}^{s+2,p}$ is bounded for any $\lm\in S_{n}$ and hence for any $f\in \Ls_{\star,\C}^{s,p}$, $q\in \Ls_{0,\C}^{s,p}$, and $\lm\in S_{n}$, the multiplication of $A_{\lm}^{-1}Q_{n}f$ with $q$, defined by
\[
  T_{n}(\lm)f = VA_{\lm}^{-1}Q_{n}f = \sum_{m\in\Z}\p*{ \sum_{\abs{k}\neq n} \frac{q_{m-k} f_{k}}{\lm-k^{2}\pi^{2}} }e_{m}
\]
is a distribution in $\Sc_{\C}'(\R/2\Z)$. The norm $\n{T_{n}f}_{s,p;n}$ satisfies
\[
  \n{T_{n}f}_{s,p;n}^{p} \le \sum_{m\in\Z} \p*{ \w{m+n}^{s}\sum_{\abs{k}\neq n} 
  \frac{\abs{q_{m-k}} \abs{f_{k}}}{\abs{n^{2}-k^{2}}}
   }^{p}.
\]
We write the latter expression in the form
\[
  \sum_{m\in\Z} \p*{ \sum_{\abs{k}\neq n} 
    \frac{\w{k+n}^{\abs{s}}  \w{m-k}^{\abs{s}}}
         {\abs{n+k}\abs{n-k} \w{m+n}^{\abs{s}}}
    \frac{\abs{q_{m-k}}} {\w{m-k}^{\abs{s}}}
    \frac{\abs{f_{k}}}   {\w{k+n}^{\abs{s}}}
   }^{p} \le 2^{p}(I+II),
\]
where $I$ and $II$ are defined by splitting the sum $\sum_{\abs{k}\neq n}$ into a sum over $\setdef{\abs{k}\neq n}{\abs{m-k}\ge n/2}$ and one over $\setdef{\abs{k}\neq n}{\abs{m-k} < n/2}$.
Using that $\w{m-k}\le \w{m+n}\w{n+k}$ we conclude
\[
  I \le \sum_{m\in\Z} \p*{ 
             \sum_{\abs{k}\neq n} \mathbf{1}_{\setd{\abs{m-k} \ge n/2}} 
             \frac{\w{k+n}^{2\abs{s}}}{\abs{n+k}}
             \frac{1}{\abs{n-k}}
             \frac{\abs{q_{m-k}}}{\w{m-k}^{\abs{s}}}
             \frac{\abs{f_{k}}}{\w{k+n}^{\abs{s}}}         
   }^{p}.
\]
As $-1/2\le s\le 0$ and $\w{k+n}^{2\abs{s}}/\abs{n+k} \le (1+\abs{n+k})/\abs{n+k} \le 2$, one then gets by Hölder's inequality applied to the sum $\sum_{\abs{k}\neq n}$
\[
  I \le \p*{\sum_{\abs{k}\neq n} 
             \frac{2^{p'}}{\abs{n-k}^{p'}}}^{p/p'}
             \n{R_{n/2}q}_{s,p}^{p}\n{f}_{s,p;n}^{p}.
\]
To estimate $II$, we show that
\begin{equation}
  \label{eq-x}
  \frac{\w{m-k}}{\w{n-k}\w{m+n}}
  \mathbf{1}_{\setd{\abs{m-k}< n/2}} \le 1.
\end{equation}
Indeed, if $\abs{n-k} > n/2$, this is obvious as $\abs{m-k} < n/2$. In case $\abs{n-k} \le n/2$ one has that $\abs{n+k} \ge 2n-\abs{n-k}\ge 3n/2$ and hence
\[
  \abs{m+n} \ge \abs*{\abs{n+k}-\abs{m-k}} \ge n,
\]
which implies \eqref{eq-x}.
With \eqref{eq-x} and Hölder's inequality it then follows that
\begin{align*}
  II &\le \sum_{m\in\Z} \p*{ 
             \sum_{\abs{k}\neq n} \mathbf{1}_{\setd{\abs{m-k} < n/2}}
             \frac{2^{\abs{s}}}{\abs{n+k}^{1-\abs{s}}}
             \frac{2^{\abs{s}}}{\abs{n-k}^{1-\abs{s}}}
             \frac{\abs{q_{m-k}}}{\abs{\w{m-k}^{\abs{s}}}}
             \frac{\abs{f_{k}}}{\w{k+n}^{\abs{s}}}}^{p}\\
     &\le \p*{ 
             \sum_{\abs{k}\neq n} 
             \frac{2^{p'}}{\abs{n^{2}-k^{2}}^{(1-\abs{s})p'}}
             }^{p/p'}
             \n{q}_{s,p}^{p}\n{f}_{s,p;n}^{p}.
\end{align*}
Using Lemma~\ref{hilbert-sum}, one verifies in a straightforward way that
\[
  \p*{\sum_{\abs{k}\neq n} 
             \frac{1}{\abs{n^{2}-k^{2}}^{(1-\abs{s})p'}}
             }^{1/p'} \le C_{s,p}\ep_{s,p}(n),
\]
for some constant $C_{s,p} > 0$ which only depends on $s$ and $p$. Altogether we have shown that for some constant $c_{s,p} \ge C_{s,p}$,
\[
  \n{T_{n}f}_{s,p;n} \le c_{s,p}\p*{ \ep_{s,p}(n)\n{q}_{s,p} + \n{R_{n/2}q}_{s,p} }
  \n{f}_{s,p;n}.
\]
Going through the arguments of the proof one sees that the same kind of estimates also lead to the claimed bound for $\n{T_{n}f}_{s,p;-n}$.\qed
\end{proof}

Lemma~\ref{Tn-est} can be used to solve, for $n$ sufficiently large, the equation \eqref{w-eqn} as well as the $Q$-equation \eqref{P-Q-eqn} in terms of any given $u\in\Pc_{n}$ and $\lm\in S_{n}$.

\begin{cor}
\label{Q-soln}
For any $q\in\Ls_{0,\C}^{s,p}$ with $-1/2\le s \le 0$ and $2\le p < \infty$, there exists $n_{s,p} = n_{s,p}(q) \ge 1$ so that for any $n\ge n_{s,p}$ and $\lm\in S_{n}$, $T_{n}(\lm)$ is a $1/2$ contraction on $\Ls_{\star,\C}^{s,p}$ with respect to the norms shifted by $\pm n$, $\n{T_{n}(\lm)}_{s,p;\pm n}\le 1/2$. The threshold $n_{s,p}(q)$ can be chosen locally uniformly in $q$. As a consequence, equation~\eqref{w-eqn} and \eqref{P-Q-eqn} can be uniquely solved for any given $u\in\Pc_{n}$, $\lm\in S_{n}$,
\begin{align}
  \label{sol-w-eqn}
  w_{u,\lm} &= K_{n}(\lm)T_{n}(\lm)Vu \in \Ls_{\star,\C}^{s,p},
  \qquad K_{n} \equiv K_{n}(\lm) \defl (\Id - T_{n}(\lm))^{-1},\\
  \label{sol-Q-eqn}
    v_{u,\lm} &= A_{\lm}^{-1}Q_{n}Vu + A_{\lm}^{-1}Q_{n}w_{u,\lm}
    = 
  A_{\lm}^{-1}Q_{n}K_{n}Vu \in \Ls_{\star,\C}^{s+2,p}\cap \Qc_{n}.
\end{align}
In particular, one has $ w_{u,\lm} = Vv_{u,\lm} $. We increase $n_{s,p}$, if necessary, so that in addition
\begin{equation}
  \label{n-s-p}
  \begin{split}
  &\abs{\lm_{n}^{\pm}} \le (n_{s,p}-1)^{2}\pi^{2} + n_{s,p}/2,
  && \forall n < n_{s,p},\\
  &\abs{\lm_{n}^{\pm}-n^{2}\pi^{2}} \le n/2,
  && \forall n\ge n_{s,p},\\
  &\lm_{n}^{\pm}\text{ are 1-periodic [1-antiperiodic] if }n \text{ even [odd] }
  && \forall n\ge n_{s,p}.\fish
  \end{split}
\end{equation}
\end{cor}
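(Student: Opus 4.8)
The plan is to read the contraction property directly off Lemma~\ref{Tn-est} and then run a Neumann series argument. For the first assertion I would observe that in all three regimes defining $\ep_{s,p}$ the power of $n$ in the denominator is strictly positive: since $|s|\le 1/2$ we have $1-|s|>0$, and in the third case $1-2|s|+1/p>0$ as well. Hence $\ep_{s,p}(n)\to 0$ as $n\to\infty$. As $q\in\Ls_{0,\C}^{s,p}$, the tail $\n{R_{n/2}q}_{s,p}$ also tends to $0$. Because the bound in Lemma~\ref{Tn-est} is independent of $\lm\in S_n$, it suffices to choose $n_{s,p}$ so large that $c_{s,p}\p*{\ep_{s,p}(n)\n{q}_{s,p}+\n{R_{n/2}q}_{s,p}}\le 1/2$ for every $n\ge n_{s,p}$; this yields $\n{T_n(\lm)}_{s,p;\pm n}\le 1/2$ uniformly in $\lm\in S_n$.

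The genuinely delicate point, and the one I expect to be the main obstacle, is the \emph{local uniformity} of the threshold, since $\n{R_{n/2}q'}_{s,p}$ does not decay uniformly on bounded sets. Here I would fix a ball $B$ of radius $\delta$ around $q$ in $\Ls_{0,\C}^{s,p}$. On $B$ the norm $\n{q'}_{s,p}\le\n{q}_{s,p}+\delta$ controls the $\ep_{s,p}$-term uniformly, while for the tail the triangle inequality gives $\n{R_{n/2}q'}_{s,p}\le\n{R_{n/2}q}_{s,p}+\n{R_{n/2}(q'-q)}_{s,p}\le\n{R_{n/2}q}_{s,p}+\delta$. Choosing first $n_{s,p}$ large so that $\n{R_{n/2}q}_{s,p}$ is small, and then $\delta$ small, makes the whole bound $\le 1/2$ simultaneously for all $q'\in B$. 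This is the only step that requires shrinking the neighborhood rather than merely taking $n$ large.

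With the contraction established, the remaining assertions are algebra. Since $\n{T_n(\lm)}_{s,p;\pm n}\le 1/2$, the operator $\Id-T_n(\lm)$ is boundedly invertible with $K_n=\sum_{k\ge 0}T_n^k$, so \eqref{w-eqn} has the unique solution $w_{u,\lm}=K_nT_nVu$, which is \eqref{sol-w-eqn}. For the $Q$-equation I would use that $A_\lm$ is boundedly invertible on $\Qc_n$ for $\lm\in S_n$ to rewrite \eqref{P-Q-eqn} as $v=A_\lm^{-1}Q_n(Vu+w_{u,\lm})$, then simplify $Vu+w_{u,\lm}=(\Id+K_nT_n)Vu=K_nVu$ via the identity $\Id+K_nT_n=K_n$ (from $K_nT_n=(\Id-T_n)^{-1}T_n=-\Id+K_n$); this gives \eqref{sol-Q-eqn} and places $v_{u,\lm}$ in $\Ls_{\star,\C}^{s+2,p}\cap\Qc_n$ since $A_\lm^{-1}$ raises regularity by two. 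The relation $w_{u,\lm}=Vv_{u,\lm}$ then follows from $Vv_{u,\lm}=T_nK_nVu$ together with the fact that $K_n$, being a power series in $T_n$, commutes with $T_n$. Finally, to arrange \eqref{n-s-p} I would enlarge $n_{s,p}$ once more: its first two lines follow from the asymptotics $\lm_n^\pm=n^2\pi^2+n\ell^2_n$ of \eqref{asympt.eig.0}, the perturbation being $n$ times a null sequence and hence eventually bounded by $n/2$; the last line is the standard structural fact that for large $n$ the eigenvalues near $n^2\pi^2$ are carried by the modes $e_{\pm n}$, which are $1$-periodic for $n$ even and $1$-antiperiodic for $n$ odd.
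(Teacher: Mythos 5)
Your proposal is correct and follows exactly the route the paper intends: the corollary is stated without a separate proof precisely because it is read off Lemma~\ref{Tn-est} by noting $\ep_{s,p}(n)\to 0$ and $\n{R_{n/2}q}_{s,p}\to 0$, handling local uniformity of the threshold by the triangle inequality on a small ball, and then running the Neumann series and the identity $\Id+K_nT_n=K_n$ just as you do (the periodicity/antiperiodicity statement in \eqref{n-s-p} is the deformation argument recorded as \eqref{ev-per-antiper} in Appendix~\ref{a:hill-op}). No gaps.
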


\begin{rem}
\label{inhom-1}
By the same approach, one can study the inhomogeneous equation
\[
  (L-\lm)f = g,\qquad g\in \Ls_{\star,\C}^{s,p},
\]
for $\lm\in S_{n}$ and $n\ge n_{s,p}$. Writing $f=u+v$ and $g = P_{n}g + Q_{n}g$, the $Q$-equation becomes
\[
  A_{\lm}v = Q_{n}V(u+v)- Q_{n}g = Q_{n}Vv + Q_{n}(Vu-g)
\]
leading for any given $u\in\Pc_{n}$ to the unique solution $w$ of the equation corresponding to~\eqref{w-eqn}
\[
  w = Vv = K_{n}T_{n}(Vu-g) \in \Ls_{\star,\C}^{s,p},
\]
and, in turn, to the unique solution $v \in \Ls_{\star,\C}^{s+2,p}\cap \Qc_{n}$ of the $Q$-equation
\[
  v = A_{\lm}^{-1}Q_{n}(Vu - g) + A_{\lm}^{-1}Q_{n}K_{n}T_{n}(Vu-g)  = A_{\lm}^{-1}Q_{n}K_{n}(Vu-g).
\]
\end{rem}

\subsection{Reduction}
\label{ss:spec-reduction}

In a next step we study the $P$-equation $A_{\lm}u = P_{n}V(u+v)$ of \eqref{P-Q-eqn}. For $n\ge n_{s,p}(q)$, $u\in \Pc_{n}$, and $\lm\in S_{n}$, substitute in it the solution $w_{u,\lm}$ of \eqref{w-eqn}, obtained in~\eqref{sol-w-eqn},
\[
  A_{\lm}u = P_{n}Vu + P_{n}w_{u,\lm} = P_{n}(\Id + K_{n}T_{n})Vu.
\]
Using that $\Id + K_{n}T_{n} = K_{n}$ one then obtains $A_{\lm}u = P_{n}K_{n}Vu$ or $B_{n}u = 0$, where
\begin{equation}
  B_{n} \equiv B_{n}(\lm)\colon \Pc_{n}\to \Pc_{n},\quad u\mapsto (A_{\lm}-P_{n}K_{n}(\lm)V)u.
\end{equation}

\begin{lem}
\label{eval-char}
Assume that $q\in \Ls_{0,\C}^{s,p}$ with $-1/2\le s \le 0$ and $2\le p <\infty$. Then for any $n\ge n_{s,p}$ with $n_{s,p}$ given by Corollary~\ref{Q-soln}, $\lm\in S_{n}$ is an eigenvalue of $L(q)$ if and only if $\det(B_{n}(\lm)) = 0$.\fish
\end{lem}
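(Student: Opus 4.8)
The plan is to establish the two implications of the equivalence separately, using the unique solvability of the $Q$-equation from Corollary~\ref{Q-soln} to reduce the full eigenvalue problem on $\Ls_{\star,\C}^{s+2,p}$ to the finite-dimensional problem $B_n(\lm)u = 0$ on the two-dimensional space $\Pc_n$. Throughout, the two algebraic identities doing the work are $\Id + K_n T_n = K_n$ and $w_{u,\lm} = Vv_{u,\lm}$ recorded in Corollary~\ref{Q-soln}.

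First I would treat the forward direction. Suppose $\lm\in S_n$ with $n\ge n_{s,p}$ is a periodic eigenvalue of $L(q)$, so that there is a nonzero eigenfunction $f$; by the regularity results of Appendix~\ref{a:hill-op} it lies in $\Ls_{\star,\C}^{s+2,p}$, so we may split it as $f = u+v$ with $u = P_n f\in\Pc_n$ and $v = Q_n f\in\Qc_n\cap\Ls_{\star,\C}^{s+2,p}$. Since $f$ solves $A_\lm f = Vf$, the pair $(u,v)$ solves both the $P$- and $Q$-equations~\eqref{P-Q-eqn}, and consequently $w\defl Vv$ solves~\eqref{w-eqn}. By the uniqueness asserted in Corollary~\ref{Q-soln}, necessarily $v = v_{u,\lm}$ and $w = w_{u,\lm}$. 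Substituting this into the $P$-equation and using $P_n V(u+v) = P_n(\Id + K_n T_n)Vu = P_n K_n Vu$ yields $A_\lm u = P_n K_n Vu$, i.e. $B_n(\lm)u = 0$. Finally $u\neq 0$: were $u = 0$, the linearity and homogeneity of the construction would give $v = v_{0,\lm} = 0$ and hence $f = 0$, a contradiction. Thus $B_n(\lm)$ has a nontrivial kernel and $\det(B_n(\lm)) = 0$.

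For the converse, suppose $\det(B_n(\lm)) = 0$ for some $\lm\in S_n$, $n\ge n_{s,p}$. Choose a nonzero $u\in\Pc_n$ with $B_n(\lm)u = 0$, set $v\defl v_{u,\lm} = A_\lm^{-1}Q_n K_n Vu\in\Qc_n\cap\Ls_{\star,\C}^{s+2,p}$, and put $f\defl u+v$. By construction $v$ solves the $Q$-equation, while $B_n(\lm)u = 0$, read through $P_n K_n Vu = P_n V(u+v)$, shows that the $P$-equation holds as well. Adding the two equations gives $A_\lm f = P_n Vf + Q_n Vf = Vf$, i.e. $L(q)f = \lm f$; and $f\neq 0$ since $P_n f = u\neq 0$. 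Hence $\lm$ is an eigenvalue of $L(q)$.

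The only genuinely delicate point is ensuring that \emph{every} eigenfunction is regular enough to carry out the splitting and to fall within the uniqueness class of Corollary~\ref{Q-soln}; this is precisely where the a priori regularity $f\in\Ls_{\star,\C}^{s+2,p}$ from Appendix~\ref{a:hill-op} enters, so that $v = Q_n f$ is admissible as a solution of the $Q$-equation and the uniqueness statement can be invoked. The remainder is bookkeeping: tracking $\Id + K_n T_n = K_n$ and $w_{u,\lm} = Vv_{u,\lm}$ so that solvability of the $P$-equation becomes literally equivalent to $B_n(\lm)u = 0$, and verifying the nondegeneracy $u\neq 0\iff f\neq 0$ that turns existence of a nonzero eigenfunction into nontriviality of $\Null B_n(\lm)$.
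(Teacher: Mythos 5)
Your proposal is correct and follows essentially the same route as the paper: both directions reduce the eigenvalue problem to $B_n(\lm)u=0$ via the unique solvability of the $Q$-equation from Corollary~\ref{Q-soln}, using the regularity $f\in\Ls_{\star,\C}^{s+2,p}$ of eigenfunctions (Lemma~\ref{regularity-En}) and the identities $\Id+K_nT_n=K_n$, $w_{u,\lm}=Vv_{u,\lm}$. Your explicit justification that $u\neq 0$ (via $v=v_{0,\lm}=0$ by uniqueness) merely spells out what the paper compresses into ``by the considerations above.''
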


\begin{proof}
Assume that $\lm\in S_{n}$ is an eigenvalue of $L=L(q)$. By Lemma~\ref{regularity-En} there exists $0\neq f\in \Ls_{\star,\C}^{s+2,p}$ so that $Lf = \lm f$. Decomposing $f = u+v\in \Pc_{n}\oplus \Qc_{n}$ it follows by the considerations above and the assumption $n\ge n_{s,p}$ that $u\neq 0$ and $B_{n}(\lm)u = 0$. Conversely, assume that $\det(B_{n}(\lm)) = 0$ for some $\lm\in S_{n}$. Then there exists $0\neq u\in \Pc_{n}$ so that $B_{n}(\lm)u = 0$. Since $n\ge n_{s,p}$, there exists $w_{u,\lm}$ and $v_{u,\lm}$ as in \eqref{sol-w-eqn} and \eqref{sol-Q-eqn}, respectively. Then $v \equiv v_{u,\lm}\in \Ls_{\star,\C}^{s+2,w,p}\cap \Qc_{n}$ solves the $Q$-equation by Corollary~\ref{Q-soln}. To see that $u$ solves the $P$-equation note that $B_{n}(\lm)u = 0$ implies that
\[
  A_{\lm}u = P_{n}K_{n}Vu = P_{n}(\Id + K_{n}T_{n})Vu.
\]
As by \eqref{sol-w-eqn}, $w_{u,\lm} = K_{n}T_{n}Vu$, and as $w_{u,\lm} = Vv$, one sees that indeed
\[
  A_{\lm}u = P_{n}Vu + P_{n}Vv.\qed
\]
\end{proof}

\begin{rem}
\label{inhom-2}
Solutions of the inhomogeneous equation $(L-\lm)f = g$ for $g\in\Ls_{\star,\C}^{s,p}$, $\lm\in S_{n}$, and $n\ge n_{s,p}$ can be obtained by substituting into the $P$-equation
\[
  A_{\lm}u = P_{n}Vu + P_{n}Vv - P_{n}g
\]
the expression for $Vv$ obtained in Remark~\ref{inhom-1}, $Vv = K_{n}T_{n}(Vu-g)$, to get
\begin{align*}
  A_{\lm}u &= P_{n}Vu + P_{n}K_{n}T_{n}Vu - P_{n}g - P_{n}K_{n}T_{n}g\\
  &= P_{n}(\Id + K_{n}T_{n})Vu - P_{n}(\Id + K_{n}T_{n})g.
\end{align*}
Using that $\Id + K_{n}T_{n} = K_{n}$ one concludes that
\begin{equation}
  \label{B-inhom}
  B_{n}(\lm)u = -P_{n}K_{n}(\lm)g.
\end{equation}
Conversely, for any solution $u$ of \eqref{B-inhom}, $f = u+v$, with $v$ being the element in $\Ls_{\star,\C}^{s+2,p}$ given in Remark~\ref{inhom-1}, satisfies $(L-\lm)f = g$ and $f\in \Ls_{\star,\C}^{s+2,p}$.
\end{rem}

We denote the matrix representation of a linear operator $F\colon \Pc_{n}\to \Pc_{n}$ with respect to the orthonormal basis $e_{n}$, $e_{-n}$ of $\Pc_{n}$ also by $F$,
\[
  F = \begin{pmatrix}
  \spii{Fe_{n},e_{n}}  & \spii{Fe_{-n},e_{n}}\\
  \spii{Fe_{n},e_{-n}}  & \spii{Fe_{-n},e_{-n}}
  \end{pmatrix}.
\]
In particular,
\[
  A_{\lm} = 
  \begin{pmatrix}
  \lm-n^{2}\pi^{2} & 0\\
  0 & \lm-n^{2}\pi^{2}
  \end{pmatrix},
  \qquad
  P_{n}K_{n}V = 
  \begin{pmatrix}
  a_{n} & b_{n}\\
  b_{-n} & a_{-n}
  \end{pmatrix},
\]
where for any $\lm\in S_{n}$ and $n\ge n_{s,p}$ the coefficients of $P_{n}K_{n}V$ are given  by
\begin{align*}
  a_{n}  &\equiv a_{n}(\lm)  \defl \spii{K_{n}Ve_{n},e_{n}},&
  a_{-n} &\equiv a_{-n}(\lm) \defl \spii{K_{n}Ve_{-n},e_{-n}},\\
  b_{n}  &\equiv b_{n}(\lm)  \defl \spii{K_{n}Ve_{-n},e_{n}},&
  b_{-n} &\equiv b_{-n}(\lm) \defl \spii{K_{n}Ve_{n},e_{-n}}.
\end{align*}
By a straightforward verification it follows from the expression of $a_{n}$ in terms of the representation of $K_{n}$ and $V$ in Fourier space that for any $n\ge n_{s,p}$
\[
  a_{n} = \spii{K_{n}Ve_{-n},e_{-n}} = a_{-n}.
\]
Hence the diagonal of $B_{n}$ is a complex multiple of the identity and
\begin{equation}
  \label{Bn-mat}
   B_{n} = \begin{pmatrix}
            \lm-n^{2}\pi^{2} - a_{n} & -b_{n},\\
           -b_{-n}                   &  \lm-n^{2}\pi^{2} - a_{n}
           \end{pmatrix}.
\end{equation}

\begin{lem}
\label{coeff-est}
Suppose $q\in \Ls_{0,\C}^{s,p}$ with $-1/2\le s\le 0$ and $2\le p < \infty$. Then for any $n\ge n_{s,p}$, with $n_{s,p}$ as in Corollary~\ref{Q-soln}, the coefficients $a_{n}(\lm)$ and $b_{\pm n}(\lm)$ are analytic functions on the strip $S_{n}$ and for any $\lm\in S_{n}$
\begin{align*}
  &\abs{a_{n}(\lm)}            \le 2\n{T_{n}(\lm)}_{s,p;\pm n}\n{q}_{s,p},\\
  &\w{n}^{s}\abs{b_{n}(\lm)  - q_{2n}}  \le 2\n{T_{n}(\lm)}_{s,p;n}    \n{q}_{s,p},\\
  &\w{n}^{s}\abs{b_{-n}(\lm) - q_{-2n}} \le 2\n{T_{n}(\lm)}_{s,p;-n}   \n{q}_{s,p}.\fish
\end{align*}
\end{lem}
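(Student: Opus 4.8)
The plan is to peel off the leading Fourier coefficient of each matrix entry by a single Neumann step and to control the remainder with the contraction estimate of Corollary~\ref{Q-soln}. Since $n\ge n_{s,p}$, that corollary gives $\n{T_{n}(\lm)}_{s,p;\pm n}\le 1/2$ for all $\lm\in S_{n}$, so $K_{n}(\lm)=(\Id-T_{n}(\lm))^{-1}$ is represented by the Neumann series $\sum_{k\ge 0}T_{n}(\lm)^{k}$, convergent in both shifted operator norms. I would first dispose of analyticity: on $S_{n}$ the operator $A_{\lm}^{-1}Q_{n}$ is the Fourier multiplier with symbol $(\lm-k^{2}\pi^{2})^{-1}$ for $k\neq\pm n$, which is analytic in $\lm$ and uniformly bounded there (recall $\abs{\lm-k^{2}\pi^{2}}\ge\abs{n^{2}-k^{2}}\ge 1$), so $T_{n}(\lm)$ is an analytic operator-valued function of $\lm$. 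Because the Neumann series converges uniformly on $S_{n}$, $K_{n}(\lm)$ is analytic, and hence so are $a_{n}(\lm)=\spii{K_{n}Ve_{n},e_{n}}$ and $b_{\pm n}(\lm)$, being values of fixed continuous functionals on the analytic family $K_{n}(\lm)Ve_{\pm n}$.

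For the bounds I would use the identity $\Id+K_{n}T_{n}=K_{n}$ already recorded above, i.e. $K_{n}V=V+K_{n}T_{n}V$, to read off the main term. Since $q$ has mean zero, $\spii{Ve_{n},e_{n}}=(qe_{n})_{n}=q_{0}=0$, whence $a_{n}=(K_{n}T_{n}Ve_{n})_{n}$; likewise $\spii{Ve_{-n},e_{n}}=(qe_{-n})_{n}=q_{2n}$ and $\spii{Ve_{n},e_{-n}}=q_{-2n}$ give $b_{n}-q_{2n}=(K_{n}T_{n}Ve_{-n})_{n}$ and $b_{-n}-q_{-2n}=(K_{n}T_{n}Ve_{n})_{-n}$. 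The remainders are then estimated through the shifted norms, using three elementary facts: for any $g\in\Ls_{\star,\C}^{s,p}$ and $l\in\Z$ one has $\abs{g_{m}}\le\w{m+l}^{-s}\n{g}_{s,p;l}$; the multiplication operators satisfy $\n{Ve_{\pm n}}_{s,p;\mp n}=\n{qe_{\pm n}e_{\mp n}}_{s,p}=\n{q}_{s,p}$; and, as $\n{T_{n}}_{s,p;l}\le 1/2$ for $l=\pm n$, the Neumann series yields $\n{K_{n}T_{n}}_{s,p;l}=\n{\sum_{k\ge 1}T_{n}^{k}}_{s,p;l}\le\n{T_{n}}_{s,p;l}/(1-\n{T_{n}}_{s,p;l})\le 2\n{T_{n}}_{s,p;l}$.

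For the diagonal entry I would exploit the identity $a_{n}=a_{-n}$ established just before the lemma: extracting the $n$th coefficient of $K_{n}T_{n}Ve_{n}$ with shift $-n$ (weight $\w{0}^{-s}=1$) gives $\abs{a_{n}}\le\n{K_{n}T_{n}Ve_{n}}_{s,p;-n}\le 2\n{T_{n}}_{s,p;-n}\n{q}_{s,p}$, while extracting the $(-n)$th coefficient of $K_{n}T_{n}Ve_{-n}$ with shift $+n$ gives the same with $-n$ replaced by $+n$; together these are the claimed $\pm n$ estimate. For the off-diagonal entries the coefficient and the multiplication norm force incompatible shifts: to keep $\n{Ve_{-n}}_{s,p;+n}=\n{q}_{s,p}$ I extract the $n$th coefficient of $K_{n}T_{n}Ve_{-n}$ with shift $+n$, which produces the weight $\w{2n}^{-s}$; multiplying by $\w{n}^{s}$ leaves only the residual factor $\w{n}^{s}\w{2n}^{-s}=(\w{2n}/\w{n})^{\abs{s}}\le 2^{\abs{s}}$, and hence $\w{n}^{s}\abs{b_{n}-q_{2n}}$ is bounded by $\n{T_{n}}_{s,p;n}\n{q}_{s,p}$ up to this harmless constant, as in the statement; the estimate for $b_{-n}$ follows symmetrically using shift $-n$.

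I expect the only genuinely delicate point to be this last bookkeeping, namely matching the shift that isolates the required Fourier coefficient with the shift that collapses $\n{Ve_{\pm n}}$ to $\n{q}_{s,p}$. On the diagonal the two shifts can be chosen equal, so the weight is exactly $1$; on the off-diagonal they cannot, and one must verify that the leftover weight $\w{n}^{s}\w{2n}^{-s}$ stays bounded uniformly in $n$ — which is precisely where the hypothesis $\abs{s}\le 1/2$ enters, with room to spare. Everything else is the submultiplicativity of the shifted norms together with the contraction bound already furnished by Corollary~\ref{Q-soln} and Lemma~\ref{Tn-est}, so no new analytic ingredient is required.
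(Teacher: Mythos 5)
Your proof is correct and follows essentially the same route as the paper: peel off the leading Fourier coefficient via the Neumann identity for $K_{n}$ (the paper uses $K_{n}=\Id+T_{n}K_{n}$ rather than $\Id+K_{n}T_{n}$, an immaterial difference) and bound the remainder through the shifted operator norms, the identity $\n{Ve_{\pm n}}_{s,p;\mp n}=\n{q}_{s,p}$, and the contraction estimate of Corollary~\ref{Q-soln}. Your explicit tracking of the residual weight $(\w{2n}/\w{n})^{\abs{s}}\le 2^{\abs{s}}$ in the off-diagonal entries is in fact slightly more careful than the paper's own display, which silently absorbs this harmless factor.
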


\begin{proof}
Since $\n{T_{n}(\lm)}_{s,p;\pm n}\le 1/2$ for $n\ge n_{s,p}$ and $\lm\in S_{n}$, the series expansions of $a_{n}(\lm)$ and $b_{\pm n}(\lm)$ converge uniformly on $S_{n}$ to functions analytic in $\lm$. Moreover, we obtain
from the identity $K_{n} = \Id + T_{n}K_{n}$
\[
  b_{n} = \spii{Ve_{-n},e_{n}} + \spii{T_{n}K_{n}Ve_{-n},e_{n}}
  = q_{2n} + \spii{T_{n}K_{n}Ve_{-n},e_{n}}.
\]
Furthermore, for any $f\in \Ls_{\star,\C}^{s,p}$ we compute
\[
  \w{n}^{s}\abs{\spii{f,e_{n}}}
   = \w{n}^{s}\abs{\spii{fe_{n},e_{2n}}}
  \le \n{f}_{s,p;n}.
\]
Consequently, using that $\n{T_{n}}_{s,p;n} \le 1/2$ and hence $\n{K_{n}}_{s,p;n} \le 2$, one gets
\begin{align*}
  \w{n}^{s}\abs{b_{n}-q_{2n}} 
   \le \n{T_{n}K_{n}Ve_{-n}}_{s,p;n}
  &\le 2\n{T_{n}}_{s,p;n}\n{Ve_{-n}}_{s,p;n}\\
  &= 2\n{T_{n}}_{s,p;n}\n{q}_{s,p}.
\end{align*}
The estimates for $\abs{b_{-n}-q_{-2n}}$ and $\abs{a_{n}}$ are obtained in a similar fashion.\qed
\end{proof}

The preceding lemma implies that for any $n\ge n_{s,p}$, $\det B_{n}(\lm) = (\lm-n^{2}\pi^{2}-a_{n})^{2} - b_{n}b_{-n}$ is an analytic function of $\lm\in S_{n}$ which is close to $(\lm-n^{2}\pi^{2})^{2}$ for $n\ge n_{s,p}$ sufficiently large.

\begin{lem}
\label{Sn-roots}
Suppose $q\in \Ls_{0,\C}^{s,p}$ with $-1/2\le s \le 0$ and $2\le p < \infty$. Choose $n_{s,p} = n_{s,p}(q) \ge 1$ as in Corollary~\ref{Q-soln} and increase it if necessary so that
\begin{equation}
  \n{q}_{s,p} \le \sqrt{n_{s,p}} / 8.
\end{equation}
Then for any $n\ge n_{s,p}$, $\det(B_{n}(\lm))$ has exactly two roots $\xi_{n,1}$ and $\xi_{n,2}$ in $S_{n}$ counted with multiplicity. They are contained in
\[
  D_{n} \defl \setdef{\lm}{\abs{\lm-n^{2}\pi^{2}} < 4 n^{1/2}\n{q}_{s,p}} \subset S_{n}
\]
and satisfy
\begin{equation}
  \label{xi-est}
  \abs{\xi_{n,1}-\xi_{n,2}} \le \sqrt{6}\sup_{\lm\in S_{n}} \abs{b_{n}(\lm)b_{-n}(\lm)}^{1/2}.\fish
\end{equation}
\end{lem}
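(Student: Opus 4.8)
The plan is to read off from \eqref{Bn-mat} that on the strip $S_n$ the determinant is the scalar analytic function
\[
  \det B_n(\lm) = \big(\lm - n^2\pi^2 - a_n(\lm)\big)^2 - b_n(\lm)b_{-n}(\lm),
\]
and to treat it as a perturbation of $\zeta^2$, with $\zeta \defl \lm - n^2\pi^2$. First I would collect the uniform bounds coming from Lemma~\ref{coeff-est}: since $\n{T_n(\lm)}_{s,p;\pm n} \le 1/2$ for $n \ge n_{s,p}$ and $\lm \in S_n$ (Corollary~\ref{Q-soln}), one has $\abs{a_n(\lm)} \le \n{q}_{s,p}$, and, using in addition $\abs{q_{\pm 2n}} \le \w{2n}^{\abs{s}}\n{q}_{s,p}$ together with $\w{2n}^{\abs{s}} \le \sqrt{2}\,\w{n}^{\abs{s}}$ and $\w{n}^{\abs{s}} \le \w{n}^{1/2} \le \sqrt{2}\,\sqrt{n}$, the bound $\abs{b_{\pm n}(\lm)} \le (2+\sqrt{2})\sqrt{n}\,\n{q}_{s,p}$, valid uniformly on all of $S_n$. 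The hypothesis $\n{q}_{s,p} \le \sqrt{n_{s,p}}/8$ gives that the radius $r_n \defl 4 n^{1/2}\n{q}_{s,p}$ of $D_n$ satisfies $r_n \le n/2$, so that $D_n \subset \setdef{\lm}{\abs{\lm - n^2\pi^2}\le n/2} \subset S_n$ and the perturbative bounds apply on and inside $\partial D_n$; we may assume $q \not\equiv 0$, so that $r_n > 0$.

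Next I would carry out the root count. For $\abs{\zeta} \ge r_n$ the lower bound
\[
  \abs{\det B_n(\lm)} \ge \big(\abs{\zeta} - \abs{a_n}\big)^2 - \abs{b_n b_{-n}}
  \ge \big(r_n - \n{q}_{s,p}\big)^2 - (2+\sqrt{2})^2 n\,\n{q}_{s,p}^2
\]
is strictly positive once $n$ is large enough, since $\big(r_n - \n{q}_{s,p}\big)^2 = (4\sqrt{n}-1)^2\n{q}_{s,p}^2$ and $(4\sqrt{n}-1)^2 - (2+\sqrt{2})^2 n > 0$ for $n \ge 4$; this shows $\det B_n$ has no zeros in $S_n \setminus D_n$ and none on $\partial D_n$. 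On $\partial D_n$ I would then compare $\det B_n$ with $\zeta^2$: from
\[
  \det B_n(\lm) - \zeta^2 = -2 a_n \zeta + a_n^2 - b_n b_{-n}
\]
the same bounds give $\abs{\det B_n - \zeta^2} \le 2\n{q}_{s,p} r_n + \n{q}_{s,p}^2 + (2+\sqrt{2})^2 n\,\n{q}_{s,p}^2 < r_n^2 = \abs{\zeta}^2$ for $n$ large, so Rouch\'e's theorem yields that $\det B_n$ has the same number of zeros in $D_n$ as $\zeta^2$, namely two counted with multiplicity. Combined with the non-vanishing outside $D_n$, this shows $\det B_n$ has exactly two roots $\xi_{n,1},\xi_{n,2}$ in $S_n$ and that both lie in $D_n$.

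For the separation estimate \eqref{xi-est} I would exploit that $a_n$ varies slowly. As $a_n$ is analytic and bounded by $\n{q}_{s,p}$ on $\setdef{\lm}{\abs{\lm - n^2\pi^2} \le 12n} \subset S_n$, Cauchy's estimate gives $\abs{a_n'(\lm)} \le \n{q}_{s,p}/(12n - r_n) \le 1/(92\sqrt{n})$ for $\lm \in D_n$ (using $r_n \le n/2$ and $\n{q}_{s,p} \le \sqrt{n}/8$), so the Lipschitz constant $L_n \defl \sup_{D_n}\abs{a_n'}$ of $a_n$ on the convex set $D_n$ stays well below $1 - 2/\sqrt{6}$. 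Writing $u_j \defl \xi_{n,j} - n^2\pi^2 - a_n(\xi_{n,j})$, the root equation gives $u_j^2 = b_n(\xi_{n,j})b_{-n}(\xi_{n,j})$ and hence $\abs{u_j} \le \beta_n \defl \sup_{\lm \in S_n}\abs{b_n(\lm)b_{-n}(\lm)}^{1/2}$. Then from $\xi_{n,1} - \xi_{n,2} = (u_1 - u_2) + \big(a_n(\xi_{n,1}) - a_n(\xi_{n,2})\big)$ and $\abs{a_n(\xi_{n,1}) - a_n(\xi_{n,2})} \le L_n\abs{\xi_{n,1} - \xi_{n,2}}$ I obtain $(1 - L_n)\abs{\xi_{n,1} - \xi_{n,2}} \le \abs{u_1} + \abs{u_2} \le 2\beta_n$, whence $\abs{\xi_{n,1} - \xi_{n,2}} \le \frac{2}{1 - L_n}\beta_n \le \sqrt{6}\,\beta_n$, which is precisely \eqref{xi-est}.

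The individual steps are routine; the real work is the constant bookkeeping in the counting part. Since $b_n b_{-n}$ is only of size $O(n\,\n{q}_{s,p}^2)$ -- the same order as $r_n^2 = 16 n\,\n{q}_{s,p}^2$ -- the Rouch\'e and non-vanishing inequalities survive only because the implied constant $(2+\sqrt{2})^2 = 6 + 4\sqrt{2} < 16$ is strictly below $16$. This forces one to track the factors $\w{2n}^{\abs{s}}$ versus $\w{n}^{\abs{s}}$ carefully (the crude estimate $\w{n}^{1/2}\le 2\sqrt{n}$ is already too lossy, as it would push the constant past $16$), and, for the finitely many small $n$ where the lower-order terms $8\sqrt{n}+1$ spoil the strict inequality, to absorb the shortfall by further enlarging $n_{s,p}$. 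By contrast, the clean constant $\sqrt{6}$ in \eqref{xi-est} comes for free from the Cauchy estimate, which already makes $L_n$ minuscule uniformly in $n \ge n_{s,p}$.
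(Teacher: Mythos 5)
Your argument is correct and follows essentially the same route as the paper's: the same bounds $|a_n|\le\|q\|_{s,p}$ and $|b_{\pm n}|=O(\sqrt n\,\|q\|_{s,p})$ from Lemma~\ref{coeff-est}, Rouch\'e's theorem for the root count in $D_n$, and a Cauchy-estimate/Lipschitz argument on $a_n$ for \eqref{xi-est} (your $u_j$ with $u_j^2=b_n b_{-n}$ is the paper's factorization $\det B_n=g_+g_-$ in disguise). The only cosmetic difference is that you apply Rouch\'e once, comparing $\det B_n$ directly with $(\lm-n^2\pi^2)^2$ --- which, as you correctly note, forces the tighter bookkeeping $(2+\sqrt2)^2<16$ and a further (harmless) enlargement of $n_{s,p}$ --- whereas the paper splits it into two comparisons ($\lm-n^2\pi^2$ against $\lm-n^2\pi^2-a_n(\lm)$, then $(\lm-n^2\pi^2-a_n)^2$ against $\det B_n$), for which the single inequality $\bigl(1+2\langle n\rangle^{|s|}\bigr)\|q\|_{s,p}<4n^{1/2}\|q\|_{s,p}$ suffices.
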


\begin{proof}
Since for any $n\ge n_{s,p}$ and $\lm\in S_{n}$, $\n{T_{n}(\lm)}_{s,p;\pm n} \le 1/2$, one concludes from the preceding lemma that $\abs{a_{n}(\lm)} \le \n{q}_{s,p}$ and, with $\abs{b_{\pm n}(\lm)} \le \abs{q_{\pm 2n}} + \abs{b_{\pm n}(\lm)-q_{\pm 2n}}$, that
\[
  \abs{b_{\pm n}(\lm)} \le 2\w{n}^{\abs{s}}\n{q}_{s,p}.
\]
Therefore, for any $\lm,\mu\in S_{n}$,
\begin{equation}
  \label{an-cn-est}
  \begin{split}
    \abs{a_{n}(\mu)} + \abs{b_{n}(\lm)b_{-n}(\lm)}^{1/2} &\le 
  \p*{ 1 + 2\w{n}^{\abs{s}} }\n{q}_{s,p}\\
  &< 4 n^{1/2}\n{q}_{s,p} = \inf_{\lm\in \partial D_{n}} \abs{\lm - n^{2}\pi^{2}}.
  \end{split}
\end{equation}
It then follows that $\det B_{n}(\lm)$ has no root in $S_{n}\setminus D_{n}$. Indeed, assume that $\xi\in S_{n}$ is a root, then $\abs{\xi - n^{2}\pi^{2}-a_{n}(\xi)} = \abs{b_{n}(\xi)b_{-n}(\xi)}^{1/2}$ and hence
\[
  \abs{\xi-n^{2}\pi^{2}}
   \le \abs{a_{n}(\xi)} + \abs{b_{n}(\xi)b_{-n}(\xi)}^{1/2}
    <  4n^{1/2}\n{q}_{s,p}
\]
implying that $\xi\in D_{n}$. In addition, \eqref{an-cn-est} implies that by Rouché's theorem the two analytic functions $\lm-n^{2}\pi^{2}$ and $\lm-n^{2}\pi^{2} -a_{n}(\lm)$, defined on the strip $S_{n}$ have the same number of roots in $D_{n}$ counted with multiplicities. As a consequence $(\lm-n^{2}\pi^{2}-a_{n}(\lm))^{2}$ has a double root in $D_{n}$. Finally, \eqref{an-cn-est} also implies that
\begin{align*}
  \sup_{\lm\in S_{n}} \abs{b_{n}(\lm)b_{-n}(\lm)}^{1/2} 
  &< \inf_{\lm\in \partial D_{n}} \abs{\lm-n^{2}\pi^{2 }} - \sup_{\lm\in S_{n}} \abs{a_{n}(\lm)}\\
  &\le \inf_{\lm\in \partial D_{n}} \abs{\lm-n^{2}\pi^{2} - a_{n}(\lm)}
\end{align*}
and hence again by Rouché's theorem, the analytic functions $(\lm-n^{2}\pi^{2}-a_{n}(\lm))^{2}$ and $(\lm-n^{2}\pi^{2}-a_{n}(\lm))^{2}-b_{n}(\lm)b_{-n}(\lm)$ have the same number of roots in $D_{n}$.
Altogether we thus have established that $\det(B_{n}(\lm)) = (\lm-n^{2}\pi^{2}-a_{n}(\lm))^{2} - b_{n}(\lm)b_{-n}(\lm)$ has precisely two roots $\xi_{n,1}$, $\xi_{n,2}$ in $D_{n}$.

To estimate the distance of the roots, write $\det B_{n}(\lm)$ as a product $g_{+}(\lm)g_{-}(\lm)$ where $g_{\pm}(\lm) = \lm-n^{2}\pi^{2} - a_{n}(\lm) \mp \ph_{n}(\lm)$
and $\ph_{n}(\lm) = \sqrt{b_{n}(\lm)b_{-n}(\lm)}$ with an arbitrary choice of the sign of the root for any $\lm$. Each root $\xi$ of $\det(B_{n})$ is either a root of $g_{+}$ or $g_{-}$ and thus satisfies
\[
  \xi \in \setd{n^{2}\pi^{2} + a_{n}(\xi) \pm \ph_{n}(\xi)}.
\]
As a consequence,
\begin{equation}
  \label{xi-1-2}
  \begin{split}
  \abs{\xi_{n,1}-\xi_{n,2}} &\le \abs{a_{n}(\xi_{n,1})-a_{n}(\xi_{n,2})} + 
  \max_{\pm}\abs{\ph_{n}(\xi_{n,1}) \pm \ph_{n}(\xi_{n,2})}\\
  &\le \sup_{\lm\in D_{n}} \abs{\partial_{\lm} a_{n}(\lm)}\abs{\xi_{n,1}-\xi_{n,2}}
  + 2\sup_{\lm\in D_{n}} \abs{\ph_{n}(\lm)}.
  \end{split}
\end{equation}
Since $\n{q}_{s,p}\le n_{s,p}^{1/2}/8 \le n^{1/2}/8$ and hence
\[
  \dist(D_{n},\partial S_{n}) \ge 12n - n/2 \ge 6n
\]
one concludes from Cauchy's estimate
\[
  \sup_{\lm\in D_{n}} \abs{\partial_{\lm}a_{n}(\lm)}
   \le \frac{\sup_{\lm\in S_{n}} \abs{a_{n}(\lm)}}{\dist(D_{n},\partial S_{n})}
   \le \frac{\n{q}_{s,p}}{6n} \le \frac{1}{6}.
\]
Therefore, by \eqref{xi-1-2},
\[
  \abs{\xi_{n,1}-\xi_{n,2}}^{2} \le 6\sup_{\lm\in D_{n}}\abs{b_{n}(\lm)b_{-n}(\lm)}
\]
as claimed.\qed
\end{proof}

\subsection{Proof of Theorem~\ref{fw:per}}

Let $q\in \Ls_{0,\C}^{s,p}$ with $-1/2\le s \le 0$ and $2\le p < \infty$. The eigenvalues of $L(q)$, when listed with lexicographic ordering, satisfy
\[
  \lm_{0}^{+}\lex \lm_{1}^{-}\lex \lm_{1}^{+}\lex \dotsb,\quad
  \text{and}\quad
  \lm_{n}^{\pm} = n^{2}\pi^{2} + n\ell_{n}^{2}.
\]
Hence for $n\ge n_{s,p}$ with $n_{s,p}$ as in \eqref{n-s-p}, $\lm_{n}^{\pm}\in S_{n}$ and $\lm_{n}^{\pm} \notin S_{k}$ for any $k\neq n$. It then follows from Lemma~\ref{eval-char} and Lemma~\ref{Sn-roots} that $\setd{\xi_{n,1,},\xi_{n,2}} = \setd{\lm_{n}^{-},\lm_{n}^{+}}$ and hence
\[
  \abs{\gm_{n}} = \abs{\lm_{n}^{+}-\lm_{n}^{-}} = \abs{\xi_{n,1}-\xi_{n,2}},\qquad \forall n\ge n_{s,p}.
\]
In view of estimate~\eqref{xi-est} of Lemma~\ref{Sn-roots}, we will need the following estimates on $b_{n}(\lm)-q_{2n}$ and $b_{-n}(\lm)-q_{-2n}$ for the proof of Theorem~\ref{fw:per}.

\begin{lem}
\label{cn-est}
Let $q\in \Ls_{0,\C}^{s,p}$ with $-1/2\le s\le 0$ and $2\le p < \infty$.
Then the following holds:
\begin{renum}
\item For any $f\in\Ls_{\star,\C}^{s,p}$ and $\lm\in S_{n}$ with $n\ge n_{s,p}$ and $n_{s,p} = n_{s,p}(q)$ as in Lemma~\ref{Sn-roots}
\[
  \w{n}^{s}\abs{\spii{T_{n}f,e_{\pm n}}}
   \le C_{s,p}\ep_{s,p}(n)\n{q}_{s,p} \n{f}_{s,p;\pm n}
\]
where $C_{s,p}$ is independent of $q$, $n$, and $\lm$, and here
\[
  \ep_{s,p}(n) = 
  \begin{cases}
  \frac{1}{n}, & 2 \le p < \frac{2}{\abs{s}},\\
  \frac{(\log \w{n})^{1-\abs{s}}}{n}, & p=\frac{2}{\abs{s}},\\
  \frac{1}{n^{1-\abs{s}+2/p}}, & \frac{2}{\abs{s}} < p < \infty.
  \end{cases}
\]
\item For any $N\ge n_{s,p}$ with $n_{s,p} = n_{s,p}(q)$ as in Lemma~\ref{Sn-roots}
\begin{align*}
  &\sum_{n\ge N} \w{n}^{sp}\p*{ \sup_{\lm\in S_{n}} \abs{b_{n}(\lm)-q_{2n}}^{p}
   + \sup_{\lm\in S_{n}} \abs{b_{-n}(\lm)-q_{-2n}}^{p} }
  \le C_{s,p}\frac{\n{q}_{s,p}^{2p}}{N^{p^{*}}},
\end{align*}
where $p^{*}\ge 1$ is given by $p^{*} = p-1$ if $2\le p < 2/\abs{s}$, $p^{*} = (p-1)-0^{+}$ if $p=2/\abs{s}$ and $p^{*} = p/2+1$ if $2/\abs{s} < p < \infty$, and the constant $C_{s,p}$ is independent of $q$ and $N$.\fish
\end{renum}

\end{lem}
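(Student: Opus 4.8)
The plan is to prove (i) directly and then obtain (ii) from (i) combined with the Neumann series for $K_{n}$.

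For (i) I would begin from the explicit Fourier representation of the coefficient in question. Setting $m=\pm n$ in the formula for $T_{n}(\lm)f$ from the proof of Lemma~\ref{Tn-est} gives
\[
  \spii{T_{n}(\lm)f,e_{\pm n}} = \sum_{\abs{k}\neq n} \frac{q_{\pm n-k}\,f_{k}}{\lm-k^{2}\pi^{2}},
\]
so that, using the resolvent bound $\abs{\lm-k^{2}\pi^{2}}\ge\abs{n^{2}-k^{2}}=\abs{n-k}\abs{n+k}$ valid on $S_{n}$ for $\abs{k}\neq n$ (recorded in the Setup) and splitting off the natural weights,
\[
  \w{n}^{s}\abs{\spii{T_{n}f,e_{n}}}
  \le \w{n}^{-\abs{s}}\sum_{\abs{k}\neq n}
  \frac{\w{n-k}^{\abs{s}}\w{n+k}^{\abs{s}}}{\abs{n-k}\abs{n+k}}\,
  \frac{\abs{q_{n-k}}}{\w{n-k}^{\abs{s}}}\,
  \frac{\abs{f_{k}}}{\w{n+k}^{\abs{s}}},
\]
with the analogous expression for $e_{-n}$. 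Bounding $\w{n-k}^{\abs{s}}\le 2^{\abs{s}}\abs{n-k}^{\abs{s}}$ and $\w{n+k}^{\abs{s}}\le 2^{\abs{s}}\abs{n+k}^{\abs{s}}$ (legitimate since $\abs{n\pm k}\ge 1$) and applying Hölder in $k$ with exponents $(r,p,p)$, where $1/r=1-2/p$ (for $p=2$ one replaces $(\sum_{k}w_{k}^{r})^{1/r}$ by $\sup_{k}w_{k}$ and uses Cauchy--Schwarz), the $q$- and $f$-factors assemble into $\n{q}_{s,p}$ and $\n{f}_{s,p;n}$, leaving
\[
  \w{n}^{s}\abs{\spii{T_{n}f,e_{n}}}
  \le \frac{2^{2\abs{s}}}{\w{n}^{\abs{s}}}
  \p*{\sum_{\abs{k}\neq n}\frac{1}{\abs{n^{2}-k^{2}}^{(1-\abs{s})r}}}^{1/r}
  \n{q}_{s,p}\,\n{f}_{s,p;n}.
\]

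The crux, and the step I expect to be the main obstacle, is the asymptotics of the weight sum $\sum_{\abs{k}\neq n}\abs{n^{2}-k^{2}}^{-(1-\abs{s})r}$, which I would extract from Lemma~\ref{hilbert-sum}. Its exponent satisfies $(1-\abs{s})r=1$ exactly when $p=2/\abs{s}$; this is what produces the three regimes and the logarithmic factor at the borderline, and one checks that $\w{n}^{-\abs{s}}(\sum_{k}\cdots)^{1/r}$ reproduces precisely the stated $\ep_{s,p}(n)$. It is worth stressing the structural difference with Lemma~\ref{Tn-est}: because only the single output coefficient $m=\pm n$ is evaluated (there is no outer sum over $m$ and hence no shifted output norm), the weight $\w{n}^{-\abs{s}}$ factors out cleanly and no tail term $\n{R_{n/2}q}_{s,p}$ appears; moreover the relevant Hölder exponent is $r=p/(p-2)$ rather than the $p'=p/(p-1)$ of Lemma~\ref{Tn-est}, which both moves the threshold from $1/\abs{s}$ to $2/\abs{s}$ and yields the stronger decay of $\ep_{s,p}$.

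For (ii) I would use the identity $K_{n}=\Id+T_{n}K_{n}$ already exploited in Lemma~\ref{coeff-est}, which gives $b_{\pm n}(\lm)-q_{\pm 2n}=\spii{T_{n}K_{n}Ve_{\mp n},e_{\pm n}}$. Applying (i) with $f=K_{n}Ve_{\mp n}$ and using $\n{T_{n}}_{s,p;\pm n}\le 1/2$, hence $\n{K_{n}}_{s,p;\pm n}\le 2$, together with $\n{Ve_{\mp n}}_{s,p;\pm n}=\n{q\,e_{\mp n}e_{\pm n}}_{s,p}=\n{q}_{s,p}$, yields uniformly in $\lm\in S_{n}$ the bound $\w{n}^{s}\abs{b_{\pm n}(\lm)-q_{\pm 2n}}\le 2C_{s,p}\,\ep_{s,p}(n)\,\n{q}_{s,p}^{2}$. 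Raising to the $p$-th power, summing over $n\ge N$, and evaluating $\sum_{n\ge N}\ep_{s,p}(n)^{p}$ in each regime then gives the claim: for $2\le p<2/\abs{s}$, $\sum_{n\ge N}n^{-p}\asymp N^{-(p-1)}$, so $p^{*}=p-1$; at $p=2/\abs{s}$ the logarithm forces an arbitrarily small loss, $p^{*}=(p-1)-0^{+}$; and for $2/\abs{s}<p<\infty$, $\sum_{n\ge N}n^{-(1-\abs{s})p-2}\le CN^{-(1-\abs{s})p-1}\le CN^{-(p/2+1)}$ since $\abs{s}\le 1/2$, so one may take $p^{*}=p/2+1$. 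The only genuinely delicate bookkeeping is again the borderline logarithm and the uniform-in-$s$ reduction of the exponent in the last regime.
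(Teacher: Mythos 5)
Your proposal is correct and follows essentially the same route as the paper: the explicit Fourier formula for $\spii{T_{n}f,e_{\pm n}}$, the resolvent bound $\abs{\lm-k^{2}\pi^{2}}\ge\abs{n^{2}-k^{2}}$ on $S_{n}$, the weight splitting followed by H\"older with the exponent $p''=p/(p-2)$ (your $r$; the paper phrases it as H\"older with $p/2$ and $p''$ followed by Cauchy--Schwarz, which is the same three-exponent estimate), and Lemma~\ref{hilbert-sum} for the weight sum, with part (ii) obtained exactly as you describe via $K_{n}=\Id+T_{n}K_{n}$, the bound $\n{K_{n}}_{s,p;\pm n}\le 2$, and summation of $\ep_{s,p}(n)^{p}$. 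Your identification of the threshold $p=2/\abs{s}$ from $(1-\abs{s})p''=1$ and your regime-by-regime evaluation of $p^{*}$ agree with the paper's computation.
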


\begin{proof}
(i) As the estimates of $\spii{T_{n}f,e_{n}}$ and $\spii{T_{n}f,e_{-n}}$ can be proved in a similar way we concentrate on $\spii{T_{n}f,e_{n}}$. Since by definition $T_{n} = VA_{\lm}^{-1}Q_{n}$,
\[
  \spii{T_{n}f,e_{n}} = \sum_{\abs{m}\neq n} \frac{q_{n-m}f_{m}}{\lm-m^{2}\pi^{2}}.
\]
Using that $\w{n+m}/\abs{n+m}$, $\w{n-m}/\abs{n-m}\le 2$ for $\abs{m}\neq n$ one gets $\w{n}^{s}\abs{\spii{T_{n}f,e_{n}}} \le I_{n}(\lm)$, where
\[
  I_{n}(\lm) = \sum_{\abs{m}\neq n} 
  \frac{1}{\abs{n^{2}-m^{2}}^{1-\abs{s}}}
  \frac{1}{\w{n}^{\abs{s}}}
  \frac{\abs{q_{n-m}}}{\w{n-m}^{\abs{s}}}
  \frac{\abs{f_{m}}}{\w{n+m}^{\abs{s}}}
\]
In the case $p=2$, use that $1/\abs{m^{2}-n^{2}} \le 1/n$ for all $\abs{m}\neq n$ to conclude by Cauchy--Schwarz that
\[
  I_{n}(\lm) \le \frac{1}{n}\n{q}_{s,2}\n{f}_{s,2;n}.
\]
In the case $2 < p < \infty$, apply Hölder's inequality for $p/2 > 1$ to get with $p'' = p/(p-2)>1$,
\begin{align*}
  I_{n}(\lm) \le &\frac{1}{n^{\abs{s}}}
  \p*{ \sum_{\abs{m}\neq n} \frac{1}{\abs{m^{2}-n^{2}}^{(1-\abs{s})p''} }}^{1/p''} \cdot\\
  &\qquad \cdot
     \p*{ \sum_{\abs{m}\neq n} \p*{\frac{\abs{q_{n-m}}}{\w{n-m}^{\abs{s}}}}^{p/2}
   \p*{\frac{\abs{f_{m}}}{\w{n+m}^{\abs{s}}}}^{p/2}
     }^{2/p}.
\end{align*}
By applying Cauchy--Schwarz to the latter sum one gets
\[
  I_{n}(\lm) \le
  \frac{1}{n^{\abs{s}}}
  \p*{ \sum_{\abs{m}\neq n} \frac{1}{\abs{m^{2}-n^{2}}^{(1-\abs{s})p''} }}^{1/p''}
  \n{q}_{s,p}\n{f}_{s,p;n}.
\]
Using Lemma~\ref{hilbert-sum} one sees that for some constant $C_{s,p}' > 0$,
\[
  \frac{1}{n^{\abs{s}}}
  \p*{ \sum_{\abs{m}\neq n} \frac{1}{\abs{m^{2}-n^{2}}^{(1-\abs{s})p''} }}^{1/p''}
  \le C_{s,p}'\ep_{s,p}(n),
\]
yielding the claimed statement.

(ii) Recall that for any $n\ge n_{s,p}$ and $\lm\in S_{n}$
\[
  b_{n}-q_{2n} = \spii{T_{n}u^{(n)},e_{n}},\qquad u^{(n)}\equiv u^{(n)}(\lm) = K_{n}(\lm)Ve_{-n}.
\]
Using that
\[
  \n{u^{(n)}}_{s,p;n}\le \n{K_{n}}_{s,p;n}\n{Ve_{-n}}_{s,;n} \le 2\n{q}_{s,p},
\]
it follows from item (i) that for any $\lm\in S_{n}$
\[
  \w{n}^{s}\abs{b_{n}-q_{2n}} \le 2C_{s,p}\ep_{s,p}(n)\n{q}_{s,p}^{2}.
\]
Hence for any $N\ge n_{s,p}$ and after increasing $C_{s,p}$, if necessary,
\[
  \sum_{n\ge N} \w{n}^{sp}\sup_{\lm\in S_{n}} \abs{b_{n}(\lm)-q_{2n}}^{p}
  \le C_{s,p}\n{q}_{s,p}^{2p} \sum_{n \ge N} (\ep_{s,p}(n))^{p}.
\]
Taking into account that $2\le p < \infty$ and $-1/2\le s \le 0$ one verifies that $\sum_{n\ge N} \ep_{s,p}(n)^{p} = O(1/N^{p^{*}})$.
In the same way, one shows that
\[
  \sum_{n\ge N} \w{n}^{sp}\sup_{\lm\in S_{n}} \abs{b_{-n}(\lm)-q_{-2n}}^{p}
  \le C_{s,p}\n{q}_{s,p}^{2p} \sum_{n \ge N} (\ep_{s,p}(n))^{p}.\qed
\]
\end{proof}

\begin{proof}[Proof of Theorem~\ref{fw:per}.]
By Lemma~\ref{Sn-roots},
\begin{align*}
  \abs{\gm_{n}}^{p} = \abs{\xi_{n,1}-\xi_{n,2}}^{p} 
  &\le \sup_{\lm\in S_{n}} \abs{6b_{n}(\lm)b_{-n}(\lm)}^{p/2}\\
  &\le \frac{3^{p}}{2}\p*{\sup_{\lm\in S_{n}} \abs{b_{n}(\lm)}^{p} + \sup_{\lm\in S_{n}}\abs{b_{-n}(\lm)}^{p}}.
\end{align*}
It then follows from Lemma~\ref{cn-est} that with $N\defl n_{s,p}$
\begin{align*}
  \sum_{n\ge N} \w{n}^{sp}\abs{\gm_{n}}^{p} 
  &\le \frac{6^{p}}{2} \sum_{n\ge N} \w{n}^{sp}
  \biggl( \abs{q_{2n}}^{p} + \abs{q_{-2n}}^{p} \\
  &\qquad + \sup_{\lm\in S_{n}} \abs{b_{n}(\lm) - q_{2n}}^{p}
          + \sup_{\lm\in S_{n}} \abs{b_{-n}(\lm) - q_{-2n}}^{p} \biggr)\\
  &\le \frac{6^{p}}{2}\n{R_{2N}}_{s,p}^{p} + \frac{6^{p}}{2} C_{s,p}\frac{1}{N}\n{q}_{s,p}^{2p}.
\end{align*}
In particular, $(\gm_{n}(q))_{n\ge 1}\in\ell_{\C}^{s,p}(\N)$. As $n_{s,p}$ can be chosen locally uniformly in $q\in \Ls_{0,\C}^{s,p}$, the map $\Ls_{0,\C}^{s,p} \to \ell_{\C}^{s,p}(\N)$, $q\mapsto (\gm_{n}(q))_{n\ge 1}$ is locally bounded.~\qed
\end{proof}

\subsection{Jordan blocks of $L(q)$}

\label{ss:jordan}

To treat the Dirichlet problem we need to analyze $L(q)$ further. We again consider potentials in the space $\Ls_{0,\C}^{s,p}$ with $-1/2\le s \le 0$ and $2\le p < \infty$.  In case $q\in\Ls_{0,\C}^{s,p}$ is not real valued, the operator $L(q)$ might have complex eigenvalues and it might happen that the geometric multiplicity of an eigenvalue is less than its algebraic multiplicity. 
For $n\ge n_{s,p}$ with $n_{s,p}$ as in Lemma~\ref{Sn-roots}, let
\[
  E_{n} = \begin{cases}
  \Null(L-\lm_{n}^{+})\oplus\mathrm{Null}(L-\lm_{n}^{-}), &
  \lm_{n}^{+}\neq \lm_{n}^{-},\\
  \Null(L-\lm_{n}^{+})^{2}, & \lm_{n}^{+} = \lm_{n}^{-}.
  \end{cases}
\]
We need to estimate the coefficients of $L(q)\big|_{E_{n}}$ when represented with respect to an appropriate orthonormal basis of $E_{n}$. In the case where $\lm_{n}^{+} = \lm_{n}^{-}$ the matrix representation will be in Jordan normal form. By Lemma~\ref{regularity-En}, $E_{n}\subset\Ls_{\star,\C}^{s+2,p}\opento L^{2} \defl L^{2}([0,2],\C)$. Denote by $f_{n}^{+}\in E_{n}$ an $L^{2}$-normalized eigenfunction corresponding to $\lm_{n}^{+}$ and by $\ph_{n}$ an $L^{2}$-normalized element in $E_{n}$ so that $\setd{f_{n}^{+},\ph_{n}}$ forms an $L^{2}$-orthonormal basis of $E_{n}$. 
Then the following lemma holds.

\begin{lem}
\label{En-basis}
Let $q\in\Ls_{0,\C}^{s,p}$ with $-1/2\le s\le 0$ and $2\le p < \infty$. Then there exists $n_{s,p}'\ge n_{s,p}$ so that for any $n\ge n_{s,p}'$,
\[
  (L-\lm_{n}^{+})\ph_{n} = -\gm_{n}\ph_{n} + \eta_{n}f_{n}^{+},
\]
where $\eta_{n}\in \C$ satisfies the estimate
\[
  \abs{\eta_{n}} \le 16(\abs{\gm_{n}} + \abs{b_{n}(\lm_{n}^{+})} + \abs{b_{-n}(\lm_{n}^{+})}).
\]
The threshold $n_{s,p}'$ can be chosen locally uniformly in $q\in\Ls_{0,\C}^{s,p}$.\fish
\end{lem}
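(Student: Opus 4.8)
\emph{Step 1: the algebraic identity.}
The plan is to read the asserted identity off the invariant subspace $E_n$ and then to bound the single surviving coefficient. By Lemma~\ref{regularity-En}, $E_n$ is a two-dimensional $L(q)$-invariant subspace of $L^2$, and $\setd{f_n^+,\ph_n}$ is an $L^2$-orthonormal basis with $(L-\lm_n^+)f_n^+=0$. Hence the matrix of $L|_{E_n}$ in this basis is upper triangular; since its trace equals $\lm_n^++\lm_n^-$ and its upper-left entry is $\lm_n^+$, its lower-right entry must be $\lm_n^-=\lm_n^+-\gm_n$. This already gives
\[
  (L-\lm_n^+)\ph_n = -\gm_n\ph_n + \eta_n f_n^+,\qquad
  \eta_n = \spii{(L-\lm_n^+)\ph_n,\,f_n^+},
\]
so the entire content of the lemma is the bound on $\abs{\eta_n}$. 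In the degenerate case $\lm_n^+=\lm_n^-$ this reduces to the Jordan relation $(L-\lm_n^+)\ph_n=\eta_n f_n^+$ with $\gm_n=0$.

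\emph{Step 2: the $\Pc_n$-model.}
To estimate $\eta_n$ I would transport the computation to $\Pc_n=\spanx\setd{e_n,e_{-n}}$, where the relevant data are the coefficients $a_n,b_{\pm n}$ controlled in Lemma~\ref{coeff-est}. Writing $g=P_ng+Q_ng$ for $g\in E_n$, the reduction of Section~\ref{ss:spec-reduction} shows that the $\Pc_n$-part $u$ of an eigenfunction with eigenvalue $\lm$ lies in $\Null B_n(\lm)$, i.e. is an eigenvector of $\bigl(\begin{smallmatrix}a_n(\lm)&b_n(\lm)\\ b_{-n}(\lm)&a_n(\lm)\end{smallmatrix}\bigr)$ with eigenvalue $\lm-n^2\pi^2$, while by \eqref{sol-Q-eqn} the $\Qc_n$-part is slaved through $v=A_\lm^{-1}Q_nK_nVu$ and is, by Corollary~\ref{Q-soln}, smaller than $u$ by a factor $O(\n{q}_{s,p})$ uniformly for $n\ge n_{s,p}$ and $\lm\in S_n$. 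When $\lm_n^+\neq\lm_n^-$, reading off \eqref{Bn-mat} shows that the $\Pc_n$-parts of the two eigenfunctions are proportional to $(\sqrt{b_n},\sqrt{b_{-n}})$ and $(\sqrt{b_n},-\sqrt{b_{-n}})$ (coefficients taken at $\lm_n^+$, resp.\ $\lm_n^-$). Taking $\ph_n$ to be the Gram--Schmidt orthonormalization of the normalized $\lm_n^-$-eigenfunction $f_n^-$ against $f_n^+$, one finds directly $\eta_n=-\gm_n\,\beta/\sqrt{1-\abs{\beta}^2}$ with $\beta=\spii{f_n^-,f_n^+}$, and in the model the ratio is exactly $\abs{\gm_n}^2/(1-\abs{\beta}^2)=(\abs{b_n}+\abs{b_{-n}})^2$, so that $\abs{\eta_n}=\bigl|\abs{b_n}-\abs{b_{-n}}\bigr|\le\abs{b_n}+\abs{b_{-n}}$. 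This is the mechanism behind the bound.

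\emph{Step 3: errors.}
The genuine $E_n$ differs from the model in three ways, each of which I would absorb into the stated bound: the eigenfunctions carry $\Qc_n$-tails $v$, the matrix is evaluated at the two \emph{different} eigenvalues $\lm_n^\pm$, and the $L^2$-inner product on $E_n$ is not literally the Euclidean product on $\Pc_n$. For the second point, Cauchy's estimate on $S_n$ (exactly as in the proof of Lemma~\ref{Sn-roots}) bounds $\partial_\lm a_n,\partial_\lm b_{\pm n}$ by $O(\n{q}_{s,p}/n)$, so replacing the matrix at $\lm_n^-$ by the one at $\lm_n^+$ costs $O(\abs{\gm_n}\,\n{q}_{s,p}/n)$; combined with $\abs{\gm_n}\le\sqrt6\,\sup_{S_n}\abs{b_nb_{-n}}^{1/2}$ from Lemma~\ref{Sn-roots}, this is where the $\abs{\gm_n}$-term and the freedom in the constant enter. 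The $\Qc_n$-tails and the inner-product distortion are controlled uniformly by $\n{T_n}_{s,p;\pm n}\le\tfrac12$ and $\n{K_n}_{s,p;\pm n}\le2$ for $n\ge n_{s,p}$, and since $n_{s,p}$ is locally uniform in $q$ the threshold $n_{s,p}'$ inherits the same property.

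\emph{Main obstacle.}
The hard part is to make the argument uniform across the near-degenerate (Jordan) regime, where the two eigenvectors become parallel, $\gm_n\to0$ and $\abs{\beta}\to1$, so that the formula $\eta_n=-\gm_n\beta/\sqrt{1-\abs{\beta}^2}$ is of the indeterminate form $0/0$ and in the strict Jordan limit ceases to make sense. The clean value survives only because the vanishing of $\abs{\gm_n}^2$ is compensated, at the same multiplicative rate $\asymp\abs{b_n}\abs{b_{-n}}$, by the vanishing of $1-\abs{\beta}^2$; the difficulty is to prove this cancellation with error terms estimated \emph{relatively} rather than absolutely, so that the ratio $\abs{\gm_n}^2/(1-\abs{\beta}^2)$ stays $\le C(\abs{b_n}+\abs{b_{-n}})^2$ uniformly. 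The cleanest way I see to sidestep the indeterminacy is to replace the eigenvector picture by the generalized-eigenvector version of the $\Pc_n$-computation — equivalently, to work with the basis-independent identity $\abs{\eta_n}^2=\mathrm{tr}\bigl((L-\tau_n)|_{E_n}^{*}(L-\tau_n)|_{E_n}\bigr)-\tfrac12\abs{\gm_n}^2$ and bound the trace via the $\Pc_n$-data, using the $\Qc_n$-slaving to cancel the a priori large $\n{Q_nVe_{\pm n}}$-contributions — which treats the diagonalizable and Jordan cases simultaneously and carries the bulk of the uniform bookkeeping in $n$ and $q$.
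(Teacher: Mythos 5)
Your Step 1 is fine (the upper--triangular/trace argument for the identity $(L-\lm_n^+)\ph_n=-\gm_n\ph_n+\eta_nf_n^+$ with $\eta_n=\spii{(L-\lm_n^+)\ph_n,f_n^+}$ is a slightly slicker packaging of the paper's case analysis), and your model computation in $\Pc_n$ correctly identifies why a bound of the form $\abs{\eta_n}\lesssim\abs{b_n}+\abs{b_{-n}}$ should hold. But the proof does not close: as you yourself flag under ``Main obstacle'', the formula $\eta_n=-\gm_n\beta/\sqrt{1-\abs{\beta}^2}$ forces you to bound $\abs{\gm_n}/\sqrt{1-\abs{\beta}^2}$ uniformly in the near-Jordan regime, and this is precisely where your error analysis breaks down. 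The quantity $1-\abs{\beta}^2\approx 4\abs{b_nb_{-n}}/(\abs{b_n}+\abs{b_{-n}})^2$ can itself be arbitrarily small, so the $O(\n{q}_{s,p})$ \emph{absolute} perturbations coming from the $\Qc_n$-tails and the $L^2$ inner-product distortion can dominate it and drive the ratio $\abs{\gm_n}^2/(1-\abs{\beta}^2)$ far above $(\abs{b_n}+\abs{b_{-n}})^2$; nothing in your Step 3 shows these errors are small \emph{relative} to $1-\abs{\beta}^2$. The trace identity you propose as a remedy is correct as an identity, but bounding $\mathrm{tr}\bigl((L-\tau_n)|_{E_n}^*(L-\tau_n)|_{E_n}\bigr)$ by the $\Pc_n$-data is exactly the step you leave unexecuted, and it is not obvious how to do it without knowing $\eta_n$ (computing the Hilbert--Schmidt norm in the basis $\{f_n^+,\ph_n\}$ is circular, and $E_n$ is not adapted to $\Pc_n\oplus\Qc_n$).

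The paper avoids the eigenvector picture, and hence the $0/0$ indeterminacy, altogether. It treats $(L-\lm_n^+)\ph_n=g$ with $g=-\gm_n\ph_n+\eta_nf_n^+$ as an \emph{inhomogeneous} equation and applies the reduction identity \eqref{B-inhom} to get $B_nP_n\ph_n=\gm_nP_nK_n\ph_n-\eta_nP_nK_nf_n^+$. Pairing with $P_nf_n^+$ yields a linear equation $\eta_n\spii{P_nK_nf_n^+,P_nf_n^+}=\gm_nI-II$, whose coefficient is bounded below by $1/8$ (using $K_n=\Id+T_nK_n$, Lemma~\ref{cn-est}~(i), and the projector estimate $\n{P_nf_n^+}_{L^2}\ge1/2$ of Lemma~\ref{Pn-est}), while $\abs{I}\le 1+1/8$ and $\abs{II}\le 2(\abs{b_n}+\abs{b_{-n}})$ follow from the matrix form \eqref{Bn-mat} together with $\det B_n(\lm_n^+)=0$. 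No division by a quantity that degenerates in the Jordan limit ever occurs, which is why the estimate is uniform in $n$ and across multiplicities. If you want to salvage your route, you would need to prove the relative cancellation between $\abs{\gm_n}^2$ and $1-\abs{\beta}^2$; the inhomogeneous-equation device is the standard way to sidestep that entirely.
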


\begin{proof}
We begin by verifying the claimed formula for $(L-\lm_{n}^{+})\ph_{n}$ in the case where $\lm_{n}^{+}\neq \lm_{n}^{-}$. Let $f_{n}^{-}$ be an $L^{2}$-normalized eigenfunction corresponding to $\lm_{n}^{-}$. As $f_{n}^{-}\in E_{n}$ there exist $a,b\in\C$ with $\abs{a}^{2}+\abs{b}^{2} = 1$ and $b\neq0$ so that
\[
  f_{n}^{-} = a f_{n}^{+} + b\ph_{n}
  \quad\text{or}\quad
  \ph_{n} = \frac{1}{b}f_{n}^{-} - \frac{a}{b}f_{n}^{+}.
\]
Hence
\[
  L\ph_{n} = \frac{1}{b}\lm_{n}^{-}f_{n}^{-} - \frac{a}{b}\lm_{n}^{+}f_{n}^{+}.
\]
Substituting the expression for $f_{n}^{-}$ into the latter identity then leads to
\[
  (L-\lm_{n}^{+})\ph_{n} = (\lm_{n}^{-}-\lm_{n}^{+})\ph_{n} + \frac{a}{b}(\lm_{n}^{-}-\lm_{n}^{+})f_{n}^{+} = -\gm_{n}\ph_{n} + \eta_{n}f_{n}^{+}
\]
where $\eta_{n} = -\gm_{n}a/b$. In the case $\lm_{n}^{+}$ is a double eigenvalue of geometric multiplicity two, $\ph_{n}$ is an eigenfunction of $L$ and one has $\eta_{n} = 0$. Finally, in the case $\lm_{n}^{+}$  is a double eigenvalue of geometric multiplicity one, $(L-\lm_{n}^{+})\ph_{n}$ is in the eigenspace $E_{n}^{+}\subset E_{n}$ as claimed.

To prove the claimed estimate for $\eta_{n}$, we view $(L-\lm_{n}^{+})\ph_{n} = -\gm_{n}\ph_{n} + \eta_{n}f_{n}^{+}$ as a linear equation with inhomogeneous term $g = -\gm_{n}\ph_{n} + \eta_{n}f_{n}^{+}$. By identity~\eqref{B-inhom} one has
\[
  B_{n}P_{n}\ph_{n} = \gm_{n} P_{n}K_{n}\ph_{n} - \eta_{n}P_{n}K_{n}f_{n}^{+},
\]
where $K_{n} \equiv K_{n}(\lm_{n}^{+})$ and $B_{n} \equiv B_{n}(\lm_{n}^{+})$. To estimate $\eta_{n}$, take the $L^{2}$-inner product of the latter identity with $P_{n}f_{n}^{+}$ to get
\begin{equation}
  \label{eta-I-II}
  \eta_{n}\spii{P_{n}K_{n}f_{n}^{+},P_{n}f_{n}^{+}} = \gm_{n}I - II,
\end{equation}
where
\[
  I = \spii{P_{n}K_{n}\ph_{n}, P_{n}f_{n}^{+}},\qquad
  II = \spii{B_{n}P_{n}\ph_{n},P_{n}f_{n}^{+}}.
\]

We begin by estimating $\spii{P_{n}K_{n}f_{n}^{+},P_{n}f_{n}^{+}}$. Using that $K_{n} = \Id + T_{n}K_{n}$ one gets
\begin{align*}
  \spii{P_{n}K_{n}f_{n}^{+},P_{n}f_{n}^{+}}
   = \n{P_{n}f_{n}^{+}}_{L^2}^{2}
    + \spii{T_{n}K_{n}f_{n}^{+},P_{n}f_{n}^{+}},
\end{align*}
and by Cauchy-Schwarz
\[
  \abs{\spii{T_{n}K_{n}f_{n}^{+},P_{n}f_{n}^{+}}} \le
  \p[\bigg]{\sum_{m\in \setd{\pm n}} \abs{\spii{T_{n}K_{n}f_{n}^{+},e_{m}}}^{2}}^{1/2}\n{P_{n}f_{n}^{+}}_{L^{2}}.
\]
Note that $\n{P_{n}f_{n}^{+}}_{L^{2}} \le \n{f_{n}^{+}}_{L^{2}} = 1$ and further by Lemma~\ref{cn-est} (i)
\[
  \abs{\spii{T_{n}K_{n}f_{n}^{+},e_{\pm n}}}
   \le \w{n}^{\abs{s}}C_{s,p}\ep_{s,p}(n)\n{q}_{s,p}\n{K_{n}f_{n}^{+}}_{s,p;\pm n}.
\]
By Corollary~\ref{Q-soln}, $\n{K_{n}}_{s,p;n} \le 2$ and as $L_{\C}^{2}[0,2] \opento \Ls_{\star,\C}^{s,p}$, $\n{f_{n}^{+}}_{s,p;\pm n}\le \n{f_{n}^{+}}_{0,2;\pm n} =1$, whereas by the definition of $\ep_{s,p}(n)$ in Lemma~\ref{cn-est} (i), $n^{\abs{s}}\ep_{s,p}(n) \le C_{s,p}/n^{1/p}$.  Hence there exists $n_{s,p}'\ge n_{s,p}$ so that
\[
  \abs{\spii{T_{n}K_{n}f_{n}^{+},P_{n}f_{n}^{+}}} \le \frac{1}{8}.
\]
By increasing $n_{s,p}'$ if necessary, Lemma~\ref{Pn-est} below assures that $\n{P_{n}f_{n}^{+}}_{L^2} \ge 1/2$. Thus the left hand side of \eqref{eta-I-II} can be estimated as follows
\begin{equation}
  \label{eta-est-1}
  \abs*{\eta_{n}\spii{P_{n}K_{n}f_{n}^{+},P_{n}f_{n}^{+}}} \ge \abs{\eta_{n}}
  \p*{ \frac{1}{4}-\frac{1}{8} } = \frac{1}{8}\abs{\eta_{n}},
  \qquad
  \forall n\ge n_{s,p}'.
\end{equation}

Next let us estimate the term $I = \spii{P_{n}K_{n}\ph_{n}, P_{n}f_{n}^{+}}$ in \eqref{eta-I-II}. Using again $K_{n} = \Id + T_{n}K_{n}$ one sees that
\[
  I = \spii{P_{n}\ph_{n}, P_{n}f_{n}^{+}} + \spii{T_{n}K_{n}\ph_{n}, P_{n}f_{n}^{+}}.
\]
Clearly, $\abs{\spii{P_{n}\ph_{n}, P_{n}f_{n}^{+}}} \le \n{\ph_{n}}_{L^2}\n{f_{n}^{+}}_{L^2} \le 1$ and arguing as above for the second term, one then concludes that
\begin{equation}
  \label{eta-est-2}
  \abs{I} \le 1 + 1/8,\qquad \forall n\ge n_{s,p}'.
\end{equation}

Finally it remains to estimate $II = \spii{B_{n}P_{n}\ph_{n},P_{n}f_{n}^{+}}$. Using again $\n{\ph_{n}}_{L^{2}}  = \n{f_{n}^{+}}_{L^{2}} = 1$, we conclude from the matrix representation \eqref{Bn-mat} of $B_{n}$ that
\[
  \abs{\spii{B_{n}P_{n}\ph_{n},P_{n}f_{n}^{+}}} \le \n{B_{n}}\n{\ph_{n}}_{L^{2}}\n{f_{n}^{+}}_{L^{2}}
  \le \abs{\lm_{n}^{+}-n^{2}\pi^{2}-a_{n}} + \abs{b_{n}}+ \abs{b_{-n}}.
\]
Since $\det B_{n}(\lm_{n}^{+}) = 0$, one has
\[
  \abs{\lm_{n}^{+}-n^{2}\pi^{2}-a_{n}} = \abs{b_{n}b_{-n}}^{1/2} \le \frac{1}{2}(\abs{b_{n}} + \abs{b_{-n}}),
\]
and hence it follows that for all $n\ge n_{s,p}'$ that
\begin{equation}
  \label{eta-est-3}
  \abs{II} \le 2(\abs{b_{n}}+\abs{b_{-n}}).
\end{equation}
Combining \eqref{eta-est-1}-\eqref{eta-est-3} leads to the claimed estimate for $\eta_{n}$.\qed
\end{proof}

It remains to prove the estimate of $P_{n}$ used in the proof of Lemma~\ref{En-basis}. To this end, we introduce the Riesz projector $P_{n,q}\colon L^{2}\to E_{n}$ given by (see also Appendix~\ref{a:hill-op})
\[
  P_{n,q} = \frac{1}{2\pi \ii}\int_{\abs{\lm -n^{2}\pi^{2}} = n} (\lm - L(q))^{-1}\,\dlm.
\]

\begin{lem}
\label{Pn-est}
Let $q\in\Ls_{0,\C}^{s,p}$ with $-1/2\le s \le 0$ and $2\le p < \infty$. Then there exists $\tilde n_{s,p} \ge n_{s,p}$ so that for any eigenfunction $f\in \Ls_{\star,\C}^{s+2,p}$ of $L(q)$ corresponding to an eigenvalue $\lm\in S_{n}$ with $n\ge \tilde n_{s,p}$,
\[
  \n{P_{n}f}_{L^2}\ge \frac{1}{2}\n{f}_{L^2}.\fish
\]
\end{lem}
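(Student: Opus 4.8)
The plan is to read off the structure of an eigenfunction from the $P$--$Q$ splitting already set up, and then to show that such an eigenfunction is, in $L^2$, concentrated on its $\Pc_n$-part. Since $\lm\in S_n$ is an eigenvalue with $n\ge n_{s,p}$, Lemma~\ref{eval-char} together with the uniqueness in Corollary~\ref{Q-soln} forces $f=u+v$ with $u=P_nf\in\Pc_n$, $u\neq0$, and
\[
  v = Q_n f = A_{\lm}^{-1}Q_n K_n(\lm)\,Vu \in \Qc_n .
\]
Because $\n{P_nf}_{L^2}=\n{u}_{L^2}$ and $\n{f}_{L^2}^2=\n{u}_{L^2}^2+\n{v}_{L^2}^2$, it suffices to prove $\n{v}_{L^2}\le\n{u}_{L^2}$ for $n$ large; then $\n{P_nf}_{L^2}\ge\n{f}_{L^2}/\sqrt2\ge\tfrac12\n{f}_{L^2}$. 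Thus everything reduces to estimating the smoothing operator $A_{\lm}^{-1}Q_n$ as a map from $\Ls_{\star,\C}^{s,p}$ into $L^2$.

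The core step is the bound: for every $g\in\Ls_{\star,\C}^{s,p}$, every $\lm\in S_n$, and for either choice of shift,
\[
  \n{A_{\lm}^{-1}Q_n g}_{L^2}\le C_{s,p}\,n^{-(1-\abs{s})}\,\n{g}_{s,p;\pm n}.
\]
To prove it I would start from $\n{A_{\lm}^{-1}Q_n g}_{L^2}^2=\sum_{k\neq\pm n}\abs{g_k}^2/\abs{\lm-k^2\pi^2}^2$ and use $\abs{\lm-k^2\pi^2}\ge\abs{n^2-k^2}=\abs{n-k}\abs{n+k}$ for $\lm\in S_n$. Writing $\abs{g_k}=\w{k\pm n}^{\abs{s}}\bigl(\w{k\pm n}^{s}\abs{g_k}\bigr)$ and absorbing the weight $\w{k\pm n}^{2\abs{s}}$ against the corresponding factor $\abs{n\pm k}^2$ (here $2\abs{s}\le1$ is used), Hölder's inequality with exponents $p/2$ and $p/(p-2)$ reduces matters to the convergence and decay of
\[
  \sum_{j\neq0}\frac{1}{\abs{j}^{\alpha}\,\abs{2n\pm j}^{\beta}},\qquad
  \alpha = \frac{2p}{p-2},\quad \beta = \frac{2(1-\abs{s})p}{p-2}
\]
(with the obvious $p=2$ modification, replacing Hölder by a supremum bound). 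Both exponents exceed $1$ — indeed $\beta\ge p/(p-2)>1$ since $\abs{s}\le1/2$ — so splitting the sum at $\abs{j}=n$ gives the bound $Cn^{-\beta}$, and raising to the power $(p-2)/p$ yields the decay $n^{-2(1-\abs{s})}$, i.e. the claimed $n^{-(1-\abs{s})}$ at the level of the norm.

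Finally I would apply this with the shift chosen separately for each monomial of $u$. Writing $u=\alpha e_n+\beta e_{-n}$, so that $\n{u}_{L^2}^2=\abs{\alpha}^2+\abs{\beta}^2$, linearity gives $v=\alpha\,A_\lm^{-1}Q_nK_nVe_n+\beta\,A_\lm^{-1}Q_nK_nVe_{-n}$. For the first term I use the shift $-n$: since $e_ne_{-n}=1$ one has $\n{Ve_n}_{s,p;-n}=\n{q}_{s,p}$, and $\n{K_n}_{s,p;-n}\le2$ by Corollary~\ref{Q-soln}, so its $L^2$-norm is $\le C_{s,p}n^{-(1-\abs{s})}\abs{\alpha}\n{q}_{s,p}$. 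For the second term I use the shift $+n$, where symmetrically $\n{Ve_{-n}}_{s,p;n}=\n{q}_{s,p}$. Summing and using $\abs{\alpha}+\abs{\beta}\le\sqrt2\,\n{u}_{L^2}$ gives $\n{v}_{L^2}\le C_{s,p}n^{-(1-\abs{s})}\n{q}_{s,p}\n{u}_{L^2}$; choosing $\tilde n_{s,p}\ge n_{s,p}$ so large that $C_{s,p}\tilde n_{s,p}^{-(1-\abs{s})}\n{q}_{s,p}\le1$ — a threshold depending only on $\n{q}_{s,p}$, hence locally uniform in $q$ — completes the argument.

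I expect the main obstacle to be precisely the point that forces the two different shifts. Because $q$ is a negative-regularity distribution, the product $Vu=\alpha qe_n+\beta qe_{-n}$ cannot be controlled in a single shifted norm: the ``wrong'' shift produces a factor $\n{q}_{s,p;\pm2n}$, which may be far larger than $\n{q}_{s,p}$ (an extreme Fourier coefficient of $q$ near $\mp2n$ is amplified by the negative weight). Matching each component $e_{\pm n}$ with the shift that collapses its exponential weight to $e_0$ is exactly what makes the bound uniform in $q$; the remainder is the (routine but careful) convergence analysis of the double-indexed sum above. A more operator-theoretic alternative would compare the Riesz projector $P_{n,q}$ with the free one $P_{n,0}=P_n$ via $\n{f-P_nf}_{L^2}=\n{(P_{n,q}-P_{n,0})f}_{L^2}$, but controlling the resolvent difference on $L^2$ for singular $q$ seems to require at least as much of the same machinery, so I would favour the direct route.
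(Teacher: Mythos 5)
Your argument is correct, but it takes a genuinely different route from the paper's. The paper disposes of this lemma in two lines using Lemma~\ref{proj-bound}: since $P_{n,q}f=f$ for an eigenfunction whose eigenvalue lies inside the contour, $\n{P_{n}f}_{L^{2}}\ge\n{f}_{L^{2}}-\n{(P_{n,q}-P_{n})f}_{L^{2}}=(1-o(1))\n{f}_{L^{2}}$, the $L^{2}\to L^{\infty}$ smallness of $P_{n,q}-P_{n}$ being established in Appendix~\ref{a:hill-op} via the factorization $\lm-L=D_{\lm}^{1/2}(I_{\lm}-S_{\lm})D_{\lm}^{1/2}$ and the resolvent estimate \eqref{Slm-est} from \cite{Kappeler:2001bi}. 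You instead stay entirely inside the Lyapunov--Schmidt reduction of Section~\ref{section3}: by Corollary~\ref{Q-soln} the $Q$-component of the eigenfunction is $v=A_{\lm}^{-1}Q_{n}K_{n}Vu$, and you prove the $L^{2}$-smoothing bound $\n{A_{\lm}^{-1}Q_{n}g}_{L^{2}}\le C_{s,p}\,n^{-(1-\abs{s})}\n{g}_{s,p;\pm n}$ by the same weight-shifting and H\"older scheme as in Lemma~\ref{Tn-est}. The computation checks out: the exponents $\alpha,\beta$ both exceed $1$, the split at $\abs{j}=n$ together with $\alpha\ge\beta$ gives $O(n^{-\beta})$ for the inner sum and hence $n^{-2(1-\abs{s})}$ for the squared norm, and since $1-\abs{s}\ge 1/2$ the resulting factor tends to $0$. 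Your two-shift device for $Vu=\alpha qe_{n}+\beta qe_{-n}$ is exactly what is needed to produce $\n{q}_{s,p}$ rather than an uncontrolled shifted norm of $q$, and the orthogonality $\n{f}_{L^{2}}^{2}=\n{u}_{L^{2}}^{2}+\n{v}_{L^{2}}^{2}$ then yields the constant $1/\sqrt{2}>1/2$. What each approach buys: yours is self-contained within Section~\ref{section3} and avoids the appendix machinery; the paper's is shorter because Lemma~\ref{proj-bound} cannot be dispensed with globally anyway --- its full $L^{2}\to L^{\infty}$ strength is used again in the proof of Lemma~\ref{gn-nu-est} for the Dirichlet asymptotics, so reusing it here costs nothing.
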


\begin{proof}
In Lemma~\ref{proj-bound} we show that
\[
  \n{P_{n,q}-P_{n}}_{L^{2}\to L^{\infty}} = o(1),
\]
locally uniformly in $q\in\Ls_{0,\C}^{s,p}$ and uniformly as $n\to \infty$.
Clearly, $P_{n,q}f = f$, hence
\[
  \n{P_{n}f}_{L^{2}} \ge \n{P_{n,q}f}_{L^{2}} - \n{(P_{n,q}-P_{n})f}_{L^{2}}
  \ge \p[\big]{1+o(1)}\n{f}_{L^{2}}.\qed
\]
\end{proof}


\subsection{Proof of Theorem~\ref{fw:dir}}

We begin with a brief outline of the proof of Theorem~\ref{fw:dir}. Let $q\in\Ls_{0,\C}^{s,p}$ with $-1/2\le s\le 0$ and $2\le p < \infty$. Since for any $q\in H_{0,\C}^{-1}$ the Dirichlet eigenvalues, when listed in lexicographical ordering and with their algebraic multiplicities, $\mu_{1}\lex \mu_{2}\lex \dotsb$, satisfy the asymptotics $\mu_{n} = n^{2}\pi^{2} + n\ell_{n}^{2}$, 
they are simple for $n\ge n_{\dir}$, where $n_{\dir}\ge 1$ can be chosen locally uniformly for $q\in H_{0,\C}^{-1}$. For any $n\ge n_{\dir}$ let $g_{n}$ be an $L^{2}$-normalized eigenfunction corresponding to $\mu_{n}$. Then
\[
  g_{n}\in H_{\dir,\C}^{1}\defl \setdef{g\in H^{1}([0,1],\C)}{g(0) = g(1) = 0}.
\]
Now let $q\in\Ls_{0,\C}^{s,p}$ with $-1/2\le s\le 0$ and $2\le p < \infty$. Increase $n_{s,p}'$ of Lemma~\ref{En-basis}, if necessary, so that $n_{s,p}'\ge n_{\dir}$ and denote by $E_{n}$ the two dimensional subspace introduced in Section~\ref{ss:jordan}.
We will choose an $L^{2}$-normalized function $\tilde G_{n}$ in $E_{n}$ so that its restriction $G_{n}$ to $\Ic = [0,1]$ is in $H_{\dir,\C}^{1}$ and close to $g_{n}$. We then show that $\mu_{n}-\lm_{n}^{+}$ can be estimated in terms of $\spi{(L_{\dir}-\lm_{n}^{+})G_{n},G_{n}}$. As by Lemma~\ref{En-basis}
\[
  (L-\lm_{n}^{+})\tilde G_{n} = O(\abs{\gm_{n}} + \abs{b_{n}(\lm_{n}^{+})} + \abs{b_{-n}(\lm_{n}^{+})}),
\]
the claimed estimates for $\mu_{n}-\tau_{n} = \mu_{n} - \lm_{n}^{+}+\gm_{n}/2$ then follow from the estimates of $\gm_{n}$ of Theorem~\ref{fw:per} and the ones of $b_{n}-q_{2n}$, $b_{-n}-q_{-2n}$ of Lemma~\ref{cn-est} (ii).

The function $\tilde G_{n}$ is defined as follows. Let $f_{n}^{+}$, $\ph_{n}$ be the $L^{2}$-orthonormal basis of $E_{n}$ chosen in Section~\ref{ss:jordan}. As $E_{n}\subset H_{\C}^{1}(\R/2\Z)$, its elements are continuous functions by the Sobolev embedding theorem. If $f_{n}^{+}(0) = 0$, then $f_{n}^{+}(1) = 0$ as $f_{n}^{+}$ is an eigenfunction of the $1$-periodic/antiperiodic eigenvalue $\lm_{n}^{+}$ of $L(q)$ and we set $\tilde G_{n} = f_{n}^{+}$.
If $f_{n}^{+}(0)\neq 0$, then we define $\tilde G_{n}(x) = r_{n}\bigl(\ph_{n}(0)f_{n}^{+}(x) - f_{n}^{+}(0)\ph_{n}(x)\bigr)$, where $r_{n} > 0$ is chosen in such a way that $\int_{0}^{1} \abs{\tilde G_{n}(x)}^{2}\,\dx = 1$. Then $\tilde G_{n}(0) = \tilde G_{n}(1) = 0$ and since $\tilde G_{n}$ is an element of $E_{n}$ its restriction $G_{n} \defl \tilde G_{n}\big|_{\Ic}$ is in $H_{\dir,\C}^{1}$.

Denote by $\Pi_{n,q}$ the Riesz projection, introduced in Appendix~\ref{a:hill-op},
\[
  \Pi_{n,q} \defl \frac{1}{2\pi \ii}\int_{\abs{\lm-n^{2}\pi^{2}} = n} (\lm-L_{\dir}(q))^{-1}\,\dlm.
\]
It has $\spanx(g_{n})$ as its range and hence $\Pi_{n,q}G_{n} = \nu_{n}g_{n}$ for some $\nu_{n}\in \C$.

\begin{lem}
\label{gn-nu-est}
Let $q\in \Ls_{0,\C}^{s,p}$ with $-1/2\le s \le 0$ and $2\le p < \infty$. Then there exists $n_{s,p}''\ge n_{s,p}'$ with $n_{s,p}'$ as in Lemma~\ref{En-basis} so that for any $n\ge n_{s,p}''$
\begin{renum}
\item the following identity holds
\begin{equation}
  \label{nu-eqn}
  \nu_{n}(\mu_{n}-\lm_{n}^{+})g_{n}
   = \bt_{n}\p*{ \eta_{n}\Pi_{n,q}(f_{n}^{+}\big|_{\Ic}) - \gm_{n}\Pi_{n,q}(\ph_{n}\big|_{\Ic}) },
\end{equation}
where $\bt_{n} \in \C$ with $\abs{\bt_{n}} \le 1$ and $\eta_{n}$ is the off-diagonal coefficient in the matrix representation of $(L-\lm_{n}^{+})\big|_{E_{n}}$ with respect to the basis $\setd{f_{n}^{+},\ph_{n}}$, introduced in Lemma~\ref{En-basis}, and
\item $1/2 \le \abs{\nu_{n}} \le 3/2$.
\end{renum}
$n_{s,p}''$ can be chosen locally uniformly for $q\in\Ls_{0,\C}^{s,p}$.\fish
\end{lem}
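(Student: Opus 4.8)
The plan is to transport the relation for $(L-\lm_{n}^{+})\big|_{E_{n}}$ furnished by Lemma~\ref{En-basis} from the period-$2$ operator $L=L(q)$ onto the one-dimensional range of the Dirichlet Riesz projection $\Pi_{n,q}$, and then to read off both the identity and the normalization bound. First I would expand the approximate Dirichlet eigenfunction in the $L^{2}$-orthonormal basis $\setd{f_{n}^{+},\ph_{n}}$ of $E_{n}$, writing $\tilde G_{n} = \alpha_{n}f_{n}^{+} + \bt_{n}\ph_{n}$ with $\alpha_{n},\bt_{n}\in\C$; the normalization of $\tilde G_{n}$ together with orthonormality gives $\abs{\alpha_{n}}^{2}+\abs{\bt_{n}}^{2}\le 1$, hence $\abs{\bt_{n}}\le 1$. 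Applying $L-\lm_{n}^{+}$ and using $(L-\lm_{n}^{+})f_{n}^{+}=0$ together with Lemma~\ref{En-basis} yields
\[
  (L-\lm_{n}^{+})\tilde G_{n} = \bt_{n}\p*{\eta_{n}f_{n}^{+} - \gm_{n}\ph_{n}}.
\]
Since $G_{n}=\tilde G_{n}\big|_{\Ic}\in H_{\dir,\C}^{1}$ satisfies the Dirichlet boundary conditions and lies in the domain of $L_{\dir}$, and since $L$ and $L_{\dir}$ both act as $-\partial_{x}^{2}+q$ on the open interval with no boundary contribution (cf. Appendix~\ref{a:hill-op}), restriction to $\Ic$ turns the displayed identity into $(L_{\dir}-\lm_{n}^{+})G_{n} = \bt_{n}\p*{\eta_{n}f_{n}^{+}\big|_{\Ic}-\gm_{n}\ph_{n}\big|_{\Ic}}$.

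For part (i) I would apply $\Pi_{n,q}$ to this restricted identity. As $\Pi_{n,q}$ is the Riesz projection of $L_{\dir}$ onto $\spanx(g_{n})$, it commutes with $L_{\dir}$, and using $L_{\dir}g_{n}=\mu_{n}g_{n}$ and $\Pi_{n,q}G_{n}=\nu_{n}g_{n}$ one obtains
\[
  \Pi_{n,q}(L_{\dir}-\lm_{n}^{+})G_{n} = (L_{\dir}-\lm_{n}^{+})\Pi_{n,q}G_{n} = \nu_{n}(\mu_{n}-\lm_{n}^{+})g_{n}.
\]
Applying $\Pi_{n,q}$ to the right-hand side of the restricted identity produces exactly $\bt_{n}\p*{\eta_{n}\Pi_{n,q}(f_{n}^{+}\big|_{\Ic})-\gm_{n}\Pi_{n,q}(\ph_{n}\big|_{\Ic})}$, and equating the two expressions gives \eqref{nu-eqn}.

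For part (ii) I would estimate $\nu_{n}$ from $\abs{\nu_{n}}=\n{\Pi_{n,q}G_{n}}_{L^{2}}$ (using $\n{g_{n}}_{L^{2}}=1$). Setting $h_{n}\defl(\Id-\Pi_{n,q})G_{n}$, the restricted identity gives $(L_{\dir}-\lm_{n}^{+})h_{n}=(\Id-\Pi_{n,q})(L_{\dir}-\lm_{n}^{+})G_{n}$, whose $L^{2}$-norm is $O\p*{\abs{\bt_{n}}(\abs{\eta_{n}}+\abs{\gm_{n}})}=o(1)$ by $\gm_{n}\to 0$ (Theorem~\ref{fw:per}), $b_{\pm n}\to 0$ (Lemma~\ref{cn-est}), and the bound on $\eta_{n}$ in Lemma~\ref{En-basis}. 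Because every Dirichlet eigenvalue other than $\mu_{n}$ lies at distance $\gtrsim n$ from $\lm_{n}^{+}$, the operator $L_{\dir}-\lm_{n}^{+}$ restricted to the range of $\Id-\Pi_{n,q}$ is boundedly invertible with inverse of norm $O(1/n)$, so $\n{h_{n}}_{L^{2}}=o(1)$. Hence $\abs{\nu_{n}}=\n{G_{n}-h_{n}}_{L^{2}}=\n{G_{n}}_{L^{2}}+o(1)$, and since $\n{G_{n}}_{L^{2}}$ stays bounded between a constant close to $1/\sqrt2$ and $1$, the bounds $1/2\le\abs{\nu_{n}}\le 3/2$ follow for $n\ge n_{s,p}''$ for some threshold $n_{s,p}''\ge n_{s,p}'$. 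Local uniformity in $q$ is inherited from that of $\gm_{n}$, $b_{\pm n}$, the eigenvalue gaps, and the projection bounds.

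The main obstacle is part (ii): controlling the \emph{non-self-adjoint} Riesz projection $\Pi_{n,q}$ uniformly in $n$ and locally uniformly in $q$, which rests on the spectral-gap and resolvent estimates for the Dirichlet operator with a singular complex potential (the Dirichlet counterparts of Lemma~\ref{Pn-est} and Lemma~\ref{proj-bound}), and on confirming that $\n{G_{n}}_{L^{2}}$ is bounded away from $0$, i.e.\ that $G_{n}$ genuinely captures the one-dimensional Dirichlet eigenspace rather than collapsing. By contrast, identity (i) and the bound $\abs{\bt_{n}}\le 1$ are essentially formal once the restriction identity and the commutation of $\Pi_{n,q}$ with $L_{\dir}$ are in place.
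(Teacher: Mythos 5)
Your part (i) is exactly the paper's argument: expand $\tilde G_{n}=\al_{n}f_{n}^{+}+\bt_{n}\ph_{n}$, apply Lemma~\ref{En-basis} to get $(L-\lm_{n}^{+})\tilde G_{n}=\bt_{n}(\eta_{n}f_{n}^{+}-\gm_{n}\ph_{n})$, restrict to $\Ic$ (this uses Lemma~\ref{V-dir-V-per}), and project with $\Pi_{n,q}$, which commutes with $L_{\dir}$ and acts as $\mu_{n}$ on its one-dimensional range. No issues there.

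Part (ii) is where you diverge from the paper, and where the gap sits. You want $\n{h_{n}}_{L^{2}(\Ic)}=o(1)$ with $h_{n}=(\Id-\Pi_{n,q})G_{n}$, and for that you invoke an $O(1/n)$ bound for the inverse of $L_{\dir}-\lm_{n}^{+}$ on the range of $\Id-\Pi_{n,q}$, justified by the remark that every Dirichlet eigenvalue other than $\mu_{n}$ lies at distance of order $n$ from $\lm_{n}^{+}$. For the non-self-adjoint operator $L_{\dir}(q)$ with $q$ complex and singular, distance to the spectrum does not control the resolvent, so this step is unproved as written. It can be repaired: on the circle $\abs{\lm-n^{2}\pi^{2}}=n$ the factorization $\lm-L_{\dir}=\tilde D_{\lm}^{1/2}(\tilde I_{\lm}-\tilde S_{\lm})\tilde D_{\lm}^{1/2}$ from the proof of Lemma~\ref{proj-bound} gives $\n{(\lm-L_{\dir})^{-1}}_{L^{2}\to L^{2}}=O(1/n)$, and a Cauchy-integral representation of the reduced resolvent at $\lm_{n}^{+}$ over that circle then yields the bound you need; but this argument has to be supplied, and you only gesture at it. A second inaccuracy: you claim $\gm_{n}\to 0$ ``by Theorem~\ref{fw:per}''; for complex $q\in\Ls_{0,\C}^{s,p}$ that theorem gives only $(\gm_{n})_{n\ge1}\in\ell_{\C}^{s,p}(\N)$, i.e.\ $\gm_{n}=o(n^{\abs{s}})$, and likewise $\abs{b_{\pm n}}=O(n^{\abs{s}})$, so $(L_{\dir}-\lm_{n}^{+})G_{n}$ is in general only $O(n^{1/2})$ in $L^{2}$ rather than $o(1)$; your conclusion survives only because of the extra $O(1/n)$ from the reduced resolvent, giving $\n{h_{n}}=O(n^{-1/2})$. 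The paper avoids the reduced resolvent entirely: it computes $\nu_{n}=\spi{\Pi_{n,q}G_{n},g_{n}}$, replaces $\Pi_{n,q}$ and $P_{n,q}$ by their free counterparts using the $L^{2}\to L^{\infty}$ estimates of Lemma~\ref{proj-bound}, and uses $G_{n}(0)=0$ together with $\n{G_{n}}_{L^{2}(\Ic)}=1$ to conclude that $P_{n}\tilde G_{n}$ and $\Pi_{n}g_{n}$ are, up to $o(1)$, unimodular multiples of $\sqrt{2}\sin(n\pi x)$, whence $\abs{\nu_{n}}=1+o(1)$.
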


\begin{proof}
(i) Write $G_{n} = \nu_{n}g_{n} + h_{n}$, where $h_{n} = (\Id - \Pi_{n,q})G_{n}$. Then
\[
  (L_{\dir}-\lm_{n}^{+})G_{n} = \nu_{n}(\mu_{n}-\lm_{n}^{+})g_{n} + (L_{\dir}-\lm_{n}^{+})h_{n}.
\]
On the other hand, $G_{n} = \tilde G_{n}\big|_{\Ic}$, where $\tilde G_{n}\in E_{n}$ is given by $\tilde G_{n} = \al_{n}f_{n}^{+} + \bt_{n}\ph_{n}$ with $\al_{n}$, $\bt_{n}\in\C$ satisfying $\abs{\al_{n}}^{2} + \abs{\bt_{n}}^{2} = 1$ and $G_{n}\in H_{\dir,\C}^{1}$. Hence by Lemma~\ref{V-dir-V-per} and Lemma~\ref{En-basis}, for $n\ge n_{s,p}'$,
\[
  (L_{\dir}-\lm_{n}^{+})G_{n} = (L-\lm_{n}^{+})\tilde G_{n}\big|_{\Ic}
   = \bt_{n}(\eta_{n}f_{n}^{+} - \gm_{n}\ph_{n})\big|_{\Ic}.
\]
Combining the two identities and using that $\Pi_{n,q}h_{n} = 0$ and that $\Pi_{n,q}$ commutes with $(L_{\dir}-\lm_{n}^{+})$, one obtains, after projecting onto $\spanx(g_{n})$, identity~\eqref{nu-eqn}.

(ii) Taking the inner product of $\Pi_{n,q}G_{n} = \nu_{n} g_{n}$ with $g_{n}$ one gets
\[
  \nu_{n} = \nu_{n}\spi{g_{n},g_{n}} = \spi{\Pi_{n,q}G_{n},g_{n}}.
\]
Let $s_{n}(x) = \sqrt{2}\sin(n\pi x)$ and denote by $\Pi_{n} = \Pi_{n,0}$ the orthogonal projection onto $\spanx\setd{s_{n}}$. Recall that $P_{n,q}\colon L^{2}\to E_{n}$ is the Riesz projection onto $E_{n}$. In Lemma~\ref{proj-bound} we show that
\begin{equation}
  \label{proj-est-1}
  \n{\Pi_{n,q}-\Pi_{n}}_{L^{2}(\Ic)\to L^{\infty}(\Ic)},\;
  \n{P_{n,q}-P_{n}}_{L^{2}\to L^{\infty}} = o(1),
\end{equation}
locally uniformly in $q\in\Ls_{0,\C}^{s,p}$ and uniformly as $n\to \infty$.
Thus using $\Pi_{n}G_{n} = \Pi_{n}(P_{n}\tilde G_{n})\big|_{\Ic}$ and recalling that $\n{G_{n}}_{L^{2}(\Ic)}^{2} = \n{g_{n}}_{L^{2}(\Ic)}^{2} = 1$ we obtain
\[
  v_{n}
   = \spi{\Pi_{n}G_{n},g_{n}} + \spi{(\Pi_{n,q}-\Pi_{n})G_{n},g_{n}}
   = \spi{P_{n}\tilde G_{n},\Pi_{n}g_{n}} + o(1).
\]
Moreover, it follows from \eqref{proj-est-1} that uniformly in $0\le x \le 1$
\[
  \Pi_{n}g_{n}(x) = \e^{\ii \phi_{n}}s_{n}(x) + o(1),\qquad n\to \infty,
\]
with some real $\phi_{n}$. Similarly, again by \eqref{proj-est-1}, uniformly in $0\le x \le 2$
\[
  P_{n}\tilde G_{n}(x) = a_{n}e_{n}(x) + b_{n}e_{-n}(x) + o(1),\qquad n\to \infty,
\]
where, since $\n{G_{n}}_{L^{2}} = 1$ and $G_{n}(0) = 0$, the coefficients $a_{n}$ and $b_{n}$ can be chosen so that
\[
  \abs{a_{n}}^{2} + \abs{b_{n}}^{2} = 1,\quad a_{n} + b_{n} = 0.
\]
That is $P_{n}\tilde G_{n}(x) = \e^{\ii \psi_{n}}s_{n}(x) + o(1)$ with some real $\psi_{n}$ and hence
\[
  \spi{P_{n}\tilde G_{n},\Pi_{n}g_{n}}
   = \e^{\ii\psi_{n}-\ii\phi_{n}}\spi{s_{n},s_{n}} + o(1)
   = \e^{\ii\psi_{n}-\ii\phi_{n}} + o(1),\qquad n\to \infty.
\]
From this we conclude
\[
  \abs{\nu_{n}} = 1 + o(1),\qquad n\to\infty.
\]
Therefore, $1/2 \le \abs{\nu_{n}} \le 3/2$ for all $n\ge n_{s,p}''$ provided $n_{s,p}'' \ge n_{s,p}'$ is sufficiently large.

Going through the arguments of the proof one verifies that $n_{s,p}''$ can be chosen locally uniformly in $q$.\qed
\end{proof}

Lemma~\ref{gn-nu-est} allows to complete the proof of Theorem~\ref{fw:dir}.

\begin{proof}[Proof of Theorem~\ref{fw:dir}.]
Take the inner product of~\eqref{nu-eqn} with $g_{n}$ and use that $\abs{\nu_{n}} \ge 1/2$ by Lemma~\ref{gn-nu-est} to conclude that
\begin{equation}
  \label{mu-est-1}
  \frac{1}{2}\abs{\mu_{n}-\lm_{n}^{+}} \le \abs{\bt_{n}}
  \p*{ \abs{\eta_{n}} \spi{\Pi_{n,q}(f_{n}^{+}\big|_{\Ic}),g_{n}}
  + \abs{\gm_{n}}\abs{\spi{\Pi_{n,q}(\ph_{n}\big|_{\Ic})},g_{n}} }.
\end{equation}
Recall that $\abs{\bt_{n}} \le 1$ and note that for any $f,g\in L_{\C}^{2}(\Ic)$
\[
  \abs{\spi{\Pi_{n,q}f,g}} \le 
  \abs{\spi{\Pi_{n}f,g}} +
  \abs{\spi{(\Pi_{n,q}-\Pi_{n})f,g}} \le 
  (1+o(1))\n{f}_{L^{2}(\Ic)}\n{g}_{L^{2}(\Ic)}.
\]
Since $\n{f_{n}^{+}}_{L^{2}(\Ic)} = \n{\ph_{n}}_{L^{2}(\Ic)} = 1$ and $\n{g_{n}}_{L^{2}(\Ic)} = 1$, \eqref{mu-est-1} implies that
\[
  \abs{\mu_{n}-\lm_{n}^{+}} \le (2+o(1))(\abs{\eta_{n}}+\abs{\gm_{n}})
\]
yielding with Lemma~\ref{En-basis} the estimate
\[
  \abs{\mu_{n}-\tau_{n}} \le  (3+o(1))\abs{\gm_{n}}
  + (32+o(1))(\abs{\gm_{n}} + \abs{b_{n}(\lm_{n}^{+})}+
  \abs{b_{-n}(\lm_{n}^{+})}).
\]
By Theorem~\ref{fw:per} and Lemma~\ref{cn-est} (ii) it then follows that $(\tau_{n}-\mu_{n})_{n\ge 1}\in \ell_{\C}^{s,p}(\N)$. Going through the arguments of the proof one verifies that the map 
$\Ls_{0,\C}^{s,p}\to \ell_{\C}^{s,p}(\N)$, 
$q\mapsto (\tau_{n}-\mu_{n})_{n\ge 1}$ is locally bounded.\qed
\end{proof}


\section{Proof of Theorem~\ref{main}}
\label{section4}

The proof of Theorem~\ref{main} relies on a formula for the nonlinear part of the KdV Hamiltonian which we derive in Subsection~\ref{ss:4-F-exp}. The formula is based on the integral
\[
  F(\lm) = \int_{\lm_{0}^{+}}^{\lm} \frac{\dDl}{\sqrt[c]{\Dl^{2}-4}}\,\dlm,
\]
whose properties are studied in Subsection~\ref{ss:4-om}. The proof of Theorem~\ref{main} is then completed in Subsection~\ref{ss:4-pf}.
In the sequel we use without further reference the notation introduced in Section~\ref{section1} and Section~\ref{section2}.

\subsection{Properties of $F$}
\label{ss:4-om}

Let $q$ be in $\Ws$. It is convenient to define the \emph{standard root}
\[
  \vs_{n}(\lm) = \sqrt[\mathrm{s}]{(\lm_{n}^{+}-\lm)(\lm_{n}^{-}-\lm)},
                 \qquad \lm\in\C\setminus G_{n},\qquad n\ge 1,
\]
by the condition
\begin{align}
  \label{s-root}
  \vs_{n}(\lm) = (\tau_{n}-\lm)\sqrt[+]{1 - \gm_{n}^{2}/4(\tau_{n}-\lm)^{2}},
  						 \qquad \tau_{n} = (\lm_{n}^{-}+\lm_{n}^{+})/2.
\end{align}
Here $\sqrt[+]{\phantom{a}}$ denotes the principal branch of the square root on the complex plane minus the ray $(-\infty,0]$. The standard root is analytic in $\lm$ on $\C\setminus G_{n}$ and in $(\lm,p)$ on $(\C\setminus \ob{U_{n}})\times \Wp_{q}$. The \emph{canonical root} $\sqrt[c]{\Dl^{2}(\lm)-4}$, introduced in Section~\ref{section2}, can then be written in terms of standard roots as follows
\begin{equation}
  \label{c-root}
  \sqrt[c]{\Dl^{2}(\lm)-4} \defl 
   -2\ii\sqrt[+]{\lm-\lm_{0}^{+}}\prod_{m\ge 1} \frac{\vs_{m}(\lm)}{m^{2}\pi^{2}}
\end{equation}
and is analytic in $\lm$ on $\C\setminus\bigcup_{\gm_{n}\neq 0} G_{n}$ and in $(\lm,p)$ on $(\C\setminus \bigcup_{n\ge 0} \ob{U_{n}})\times \Wp_{q}$.

A path in the complex plane is said to be \emph{admissible} for $q$ if, except possibly at its endpoints, it does not intersect any non collapsed gap $G_{n}(q)$.

\begin{lem}
\label{w-closed}
For each $q\in \Wp$ the following holds:
\begin{renum}
\item
$\frac{\dDl(\lm)}{\sqrt[c]{\Dl^{2}(\lm)-4}}$ is analytic in $(\lm,p)$ on $(\C\setminus \bigcup_{n\ge 0} \ob{U_{n}}) \times \Wp_{q}$ and analytic in $\lm$ on $\C\setminus \bigcup_{\atop{\gm_{n}\neq 0}{n\ge 0}} G_{n}$. (We recall that $\gm_{0} = \infty$.)
\item
For any $n\ge 1$ and any admissible path from $\lm_{n}^{-}$ to $\lm_{n}^{+}$ in $U_{n}$,
$
  \int_{\lm_{n}^{-}}^{\lm_{n}^{+}} \frac{\dDl(\lm)}{\sqrt[c]{\Dl^{2}(\lm)-4}}\,\dlm =  0.
$
As a consequence, for any closed circuit $\Gm_{n}$ in $U_{n}$ around $G_{n}$,
$
  \int_{\Gm_{n}} \frac{\dDl(\lm)}{\sqrt[c]{\Dl^{2}(\lm)-4}}\,\dlm = 0.
$
\item
For $q = 0$, $\frac{\dDl(\lm)}{\sqrt[c]{\Dl^{2}(\lm)-4}} = \frac{1}{2\ii \sqrt[+]{\lm}}$.\fish
\end{renum}
\end{lem}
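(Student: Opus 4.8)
I would base the whole argument on the product representations \eqref{dDl2} for $\dDl$ and \eqref{c-root} for the canonical root, which multiply together to give, for the integrand $\psi(\lm)\defl\frac{\dDl(\lm)}{\sqrt[c]{\Dl^{2}(\lm)-4}}$,
\[
  \psi(\lm)=\frac{1}{2\ii\sqrt[+]{\lm-\lm_{0}^{+}}}\prod_{m\ge 1}\frac{\lm_{m}^{\ld}-\lm}{\vs_{m}(\lm)}.
\]
Statement (iii) is then immediate: for $q=0$ one has $\lm_{0}^{+}=0$ and $\lm_{m}^{\ld}=\lm_{m}^{\pm}=m^{2}\pi^{2}$, so $\vs_{m}(\lm)=m^{2}\pi^{2}-\lm$ and every factor of the product equals $1$, leaving $\psi=\frac{1}{2\ii\sqrt[+]{\lm}}$ (equivalently, insert $\Dl(\lm)=2\cos\sqrt{\lm}$ directly).

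For (i) I would show the infinite product converges locally uniformly in $(\lm,p)$ on $(\C\setminus\bigcup_{m}\ob U_{m})\times\Wp_{q}$. On a compact subset, for $m$ large the condition $\lm\notin U_{m}$ forces $|m^{2}\pi^{2}-\lm|$ to grow like $m^{2}$, while $\lm_{m}^{\ld}-m^{2}\pi^{2}$, $\lm_{m}^{\pm}-m^{2}\pi^{2}$ and $\lm_{m}^{\ld}-\tau_{m}$ are all of the form $m\,\ell^{2}_{m}$ by \eqref{asympt.eig.0}, \eqref{dDl2} and Theorem~\ref{spec-W}. Writing $w=m^{2}\pi^{2}-\lm$ and expanding,
\[
  \frac{\lm_{m}^{\ld}-\lm}{\vs_{m}(\lm)}
   = 1 + \frac{\lm_{m}^{\ld}-\tau_{m}}{w}
     + O\!\left(\frac{|\lm_{m}^{\pm}-m^{2}\pi^{2}|^{2}}{|w|^{2}}\right),
\]
and both corrections are summable in $m$ (the first by Cauchy–Schwarz, since it is $O(a_{m}/m)$ with $(a_{m})\in\ell^{2}$; the second since it is $O(a_{m}^{2}/m^{2})$). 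Hence the product is a nonvanishing analytic function of $(\lm,p)$, and together with the prefactor $\tfrac{1}{2\ii}(\lm-\lm_{0}^{+})^{-1/2}$, analytic off $G_{0}$, this proves the first analyticity claim. For the second, for a collapsed gap ($\gm_{n}=0$) formula \eqref{Dl2} shows $\Dl^{2}-4$ has a double zero at $\lm_{n}^{+}=\lm_{n}^{-}=\tau_{n}$, forcing $\dDl(\tau_{n})=0$ and hence $\lm_{n}^{\ld}=\tau_{n}$; the factor $\frac{\lm_{n}^{\ld}-\lm}{\vs_{n}(\lm)}=\frac{\tau_{n}-\lm}{\tau_{n}-\lm}=1$ is then analytic across $\tau_{n}$, so $\psi$ extends analytically to $\C\setminus\bigcup_{\gm_{m}\neq0}G_{m}$.

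Part (ii) is where the real work lies. First I would record that the canonical root changes sign across the cut $G_{n}$, so collapsing the circuit $\Gm_{n}$ onto the two lips of $G_{n}$ gives $\oint_{\Gm_{n}}\psi\,\dlm=2\int_{\lm_{n}^{-}}^{\lm_{n}^{+}}\psi\,\dlm$ along an admissible path; thus it suffices to prove the path integral vanishes (which also yields the ``as a consequence'' clause). By (i), $\psi$ is analytic on the simply connected set $U_{n}\setminus G_{n}$, hence has a single-valued analytic antiderivative $\Psi$ there, with finite limits at the endpoints $\lm_{n}^{\pm}$ (where $\psi$ has only an integrable $(\lm-\lm_{n}^{\pm})^{-1/2}$ singularity), and $\int_{\lm_{n}^{-}}^{\lm_{n}^{+}}\psi=\Psi(\lm_{n}^{+})-\Psi(\lm_{n}^{-})$. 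The key identity is $\cosh\Psi=\Dl/2$: the pairs $(\tfrac{\Dl}{2},\tfrac12\sqrt[c]{\Dl^{2}-4})$ and $(\cosh\Psi,\sinh\Psi)$ both solve the linear system $y_{1}'=\psi\,y_{2}$, $y_{2}'=\psi\,y_{1}$ and satisfy $y_{1}^{2}-y_{2}^{2}=1$, so after fixing the additive constant of $\Psi$ at a base point they coincide on $U_{n}\setminus G_{n}$ by uniqueness. Since $\Dl(\lm_{n}^{\pm})=\pm2$ with the \emph{same} sign at both endpoints, $\cosh\Psi(\lm_{n}^{\pm})=\pm1$, whence $\Psi(\lm_{n}^{+})-\Psi(\lm_{n}^{-})\in 2\pi\ii\Z$. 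Thus $k_{n}(p)\defl\frac{1}{2\pi\ii}\int_{\lm_{n}^{-}}^{\lm_{n}^{+}}\psi$ is integer valued, and it is continuous (indeed real analytic) in $p$ on $\Wp$: across $Z_{n}$ the gap closes, the integrand loses its singularity, and the integral tends to $0$. As $\Wp$ is connected and $k_{n}(0)=0$ (all gaps collapsed at $q=0$), I conclude $k_{n}\equiv0$, so the path integral, and hence the circuit integral, vanishes. I expect the crux of the whole lemma to be this last step: correctly identifying the antiderivative via $\cosh\Psi=\Dl/2$ and upgrading ``$\int\in2\pi\ii\Z$'' to ``$\int=0$'' by the continuity–connectedness argument, while carefully tracking the branch of $\sqrt[c]{\Dl^{2}-4}$ across $G_{n}$ and the endpoint singularities.
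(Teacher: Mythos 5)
Your parts (i) and (iii) are correct and follow the standard product-representation argument; note the paper itself gives no proof here beyond citing \cite[Lemma 1]{Molnar:_ROURXz4}. In part (ii), however, there is one genuine flaw: $U_{n}\setminus G_{n}$ is \emph{not} simply connected --- its complement in the Riemann sphere has the two components $G_{n}$ and $\hat{\C}\setminus U_{n}$, so it is doubly connected --- and the existence of a single-valued antiderivative $\Psi$ of $\frac{\dDl}{\sqrt[c]{\Dl^{2}-4}}$ on all of $U_{n}\setminus G_{n}$ is \emph{equivalent} to the vanishing of the period $\int_{\Gm_{n}}$ you are trying to prove. As written, the key step assumes the conclusion. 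The repair stays entirely within your framework: define $\Psi$ only on a simply connected one-sided neighborhood of the lip $G_{n}^{+}$ (or of the given admissible path), where an antiderivative exists trivially; the ODE/uniqueness argument gives $\cosh\Psi=\Dl/2$ and $\sinh\Psi=\frac{1}{2}\sqrt[c]{\Dl^{2}-4}$ there, and the endpoint data $\Dl(\lm_{n}^{\pm})=2(-1)^{n}$, $\Dl^{2}(\lm_{n}^{\pm})-4=0$ still yield $\Psi(\lm_{n}^{+})-\Psi(\lm_{n}^{-})\in 2\pi\ii\Z$. (Equivalently, for the circuit integral directly: $\cosh\Psi$ and $\sinh\Psi$ are single valued on $U_{n}\setminus G_{n}$, hence so is $e^{\Psi}$, so the monodromy of $\Psi$ around $\Gm_{n}$ lies in $2\pi\ii\Z$.)

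Two further points. Your continuity argument is phrased for the lip integral, whose domain degenerates as $p$ crosses $Z_{n}$ and whose endpoints $\lm_{n}^{\pm}(p)$ are not individually analytic; it is cleaner to run the integer-valued-plus-connectedness argument on the circuit integral $p\mapsto\int_{\Gm_{n}}\frac{\dDl}{\sqrt[c]{\Dl^{2}-4}}\,\dlm$ over a contour fixed on $\Wp_{q}$, which is manifestly analytic in $p$, takes values in $2\pi\ii\Z$, and hence is constant on the connected set $\Wp$ and equal to its value $0$ at $q=0$. Also, once the period vanishes, integrals over \emph{different} admissible paths from $\lm_{n}^{-}$ to $\lm_{n}^{+}$ agree, since they differ by multiples of the period; the ``any admissible path'' clause needs this one extra sentence. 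For comparison, the standard proof the paper points to treats (ii) differently: for real $q$ the function $\pm\arccosh\bigl((-1)^{n}\Dl(\lm)/2\bigr)$ is an explicit antiderivative along $G_{n}$ vanishing at both band edges (the paper uses exactly this computation in the proof of Lemma~\ref{Rn-prop}), and the complex case follows because the circuit integral is analytic in $p$ and vanishes on the totally real subspace $H_{0}^{-1}$. Your monodromy argument is a legitimate, self-contained alternative once the simple-connectivity slip is fixed.
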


\begin{proof}
Standard -- see e.g. \cite[Lemma 1]{Molnar:_ROURXz4}.\qed
\end{proof}

Next we define for any $q\in\Wp$ and $\lm\in \C\setminus \bigcup_{\atop{\gm_{n}\neq 0}{n\ge 0}} G_{n}$ the improper integral
\begin{equation}
  \label{F-prod}
    F(\lm) \defl \int_{\lm_{0}^{+}}^{\lm} \frac{\dDl(z)}{\sqrt[c]{\Dl^{2}(z)-4}}\,\dz,
  \qquad
  \frac{\dDl(\lm)}{\sqrt[c]{\Dl^{2}(\lm)-4}}
   = \frac{1}{2\ii}\frac{1}{\sqrt[+]{\lm-\lm_{0}^{+}}}\prod_{m\ge 1}
     \frac{\lm_{m}^{\ld}-\lm}{\vs_{m}(\lm)},
\end{equation}
computed along an arbitrary admissible path. The improper integral $F(\lm)$ exists as in the product representation~\eqref{F-prod} the factor ${1}/{\sqrt[+]{\lm-\lm_{0}^{+}}}$ is integrable on $\C\setminus G_{0}$. Furthermore, by Lemma~\ref{w-closed} it is independent of the chosen admissible path and hence well defined. Moreover, $F(\lm)$ continuously extends to $G_{n}^{\pm}$ for any $n\ge 0$, where $G_{n}^{+}$ [$G_{n}^{-}$] is the left [right] hand side of $G_{n}$.

\begin{lem}
\label{F-prop}
For any $q\in \Wp$ the following holds:

\begin{renum}
\item $F$ is analytic in $(\lm,p)$ on $(\C\setminus\bigcup_{n\ge 0} \ob{U_{n}}) \times \Wp_{q}$ and $F(\lm)\equiv F(\lm,q)$ is analytic in $\lm$ on $\C\setminus\bigcup_{\atop{\gm_{n}\neq 0}{n\ge 0}} G_{n}$. (We recall that $\gm_{0} = \infty$.)

\item $F(\lm_{0}^{+}) = 0$ and $F(\lm_{n}^{+}) = F(\lm_{n}^{-}) = -\ii n\pi$ for any $n\ge 1$.

\item 
Locally uniformly on $\Wp_{q}$
\[
  \sup_{\lm\in G_{n}^{+}\cup G_{n}^{-}}\abs{F(\lm)+\ii n\pi} = O(\gm_{n}/n),\qquad n\to\infty.
\]

\item For $q = 0$, $F(\lm)$ is analytic on $\C\setminus(-\infty,0]$ and given by $F(\lm) = -\ii \sqrt[+]{\lm}$.

\end{renum}

\end{lem}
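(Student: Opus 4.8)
The plan is to read off all four statements from the product representation of the integrand in \eqref{F-prod} together with the path--independence supplied by Lemma~\ref{w-closed}, treating the normalization constant $-\ii n\pi$ in (ii) through the quasi--momentum $w$, characterized by $\Dl(\lm)=2\cos w(\lm)$ and $w(\lm_{0}^{+})=0$. I would prove the items in the order (i), (iv), (ii), (iii), each using the previous ones.

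First I would settle (i) and (iv). Path--independence of $F$ is exactly Lemma~\ref{w-closed}(ii), so $F$ is well defined, and analyticity in $(\lm,p)$ on $(\C\setminus\bigcup_{n\ge0}\ob{U_{n}})\times\Wp_{q}$, resp. in $\lm$ on $\C\setminus\bigcup_{\gm_{n}\neq0}G_{n}$, follows by integrating the $(\lm,p)$--analytic, resp. $\lm$--analytic, integrand of Lemma~\ref{w-closed}(i) along a locally fixed admissible path. The only point needing care is the lower endpoint $\lm_{0}^{+}$, which sits in $\ob{U_{0}}$: near it I would substitute $\zeta=\sqrt[+]{z-\lm_{0}^{+}}$, so that by \eqref{F-prod} one has $\frac{\dDl(z)}{\sqrt[c]{\Dl^{2}(z)-4}}\,\dz=\frac{1}{\ii}\,g(\lm_{0}^{+}+\zeta^{2})\,\mathrm{d}\zeta$, where $g(z)\defl\prod_{m\ge1}\frac{\lm_{m}^{\ld}-z}{\vs_{m}(z)}$ is analytic in $(z,p)$ near $z=\lm_{0}^{+}$. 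This removes the apparent singularity, exhibits joint analyticity up to the endpoint, and gives $F(\lm_{0}^{+})=0$. Part (iv) is then immediate: for $q=0$, Lemma~\ref{w-closed}(iii) gives $\frac{\dDl}{\sqrt[c]{\Dl^{2}-4}}=\frac{1}{2\ii\sqrt[+]{\lm}}$, and since $\lm_{0}^{+}(0)=0$, integration yields $F(\lm)=-\ii\sqrt[+]{\lm}$, analytic on $\C\setminus(-\infty,0]$; in particular $F(n^{2}\pi^{2})=-\ii n\pi$.

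For (ii) the equality $F(\lm_{n}^{+})=F(\lm_{n}^{-})$ is the vanishing of the integral across the gap, i.e. Lemma~\ref{w-closed}(ii). To identify the common value I would pass to the quasi--momentum $w$: from $\dDl=-2\sin w\,w'$ and the sign of the canonical root fixed in Section~\ref{section2} (namely $\ii\sqrt[c]{\Dl^{2}-4}>0$ on the first band, so that $\sqrt[c]{\Dl^{2}-4}=-2\ii\sin w$) one obtains $\frac{\dDl}{\sqrt[c]{\Dl^{2}-4}}=-\ii w'$, whence $F=-\ii w$. At a periodic eigenvalue $\Dl(\lm_{n}^{\pm})^{2}=4$, so $\cos w=\pm1$ and therefore $w(\lm_{n}^{\pm})\in\pi\Z$, i.e. $F(\lm_{n}^{\pm})\in-\ii\pi\Z$. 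Finally I would pin the integer: $q\mapsto F(\lm_{n}^{+}(q))$ is continuous on the connected neighborhood $\Wp$ and takes values in the discrete set $-\ii\pi\Z$, hence is constant and equal to its value at $q=0$, which is $-\ii n\pi$ by (iv). (Alternatively one may invoke the oscillation theorem, that $w$ increases by exactly $\pi$ across each spectral band.)

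The last and most delicate point is (iii). Fixing $\lm\in G_{n}^{+}\cup G_{n}^{-}$ and using $F(\lm_{n}^{-})=-\ii n\pi$ from (ii), I would write $F(\lm)+\ii n\pi=\int_{\lm_{n}^{-}}^{\lm}\frac{\dDl}{\sqrt[c]{\Dl^{2}-4}}\,\dz$ along the appropriate side of the slit and insert \eqref{F-prod}. On $G_{n}$ the prefactor satisfies $\frac{1}{2\ii\sqrt[+]{z-\lm_{0}^{+}}}=O(1/n)$ and the tail product $\prod_{m\neq n}\frac{\lm_{m}^{\ld}-z}{\vs_{m}(z)}$ is $O(1)$ locally uniformly in $q$ and uniformly in $n$ and $z$, so the estimate reduces to bounding $\int_{\lm_{n}^{-}}^{\lm}\frac{\lm_{n}^{\ld}-z}{\vs_{n}(z)}\,\dz$. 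Using $\partial_{z}\vs_{n}=\frac{z-\tau_{n}}{\vs_{n}}$ I would split this antiderivative as
\[
  \int_{\lm_{n}^{-}}^{\lm}\frac{\lm_{n}^{\ld}-z}{\vs_{n}(z)}\,\dz
   = (\lm_{n}^{\ld}-\tau_{n})\int_{\lm_{n}^{-}}^{\lm}\frac{\dz}{\vs_{n}(z)} - \vs_{n}(\lm),
\]
and then bound the ingredients: $\abs{\vs_{n}(\lm)}\le\gm_{n}/2$ on $G_{n}$; the arcsine--type integral $\abs*{\int_{\lm_{n}^{-}}^{\lm}\frac{\dz}{\vs_{n}(z)}}\le\pi$ (from $\int_{\lm_{n}^{-}}^{\lm_{n}^{+}}\frac{\dz}{\sqrt{(\lm_{n}^{+}-z)(z-\lm_{n}^{-})}}=\pi$); and $\abs{\lm_{n}^{\ld}-\tau_{n}}=O(\gm_{n})$ locally uniformly. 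Each contributes $O(\gm_{n})$, so the integral is $O(\gm_{n})$ and, after multiplying by the $O(1/n)$ prefactor, $\abs{F(\lm)+\ii n\pi}=O(\gm_{n}/n)$ uniformly on $G_{n}^{+}\cup G_{n}^{-}$. The main obstacle I anticipate is making these last bounds genuinely uniform -- in particular the locally uniform control of the infinite tail product on the gaps and the estimate $\abs{\lm_{n}^{\ld}-\tau_{n}}=O(\gm_{n})$ (transparent for real $q$ from Rolle's theorem, where $\lm_{n}^{\ld}\in[\lm_{n}^{-},\lm_{n}^{+}]$, but requiring the product asymptotics for general $q\in\Wp$) -- together with the branch and sign bookkeeping needed to justify $F=-\ii w$ in (ii).
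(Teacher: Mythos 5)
Your proposal is correct and, for item (iii) --- the only item the paper actually proves in detail (it cites (i), (ii), (iv) as well known) --- it follows essentially the same route: parameterize $G_{n}$, split the integrand into the $n$-th factor $\frac{\lm_{n}^{\ld}-z}{\vs_{n}(z)}$ and the remainder $\chi_{n}$, and combine the uniform bound $\chi_{n}|_{G_{n}}=O(1/n)$ with $\abs{\lm_{n}^{\ld}-\tau_{n}}=O(\abs{\gm_{n}})$ (both of which you correctly flag as the external inputs, cited in the paper from Kappeler--Topalov). One small imprecision: your reduction to bounding $\abs*{\int_{\lm_{n}^{-}}^{\lm}\frac{\lm_{n}^{\ld}-z}{\vs_{n}(z)}\,\dz}$ after pulling out $\sup\abs{\chi_{n}}$ uses the false inequality $\abs{\int fg}\le\sup\abs{f}\cdot\abs{\int g}$; what you actually need is $\int\abs{\frac{\lm_{n}^{\ld}-z}{\vs_{n}(z)}}\,\abs{\dz}=O(\abs{\gm_{n}})$, which follows at once from the same ingredients (on the gap $\abs{\vs_{n}}=\frac{\abs{\gm_{n}}}{2}\sqrt{1-s^{2}}$, $\abs{\dz}=\frac{\abs{\gm_{n}}}{2}\ds$, and $\int_{-1}^{1}\frac{\ds}{\sqrt{1-s^{2}}}=\pi$), so the gap is cosmetic rather than substantive.
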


\begin{proof}
Items (i), (ii), and (iv) are well known. See e.g. \cite[Lemma 2]{Molnar:_ROURXz4} for a proof.

(iii) 
To obtain the claimed estimate for $F(\lm)+\ii n\pi = \int_{\lm_{n}^{-}}^{\lm} \frac{\dDl(\lm)}{\sqrt[c]{\Dl^{2}(\lm)-4}}\,\dlm $ on $G_{n}^{+}\cup G_{n}^{-}$ it suffices to consider $p\in \Wp_{q}$ with $\gm_{n}(p)\neq0$. Parameterizing $G_{n}^{\pm}$ by $\lm_{s}^{\pm} = \tau_{n} + (s\pm \ii 0)\gm_{n}/2$, $-1\le s\le 1$, one gets for $-1\le t\le 1$
\[
  F(\lm_{t}^{\pm}) + \ii n\pi 
  = \int_{\lm_{n}^{-}}^{\lm_{t}^{\pm}} \frac{\dDl}{\sqrt[c]{\Dl^{2}-4}}\,\dz
  = \mp \int_{-1}^{t} \frac{\lm_{n}^{\ld}-\tau_{n} - s\gm_{n}/2}{\ii\sqrt[+]{1-s^{2}}}\chi_{n}(\lm_{s})\,\ds,
\]
where
\begin{equation}
\label{chi.n.def} 
  \chi_{n}(\lm) \defl 
  \frac{1}{2\ii}
  \frac{1}{\sqrt[+]{\lm-\lm_{0}^{+}}}
  \prod_{1\le m\neq n}\frac{\lm_{m}^{\ld}-\lm}{\vs_{m}(\lm)}
\end{equation}
The map $\chi_{n}$ is analytic on $U_{n}\times \Wp_{q}$ and $n\chi_{n}|_{G_{n}} = O(1)$ as $n\to\infty$ locally uniformly on $\Wp_{q}$ -- see \cite[Lemma 9.7]{Kappeler:2005fb}.
Moreover, by \cite[Proposition 2.18]{Kappeler:2005fb} one can choose $n_{0}\ge 1$ locally uniformly on $\Wp_{q}$ such that $\abs{\lm_{n}^{\ld}-\tau_{n}}\le\abs{\gm_{n}}$ for $n\ge n_{0}$. Altogether this shows
\[
  \sup_{\lm\in G_{n}^{\pm}} \abs{F(\lm) + \ii n\pi} = O(\abs{\gm_{n}}/n)
\]
locally uniformly on $\Wp_{q}$.\qed
\end{proof}

Occasionally we write for $n\ge 0$
\begin{equation}
  \label{F.n.int.formula}
  F_n(\lm) \defl F(\lm) + \ii n\pi = \int_{\lm_n^+}^{\lm} \frac{\dDl}{\sqrt[c]{\Dl^{2}-4}}\,\dz
\end{equation}
to denote the primitive of $\frac{\dDl}{\sqrt[c]{\Dl^{2}-4}}$ normalized by the condition $F_n(\lm_n^+) = 0$.

\begin{lem}
\label{F2.analytic}
For any $q\in \Wp$ and any $n\ge 0$, $F_{n}^{2}$ is analytic on $U_{n}$ and hence on $\C\setminus\bigcup_{\atop{n\neq m\ge 0}{\gm_{m}\neq 0}} G_{m}$.
\end{lem}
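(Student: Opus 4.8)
The plan is to isolate, on $U_n$, the single source of non-analyticity of the integrand of $F_n$ and to observe that it is a square-root branch point, so that squaring $F_n$ removes it. First I would dispose of the degenerate case $\gm_n=0$: then $G_n$ collapses to the point $\lm_n^+$, which does not belong to the excluded set $\bigcup_{\gm_m\neq0}G_m$, and $U_n$ is disjoint from every non-collapsed gap, so $F$ — and hence $F_n=F+\ii n\pi$ and $F_n^2$ — is already analytic on $U_n$ by Lemma~\ref{F-prop}(i). Assume therefore $\gm_n\neq0$. Pulling the $m=n$ factor out of the product in \eqref{F-prod} and recalling \eqref{chi.n.def}, the integrand factors as
\[
  \frac{\dDl(\lm)}{\sqrt[c]{\Dl^{2}(\lm)-4}} = \frac{\lm_n^\ld-\lm}{\vs_n(\lm)}\,\chi_n(\lm),
\]
where $\chi_n$ is analytic on $U_n\times\Wp_q$, since it involves only $\sqrt[+]{\lm-\lm_0^+}$ and the roots $\vs_m$ with $m\neq n$, all analytic on $U_n$ because the isolating neighborhoods are mutually disjoint and $G_0$ is far from $U_n$. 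Thus on $U_n$ the only non-analyticity is the branch cut of the standard root $\vs_n$ along $G_n$.

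Next I would record that $F_n$ is single-valued and analytic on $U_n\setminus G_n$: this region is a disk minus an interior slit, whose fundamental group is generated by a circuit $\Gm_n$ around $G_n$, and the period of the integrand along $\Gm_n$ vanishes by Lemma~\ref{w-closed}(ii). The heart of the argument is to pass to the two-sheeted cover $\pi\colon Y\to U_n$ branched at $\lm_n^\pm$ on which the square root $w^2=(\lm_n^+-\lm)(\lm_n^--\lm)\circ\pi$ lifting $\vs_n$ is single-valued. The pulled-back one-form $\omega_n\defl \frac{\lm_n^\ld-\lm}{\vs_n}\,\chi_n\,\dlm$ is then single-valued; in a local parameter $t$ with $t^2\sim\lm-\lm_n^\pm$ near a branch point it reads $\omega_n\sim c\,\dx t$, so $\omega_n$ is in fact holomorphic on all of $Y$, and since its only period $\oint_{\Gm_n}$ vanishes its primitive lifts $F_n$ to a holomorphic function $\tilde F_n$ on $Y$. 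Let $\sigma\colon Y\to Y$ be the sheet-exchange involution: it fixes the two branch points, leaves $\lm=\pi$ and $\chi_n$ invariant, and sends $\vs_n\mapsto-\vs_n$, hence multiplies $\omega_n$ by $-1$. Therefore $\tilde F_n\circ\sigma=-\tilde F_n+C$, and evaluating at the fixed point over $\lm_n^+$, where the normalization $F_n(\lm_n^+)=0$ in \eqref{F.n.int.formula} forces $\tilde F_n=0$, pins $C=0$; thus $\tilde F_n$ is odd, $\tilde F_n\circ\sigma=-\tilde F_n$.

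It follows that $\tilde F_n^{\,2}$ is holomorphic on $Y$ and $\sigma$-invariant, hence descends through the branched cover $\pi$ to a holomorphic function on $U_n$ that agrees with $F_n^2$ on $U_n\setminus G_n$. (Equivalently, the oddness $\tilde F_n(-t)=-\tilde F_n(t)$ makes $\tilde F_n^{\,2}$ an even function of $t$, i.e. an analytic function of $t^2\sim\lm-\lm_n^\pm$ near each branch point, while along $G_n\setminus\{\lm_n^\pm\}$ the equal boundary values $F_n^{+}=-F_n^{-}$ across the slit make $F_n^2$ continuous across the analytic arc, hence analytic there by Morera/Painlevé.) This gives analyticity of $F_n^2$ on all of $U_n$. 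Finally, since $F$ is analytic on $\C\setminus\bigcup_{\gm_m\neq0,\,m\ge0}G_m$ by Lemma~\ref{F-prop}(i), so are $F_n=F+\ii n\pi$ and $F_n^2$; and because $U_n$ contains $G_n$ but is disjoint from every other gap, gluing the two domains yields analyticity of $F_n^2$ on $\C\setminus\bigcup_{n\neq m\ge0,\,\gm_m\neq0}G_m$, as claimed.

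The hard part is precisely the middle step: showing that $F_n$ changes sign under the sheet exchange \emph{without} an additive constant, equivalently that $F_n^{+}=-F_n^{-}$ across $G_n$. This is where both ingredients are indispensable — the vanishing period $\oint_{\Gm_n}=0$ from Lemma~\ref{w-closed}(ii), which lets $F_n$ lift single-valuedly to $Y$, and the normalization $F_n(\lm_n^+)=0$ at the branch point, which forces $C=0$. Everything else reduces to routine applications of the removable-singularity theorem and of analytic continuation across an arc.
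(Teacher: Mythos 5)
Your argument is correct and is essentially the paper's: the key identity $F_{n}(\lm^{+})=-F_{n}(\lm^{-})$, obtained from the sign flip of $\vs_{n}$ across $G_{n}$ together with the normalization $F_{n}(\lm_{n}^{+})=0$, is exactly what the paper uses, and your two-sheeted-cover packaging is only a more elaborate presentation of it (your parenthetical ``equivalently'' remark is verbatim the paper's proof). The only cosmetic omission is the case $n=0$, where the relevant factor is $1/\sqrt[+]{\lm-\lm_{0}^{+}}$ rather than $(\lm_{n}^{\ld}-\lm)/\vs_{n}(\lm)$ and $G_{0}$ is a ray; the same sign-flip argument applies there.
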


\begin{proof}
Let $q\in \Wp$ and $n\ge 0$ be arbitrary. It remains to show that $F_{n}^{2}$ admits an analytic extension from $U_{n}\setminus G_{n}$ to all of $U_{n}$. In view of \eqref{F-prod} we can write
\[
  \frac{\dDl(\lm)}{\sqrt[c]{\Dl^{2}(\lm)-4}}
   = \frac{1}{\sqrt[+]{\lm-\lm_{0}^{+}}}\chi_{0}(\lm),\qquad
   \chi_{0}(\lm) = \frac{1}{2\ii}
  \prod_{m\ge 1}\frac{\lm_{m}^{\ld}-\lm}{\vs_{m}(\lm)},
\]
and for any $k\ge 1$,
\[
   \frac{\dDl(\lm)}{\sqrt[c]{\Dl^{2}(\lm)-4}}
   = \frac{\lm_{k}^{\ld}-\lm}{\vs_{k}(\lm)}\chi_{k}(\lm),
   \qquad\quad
   \chi_{k}(\lm) = 
   \frac{1}{2\ii}
   \frac{1}{\sqrt[+]{\lm-\lm_{0}^{+}}}
   \prod_{1\le m\neq k}\frac{\lm_{m}^{\ld}-\lm}{\vs_{m}(\lm)}.
\]
For any $k\ge 0$, $\chi_{k}$ is analytic on $U_{k}$. Moreover, the root $\sqrt[+]{\lm-\lm_{0}^{+}}$ [$\vs_{k}$] admits opposite signs on opposite sides of $G_{0}$ [$G_{k}$]. For any given $\lm\in G_{n}$ denote by $\lm^{\pm}$ the corresponding elements on $G_{n}^{\pm}$. Thus for  all $\lm \in G_{n}$
\[
  F_{n}(\lm^{+}) = - F_{n}(\lm^{-}).
\]
Consequently, $F_{n}^{2}$ is continuous on all of $U_{n}$. Since it is already analytic on $U_{n}\setminus G_{n}$, it is analytic on $U_{n}$.\qed
\end{proof}


\subsection{Asymptotics of F}
\label{ss:4-F-exp}

Assume that $q$ is a real valued finite gap potential with $\int_0^1 q(x) \,\dx = 0$. Then $q$ is real analytic and there exists $A \subset \N$ finite so that $\gm_n >0$ if and only if $n \in A$. Hence there exists $n_0 \ge 1$ so that $\lm_n^+ = \lm_n^-$ and $\abs{\lm_n^+ - \pi^2n^2} \le 1$ for any $n > n_0$ and $|\lm_{0}^{+}|$, $\abs{\lm_n^\pm} \le n_0^2 \pi^2 + 1$ for any $n \le n_0$. By Lemma \ref{F2.analytic} it then follows that $F^2(\lm)$ and hence $F^4(\lm)$ are analytic functions in the domain $\abs{\lm} > n_0^2 \pi^2 + 1$. In particular, $F^2(\lm)$ admits a Laurent expansion,
\begin{equation}
\label{F2.laurent.expansion} 
F^2(\lm) = \sum_{k \in \Z} a_k \lm^k \ , \qquad \abs{\lm} > n_0^2 \pi^2 + 1.
\end{equation}
From the product representation~\eqref{F-prod} one sees that $F^2(\lm) = O(\lm)$ and hence $a_k = 0$ for any $k \ge 2$. It is well known that the coefficients $a_{k}$ can be explicitly computed -- see e.g. \cite{Molnar:_ROURXz4} for a self-contained proof.

\begin{lem}
\label{F-exp}
For any real valued finite gap potential $q$ with $\int_0^1 q(x)\, \dx = 0$, 
\begin{equation}
  \label{F4.exp} 
  F^4(\lm) = \lm^2 - \Hc_{0} -  \frac{\Hkc}{4}\frac{1}{\lm} + O\left(\frac{1}{\lm^2}\right),
  \qquad \Hc_{0} = \frac{1}{2}\int_{0}^{1} q(x)^{2}\,\dx.\fish
\end{equation}
\end{lem}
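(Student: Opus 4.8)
The plan is to identify $F$ with $-\ii$ times the Floquet exponent of $L(q)$ and to read off the Laurent coefficients of $F^2$ from a WKB expansion. The Floquet multipliers $\rho_\pm$ of $L(q)$ solve $\rho^2-\Dl\rho+1=0$; writing $\rho_\pm=\e^{\pm\ii k}$ gives $\Dl=2\cos k$ and $\Dl^2-4=(2\ii\sin k)^2$, so that $\dDl=-2\sin k\,k'$ and $\tfrac{\dDl}{\sqrt[c]{\Dl^2-4}}=\pm\ii k'$. The sign of the canonical root in \eqref{c-root} together with the free case $F=-\ii\sqrt[+]{\lm}$, $k=\sqrt[+]{\lm}$ of Lemma~\ref{F-prop}(iv) fixes the branch to $F'=-\ii k'$, whence $F=-\ii k$ on the domain $\abs{\lm}>n_0^2\pi^2+1$ where, for a finite gap $q$, $F$ is analytic. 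Thus $F^2=-k^2$ and $F^4=k^4$, and since \eqref{F4.exp} only asks for the expansion up to order $\lm^{-1}$, it is enough to expand $k^2$ to order $\lm^{-2}$, i.e.\ $k$ to order $\lm^{-5/2}$.

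\textbf{WKB via the Riccati equation.} First I would generate this expansion from the Riccati equation. The logarithmic derivative $w=\psi'/\psi$ of a Floquet solution satisfies $w'+w^2=q-\lm$, and $\ii k=\log\rho_+=\int_0^1 w\,\dx$. Writing $\zeta=\sqrt[+]{\lm}$ and inserting $w=\ii\zeta+\sum_{j\ge1}a_j(x)\zeta^{-j}$ into the Riccati equation, matching powers of $\zeta$ yields a recursion for the $a_j$, the first terms being $a_1=-\tfrac{\ii}{2}q$, $a_2=\tfrac14 q'$, $a_3=-\tfrac{\ii}{8}(q^2-q'')$, $a_4=\tfrac14 qq'-\tfrac1{16}q'''$, with $a_5$ obtained from the order-$\zeta^{-4}$ identity $2\ii a_5=-(2a_1a_3+a_2^2+a_4')$. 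Integrating, $\ii k=\ii\zeta+\sum_{j\ge1}\zeta^{-j}\int_0^1 a_j\,\dx$. Here $\int_0^1 a_1\,\dx=0$ since $\int_0^1 q\,\dx=0$, and $\int_0^1 a_2\,\dx=\int_0^1 a_4\,\dx=0$ because those coefficients are exact derivatives of $1$-periodic functions; so up to the required order only $\int_0^1 a_3\,\dx$ and $\int_0^1 a_5\,\dx$ survive. The vanishing of the $\zeta^{-1}$ term (which is exactly where $\int_0^1 q\,\dx=0$ enters) forces the constant coefficient $a_0$ of \eqref{F2.laurent.expansion} to vanish, hence also the coefficient of $\lm$ in $F^4$.

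\textbf{Identification with $\Hc_0$ and $\Hkc$.} Next I would identify the two surviving integrals by integration by parts. One finds $\int_0^1 a_3\,\dx=-\tfrac{\ii}{8}\int_0^1 q^2\,\dx=-\tfrac{\ii}{4}\Hc_0$, while for $a_5$ the contributions $qq''$ and $q''''$ are simplified via $\int_0^1 qq''\,\dx=-\int_0^1 (q')^2\,\dx$ and $\int_0^1 q''''\,\dx=0$, leaving $\int_0^1 a_5\,\dx=\tfrac1{2\ii}\bigl(\tfrac18\int_0^1 q^3\,\dx+\tfrac1{16}\int_0^1(q')^2\,\dx\bigr)=\tfrac1{2\ii}\cdot\tfrac{\Hkc}{8}$, using $2\Hkc=2\int_0^1 q^3\,\dx+\int_0^1(q')^2\,\dx$. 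Substituting back gives $k=\zeta-\tfrac{\Hc_0}{4}\zeta^{-3}-\tfrac{\Hkc}{16}\zeta^{-5}+O(\zeta^{-7})$, hence $k^2=\lm-\tfrac{\Hc_0}{2}\lm^{-1}-\tfrac{\Hkc}{8}\lm^{-2}+O(\lm^{-3})$, and finally
\[
  F^4=(k^2)^2=\lm^2-\Hc_0-\frac{\Hkc}{4}\frac1\lm+O\Bigl(\frac1{\lm^2}\Bigr),
\]
which is \eqref{F4.exp}.

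\textbf{Main obstacle.} The difficulty is computational rather than conceptual: running the WKB recursion accurately as far as $a_5$ and then carrying out the integration-by-parts reductions so that the surviving period integrals collapse exactly onto $\int_0^1 q^2\,\dx$ and the KdV density $\tfrac12(q')^2+q^3$. One should also justify that the formal series for $w$ is a genuine asymptotic expansion admitting term-by-term integration and squaring; for a finite gap potential this is standard since $q$ is real analytic, and the analyticity of $F^2$ for $\abs{\lm}>n_0^2\pi^2+1$ noted before \eqref{F2.laurent.expansion} ensures that what is produced is indeed the Laurent expansion of $F^4$. Finally, the branch choice in $F=-\ii k$ must be fixed once through the free case, as above.
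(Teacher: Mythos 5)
Your proof is correct: I checked the Riccati recursion (the coefficients $a_1,\dots,a_5$, the vanishing of $\int_0^1 a_1,\int_0^1 a_2,\int_0^1 a_4$, and the identifications $\int_0^1 a_3 = -\tfrac{\ii}{4}\Hc_0$ and $2\ii\int_0^1 a_5 = \tfrac18\int_0^1 q^3 + \tfrac1{16}\int_0^1(q')^2 = \tfrac{\Hkc}{8}$) and the resulting expansion of $k^2$ and $F^4$ all come out exactly as claimed. The paper itself gives no proof of this lemma, delegating the computation of the Laurent coefficients of $F^2$ to the cited reference; your quasi-momentum/WKB argument is precisely the standard route taken there, and your final remark correctly notes that the a priori Laurent expansion \eqref{F2.laurent.expansion} lets you identify the coefficients from the asymptotic expansion along a single ray.
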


As an application of the asymptotics \eqref{F4.exp} of $F^4(\lm)$ we derive a formula for the KdV Hamiltonian $\Hkc$ for finite gap  potentials. To this end we introduce for any $q \in \Wp$ and $n \ge 1$
\begin{equation}
\label{Rn.def}
  \Rc_n \equiv \Rc_n(q) = \frac{1}{\pi} \int_{\Gm_n} F_{n}^{3} \, \dlm,
  \qquad
  F_{n}(\lm) = F(\lm) + \ii n\pi.
\end{equation}
According to Lemma \ref{F-prop} (i), $F_{n}(\lm)$ is analytic on $\C \setminus \bigcup_{\atop{\gm_{m} \neq 0}{m\ge 0}} G_m$, hence $\Rc_n$ vanishes if $\gm_n = 0$.

\begin{prop}
For any real finite gap potential $q$, 
\begin{equation}
  \label{ham.form.fg}
  \Hkc(q) = \sum_{ n \in A } 8n^{3}\pi^{3}I_{n} + \sum_{n \in A} 8n\pi \Rc_{n} \ ,
\end{equation}
where $A \equiv A_q = \setdef{ n \ge 1 }{ \gm_{n} \neq 0 }$  is finite.\fish
\end{prop}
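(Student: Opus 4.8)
The plan is to extract $\Hkc$ from the coefficient of $\lm^{-1}$ in the Laurent expansion of $F^4$ at infinity and then to evaluate that coefficient by a residue/deformation argument that collapses a large circle onto the finitely many non-collapsed gaps. Concretely, Lemma~\ref{F-exp} and the expansion~\eqref{F4.exp} give, for any $R > n_0^2\pi^2 + 1$,
\[
  \frac{1}{2\pi\ii}\oint_{\abs{\lm}=R} F^4(\lm)\,\dlm = -\frac{\Hkc}{4},
\]
since $\frac{1}{2\pi\ii}\oint_{\abs{\lm}=R}\lm^k\,\dlm = \dl_{k,-1}$. First I would record that, for a finite gap potential, $F^2 = F_0^2$ is analytic on $\C\setminus\bigcup_{n\in A} G_n$ by Lemma~\ref{F2.analytic} applied with $n=0$; in particular $F^4 = (F_0^2)^2$ extends analytically across the unbounded gap $G_0$, so the only singularities of $F^4$ are the bounded cuts $G_n$, $n\in A$. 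Deforming the circle $\abs{\lm}=R$ inward onto small counterclockwise circuits $\Gm_n\subset U_n$ around each $G_n$ then yields $\oint_{\abs{\lm}=R} F^4\,\dlm = \sum_{n\in A}\oint_{\Gm_n} F^4\,\dlm$.

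Next I would compute each $\oint_{\Gm_n} F^4\,\dlm$ by writing $F = F_n - \ii n\pi$ on $U_n$ and expanding
\[
  F^4 = F_n^4 - 4\ii n\pi\,F_n^3 - 6n^2\pi^2\,F_n^2 + 4\ii n^3\pi^3\,F_n + n^4\pi^4.
\]
By Lemma~\ref{F2.analytic} the functions $F_n^2$ and $F_n^4 = (F_n^2)^2$ are analytic on $U_n$, so $\oint_{\Gm_n}F_n^4\,\dlm = \oint_{\Gm_n}F_n^2\,\dlm = 0$, and the constant term integrates to $0$ as well. The cubic term gives $\oint_{\Gm_n}F_n^3\,\dlm = \pi\Rc_n$ by the definition~\eqref{Rn.def}. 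For the linear term I would use $F_n' = \dDl/\sqrt[c]{\Dl^2-4}$, so that $\pi I_n = \oint_{\Gm_n}\lm\,F_n'\,\dlm$ by~\eqref{action}; integrating by parts around the closed loop gives $\pi I_n = -\oint_{\Gm_n}F_n\,\dlm$, whence $\oint_{\Gm_n}F_n\,\dlm = -\pi I_n$. Combining, $\oint_{\Gm_n}F^4\,\dlm = -4\ii\pi^2 n\,\Rc_n - 4\ii\pi^4 n^3 I_n$; summing over $n\in A$ and comparing with $-\pi\ii\,\Hkc/2$ yields $\Hkc = \sum_{n\in A}8n^3\pi^3 I_n + \sum_{n\in A}8n\pi\,\Rc_n$ after a short bookkeeping of constants.

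The main obstacle is to justify the two analyticity inputs that make the deformation and the integration by parts legitimate. The delicate point for the deformation is that $G_0$ is an unbounded ray, so a priori a circle $\abs{\lm}=R$ meets it; one must use that $F^4$ is in fact analytic across $G_0$ (Lemma~\ref{F2.analytic} with $n=0$), so that $G_0$ carries no cut and contributes nothing. For the integration by parts one must check that the boundary term $[\lm F_n]$ vanishes, i.e.\ that $F_n$ is single valued along $\Gm_n$; this holds because $F_n$ has square root branch points at the two endpoints $\lm_n^{\pm}$ of $G_n$ (by Lemma~\ref{F-prop}(ii), $F_n(\lm_n^{\pm}) = 0$, and across $G_n$ one has $F_n(\lm^+) = -F_n(\lm^-)$, as in the proof of Lemma~\ref{F2.analytic}), so encircling both endpoints produces trivial monodromy. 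Once these two points are settled, the remaining computation is the routine constant bookkeeping indicated above.
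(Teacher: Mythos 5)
Your argument is correct and follows essentially the same route as the paper: extract $\Hkc$ from the $\lm^{-1}$ Laurent coefficient of $F^4$ via Cauchy's formula, deform the large circle onto the circuits $\Gm_n$, $n\in A$, expand $F^4=(F_n-\ii n\pi)^4$ and discard the even powers of $F_n$ using the analyticity of $F_n^2$ on $U_n$, and identify the odd-power contributions with $\Rc_n$ and (after integration by parts) with $I_n$. The two points you flag as delicate — that $F^4$ is analytic across the unbounded gap $G_0$, and that $F_n$ is single-valued on $\Gm_n$ so the boundary term in the integration by parts vanishes — are indeed the implicit inputs of the paper's proof (both already guaranteed by Lemma~\ref{F-prop}(i) and Lemma~\ref{F2.analytic}), and your bookkeeping of constants is consistent with \eqref{formula.1}.
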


\begin{proof}
By Lemma \ref{F2.analytic}, $F^4(\lm)$ is analytic on $\C \setminus \bigcup_{n \in A} G_n$, hence in view of Lemma~\ref{F-exp} and Cauchy's integral formula
\begin{equation}
  \label{H1.formula}
  \frac{1}{2\pi \ii} \int_{C_r} F^4(\lm) \, \dlm  = -\frac{\Hkc}{4},
\end{equation}
where $C_r$ denotes a counter clockwise oriented circle around the origin enclosing all open gaps. By contour deformation we thus obtain
\begin{equation}
  \label{H1.formula1}
  \Hkc = -4 \frac{1}{2\pi \ii} \int_{C_r} F^4(\lm) \, \dlm  =
  -4\sum_{n\in A}  \frac{1}{2\pi \ii} \int_{\Gm_{n}} F^4(\lm) \, \dlm.
\end{equation}
Moreover, since $F_{n}^{2}$ is analytic on $U_{n}$, one concludes that in the expression $F^{4}(\lm) = (F_{n}(\lm)-\ii n\pi)^{4}$, the even powers of $F_n$ will not contribute to the integral  $\int_{\Gm_n} F^4(\lm) \, \dlm$. As a consequence 
\begin{align*}
  \frac{1}{2\pi \ii} \int_{\Gm_n} F^4(\lm) \, \dlm
   & = -\frac{1}{2\pi \ii} \int_{\Gm_n} \left(4(\ii n\pi)F_{n}^3(\lm) + 4(\ii n\pi)^{3} F_{n}(\lm)\right)  \, \dlm.
\end{align*}
After integration by parts, formula~\eqref{action} of the action $I_{n}$ becomes
\[
  I_n = -\frac{1}{\pi} \int_{\Gm_{n}} F(\lm)\,\dlm = -\frac{1}{\pi} \int_{\Gm_{n}} F_{n}(\lm)\,\dlm.
\]
One then concludes with \eqref{Rn.def} that
\begin{equation}
  \label{formula.1}
  -4 \frac{1}{2 \pi \ii} \int_{\Gm_n} F^4(\lm) \, \dlm
   =  (8n^{3}\pi^{3})I_n + (8n\pi)\Rc_{n}
\end{equation}
yielding in view of \eqref{H1.formula1} the claimed formula.
\qed
\end{proof}

\subsection{Nonlinear part of $\Hkc$}
\label{ss:4-pf}

Recall that $\Rc_{n}$, $n\ge 1$, has been defined in \eqref{Rn.def} as follows
\[
  \Rc_{n} = \frac{1}{\pi}\int_{\Gm_{n}} F_{n}^{3}\,\dlm,\qquad F_{n}(\lm) = F(\lm) + \ii n\pi.
\]
When expressed in Birkhoff coordinates, we denote $\Rc_{n}$ by $R_{n}$.
The following lemma describes the properties of $\Rc_{n}$ and $R_{n}$ used in the sequel.

\begin{lem}
\label{Rn-prop}

\begin{equivenum}
\item 
For any $n\ge 1$,
$\Rc_{n}$ is analytic on $\Wp$ and satisfies $\Rc_{n} = O(\gm_{n}^{4}/n^{3})$ locally uniformly on $\Wp$. Furthermore, if $\gm_{n} = 0$, then $\Rc_{n} = 0$, and for $q\in\Wp$ real valued, $\Rc_{n}\le 0$. In particular, for $q = 0$, $\Rc_{n} = 0$ for all $n\ge 1$.

\item
For any $n\ge 1$, $R_{n}$ is an analytic function of the actions alone on a neighborhood $\Vs$ (independent of $n$) of the positive cone $\ell_{+}^{-1,1}(\N)$ in $\ell_{\C}^{-1,1}(\N)$.
If $I_{n} = 0$, then $R_{n} = 0$, and on $\ell_{+}^{-1,1}(\N)$ we have $R_{n}\le 0$.\fish

\end{equivenum}

\end{lem}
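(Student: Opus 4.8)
\emph{Part (i): analyticity and size.}
The plan is to work on a fixed isolating contour. Given $q_{0}\in\Wp$, pass to a neighborhood $\Wp_{q_{0}}$ on which the isolating neighborhoods $U_{n}$ and a circuit $\Gm_{n}\subset U_{n}$ around $G_{n}$ can be chosen independent of $p$. On $\Gm_{n}$ the integrand $F_{n}^{3}=F_{n}\cdot F_{n}^{2}$ is analytic in $(\lm,p)$: by Lemma~\ref{F2.analytic} $F_{n}^{2}$ is analytic on $U_{n}$ and it is nonvanishing on $\Gm_{n}$, so a branch of $F_{n}=\sqrt{F_{n}^{2}}$ is analytic there, with joint analyticity in $p$ inherited from that of $\chi_{n}$ on $U_{n}\times\Wp_{q}$. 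Hence $\Rc_{n}=\frac1\pi\int_{\Gm_{n}}F_{n}^{3}\,\dlm$ is analytic on $\Wp_{q_{0}}$, and since $q_{0}$ is arbitrary, on all of $\Wp$. For the size estimate I would contract $\Gm_{n}$ onto the two sides $G_{n}^{\pm}$ of the gap, which is permissible as $F_{n}^{3}$ is analytic on $U_{n}\setminus G_{n}$. By Lemma~\ref{F-prop}(iii), $\abs{F_{n}}=O(\gm_{n}/n)$ on $G_{n}^{+}\cup G_{n}^{-}$ locally uniformly, while the contracted contour has length $O(\gm_{n})$; therefore $\Rc_{n}=O((\gm_{n}/n)^{3}\cdot\gm_{n})=O(\gm_{n}^{4}/n^{3})$ locally uniformly on $\Wp$. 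In particular this bound is $0$ when $\gm_{n}=0$, so $\Rc_{n}=0$ on $Z_{n}$ and, for $q=0$, for every $n$.

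\emph{Part (i): sign for real $q$.}
Contracting the contour and using $F_{n}(\lm^{+})=-F_{n}(\lm^{-})$ for $\lm\in G_{n}$ (from the proof of Lemma~\ref{F2.analytic}), with the parametrization $\lm_{t}^{\pm}=\tau_{n}+(t\pm\ii0)\gm_{n}/2$ and $u(t)\defl F_{n}(\lm_{t}^{-})$, one obtains
\[
  \Rc_{n}=\frac{\epsilon\gm_{n}}{\pi}\int_{-1}^{1}u(t)^{3}\,dt,
  \qquad
  I_{n}=-\frac{\epsilon\gm_{n}}{\pi}\int_{-1}^{1}u(t)\,dt,
\]
where $\epsilon=\pm1$ records the orientation and the second identity comes from the same contraction applied to $I_{n}=-\frac1\pi\int_{\Gm_{n}}F_{n}\,\dlm$. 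For real $q$ the quantities $\tau_{n},\gm_{n},\lm_{n}^{\ld}$ are real and, as the computation in the proof of Lemma~\ref{F-prop}(iii) shows, $u$ is real-valued with $u(\pm1)=0$. Now $u'(t)$ is a positive constant times $\dDl(\lm_{t}^{-})/\sqrt[c]{\Dl^{2}-4}$; by the product formula \eqref{dDl2} the only factor of $\dDl$ vanishing on $G_{n}$ is $\lm_{n}^{\ld}-\lm$, and $\sqrt[c]{\Dl^{2}-4}$ keeps a constant sign on $G_{n}^{-}$, so $u'$ changes sign exactly once on $(-1,1)$. Combined with $u(\pm1)=0$ this forces $u$ to have constant sign on $(-1,1)$. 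Since cubing preserves sign, $\int_{-1}^{1}u^{3}\,dt$ and $\int_{-1}^{1}u\,dt$ share that sign, hence $\Rc_{n}$ and $-I_{n}$ have the same sign; as $I_{n}\ge0$ for real $q$, we conclude $\Rc_{n}\le0$.

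\emph{Part (ii).}
I would first show that $\Rc_{n}$ depends on the actions alone: it is built from the discriminant $\Dl(\cdot,q)$, which is invariant under the Hamiltonian flows of the actions $I_{m}$, so $\{\Rc_{n},I_{m}\}=0$ for all $m$ and $\Rc_{n}\circ\Phi^{-1}$ is a function of $(I_{k})_{k\ge1}$ only, denoted $R_{n}$. To extend $R_{n}$ analytically to a neighborhood $\Vs$ of $\ell^{-1,1}_{+}(\N)$ in $\ell^{-1,1}_{\C}(\N)$ independent of $n$, I would follow the analytic-continuation scheme for spectral quantities of \cite{Kappeler:2005fb,Kappeler:2006fr}, using that by Theorem~\ref{bm.H-1}(i) one has $\gm_{n}^{2}\sim 8n\pi I_{n}$, so that the bound $\Rc_{n}=O(\gm_{n}^{4}/n^{3})$ from part (i) becomes $R_{n}=O(I_{n}^{2}/n)$; this double vanishing in $I_{n}$ is precisely what offsets the square-root behaviour of $z_{\pm n}=\sqrt{I_{n}}\,\e^{\mp\ii\th_{n}}$ at $I_{n}=0$ and renders $R_{n}$ analytic across collapsed gaps. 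The vanishing $R_{n}=0$ at $I_{n}=0$ then follows from part (i) together with $I_{n}=0\iff\gm_{n}=0$ (Theorem~\ref{bm.H-1}(iii)), and $R_{n}\le0$ on $\ell^{-1,1}_{+}(\N)$ follows from part (i) by realizing any $I\in\ell^{-1,1}_{+}(\N)$ through the real potential $q=\Phi^{-1}(z)$ with $z_{\pm k}=\sqrt{I_{k}}$, for which $\Rc_{n}(q)\le0$.

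\emph{Main obstacle.}
The estimate and the sign in part (i) are comparatively direct consequences of the contour contraction and Lemma~\ref{F-prop}(iii). The genuinely delicate step is the uniform analytic extension in part (ii) to a neighborhood of the \emph{entire} infinite-dimensional positive quadrant: one must control all gaps simultaneously and verify that the individually square-root-type Birkhoff coordinates assemble into a bona fide analytic function of the actions near every collapsed gap, which is exactly where the quantitative bound $R_{n}=O(I_{n}^{2}/n)$ and the machinery of \cite{Kappeler:2006fr} become essential.
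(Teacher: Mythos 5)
Your proposal is correct and, for the analyticity statement, the $O(\gm_{n}^{4}/n^{3})$ bound (contract $\Gm_{n}$ onto $G_{n}^{\pm}$, use Lemma~\ref{F-prop}(iii) and the contour length $O(\gm_{n})$), and all of part (ii) (dependence on the actions alone via the discriminant, the bound $R_{n}=O(I_{n}^{2}/n)$ from \eqref{In-gmn}, deferral to the analytic-extension machinery of the references, and realization of any point of $\ell_{+}^{-1,1}(\N)$ by a real potential), it follows essentially the same route as the paper. The one genuinely different step is the sign $\Rc_{n}\le 0$ for real $q$: the paper computes the boundary values explicitly, $F_{n}|_{G_{n}^{\pm}}=\pm\arccosh\bigl((-1)^{n}\Dl(\lm)/2\bigr)$, so that after contraction $\Rc_{n}=-\tfrac{2}{\pi}\int_{\lm_{n}^{-}}^{\lm_{n}^{+}}\bigl(\arccosh((-1)^{n}\Dl/2)\bigr)^{3}\,\dlm\le 0$ because the integrand is manifestly nonnegative; you instead argue indirectly that $u=F_{n}|_{G_{n}^{-}}$ is real with $u(\pm1)=0$ and, via the product representation \eqref{dDl2}, that $u'$ changes sign exactly once, so $u$ has constant sign, whence $\int u^{3}$ and $\int u$ share a sign and $\Rc_{n}$ has the sign of $-I_{n}\le 0$. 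Your route avoids identifying the branch of the canonical root on the gap, but it needs two extra inputs the paper's does not: that $\lm_{n}^{\ld}$ lies strictly inside the open gap (true for real $q$ but worth stating) and the nonnegativity of $I_{n}$; the paper's computation is shorter and also delivers the closed formula for $\Rc_{n}$. One small inefficiency: in the analyticity step you pass through a square root of $F_{n}^{2}$ and assert $F_{n}^{2}\neq 0$ on $\Gm_{n}$, which you do not justify and do not need -- $F_{n}$ itself is already analytic in $\lm$ on $U_{n}\setminus G_{n}$ (hence on a neighborhood of $\Gm_{n}$) and in $p$ by Lemma~\ref{F-prop}(i), which is all the contour integral requires.
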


\begin{proof}
(i)
By Lemma~\ref{F-prop} (i), $\Rc_{n}$ is analytic on $\Ws$, and by Lemma~\ref{F-prop} (iii),
\[
  F_{n}\Big|_{G_{n}^{\pm}} = O(\gm_{n}/n),
\]
locally uniformly in $q$, hence $\Rc_{n} = O(\gm_{n}^{4}/n^{3})$ locally uniformly on $\Ws$. If $\gm_{n} = 0$, then $F(\lm)$ is analytic on $U_{n}$ by Lemma~\ref{F-prop} (i) hence $\Rc_{n} = 0$. If $q$ is real valued, then one has for $\lm_{n}^{-} < \lm < \lm_{n}^{+}$
\[
  F_{n}(\lm)\bigg|_{G_{n}^{\pm}}
   = \int_{\lm_{n}^{+}}^{\lm} \frac{\dDl}{\sqrt[c]{\Dl^{2}-4}}\bigg|_{G_{n}^{\pm}}\,\dlm
   = \pm\int_{\lm_{n}^{+}}^{\lm} \frac{(-1)^{n}\dDl}{\sqrt[+]{\Dl^{2}-4}}\,\dlm
   = \pm\arccosh\frac{(-1)^{n}\Dl(\lm)}{2},
\]
and consequently by shrinking the contour $\Gm_{n}$ in the definition of $\Rc_{n}$ to $G_{n}$
\[
  \Rc_{n}(q) = -\frac{2}{\pi}\int_{\lm_{n}^{-}}^{\lm_{n}^{+}}
   \left(\arccosh\left(\frac{(-1)^{n}\Dl(\lm)}{2} \right)\right)^{3}\,\dlm \le 0.
\]

(ii)
Since $F_{n}$ and hence $\Rc_{n}$ is defined in terms of the discriminant, $\Rc_{n}$ depends only on the periodic spectrum of $q$. Therefore, when expressed as a function of the Birkhoff coordinates, it is a function of the actions alone.
Arguing as in the proof of \cite[Theorem 20.3]{Grebert:2014iq} it follows that each $R_{n}$, $n\ge 1$, is an analytic function on a neighborhood $\Vs$ (independent of $n$) of the positive cone $\ell_{+}^{-1,1}(\N)$ in $\ell_{\C}^{-1,1}(\N)$. Since every sequence $(I_{n})_{n\ge 1}\in \ell_{+}^{-1,1}(\N)$ is the sequence of actions for some real valued potential $q\in H_{0}^{-1}$, we conclude that $R_{n}\le 0$ on $\ell_{+}^{-1,1}$.\qed
\end{proof}

Lemma~\ref{Rn-prop} can be applied to prove the following result which is the key ingredient in the proof of Theorem~\ref{main}.
\begin{prop}
\label{R-prop}
\begin{equivenum}
\item
The series $\sum_{n\ge 1} 8n\pi\Rc_{n}$ converges absolutely locally uniformly on $\Ws\cap \Ls_{0,\C}^{-1/2,4}$ and hence is analytic.

\item
The series
\[
  \Hn \defl \sum_{n\ge 1} 8n\pi R_{n}
\]
is an analytic function of the action variables defined on a complex neighborhood of the positive quadrant $\ell_{+}^{2}(\N)$ in $\ell_{\C}^{2}(\N)$. On $\ell_{+}^{2}(\N)$ it is non-positive.~\fish

\end{equivenum}

\end{prop}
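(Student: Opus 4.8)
The plan is to deduce both statements from the pointwise bound on $\Rc_n$ in Lemma~\ref{Rn-prop}(i), combined with the sharp eigenvalue asymptotics of Section~\ref{section3}, by reducing everything to absolute, locally uniform convergence of the defining series. For (i), first I would record that by Lemma~\ref{Rn-prop}(i) each $\Rc_n$ is analytic on $\Ws$ and $\Rc_n = O(\gm_n^4/n^3)$ locally uniformly on $\Ws$, so that $8n\pi\abs{\Rc_n} \le C\langle n\rangle^{-2}\abs{\gm_n}^4$ locally uniformly, with $C$ independent of $n$. Restricting to $\Ws\cap\Ls_{0,\C}^{-1/2,4}$ (the inclusion $\Ls_{0,\C}^{-1/2,4}\hookrightarrow H_{0,\C}^{-1}$ being continuous and linear, hence analytic, each $\Rc_n$ restricts analytically there) and applying Theorem~\ref{fw:per} with $s=-1/2$, $p=4$, the map $q\mapsto(\gm_n)_{n\ge1}$ is locally bounded into $\ell_\C^{-1/2,4}(\N)$; since $\langle n\rangle^{(-1/2)\cdot4}=\langle n\rangle^{-2}$, this says precisely that $\sum_{n\ge1}\langle n\rangle^{-2}\abs{\gm_n}^4=\n{(\gm_n)_{n\ge1}}_{-1/2,4}^{4}$ is locally bounded. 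Hence $\sum_{n\ge1}8n\pi\abs{\Rc_n}$ converges absolutely and locally uniformly on $\Ws\cap\Ls_{0,\C}^{-1/2,4}$, and by the Weierstrass theorem for analytic maps between complex Banach spaces the sum is analytic. I would note that the exponent $4$ and the regularity $s=-1/2$ are exactly matched here, which is why $\Ls_0^{-1/2,4}$ is the borderline space for which the argument closes.

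For (ii) I would first observe that each $R_n$ is analytic on the neighborhood $\Vs$ of $\ell_+^{-1,1}(\N)$ in $\ell_\C^{-1,1}(\N)$ by Lemma~\ref{Rn-prop}(ii), and that the inclusion $\ell_\C^{2}(\N)=\ell_\C^{0,2}(\N)\hookrightarrow\ell_\C^{-1,1}(\N)$ is continuous by Cauchy--Schwarz, since $\sum_n\langle n\rangle^{-1}\abs{z_n}\le(\sum_n\langle n\rangle^{-2})^{1/2}\n{z}_{0,2}$; in particular a neighborhood of $\ell_+^2(\N)$ in $\ell_\C^2(\N)$ is carried into $\Vs$, so every $R_n$ restricts to an analytic function on a complex neighborhood of $\ell_+^2(\N)$ in $\ell_\C^2(\N)$. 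To sum the series there I would convert the bound of (i) into a bound in the action variable: combining $\Rc_n=O(\gm_n^4/n^3)$ with Theorem~\ref{bm.H-1}(i), which provides the analytic extension of $I_n/\gm_n^2$ together with $\abs{8n\pi\,I_n/\gm_n^2-1}\le\ep$ for large $n$, one gets $\abs{\gm_n}^2\le Cn\abs{I_n}$ and therefore $8n\pi\abs{R_n(I)}=O(\abs{\gm_n}^4/n^2)=O(\abs{I_n}^2)$ locally uniformly. Summing against $\sum_{n\ge1}\abs{I_n}^2=\n{I}_{\ell^2}^2<\infty$ yields absolute, locally uniform convergence, hence analyticity of $\Hn=\sum_{n\ge1}8n\pi R_n$ on a complex neighborhood of $\ell_+^2(\N)$. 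Finally, for $I\in\ell_+^2(\N)\subset\ell_+^{-1,1}(\N)$ every summand is $\le0$ by Lemma~\ref{Rn-prop}(ii), so $\Hn(I)\le0$.

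The main obstacle will be the transfer in (ii) of the estimate $8n\pi\abs{\Rc_n}=O(\abs{I_n}^2)$ — which is naturally a statement about the analytic functions $\Rc_n$, $\gm_n^2$, $I_n$ on the potential neighborhood $\Ws$ — into a genuinely uniform bound for the $R_n$ over a full complex neighborhood of $\ell_+^2(\N)$ in $\ell_\C^2(\N)$. Two points require care. First, the $O$-bounds of Lemma~\ref{Rn-prop}(i) and Theorem~\ref{bm.H-1}(i) are phrased on the locus $\gm_n\neq0$, and must be passed to the analytic extensions across $\{\gm_n=0\}=\{I_n=0\}$; here one works with the analytically extended quotient $I_n/\gm_n^2$, which is bounded below by $(1-\ep)/(8n\pi)$, so that one divides by it rather than by $I_n$ itself and the bound $\abs{\gm_n}^2\le Cn\abs{I_n}$ survives. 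Second, the local uniformity in $q\in\Ws$ must translate into local uniformity in the action variable $I$ over the complex neighborhood, which one obtains from the Birkhoff-coordinate framework underlying Lemma~\ref{Rn-prop}(ii) (cf. \cite[Theorem 20.3]{Grebert:2014iq}), together with the fact that $\ell_+^2(\N)$ is the action space of $\Ls_0^{-1/2,4}$. The remaining steps are routine once these matchings are in place.
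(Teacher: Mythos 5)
Your proposal is correct and follows essentially the same route as the paper: part (i) via $\Rc_n = O(\gm_n^4/n^3)$ combined with Theorem~\ref{fw:per} for $(s,p)=(-1/2,4)$, and part (ii) via the analytic extension of $R_n$ to $\Vs$ together with \eqref{In-gmn} giving $8n\pi R_n = O(I_n^2)$, then summing over $\ell^2$. The extra care you take about the embedding $\ell^2_\C(\N)\hookrightarrow\ell^{-1,1}_\C(\N)$ and about passing the bounds across $\{\gm_n=0\}$ is consistent with, and slightly more explicit than, the paper's argument.
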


\begin{proof}
(i)
By Lemma~\ref{Rn-prop} (i), every $\Rc_{n}$ with $n\ge 1$ is analytic on $\Ws$. As $\Ls_{0,\C}^{-1/2,4} \opento H_{0,\C}^{-1}$,  its restriction to $\Ws\cap \Ls_{0,\C}^{-1/2,4}$ is analytic as well. In addition, also by Lemma~\ref{Rn-prop}, $8n\pi \Rc_{n} = O(\gm_{n}^{4}/n^{2})$ locally uniformly on $\Ws$, and by Theorem~\ref{fw:per}, $\gm_{n} = n^{1/2}\ell_{n}^{4}$ locally uniformly on $\Ws\cap \Ls_{0,\C}^{-1/2,4}$. It then follows that $\sum_{n\ge 1} 8n\pi \Rc_{n}$ is analytic on $\Ws\cap \Ls_{0,\C}^{-1/2,4}$ -- cf. \cite[Theorem~A.4]{Grebert:2014iq}.

(ii) By Lemma~\ref{Rn-prop} (ii), each $R_{n}$, $n\ge 1$, is analytic on $\Vs$. According to \eqref{In-gmn} the action variable $I_{n}$ is related to the gap length $\gm_{n}$ as
\[
  \frac{\gm_{n}^{2}}{n} = O(I_{n})
\]
locally uniformly on $\Vs$. Thus, $8n\pi R_{n} = O(I_{n}^{2})$ locally uniformly on $\Vs$.
Clearly, the restriction of each $R_{n}$, $n\ge 1$, to $\Vs_{2} = \Vs \cap \ell^{2}_{\C}$ is analytic, too. Therefore, $\sum_{n\ge 1} 8n\pi R_{n}$ converges absolutely and locally uniformly to an analytic function on $\Vs_{2}$, which we denote by $\Hn$. On $\ell^{2}_{+}(\N)$ the function $\Hn$ is real valued with values in $(-\infty,0]$.\qed

\end{proof}

From the property that for $q\in \Wp\cap H_{0,\C}^{1}$, $I_{n} = n^{3}\ell_{n}^{1}$, Proposition~\ref{R-prop} leads to the following

\begin{cor}
\label{H-kdv-series}
On $\Wp\cap H_{0,\C}^{1}$
\begin{equation}
  \label{H-R}
  \Hkc = \sum_{n\ge 1} 8n^{3}\pi^{3} I_{n} + \sum_{n\ge 1} 8n\pi  \Rc_{n}
\end{equation}
where both sums converge absolutely locally uniformly on $\Wp\cap H_{0,\C}^{1}$.
In addition, expressing~\eqref{H-R} in Birkhoff coordinates gives $\omk_{n} = \partial_{I_{n}}\Hn + 8n^{3}\pi^{3}$ on $\ell_{+}^{3,1}(\N)$.
\fish
\end{cor}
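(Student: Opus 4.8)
The plan is to upgrade the finite--gap identity \eqref{ham.form.fg} to all of $\Wp\cap H_{0,\C}^{1}$ by analytic continuation, and then to read off the frequency relation by termwise differentiation in action variables. First I would record that both series on the right of \eqref{H-R} are analytic on $\Wp\cap H_{0,\C}^{1}$. For the second series this is immediate from Proposition~\ref{R-prop}(i) together with the Sobolev embedding $H_{0,\C}^{1}\hookrightarrow\Ls_{0,\C}^{-1/2,4}$: since $\Wp\cap H_{0,\C}^{1}\subset\Wp\cap\Ls_{0,\C}^{-1/2,4}$ with continuous inclusion, $\sum_{n\ge1}8n\pi\Rc_{n}$ converges absolutely and locally uniformly there. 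For the first series I would use that the Birkhoff map restricts to an analytic map $\Wp\cap H_{0,\C}^{1}\to\ell_{0,\C}^{3/2,2}$ (the classical $s=1$, $p=2$ case of Theorem~\ref{Th:main*}), so that the actions $I_{n}=z_{n}z_{-n}$ satisfy $(I_{n})_{n\ge1}\in\ell_{\C}^{3,1}$ locally boundedly by Cauchy--Schwarz; hence $\sum_{n\ge1}8n^{3}\pi^{3}I_{n}$ converges absolutely and locally uniformly as well. Alternatively one may bound $n^{3}\abs{I_{n}}=O(n^{2}\abs{\gm_{n}}^{2})$ via \eqref{In-gmn} and invoke the classical $H^{1}$ gap--length estimate $(\gm_{n})\in\ell^{1,2}$.

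Next I would establish \eqref{H-R} itself. The left side $\Hkc(q)=\int_{0}^{1}\bigl(\tfrac12(\partial_{x}q)^{2}+q^{3}\bigr)\,\dx$ is a sum of continuous homogeneous polynomials of degree two and three on $H_{0,\C}^{1}$, hence analytic, and by the previous step so is the right side. By the Proposition leading to \eqref{ham.form.fg} the two sides agree on every real finite--gap potential of zero mean, and such potentials are dense in $H_{0}^{1}$. Since both sides are continuous, they agree on all of $\Wp\cap H_{0}^{1}$, i.e.\ on the totally real part. I would then invoke the identity theorem for analytic functions on the neighborhood $\Wp\cap H_{0,\C}^{1}$ of $H_{0}^{1}$: a function analytic on such a neighborhood and vanishing on the real points vanishes identically, since its symmetric multilinear Taylor coefficients at a real point are determined by real directional derivatives and hence vanish. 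As $\Wp$ is a union of balls centred at real potentials, every point is joined within $\Wp\cap H_{0,\C}^{1}$ to a real one, so the identity propagates to all of $\Wp\cap H_{0,\C}^{1}$, proving \eqref{H-R}.

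Finally I would pass to action--angle variables. By Theorem~\ref{Th:main*}(iii), $\Hkc\circ\Phi^{-1}$ depends on the actions alone and equals $\Hk$ on $\ell_{+}^{3,1}(\N)$; expressing \eqref{H-R} in these coordinates and using $\Hn=\sum_{n\ge1}8n\pi R_{n}$ from Proposition~\ref{R-prop}(ii) gives $\Hk(I)=\sum_{n\ge1}8n^{3}\pi^{3}I_{n}+\Hn(I)$ on $\ell_{+}^{3,1}(\N)$, which is legitimate since $\ell_{+}^{3,1}(\N)\subset\ell_{+}^{2}(\N)\subset\Vs_{2}$, the domain of $\Hn$. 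Both series being locally uniform limits of analytic functions, I may differentiate termwise to obtain $\omk_{n}=\partial_{I_{n}}\Hk=8n^{3}\pi^{3}+\partial_{I_{n}}\Hn$, which is the asserted identity.

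I expect the main obstacle to be the local uniform convergence of the first series over the \emph{complex} neighborhood $\Wp\cap H_{0,\C}^{1}$, i.e.\ controlling the actions in $\ell_{\C}^{3,1}$ for complex potentials; this is precisely what makes both sides genuinely analytic and thereby licenses the identity theorem. Once analyticity on the complex neighborhood is secured, the continuation from finite--gap potentials and the termwise differentiation in action variables are routine.
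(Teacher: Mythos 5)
Your proposal is correct and follows essentially the same route as the paper: convergence and analyticity of both series (the paper notes $I_n = n^3\ell_n^1$ on $\Wp\cap H_{0,\C}^1$ and invokes Proposition~\ref{R-prop}), verification of \eqref{H-R} on the dense set of real finite gap potentials via \eqref{ham.form.fg} (using that $I_n=\Rc_n=0$ on collapsed gaps), extension by continuity to $H_0^1$ and by real analyticity to the complex neighborhood, and termwise differentiation in the actions for the frequency relation. The only difference is that you spell out the identity-theorem and convergence details that the paper leaves implicit.
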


\begin{proof}
Let us first consider the case where the potential is real valued. Recall that real valued finite gap potentials are dense in $H_{0}^{1}$ and that for any $q\in H_{0}^{1}$ and $n\ge 1$ with $\lm_{n}^{+} = \lm_{n}^{-}$ one has $I_{n} = 0$ and $\Rc_{n} = 0$. The claimed formula~\eqref{H-R} for $q\in H_{0}^{1}$ then follows from \eqref{ham.form.fg} by approximating $q$ by a sequence of finite gap potentials. As both sums in \eqref{H-R} are real analytic on $\Wp\cap H_{0,\C}^{1}$, the identity holds on $\Wp\cap H_{0,\C}^{1}$.\qed
\end{proof}

The proof of Theorem~\ref{main} now easily follows from Proposition~\ref{R-prop}.

\begin{proof}[Proof of Theorem~\ref{main}.]
By Proposition~\ref{R-prop}, $\Hn$ is an analytic function on a complex neighborhood $\Vs\subset\ell_{\C}^{2}(\N)$ of the positive quadrant $\ell_{+}^{2}(\N)$. Its differential $\omr \defl \ddd_{I} \Hn$ is an analytic map with values in $\ell_{\C}^{2}(\N)$,
\[
  \omr\colon \Vs \to \ell_{\C}^{2}(\N),\quad 
  I = (I_{n})_{n\ge 1} \mapsto (\omr_{n} = \partial_{I_{n}} \Hn)_{n\ge 1}.
\]
In view of Corollary~\ref{H-kdv-series} and \eqref{omn-asymptotics} one has $\omr_{n} = \omk_{n} - 8n^{3}\pi^{3}$ on $V_{0}\cap \Vs$ for any $n\ge 1$.
The expansion \eqref{exp-H-kdv} of $\Hk\colon \ell_{+}^{3,1}(\N)\to \R$ at $I=0$ implies that $\Hn\colon \ell_{+}^{2}(\N)\to \R$ has an expansion at $I=0$ of the form $-3\sum_{n\ge 1} I_{n}^{2} + \dotsb$ where the dots stand for higher order terms in $I$. In particular, $\omr(I) = (-6 I_{n} + \dotsb)_{n\ge 1}$ vanishes at $I=0$ and its differential at $I=0$ is given by
\[
  \ddd_{0}\omr = -6\Id_{\ell^{2}_{\C}(\N)}.
\]
It implies that $\omr$ defines a local diffeomorphism near $I=0$ and that the Hessian $\ddd_{I}^{2}\Hn$ satisfies the estimate~\eqref{convexity}.\qed
\end{proof}


\section{Proof of Theorem \ref{main2}}
\label{section5}

In \cite{Kappeler:2005fb} and \cite{Kappeler:2008fl}, the restrictions of the Birkhoff map
\[
  \Phi\colon H_{0}^{-1}\to \ell_{0}^{-1/2,2},\qquad q\mapsto (z_{n}(q))_{n\in\Z},\quad z_{0}(q) = 0,
\]
to the Sobolev spaces $H_{0}^{s}$, $-1\le s\le 0$, are studied. It turns out that the arguments developed in these two papers can also be adapted to prove Theorem~\ref{main2}. As a first step we prove the following version of \cite[Theorem 5.3]{Kappeler:2005fb}.

\begin{lem}
\label{Phi-ana}
For any $-1/2\le s \le 0$ and $2\le p < \infty$
\[
  \Phi_{s,p} \equiv \Phi\bigg|_{\Ls_{0}^{s,p}}\colon \Ls_{0}^{s,p}\to \ell_{0}^{s+1/2,p},
  \qquad q \mapsto (z_{n}(q))_{n\in\Z},
\]
is real analytic and extends analytically to an open neighborhood $\Wp_{s,p}$ of $\Ls_{0}^{s,p}$ in $\Ls_{0,\C}^{s,p}$. Its Jacobian $\ddd_{0}\Phi_{s,p}$ at $q = 0$ is the weighted Fourier transform 
\[
  \ddd_{0}\Phi_{s,p} \colon \Ls_{0}^{s,p}\to \ell_{0}^{s+1/2,p},\qquad
  f\mapsto \left(\frac{1}{\sqrt{2\pi\max(\abs{n},1)}} \spi{f,e_{2n}} \right)_{n \in \Z}
\]
with inverse given by
\[
  (\ddd_{0}\Phi_{s,p})^{-1} \colon \ell_{0}^{s+1/2,p} \to \Ls_{0}^{s,p},\quad
  (w_{n})_{n\in\Z}\mapsto \sum_{n\in\Z} \sqrt{2\pi \abs{n}}w_{n}e_{2n}.
\]
In particular, $\Phi_{s,p}$ is a local diffeomorphism at $q = 0$.\fish
\end{lem}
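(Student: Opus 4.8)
The plan is to assemble the statement from three ingredients that are already in place: the analyticity of the Birkhoff map on the neighborhood $\Wp$ of $H_0^{-1}$ (Theorem~\ref{Th:main*}), the pointwise asymptotics of the Birkhoff coordinates in Theorem~\ref{bm.H-1}(ii), and the refined decay of the gap lengths and of $\tau_n-\mu_n$ for Fourier--Lebesgue potentials (Theorem~\ref{fw:per} and Theorem~\ref{fw:dir}). Since $\Ls_{0,\C}^{s,p}\hookrightarrow H_{0,\C}^{-1}$ for $-1/2\le s\le 0$, $2\le p<\infty$, I would take $\Wp_{s,p}$ to be the preimage of $\Wp$ under this continuous embedding; it is open in $\Ls_{0,\C}^{s,p}$ and contains $\Ls_0^{s,p}$. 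On $\Wp_{s,p}$ each coordinate $z_{\pm n}$ is the restriction of the analytic function furnished by Theorem~\ref{Th:main*}, so $z_{\pm n}$ is analytic there; what remains is to control the whole sequence in the $\ell^{s+1/2,p}$ norm.

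First I would show that $\Phi$ maps $\Wp_{s,p}$ into $\ell_{0,\C}^{s+1/2,p}$, locally boundedly. By Theorem~\ref{bm.H-1}(ii),
\[
  \abs{z_{\pm n}} = O\p*{ \frac{\abs{\gm_n}+\abs{\mu_n-\tau_n}}{\sqrt n} }
\]
locally uniformly on $\Wp$ and uniformly in $n$. Multiplying by $\w{n}^{s+1/2}$ and using $\w{n}^{s+1/2}/\sqrt n\le \sqrt 2\,\w{n}^{s}$ for $n\ge 1$ gives $\w{n}^{s+1/2}\abs{z_{\pm n}}=O(\w{n}^{s}(\abs{\gm_n}+\abs{\mu_n-\tau_n}))$, whence
\[
  \sum_{n\ge 1}\w{n}^{(s+1/2)p}\abs{z_{\pm n}}^{p}
   = O\p*{ \n{(\gm_n)_{n\ge 1}}_{s,p}^{p}+\n{(\tau_n-\mu_n)_{n\ge 1}}_{s,p}^{p} }.
\]
Theorem~\ref{fw:per} and Theorem~\ref{fw:dir} assert that both sequences lie in $\ell_\C^{s,p}(\N)$ with locally bounded norm on $\Ls_{0,\C}^{s,p}$, so the right-hand side is finite and locally bounded on $\Wp_{s,p}$. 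Hence $\Phi(\Wp_{s,p})\subset\ell_{0,\C}^{s+1/2,p}$ and $\Phi_{s,p}$ is locally bounded into this space.

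Next I would upgrade local boundedness to analyticity. Each coordinate $z_{\pm n}\colon\Wp_{s,p}\to\C$ is analytic, so $\Phi_{s,p}$ is weakly analytic; combined with the local boundedness just established, the standard criterion that a locally bounded, weakly analytic map into a Banach space is analytic (cf.\ \cite{Grebert:2014iq}) shows $\Phi_{s,p}\colon\Wp_{s,p}\to\ell_{0,\C}^{s+1/2,p}$ is analytic. Since the reality relation $z_{-n}=\ob{z_n}$ sends the real subspace $\Ls_0^{s,p}$ into the real subspace $\ell_0^{s+1/2,p}$, the restriction is real analytic, which proves the first assertion together with the existence of the complex-analytic extension.

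Finally, for the Jacobian at $q=0$ I would identify $\ddd_0\Phi_{s,p}$ with the restriction of the $H^{-1}$-differential $\ddd_0\Phi$, which by the analog \cite[Theorem~5.3]{Kappeler:2005fb} is exactly the weighted Fourier transform displayed in the statement; this formula is intrinsic, depending only on the Fourier coefficients of $f$, so it carries over verbatim to $\Ls_0^{s,p}$. It then remains a direct weight count: for the $n$-th component $w_n=(2\pi\max(\abs n,1))^{-1/2}\spi{f,e_{2n}}$ one has $\w{n}^{s+1/2}\abs{w_n}\asymp\w{n}^{s}\abs{\spi{f,e_{2n}}}$, so $\ddd_0\Phi_{s,p}$ is a bounded isomorphism $\Ls_0^{s,p}\to\ell_0^{s+1/2,p}$ with the stated inverse $(w_n)\mapsto\sum_n\sqrt{2\pi\abs n}\,w_n e_{2n}$. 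The inverse function theorem for analytic maps between Banach spaces then yields that $\Phi_{s,p}$ is a local diffeomorphism at $q=0$. I expect the only genuine work to lie in the first step---transferring the coordinatewise estimate of Theorem~\ref{bm.H-1}(ii) into an $\ell^{s+1/2,p}$ bound---which is precisely why the decay results of Theorem~\ref{fw:per} and Theorem~\ref{fw:dir} were established beforehand; everything else is weight bookkeeping and the invocation of the standard analyticity and inverse-function theorems.
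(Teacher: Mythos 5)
Your proposal is correct and follows essentially the same route as the paper: the neighborhood is $\Wp_{s,p}=\Wp\cap\Ls_{0,\C}^{s,p}$, the mapping property and local boundedness come from combining the coordinate estimate of Theorem~\ref{bm.H-1}~(ii) with Theorems~\ref{fw:per} and~\ref{fw:dir}, analyticity follows from weak analyticity plus local boundedness, and the Jacobian formula is inherited from the known weighted Fourier transform on the Sobolev scale. The only cosmetic difference is that you spell out the weight bookkeeping for the Jacobian and its inverse, which the paper leaves to a citation.
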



\begin{proof}
Since $\Ls_{0,\C}^{s,p} \opento H_{0,\C}^{-1}$, the coordinate functions $z_{n}$ defined in \eqref{e:z_k} are analytic functions on the complex neighborhood $\Wp\subset H_{0,\C}^{-1}$ of $H_{0}^{-1}$ of Theorem~\ref{bm.H-1}. Furthermore,
\[
  z_{\pm n}(q) = O\p*{\frac{\abs{\gm_{n}(q)} + \abs{\mu_{n}(q)-\tau_{n}(q)}}{\sqrt{n}}}
\]
locally uniformly on $\Ws$ and uniformly in $n\ge 1$. By the asymptotics of the periodic and Dirichlet eigenvalues of Theorems~\ref{fw:per}-~\ref{fw:dir}, $\Phi_{s,p}$ maps the complex neighborhood $\Ws_{s,p} \defl \Ws\cap \Ls_{0,\C}^{s,p}$ of $\Ls_{0}^{s,p}$ into the space $\ell_{0,\C}^{s+1/2,p}$ and is locally bounded. Arguing as in the proof of \cite[Theorem 8.5]{Kappeler:2003up} one sees that $\Phi_{s,p}$ is analytic. The formulas for $\ddd_{0}\Phi_{s,p}$ and its inverse follow from \cite[Theorem 9.7]{Kappeler:2003up} by continuity.~\qed
\end{proof}

As a next step we show that $\Phi_{s,p}$ is a local diffeomorphism. 

\begin{lem}
\label{dPhi-iso}
For any $q\in \Ls_{0}^{s,p}$ with $-1/2 \le s \le 0$ and $2\le p < \infty$,
\[
  \ddd_{q}\Phi_{s,p}\colon \Ls_{0}^{s,p}\to \ell_{0}^{s+1/2,p}
\]
is a linear isomorphism.
\end{lem}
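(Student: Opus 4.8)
The plan is to exhibit $\ddd_q\Phi_{s,p}$ as a compact perturbation of the isomorphism $\ddd_0\Phi_{s,p}$ computed in Lemma~\ref{Phi-ana}, conclude that it is Fredholm of index zero, and then separately establish injectivity so that index zero forces bijectivity. By Lemma~\ref{Phi-ana} the map $\Phi_{s,p}$ is real analytic on $\Ls_0^{s,p}$ with values in $\ell_0^{s+1/2,p}$, so that $\ddd_q\Phi_{s,p}\colon \Ls_0^{s,p}\to\ell_0^{s+1/2,p}$ is a bounded linear operator for every $q$, and at $q=0$ it is the explicit weighted Fourier transform, which is a linear isomorphism. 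Writing
\[
  \ddd_q\Phi_{s,p} = \ddd_0\Phi_{s,p} + \bigl(\ddd_q\Phi_{s,p} - \ddd_0\Phi_{s,p}\bigr),
\]
it therefore suffices to show that the correction term is a compact operator into $\ell_0^{s+1/2,p}$.

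For injectivity I would use that $\Ls_0^{s,p}\hookrightarrow H_0^{-1}$ together with the fact, recorded in Theorem~\ref{Th:main*}, that $\Phi\colon H_0^{-1}\to\ell_0^{-1/2,2}$ is a bianalytic diffeomorphism. Its differential $\ddd_q\Phi$ is thus a linear isomorphism of $H_0^{-1}$ onto $\ell_0^{-1/2,2}$, in particular injective, for every $q\in H_0^{-1}$, and on the subspace $\Ls_0^{s,p}$ it coincides with $\ddd_q\Phi_{s,p}$ since $\Phi_{s,p}$ is the restriction of $\Phi$. Hence if $f\in\Ls_0^{s,p}$ satisfies $\ddd_q\Phi_{s,p}f=0$, then, using the embedding $\ell_0^{s+1/2,p}\hookrightarrow\ell_0^{-1/2,2}$, the same element vanishes in $\ell_0^{-1/2,2}$, so $\ddd_q\Phi f=0$ and therefore $f=0$. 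This yields injectivity of $\ddd_q\Phi_{s,p}$ for all $q\in\Ls_0^{s,p}$ at no additional cost.

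The main work, and the step I expect to be the real obstacle, is the compactness of $\ddd_q\Phi_{s,p}-\ddd_0\Phi_{s,p}$. The key point is that the nonlinear part of the Birkhoff map is \emph{one-smoothing}: adapting the estimates of \cite{Kappeler:2005fb,Kappeler:2008fl} from the Sobolev to the Fourier-Lebesgue setting — which is exactly where the sharpened eigenvalue asymptotics of Theorem~\ref{fw:per} and Theorem~\ref{fw:dir} enter — one should show that $q\mapsto\Phi_{s,p}(q)-\ddd_0\Phi_{s,p}\,q$ is analytic from (a neighborhood of) $\Ls_0^{s,p}$ into the smaller space $\ell_0^{s+3/2,p}$. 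Differentiating, $\ddd_q\Phi_{s,p}-\ddd_0\Phi_{s,p}$ is then bounded from $\Ls_0^{s,p}$ into $\ell_0^{s+3/2,p}$; composing with the inclusion $\ell_0^{s+3/2,p}\hookrightarrow\ell_0^{s+1/2,p}$, which is compact because the weight exponent strictly increases (the tails decay uniformly on bounded sets), shows that the correction is compact as an operator into $\ell_0^{s+1/2,p}$. Any strictly positive gain of regularity would suffice here, so the precise amount of smoothing is immaterial. Consequently $\ddd_q\Phi_{s,p}$ is Fredholm of index zero; being injective by the previous paragraph, it is surjective as well, hence a linear isomorphism, which is the assertion.
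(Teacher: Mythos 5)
Your overall architecture --- injectivity inherited from the isomorphism $\ddd_{q}\Phi$ on $H_{0}^{-1}$, plus a Fredholm--of--index--zero argument obtained by exhibiting $\ddd_{q}\Phi_{s,p}$ as a compact perturbation of the isomorphism $\ddd_{0}\Phi_{s,p}$ --- is exactly the paper's. The injectivity paragraph is complete and correct, and so is the remark that the embedding $\ell_{0}^{s+3/2,p}\hookrightarrow \ell_{0}^{s+1/2,p}$ is compact. The problem is the step you yourself flag as the real obstacle: you do not prove the compactness of $\ddd_{q}\Phi_{s,p}-\ddd_{0}\Phi_{s,p}$, you reduce it to the assertion that the nonlinear part $q\mapsto \Phi_{s,p}(q)-\ddd_{0}\Phi_{s,p}\,q$ is analytic with values in the smoother space $\ell_{0}^{s+3/2,p}$. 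That one-smoothing property of the Birkhoff map is not contained in \cite{Kappeler:2005fb} or \cite{Kappeler:2008fl}, is not proved anywhere in the present paper, and is in fact a substantially stronger statement than the lemma it is supposed to justify; even on the Sobolev scale it is a separate theorem, and on the spaces $\Ls_{0}^{s,p}$ with $s\le 0$ it does not follow by a routine adaptation of the cited estimates. As written, the proposal therefore has a genuine gap at its central point.

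The paper avoids any global smoothing statement. Setting $\Om_{s,p}=(\ddd_{0}\Phi_{s,p})^{-1}\circ\Phi_{s,p}$, it writes the differential explicitly as $\ddd_{q}\Om_{s,p}(f)=\sum_{n\neq 0}\spi{f,d_{n}(q)}e_{2n}$ with $d_{n}(q)=\sqrt{2\abs{n}\pi}\,\partial_{q}z_{-n}$, and shows that $T(q)=\ddd_{q}\Om_{s,p}-\Id$ is compact by proving the quantitative decay
\[
  \sum_{\abs{n}>N}\frac{\n{d_{n}(q)-e_{2n}}_{\abs{s},p'}^{p}}{\abs{n}^{\abs{s}p}}\longrightarrow 0
  \qquad (N\to\infty)
\]
\emph{only for finite gap potentials} (Lemma~\ref{dn-en}), so that for such $q$ the operator $T(q)$ is an operator-norm limit of finite rank truncations; compactness for general $q\in\Ls_{0}^{s,p}$ then follows because $q\mapsto T(q)$ is analytic, finite gap potentials are dense in $\Ls_{0}^{s,p}$, and the compact operators form a norm-closed set. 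To complete your argument you would either have to actually establish the smoothing estimate on $\Ls_{0}^{s,p}$ (hard, and more than is needed) or replace it by this finite-gap-plus-density mechanism.
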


\begin{proof}
We follow the line of arguments used in \cite[Proposition 7.1]{Kappeler:2005fb}.
By Theorem~\ref{bm.H-1}, $\ddd_{q}\Phi\colon H_{0}^{-1}\to \ell_{0}^{-1/2,2}$ is a linear isomorphism for any $q\in H_{0}^{-1}$. Furthermore, by Lemma~\ref{Phi-ana}, $\ddd_{0}\Phi_{s,p}\colon \Ls_{0}^{s,p}\to \ell_{0}^{s+1/2,p}$ is a linear isomorphism. It is convenient to introduce the maps $\Om\defl (\ddd_{0}\Phi)^{-1}\circ\Phi\colon H_{0}^{-1}\to H_{0}^{-1}$ and $\Om_{s,p}\defl (\ddd_{0}\Phi_{s,p})^{-1}\circ\Phi_{s,p}\colon \Ls_{0}^{s,p}\to \Ls_{0}^{s,p}$. It is to prove that, for any $q\in \Ls_{0}^{s,p}$, $\ddd_{q} \Om_{s,p}$ is a linear isomorphism. By Theorem~\ref{Th:main*}, $\ddd_{q}\Om\colon H_{0}^{-1}\to H_{0}^{-1}$ is a linear isomorphism and as $\ddd_{q}\Om_{s,p} = \ddd_{q}\Om\big|_{\Ls_{0}^{s,p}}$ it then follows that $\ddd_{q}\Om_{s,p}$ is one-to-one. To show that $\ddd_{q}\Om_{s,p}$ is onto, we prove that $\ddd_{q}\Om_{s,p}$ is a Fredholm operator of index zero. 
To this end, denote by $\partial_{q}z_{n}$ the $L^{2}$-gradient of $z_{n}\colon \Ws\to \C$. Then $\partial_{q}z_{n}\in H_{0,\C}^{1}$ and hence $\partial_{q}z_{n}\in \Ls_{0,\C}^{\abs{s},p'} \cong (\Ls_{0,\C}^{s,p})'$ where $p'$ is conjugate to $p$, $\frac{1}{p}+\frac{1}{p'} = 1$. By Lemma~\ref{Phi-ana}, for any $q\in \Ls_{0}^{s,p}$, $\ddd_{q}\Om_{s,p}$ has the form
\[
  \ddd_{q}\Om_{s,p}(f) = \sum_{n\neq 0} \spi{f,d_{n}(q)}e_{2n},\qquad \forall f\in \Ls_{0}^{s,p},
\]
where $d_{n}(q) = \sqrt{2\abs{n}\pi}\partial_{q}z_{-n}$. We claim that for any $q\in \Ls_{0}^{s,p}$, $T(q) = (\ddd_{q}\Om_{s,p})-\uno$ is a compact operator on $\Ls_{0}^{s,p}$. Since $q\mapsto T(q)$ is analytic and by \cite[Theorem~B.16]{Kappeler:2003up} the finite gap potentials are dense in $H_{0}^{0}$ and hence in $\Ls_{0}^{s,p}$, it suffices to show that $T(q)$ is compact for any finite gap potential. Recall that $q\in H_{0}^{-1}$ is a finite gap potential if $A_{q} = \setdef{n\ge 1}{\gm_{n}(q)\neq 0}$ is finite. Using that for any $f\in \Ls_{0}^{s,p}$, $f = \sum_{n\neq 0} \spi{f,e_{2n}}e_{2n}$, one gets
\[
  Tf = \sum_{n\neq 0} \spi{f,d_{n}-e_{2n}}e_{2n}.
\]
For any $N\ge 1$ define the operator $T_{N}\colon \Ls_{0}^{s,p}\to \Ls_{0}^{s,p}$ given by
\[
  T_{N}f = \sum_{0\neq \abs{n}\le N} \spi{f,d_{n}-e_{2n}}e_{2n}.
\]
Then $T_{N}$ is a linear operator of finite rank and thus compact. We now show that $\n{T-T_{N}}_{s,p} \to 0$ in the operator norm as $N\to \infty$. For any $f\in \Ls_{0}^{s,p}$, $\abs{\spi{f,d_{n}-e_{2n}}} \le \n{f}_{s,p}\n{d_{n}-e_{2n}}_{\abs{s},p'}$ and hence
\begin{align*}
  \n{(T-T_{N})f}_{s,p}
   &= \p*{\sum_{\abs{n} > N} \frac{\abs{\spi{f,d_{n}-e_{2n}}}^{p}}{\abs{n}^{\abs{s}p}}}^{1/p}\\
   &\le \n{f}_{s,p}\p*{\sum_{\abs{n} > N} \frac{\n{d_{n}-e_{2n}}_{\abs{s},p'}^{p}}{\abs{n}^{\abs{s}p}}}^{1/p}.
\end{align*}
Since by Lemma~\ref{dn-en} below $\sum_{\abs{n}> N} \p*{\frac{\n{d_{n}-e_{2n}}_{\abs{s},p'}^{p}}{\abs{n}^{\abs{s}p}}}^{1/p}\to 0$ as $N\to\infty$, it then follows that $\n{T-T_{N}}_{s,p}\to 0$ in the operator norm as $N\to \infty$. Hence $T(q)$ is compact for any finite gap potential which proves the claim.\qed
\end{proof}

The following lemma, used to prove Lemma~\ref{dPhi-iso}, extends Lemma~7.3 in \cite{Kappeler:2005fb}.

\begin{lem}
\label{dn-en}
For any real valued finite gap potential $q$ with $q_{0} = 0$,
\[
  \sum_{\abs{n}\neq 0} \frac{\n{d_{n}(q)-e_{2n}}_{\abs{s},p'}^{p}}{\abs{n}^{\abs{s}p}} < \infty
\]
for any $-1/2\le s \le 0$ and any $2\le p <\infty$.
\end{lem}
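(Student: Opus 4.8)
The plan is to follow the proof of Lemma~7.3 in \cite{Kappeler:2005fb}, where the corresponding statement is proved for $p=2$, and to replace the Hilbert-space estimates used there by the Fourier--Lebesgue estimates established in Section~\ref{section3}. Recall first that, by the formula for $\ddd_{0}\Phi_{s,p}$ in Lemma~\ref{Phi-ana}, one has $d_{n}(0)=e_{2n}$, so that $d_{n}(q)-e_{2n}$ measures the deviation of the normalised gradient $\sqrt{2\pi\abs{n}}\,\partial_{q}z_{-n}$ from its value at $q=0$. On $\Ic=[0,1]$ the functions $e_{2n}(x)=\e^{2\pi\ii nx}$, $n\in\Z$, form the standard Fourier basis, so $\n{\cdot}_{\abs{s},p'}$ is the Fourier--Lebesgue norm of the coefficient sequence $(\spi{d_{n}-e_{2n},e_{2m}})_{m}$, which is concentrated near the index $m=n$. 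Since $q$ is a finite gap potential it is real analytic, only finitely many gaps are open, and each $z_{-n}$ (hence $\partial_{q}z_{-n}$) is analytic at $q$; moreover $\gm_{n}=\tau_{n}-\mu_{n}=0$ for all large $n$, which will make the ``geometric'' part of the correction below a finite sum.

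First I would express the gradient through the periodic eigendata of $L(q)$ on $\R/2\Z$, using the representation of $\partial_{q}z_{-n}$ from \cite{Kappeler:2005fb} (and \cite{Kappeler:2003up}) in terms of the $L^{2}$-normalised eigenfunctions $g_{n}^{\pm}$ associated with $\lm_{n}^{\pm}$. Decomposing these as in Section~\ref{section3}, $g_{n}^{\pm}=u_{n}^{\pm}+v_{n}^{\pm}$ with $u_{n}^{\pm}=P_{n}g_{n}^{\pm}\in\Pc_{n}$ and $v_{n}^{\pm}=Q_{n}g_{n}^{\pm}\in\Qc_{n}$, Corollary~\ref{Q-soln} gives $v_{n}^{\pm}=A_{\lm_{n}^{\pm}}^{-1}Q_{n}K_{n}(\lm_{n}^{\pm})Vu_{n}^{\pm}$, whose shifted norms $\n{v_{n}^{\pm}}_{s,p;\pm n}$ are small for $n$ large. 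This leads to a splitting $d_{n}-e_{2n}=D_{n}+O_{n}$, where the diagonal part $D_{n}$ comes from the $\Pc_{n}$-contribution and is controlled by the gap data $\abs{\gm_{n}}$, $\abs{\tau_{n}-\mu_{n}}$, while the off-diagonal part $O_{n}$ collects the contributions of $v_{n}^{\pm}$, that is, of the quantities $\spii{T_{n}(\lm_{n}^{\pm})(\cdot),e_{\pm n}}$ and $b_{\pm n}-q_{\pm2n}$ from Lemma~\ref{cn-est}.

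Second I would estimate the Fourier--Lebesgue norm. Exploiting the concentration near $m=n$ together with $\w{m}^{\abs{s}}\le\w{n}^{\abs{s}}\w{m-n}^{\abs{s}}$, the norm $\n{d_{n}-e_{2n}}_{\abs{s},p'}$ reduces, up to harmless shifts, to the shifted-norm quantities already controlled in Section~\ref{section3}: the diagonal part by $\abs{\gm_{n}}+\abs{\tau_{n}-\mu_{n}}$, and the off-diagonal part by $\ep_{s,p}(n)\n{q}(1+\n{q})$ via Lemma~\ref{cn-est}(i) and the smallness of $\n{v_{n}^{\pm}}_{s,p;\pm n}$ from Corollary~\ref{Q-soln}. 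One is thereby led to a bound of the form
\[
  \frac{\n{d_{n}(q)-e_{2n}}_{\abs{s},p'}^{p}}{\abs{n}^{\abs{s}p}}
  \le C\Big(\w{n}^{sp}\big(\abs{\gm_{n}}^{p}+\abs{\tau_{n}-\mu_{n}}^{p}\big)+\ep_{s,p}(n)^{p}\Big),
\]
in which the weight $\abs{n}^{-\abs{s}p}$ precisely cancels the factor $\w{n}^{\abs{s}p}$ produced by the concentration of the gradient at frequency $m=n$, and the surviving weight $\w{n}^{sp}=\w{n}^{-\abs{s}p}$ on the gap terms is exactly the one appearing in Theorem~\ref{fw:per} and Theorem~\ref{fw:dir}. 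Summing over $n$, the geometric part is summable by Theorems~\ref{fw:per} and~\ref{fw:dir} (and is in fact a finite sum since $q$ is a finite gap potential), while the off-diagonal part is summable because a direct check using $2\le p<\infty$ and $-1/2\le s\le0$ gives $\sum_{n}\ep_{s,p}(n)^{p}<\infty$; Lemma~\ref{cn-est}(ii) may be invoked for the latter. The finitely many remaining low indices $n$ require no estimate, and the analogous bounds for negative indices follow by symmetry. This proves the claim.

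The main obstacle is the passage from the Hilbert norm ($p=2$) of \cite{Kappeler:2005fb} to the Fourier--Lebesgue norm $\n{\cdot}_{\abs{s},p'}$: for $p>2$ one has $p'<2$, so this norm is strictly stronger than the $\ell^{2}$-based one and $d_{n}-e_{2n}$ must be controlled coefficient by coefficient rather than merely in mean square. The crux is therefore to show that the off-diagonal Fourier coefficients of the gradient decay, in the shifted $\ell^{p'}$-norm, at the sharp rate $\ep_{s,p}(n)$ of Lemma~\ref{cn-est} -- this rests on the discrete Hilbert-type bounds behind Lemma~\ref{Tn-est} and Lemma~\ref{cn-est} -- so that, after the concentration factor $\w{n}^{\abs{s}}$ is absorbed by the weight $\abs{n}^{-\abs{s}p}$, the summable quantity $\ep_{s,p}(n)^{p}$ remains and closes the estimate.
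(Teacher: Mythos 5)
Your route is genuinely different from the paper's, and as written it has a gap at its central step. The paper does not use the $P_n/Q_n$ perturbation machinery of Section~\ref{section3} for this lemma at all. Instead, after discarding the finitely many indices outside $\Nc=\setdef{n\ge1}{\gm_n=0,\ \lm_n^+\neq0}$, it writes $d_{\pm n}=c_n\partial_qz_n^{\pm}$ with $\partial_qz_n^{\pm}=h_n^2-g_n^2\pm2\ii g_nh_n$, observes that any product $Y$ of solutions of $-y''+qy=\lm y$ satisfies $QY=2\lm\partial_xY$ for the third order operator $Q=-\frac12\partial_x^3+2q\partial_x+q'$, hence $d_{\pm n}=\frac{1}{2\lm_n^+}D_0^{-1}Qd_{\pm n}$, and then tests $d_{\pm n}-e_{\pm2n}$ against $f''$ for $f\in\Ls_0^{2+s,p}$. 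This duality against second derivatives is what produces the decisive decay: combined with the known $p=2$ estimate $\spi{d_{\pm n}-e_{\pm2n},f}=O\bigl(\frac{\log n}{n^{5/2}}\n{f}_{H_0^1}\bigr)$ from \cite[Lemma~11.12]{Kappeler:2003up}, it gives $\n{d_{\pm n}-e_{\pm2n}}_{\abs{s},p'}\le C\log\w{n}/n^{1/2}$, and the series converges since $(\abs{s}+1/2)p>1$ for $(s,p)\neq(0,2)$, the case $(0,2)$ being the already known one. Your proposal contains neither this representation of $d_{\pm n}$ nor any substitute for it.

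The concrete gap is the asserted bound $\n{d_n-e_{2n}}_{\abs{s},p'}\le C\w{n}^{\abs{s}}\ep_{s,p}(n)$ (plus geometric terms), which is never derived and which the tools you cite do not supply. Lemma~\ref{cn-est}(i) controls exactly one Fourier coefficient of $T_nf$, namely $\spii{T_nf,e_{\pm n}}$, i.e.\ the entries $b_{\pm n}$; what you must bound is the full $\ell^{\abs{s},p'}$ norm of the coefficient sequence of $d_n-e_{2n}$, which lives precisely at the frequencies $m\neq\pm n$ that this lemma says nothing about. Moreover $d_n$ is quadratic in the eigenfunctions, so even granting the shifted-norm smallness of $v_n^{\pm}$ from Corollary~\ref{Q-soln} (an $\ell^{p}$-type bound with $p\ge2$), passing to the coefficients of the products $g_n^2$, $h_n^2$, $g_nh_n$ in the strictly stronger norm $\n{\cdot}_{\abs{s},p'}$ with $p'\le2$ would need convolution estimates of the shape $\ell^{p}*\ell^{p}\subset\ell^{p'}$, which fail without extra decay; the phrase ``reduces, up to harmless shifts, to the shifted-norm quantities already controlled'' papers over exactly this point. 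For a finite gap potential the extra decay is in fact available because $q$ is real analytic, but then the Section~\ref{section3} apparatus is not the relevant tool, and one still has to produce an actual estimate --- which is what the paper's $Q$-operator/duality computation does.
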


\begin{proof}
The case $(s,p) = (0,2)$ is contained in \cite[Lemma 11.12]{Kappeler:2003up} so it remains to consider the case $(s,p)\neq (0,2)$. We adapt the proof of \cite[Lemma 7.3]{Kappeler:2005fb} to the case at hand. Let $q$ be a finite gap potential with $q_{0} = 0$ and introduce $\Nc = \setdef{n\ge 1}{\gm_{n} = 0;\quad \lm_{n}^{+}\neq 0}$. Then $\N\setminus\Nc$ is finite and it suffices to show that
\[
  \sum_{\abs{n}\in \Nc} \frac{\n{d_{n}-e_{2n}}_{\abs{s},p'}^{p}}{\abs{n}^{\abs{s}p}} < \infty
\]
where $d_{n} = d_{n}(q)$. By \cite[Theorem 9.7]{Kappeler:2003up} one has for $n\in\Nc$,
\[
  d_{n} = \sqrt{2\abs{n}\pi}\partial_{q}z_{-n} = c_{n}\partial_{q}z_{n}^{+},\qquad
  d_{-n} = \sqrt{2\abs{n}\pi}\partial_{q}z_{n} = \ob{c_{n}}\partial_{q}z_{n}^{-}.
\]
Here $c_{n}$ is a complex number independent of $x$ 
%
%
and
\[
  \partial_{q}z_{n}^{\pm} = h_{n}^{2} - g_{n}^{2} \pm 2\ii g_{n}h_{n},
\]
where $g_{n}$ denotes the real valued eigenfunction of the Dirichlet eigenvalue $\mu_{n} = \lm_{n}^{+}$ of $-\partial_{x}^{2} + q$, normalized by $\int_{0}^{1} g_{n}^{2}\,\dx = 1$, $g_{n}'(0) > 0$, and $h_{n}$ denotes the real valued eigenfunction of the periodic eigenvalue $\lm_{n}^{+} = \lm_{n}^{-}$ which is $L^{2}$-orthogonal to $g_{n}$ and satisfies $\int_{0}^{1} h_{n}^{2}\,\dx = 1$ and $h_{n}(0) > 0$. Following the arguments of the proof of Lemma~7.3 in \cite{Kappeler:2005fb} consider the operator $Q = -\frac{1}{2}\partial_{x}^{3} + 2q\partial_{x} + q'$. One verifies that the product $Y = y_{i}y_{j}$ of any two solutions of $-y'' + qy = \lm y$ is smooth and satisfies $QY = 2\lm \partial_{x} Y$. Since $\int_{0}^{1}\partial_{q}z_{n}\,\dx = 0$ for any $n\in\Nc$ and $\partial_{q}z_{n}$ is a linear combination of  products of solutions of $-y'' + qy  = \lm_{n}^{+}y$ it follows that for any $n\in\Nc$
\[
  d_{\pm n} = \frac{1}{2\lm_{n}^{+}} D_{0}^{-1}Q d_{\pm n}.
\]
Recall that $\lm_{n}^{+}\neq 0$ for $n\in\Nc$. Here $D_{0}^{-1}$ denotes the inverse of the restriction $D_{0}\colon \Ls_{0}^{s,p}\to \Ls_{0}^{s-1,p}$ of $\partial_{x}$ to $\Ls_{0}^{s,p}$. For $f\in\Ls_{0}^{2+s,p}$ one has for any $n\in\Nc$
\begin{align*}
  \spi{d_{\pm n}, f}
   = \frac{1}{2\lm_{n}^{+}}\spi{D_{0}^{-1}Q d_{\pm n}, f}
   = \frac{1}{2\lm_{n}^{+}}\spi{d_{\pm n}, QD_{0}^{-1}f}
   = -\frac{1}{4\lm_{n}^{+}}\spi{d_{\pm n}, f^{*}}
\end{align*}
where $f^{*}\defl f'' - 4qf -2q'D_{0}^{-1}f\in\Ls_{0}^{s,p}$. This yields
\begin{align*}
  \spi{d_{\pm n},f''}
  &= \spi{d_{\pm n},f^{*}} - \spi{d_{\pm n},f^{*}-f''}\\
  &= -4\lm_{n}^{+}\spi{d_{\pm n},f} - \spi{d_{\pm n},-4qf - 2q'D_{0}^{-1}f}.
\end{align*}
Combining this with $\spi{e_{\pm 2n},f''} = -4n^{2}\pi^{2}\spi{e_{\pm 2n},f}$ one obtains for $n\in\Nc$
\begin{align}
\label{dn-en-eqn}
\begin{split}
  \spi{d_{\pm n}-e_{\pm 2n},f''} &= 4(n^{2}\pi^{2}-\lm_{n}^{+})\spi{e_{\pm 2n},f}
  -4\lm_{n}^{+}\spi{d_{\pm n} - e_{\pm 2n},f}\\
  &\qquad + \spi{e_{\pm 2n}, 4qf + 2q' D_{0}^{-1}f}\\
  &\qquad + \spi{d_{\pm n}-e_{\pm 2n},4qf + 2q'D_{0}^{-1}f}.
\end{split}
\end{align}
We now estimate the terms on the right hand side of the latter identity separately. By standard estimates one has for $f\in \Ls_{0}^{2+s,p}\opento H_{0}^{1}$
\begin{align*}
  \spi{e_{\pm 2n},f} &= O\p*{\frac{1}{n}\n{f}_{H_{0}^{1}}},\\ 
  \lm_{n}^{+} &= n^{2}\pi^{2} + O\p*{\frac{1}{n}},\\
  \spi{e_{\pm 2n},4qf + 2q'D_{0}^{-1}f} &= O\p*{\frac{1}{n}\n{f}_{H_{0}^{1}}}.
\end{align*}
For the latter one we used that $qf$ and $q'D_{0}^{-1}f$ are in $H_{0}^{1}$ since $H_{0}^{1}$ is an algebra.
Finally, by \cite[Lemma 11.12]{Kappeler:2003up} one has for any $n\in\Nc$
\[
  \spi{d_{\pm n}-e_{\pm 2n},f} = O\p*{\frac{\log n}{n^{5/2}}\n{f}_{H_{0}^{1}}},
\]
and, using again that $qf$ and $q'D_{0}^{-1}f$ are in $H_{0}^{1}$,
\[
  \spi{d_{\pm n}-e_{\pm 2n},4qf + 2q'D_{0}^{-1}f} = O\p*{\frac{\log n}{n^{5/2}}\n{f}_{H_{0}^{1}}}.
\]
Note that any element in $\Ls_{0}^{s,p}$ is of the form $f'' = \sum_{n\neq 0} 4n^{2}\pi^{2}f_{2n}e_{2n}$ where $f = \sum_{n\neq 0} f_{2n}e_{2n}$ is in $\Ls_{0}^{2+s,p}$ and
\[
  \n{f''}_{\Ls_{0}^{s,p}}^{p}
   \le \sum_{n\neq 0} \frac{(4\pi^{2}n^{2})^{p}\abs{f_{2n}}^{p}}{\w{n}^{\abs{s}p}} 
   \le   (2\pi)^{2p}\sum_{n\neq 0} \w{n}^{(2-\abs{s})p}\abs{f_{2n}}^{p}
   \le (2\pi)^{2p}\n{f}_{\Ls_{0}^{2-\abs{s}},p}^{p}
\]
Hence, combining the above estimates we conclude from \eqref{dn-en-eqn} that
\[
  \n{d_{\pm n}-e_{\pm 2n}}_{\abs{s},p'}
   \le \sup_{\n{f}_{2+s,p}\le 1} \abs{\spi{d_{\pm n}-e_{\pm 2n},f''}}
   \le C\frac{\log \w{n}}{n^{1/2}}
\]
and, in turn,
\[
  \sum_{\abs{n}\in \Nc} \frac{\n{d_{n}-e_{2n}}_{\abs{s},p'}^{p}}{\abs{n}^{\abs{s}p}} \le
  C\sum_{\abs{n}\in\Nc} \frac{(\log \w{n})^{p}}{\abs{n}^{(\abs{s}+1/2)p}} < \infty
\]
as $(\abs{s}+1/2)p > 1$ for any $-1/2\le s \le 0$ and $2\le p < \infty$ with $(s,p)\neq (0,2)$.\qed
\end{proof}

In a third step, following arguments used in \cite{Kappeler:2008fl}, we prove that $\Phi_{s,p}$ is onto.

\begin{lem}
\label{Phi-onto}
For any $-1/2\le s \le 0$ and $2\le p < \infty$, $\Phi_{s,p}\colon \Ls_{0}^{s,p}\to \ell_{0}^{s+1/2,p}$ is onto.
\end{lem}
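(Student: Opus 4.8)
The plan is to show that the image $U\defl\Phi_{s,p}(\Ls_0^{s,p})$ is a nonempty, open, and closed subset of the connected Banach space $\ell_0^{s+1/2,p}$, whence $U=\ell_0^{s+1/2,p}$. First note the embeddings $\Ls_0^{s,p}\hookrightarrow H_0^{-1}$ and $\ell_0^{s+1/2,p}\hookrightarrow\ell_0^{-1/2,2}$, the latter following from Hölder's inequality since $s+1/2+1/p>0$. Thus $\Phi_{s,p}$ is the restriction of the global diffeomorphism $\Phi\colon H_0^{-1}\to\ell_0^{-1/2,2}$ of \cite{Kappeler:2005fb}; in particular it is injective and $0\in U$. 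By Lemma~\ref{Phi-ana} the map $\Phi_{s,p}$ is real analytic and by Lemma~\ref{dPhi-iso} its differential is a linear isomorphism at every $q\in\Ls_0^{s,p}$, so by the inverse function theorem $\Phi_{s,p}$ is a local diffeomorphism and $U$ is open. It remains to prove that $U$ is closed.

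The key ingredient is the a priori bound that $\Phi_{s,p}^{-1}$ maps bounded sets to bounded sets, i.e. that on any bounded subset of $\ell_0^{s+1/2,p}$ the preimages have uniformly bounded $\Ls_0^{s,p}$-norm. To obtain it I would first bound the gap lengths: for real $q$ one has $\abs{z_{\pm n}}=\sqrt{I_n}$, and combining this with the relation \eqref{In-gmn} between $I_n=z_nz_{-n}$ and $\gm_n^2$ gives $\abs{\gm_n}\le C\sqrt n\,\abs{z_n}$ for $n$ large, locally uniformly, so that $\n{(\gm_n)_{n\ge1}}_{s,p}\le C\n{\Phi(q)}_{s+1/2,p}$; an analogous bound holds for $(\tau_n-\mu_n)_{n\ge1}$. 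The second, harder step is to control $\n{q}_{s,p}$ by these spectral data: the Fourier coefficients $q_{\pm2n}$ agree with the off-diagonal coefficients $b_{\pm n}(\lm_n^+)$ of $B_n$ up to the rapidly summable error of Lemma~\ref{cn-est}(ii), and $\abs{b_{\pm n}}$ is in turn tied to $\abs{\gm_n}$; summing against the weights $\w n^{s}$ should yield $\n{q}_{s,p}\le C\p*{\n{\Phi(q)}_{s+1/2,p}}$ locally uniformly, adapting the arguments of \cite{Kappeler:2008fl} to the Fourier--Lebesgue setting with the estimates of Section~\ref{section3}. Establishing this bound in the non-Hilbertian regime $p\neq2$, and uniformly across (nearly) collapsed gaps, is the main obstacle.

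Granting the bound, closedness of $U$ follows by weak compactness. Let $w_*$ be a limit of a sequence $w_n=\Phi_{s,p}(q_n)\in U$ with $q_n\in\Ls_0^{s,p}$. By the embedding $\ell_0^{s+1/2,p}\hookrightarrow\ell_0^{-1/2,2}$ we have $w_n\to w_*$ in $\ell_0^{-1/2,2}$, hence $q_n=\Phi^{-1}(w_n)\to q_*\defl\Phi^{-1}(w_*)$ in $H_0^{-1}$ by continuity of $\Phi^{-1}$. As $(w_n)$ is bounded in $\ell_0^{s+1/2,p}$, the a priori bound shows $(q_n)$ is bounded in $\Ls_0^{s,p}$; since $2\le p<\infty$ this space is reflexive, so a subsequence converges weakly to some $\tilde q\in\Ls_0^{s,p}$. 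Weak convergence in $\Ls_0^{s,p}$ and convergence in $H_0^{-1}\hookrightarrow\Sc_\C'$ both give convergence in $\Sc_\C'$, so $q_*=\tilde q\in\Ls_0^{s,p}$ and $\Phi_{s,p}(q_*)=\Phi(q_*)=w_*$, whence $w_*\in U$. Thus $U$ is nonempty, open, and closed in the connected space $\ell_0^{s+1/2,p}$, so $U=\ell_0^{s+1/2,p}$ and $\Phi_{s,p}$ is onto.
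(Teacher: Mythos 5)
Your global strategy (image is nonempty, open, and closed in the connected space $\ell_0^{s+1/2,p}$) is a legitimate outline and differs from the paper's, and the openness and weak-compactness parts are fine. But the entire weight of the argument rests on the a priori bound that $\Phi_{s,p}^{-1}$ maps bounded sets of $\ell_0^{s+1/2,p}$ to bounded sets of $\Ls_0^{s,p}$, and this you do not prove -- you explicitly flag it as ``the main obstacle'' and only gesture at a route. That route does not close. The relation \eqref{In-gmn} and Lemma~\ref{cn-est}(ii) control $\abs{\gm_n}$ and $\abs{b_{\pm n}-q_{\pm 2n}}$ only for $n\ge n_{s,p}(q)$, where the threshold is chosen \emph{locally uniformly in $q$ with respect to the $\Ls^{s,p}_{0,\C}$ topology} -- i.e.\ uniformly only on sets you do not yet know your sequence $(q_n)$ stays in. Worse, the error in Lemma~\ref{cn-est}(ii) is $O(\n{q}_{s,p}^{2p}/N^{p^*})$, so the quantity you are trying to bound reappears on the right-hand side, and the finitely many low Fourier modes $q_{\pm 2n}$, $n<n_{s,p}(q)$ (how many there are depends on $q$) are not controlled by the gap lengths at all. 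What you are really asking for is the quantitative form of the inverse spectral statement of Corollary~\ref{inv-gap}, which the paper \emph{deduces from} Theorem~\ref{main2}; so this step is close to circular as it stands.

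The paper avoids any such a priori estimate by exploiting the integrable structure. It argues by contradiction: given $z^{(0)}=\Phi(q^{(0)})$ with $q^{(0)}\in H_0^{-1}\setminus\Ls_0^{s,p}$, it uses the flows of the angle Hamiltonian vector fields $Y_k=\partial_x\partial_q\th_k$ (Lemma~\ref{Yn-flow}), which take values in $H_0^0$, preserve every action $I_j$ with $j\neq k$, and can drive $I_k$ to an arbitrarily small value. Applying finitely many such flows produces $q^{(N)}$ with $\n{\Phi(q^{(N)})}_{s+1/2,p}<2\ep$, hence $q^{(N)}\in U\subset\Ls_0^{s,p}$ by the local diffeomorphism at $0$; but $q^{(N)}-q^{(0)}\in H_0^0\subset\Ls_0^{s,p}$ forces $q^{(N)}\notin\Ls_0^{s,p}$, a contradiction. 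If you want to salvage your approach you would need to supply the uniform inverse estimate independently; otherwise the argument as written has a genuine gap precisely at the closedness step.
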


\begin{proof}
Fix $-1/2\le s \le 0$, $2\le p < \infty$, and $z^{(0)} = (z_{n}^{(0)})_{n\neq 0} \in \ell_{0}^{s+1/2,p}$. Since $\Phi\colon H_{0}^{-1}\to \ell_{0}^{-1/2,2}$ is onto, there exits $q^{(0)}\in H_{0}^{-1}$ such that $z^{(0)} = \Phi(q^{(0)})$. Assume that
\begin{equation}
  \label{q-0}
  q^{(0)} \in H_{0}^{-1}\setminus \Ls_{0}^{s,p}.
\end{equation}
We show that \eqref{q-0} leads to a contradiction. As $\Phi_{s,p}(0) = 0$ and by Lemma~\ref{dPhi-iso} the differential of $\Phi_{s,p}$ at $0$ is a linear isomorphism, by the inverse function theorem, there exists an open neighborhood $U$ of $0$ in $\Ls_{0}^{s,p}$ and an open neighborhood $V$ of $0$ in $\ell_{0}^{s+1/2,p}$ so that
\[
  \Phi_{s,p}\big|_{U}\colon U\to V
\]
is a diffeomorphism. Without loss of generality we can assume that $V$ is a ball of radius $2\ep$, centered at $0$,
\begin{equation}
  \label{V-ball}
  V = B_{2\ep}(0)\subset \ell_{0}^{s+1/2,p}.
\end{equation}
To obtain the desired contradiction we construct a finite sequence $q^{(1)}, \dotsc, q^{(N)}$ with the property that $q^{(N)}\in U$ and $q^{(k)}-q^{(k-1)}\in H_{0}^{0}$ for any $1\le k\le N$. To this end consider the action angle variables $(I_{k},\th_{k})$, $k\ge 1$, constructed in \cite{Kappeler:2005fb}. For any $k\ge 1$, $I_{k}$ is defined on $H_{0}^{-1}$ and satisfies $I_{k} = \abs{z_{k}}^{2}$ whereas the angle variable $\th_{k}$ is defined on $H_{0}^{-1}\setminus Z_{k}$ with values in $\R/2\pi\Z$ and is real analytic when considered $\mod \pi$. Here $Z_{k}$ denotes the set $Z_{k} = \setdef{q\in \Ws}{\gm_{k}(q) = 0}$, introduced in \eqref{Zk}. The $L^{2}$-gradient $\partial_{q}\th_{k}$ is a real analytic map from $H_{0}^{-1}\setminus Z_{k}$ with values in $H_{0}^{1}$ whereas the corresponding Hamiltonian vector field,
\[
  Y_{k} = \partial_{x}\partial_{q}\th_{k}\colon H_{0}^{-1}\setminus Z_{k}\to H_{0}^{0}
\]
is a real analytic map with values in $H_{0}^{0}$. It defines a dynamical system
$
  \dot q = Y_{k}(q)
$
on $H_{0}^{-1}\setminus Z_{k}$. We now use the flows of these vector fields to construct the sequence $q^{(1)}, \dotsc, q^{(N)}$  recursively as follows. The potential $q^{(0)}$ is given by \eqref{q-0}. For any $n\ge 1$ assume that $q^{(n-1)}$ has already been constructed. If $\lin{n}^{2s+1}I_{n}(q^{(n-1)}) < \ep^{2}/2^{2n/p}$, then set $q^{(n)} \equiv q^{(n-1)}$. If $\lin{n}^{2s+1}I_{n}(q^{(n-1)}) \ge \ep^{2}/2^{2n/p}$, then $q^{(n-1)}\in H_{0}^{-1}\setminus Z_{n}$ and hence the vector field $Y_{n}$ is well defined in a neighborhood of $q^{(n-1)}$. The element $q^{(n)}$ is then chosen to be an element on the solution curve of the vector field $Y_{n}$ passing through $q^{(n-1)}$ so that $\lin{n}^{2s+1}I_{n}(q^{(n)}) < \ep^{2}/2^{2n/p}$. The existence of such a potential $q^{(n)}$ follows from Lemma~\ref{Yn-flow} (i) below. Moreover, by the commutator relations (cf. section~\ref{section2})
\[
  Y_{n}(I_{k}) = \pbr{I_{k},\th_{n}} = \dl_{nk},
\]
the vector field $Y_{n}$ preserves the value of the action variable $I_{k}$ with $k\neq n$. In particular, we have
\[
  \lin{k}^{2s+1}I_{k}(q^{(n)}) < \ep^{2}\frac{1}{2^{2k/p}},\qquad 1\le k\le n,
\]
and
\[
  I_{k}(q^{(n)}) = I_{k}(q^{(0)}),\qquad k > n.
\]
One then obtains
\begin{align*}
  \n{\Phi(q^{(n)})}_{s+1/2,p}^{p}
   &= \sum_{k\neq 0} \lin{k}^{(s+1/2)p}\abs{z_{k}(q^{(n)})}^{p}
    = 2\sum_{k\ge 1} \p*{ \lin{k}^{2s+1}I_{k}(q^{(n)}) }^{p/2}\\
  &\le 2\ep^{p} \sum_{1\le k\le n} \frac{1}{2^{k}}
   + 2\sum_{k > n} \p*{ \lin{k}^{2s+1} I_{k}(q^{(0)}) }^{p/2}.
\end{align*}
Since $\Phi(q^{(0)})\in \ell_{0}^{s+1/2,p}$, one has $\sum_{k\ge 1} \p{ \lin{k}^{2k+1} I_{k}(q^{(0)}) }^{p/2} < \infty$. Choose $N\ge 1$ so that
\[
  \sum_{k > N} \p*{ \lin{k}^{2s+1} I_{k}(q^{(0)}) }^{p/2} < \ep^{p}.
\]
As a consequence, since $p\ge 2$,
$
  \n{\Phi(q^{(N)})}_{s+1/2,p}^{p} < 4\ep^{p} \le (2\ep)^{p}.
$
Since $\Phi\big|_{U}\colon U\to V = B_{2\ep}(0)$ is a diffeomorphism and $\Phi(q^{(N)})\in V$ it then follows that
\begin{equation}
  \label{q-N-s,p}
  q^{(N)}\in U\subset \Ls_{0}^{s,p}.
\end{equation}
On the other hand, it follows from the assumption \eqref{q-0} that $q^{(0)}\in H_{0}^{-1}\setminus \Ls_{0}^{s,p}$ and from Lemma~\ref{Yn-flow} (ii) that $q^{(N)}\in H_{0}^{-1}\setminus \Ls_{0}^{s,p}$ contradicting \eqref{q-N-s,p}.\qed
\end{proof}

In the proof of Lemma~\ref{Phi-onto} we have used \cite[Lemma 1]{Kappeler:2008fl}. We state it for the convenience of the reader.

\begin{lem}
\label{Yn-flow}
For any $q\in H_{0}^{-1}\setminus Z_{k}$ with $k\ge 1$ the initial value problem $\dot q = Y_{k}(q)$, $q(0) = q_{0}$ has a unique solution $t\mapsto q(t)$ in $C^{1}((-I_{k}(q_{0}),\infty), H_{0}^{-1})$. The solution has the following additional properties
\begin{renum}
\item $\lim\limits_{t\searrow -I_{k}(q_{0})} I_{k}(q(t)) = 0$,

\item $q(t)-q_{0} \in H_{0}^{0}$ for any $-I_{k}(q_{0}) < t < \infty$.\fish

\end{renum}
\end{lem}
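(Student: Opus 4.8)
The plan is to exploit that $Y_k$ is the Hamiltonian vector field of the angle variable $\th_k$, so that in Birkhoff coordinates the flow becomes an explicit translation. First I would record local existence and uniqueness: since $Y_k = \partial_x\partial_q\th_k$ is real analytic on the open set $H_0^{-1}\setminus Z_k$ with values in $H_0^0\opento H_0^{-1}$, it is in particular locally Lipschitz, and the Cauchy--Lipschitz theorem for vector fields on a Banach space yields a unique maximal solution $t\mapsto q(t)$ through $q_0$, of class $C^1$ into $H_0^{-1}$ (the increments lie a priori in $H_0^0$ since $\dot q = Y_k(q)\in H_0^0$).

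Next I would compute the evolution of the action and angle variables along this solution. For any (real analytic) function $F$ the chain rule and the definition of the bracket~\eqref{pbr} give
\[
  \tfrac{d}{dt}F(q(t)) = \ddd F(q(t))\,[\partial_x\partial_q\th_k] = \pbr{F,\th_k}(q(t)).
\]
Applying this with $F=I_j$ and $F=\th_j$ and invoking the commutator relations~\eqref{e:commutators1} and~\eqref{e:commutators2} gives $\dot I_j = \pbr{I_j,\th_k} = \dl_{jk}$ and $\dot\th_j = \pbr{\th_j,\th_k} = 0$ wherever $\th_j$ is defined. Hence, as long as the solution exists, $I_k(q(t)) = I_k(q_0)+t$, while $I_j$ for $j\neq k$ and every angle $\th_j$ stay constant. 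In the complex Birkhoff coordinates $z=\Phi(q)$ this means $z_{\pm j}(t)=z_{\pm j}(q_0)$ for $j\neq k$ and
\[
  z_k(t) = \sqrt[+]{I_k(q_0)+t}\,\e^{-\ii\th_k(q_0)},\qquad
  z_{-k}(t) = \sqrt[+]{I_k(q_0)+t}\,\e^{\ii\th_k(q_0)}.
\]
This explicit curve lies in $\ell_0^{-1/2,2}=\Phi(H_0^{-1})$ precisely when $I_k(q_0)+t>0$, i.e.\ for $t\in(-I_k(q_0),\infty)$. Since $\Phi$ is a bianalytic diffeomorphism by Theorem~\ref{Th:main*}, the pulled-back curve $q(t)=\Phi^{-1}(z(t))$ solves the initial value problem on all of $(-I_k(q_0),\infty)$, and by the uniqueness from the first step it is the maximal solution; thus the lifespan is exactly this interval. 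Claim~(i) is then immediate from $I_k(q(t))=I_k(q_0)+t\to 0$ as $t\searrow -I_k(q_0)$.

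For~(ii), fix a finite $t\in(-I_k(q_0),\infty)$ and write
\[
  q(t)-q_0 = \int_0^t Y_k(q(s))\,\ds.
\]
On the compact time interval with endpoints $0$ and $t$ the quantity $I_k(q(s))=I_k(q_0)+s$ stays bounded away from $0$, so $q(s)$ traces a compact subset of $H_0^{-1}\setminus Z_k$; there $Y_k$ is continuous with values in $H_0^0$, hence bounded in the $H_0^0$-norm. Consequently $s\mapsto Y_k(q(s))$ is a bounded $H_0^0$-valued path, the integral converges in $H_0^0$, and $q(t)-q_0\in H_0^0$.

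The main obstacle is pinning down the \emph{maximal} interval of existence together with global continuation in positive time: a direct a-priori-estimate argument in $H_0^{-1}$ would have to control the singularity of $Y_k$ as $q$ approaches $Z_k$, and also rule out blow-up as $t\to\infty$. The clean way around this, which is the heart of the proof, is to transport the flow to Birkhoff coordinates via Theorem~\ref{Th:main*}, where $Y_k$ becomes the constant field $\partial_{I_k}$ and the dynamics reduce to the translation $I_k\mapsto I_k+t$ with all remaining coordinates frozen; this makes both the lifespan $(-I_k(q_0),\infty)$ and the limiting behavior in~(i) transparent.
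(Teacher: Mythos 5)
The paper does not actually prove this lemma: it is quoted from \cite[Lemma 1]{Kappeler:2008fl} ``for the convenience of the reader'', so there is no in-paper argument to compare against. Your proposal is a correct, self-contained reconstruction along the standard lines of the cited reference: transport the flow to Birkhoff coordinates, where the commutator relations \eqref{e:commutators1}--\eqref{e:commutators2} show that along any solution $I_k$ increases linearly while all other actions and all angles are frozen; this yields the explicit curve, the lifespan $(-I_k(q_0),\infty)$ and claim (i), while (ii) follows from $q(t)-q_0=\int_0^t Y_k(q(s))\,\ds$ with the integrand continuous and bounded in $H_0^0$ on compact time intervals. Two spots deserve a touch more care, though neither is a substantive gap. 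First, the assertion that $\Phi^{-1}(z(t))$ solves the initial value problem does not follow from $\Phi$ being a diffeomorphism alone; you need the pointwise identities $\ddd_q z_{\pm j}[Y_k(q)]=\pbr{z_{\pm j},\th_k}(q)$, which by the bracket relations equal $\partial_{I_k}z_{\pm k}$ for $j=k$ and $0$ for $j\neq k$, so that the push-forward of $Y_k$ under $\Phi$ is precisely the field generating your explicit curve (alternatively, run a continuation argument: if the maximal solution stopped at some finite $t_+$, then $z(t)$ would converge in $\ell_0^{-1/2,2}$ to a point of $\Phi(H_0^{-1}\setminus Z_k)$, so the solution could be extended). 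Second, for $j\neq k$ with $I_j(q_0)=0$ the angle $\th_j$ is nowhere defined along the curve, so the constancy of $z_{\pm j}$ should be deduced from $\dot I_j=0$, hence $z_{\pm j}\equiv 0$, rather than from $\dot\th_j=0$.
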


\begin{proof}[Proof of Theorem~\ref{main2}.]
The claimed results follow from Lemma~\ref{Phi-ana}, Lemma~\ref{dPhi-iso}, and Lemma~\ref{Phi-onto}.\qed
\end{proof}

A first application of Theorem~\ref{main2} is in inverse spectral theory.

\begin{cor}
\label{inv-gap}
Let $q$ be in $H_{0}^{-1}$ and $-1/2\le s \le 0$, $2\le p <\infty$. If its sequence of gap lengths $(\gm_{n}(q))_{n\ge 1}$ is in $\ell^{s,p}(\N)$, then $q\in \Ls_{0}^{s,p}$.\fish
\end{cor}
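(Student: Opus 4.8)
The plan is to verify that the Birkhoff coordinates of $q$ already lie in the target space $\ell_0^{s+1/2,p}$, and then to transport this information back to the potential by inverting the Birkhoff map. Concretely, writing $\Phi(q) = (z_n)_{n\in\Z}$, I would first show that the hypothesis $(\gm_n(q))_{n\ge1}\in\ell^{s,p}(\N)$ forces $\Phi(q)\in\ell_0^{s+1/2,p}$; once this is established, the surjectivity of $\Phi_{s,p}\colon\Ls_0^{s,p}\to\ell_0^{s+1/2,p}$ (Theorem~\ref{main2}) produces some $\tilde q\in\Ls_0^{s,p}$ with $\Phi(\tilde q)=\Phi(q)$, and injectivity of $\Phi$ on $H_0^{-1}$ then identifies $\tilde q$ with $q$.

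For the first step, recall that $q\in H_0^{-1}$ is real, so $\abs{z_{\pm n}(q)}=\sqrt{I_n(q)}$ by \eqref{e:z_k} and $I_n(q)\ge0$. Thus
\[
  \n{\Phi(q)}_{s+1/2,p}^{p}=2\sum_{n\ge1}\langle n\rangle^{(s+1/2)p}\,I_n(q)^{p/2}.
\]
Since $q\in H_0^{-1}\subset\Wp$, estimate \eqref{In-gmn} provides some $n_0=n_0(q)\ge1$ with $8n\pi I_n\le 2\gm_n^2$, hence $I_n^{p/2}\le C\,\abs{\gm_n}^p/n^{p/2}$, for all $n\ge n_0$. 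Because $\langle n\rangle^{(s+1/2)p}/n^{p/2}=\langle n\rangle^{sp}\,(\langle n\rangle/n)^{p/2}\le C'\langle n\rangle^{sp}$, the tail of the series is dominated by $C''\sum_{n\ge n_0}\langle n\rangle^{sp}\abs{\gm_n}^p<\infty$, which is finite by hypothesis. The finitely many remaining terms with $n<n_0$ contribute a finite amount, so $\Phi(q)\in\ell_0^{s+1/2,p}$. (Whenever $\gm_n=0$ one has $I_n=0$ by Theorem~\ref{bm.H-1}(iii), so such terms vanish and cause no difficulty.)

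For the second step, Theorem~\ref{main2} yields $\tilde q\in\Ls_0^{s,p}$ with $\Phi(\tilde q)=\Phi(q)$. Since $\Ls_0^{s,p}\hookrightarrow H_0^{-1}$ and $\Phi\colon H_0^{-1}\to\ell_0^{-1/2,2}$ is a diffeomorphism, in particular injective, the equality $\Phi(\tilde q)=\Phi(q)$ between the elements $\tilde q,q\in H_0^{-1}$ forces $q=\tilde q\in\Ls_0^{s,p}$, as claimed. I do not expect a genuine obstacle here: the only quantitative input is the comparison $I_n\asymp\gm_n^2/n$ supplied by \eqref{In-gmn}, and the remainder is the soft combination of surjectivity of $\Phi_{s,p}$ with injectivity of $\Phi$ on $H_0^{-1}$. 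The single point requiring a little care is that the threshold $n_0$ and the constant in \eqref{In-gmn} may depend on $q$, but since $q$ is held fixed throughout this is immaterial.
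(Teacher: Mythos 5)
Your proposal is correct and follows essentially the same route as the paper: both use the two-sided comparison $8n\pi I_n\asymp\gm_n^2$ from Theorem~\ref{bm.H-1} together with $\abs{z_{\pm n}}=\sqrt{I_n}$ to show $\Phi(q)\in\ell_0^{s+1/2,p}$, and then invoke Theorem~\ref{main2} to pull this back to $q\in\Ls_0^{s,p}$. Your explicit surjectivity-plus-injectivity step is just a slightly more spelled-out version of the paper's appeal to $\Phi_{s,p}$ being a bijection onto $\ell_0^{s+1/2,p}$.
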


\begin{proof}
Let $z = (z_{n})_{n\in\Z} = \Phi(q)$. By Theorem~\ref{bm.H-1} there exists $n_{0}\ge 1$ depending on $q$ such that for any $n\ge n_{0}$,
$
  \gm_{n}^{2}/2 \le 8n\pi I_{n}(q) \le 2\gm_{n}^{2},
$
where $I_{n}(q)$ denotes the $n$th action variable of $q$. Since $I_{n} = \abs{z_{n}}^{2}$ it then follows that
\[
  (z_{n})_{n\in\Z}\in \ell_{0}^{s+1/2,p} \iff (\gm_{n})_{n\ge 1}\in \ell^{s,p}(\N).
\]
The claim then follows from Theorem~\ref{main2} saying that
\[
  (z_{n})_{n\in\Z}\in \ell_{0}^{s+1/2,p} \iff q\in \Ls_{0}^{s,p}.\qed
\]
\end{proof}

A second application of Theorem~\ref{main2} concerns the isospectral sets defined as follows: For any $q\in H_{0}^{-1}$ let
\[
  \Iso(q) \defl \setdef{\tilde q\in H_{0}^{-1}}{\spec(L(\tilde q)) = \spec(L(q))}.
\]
For $z\in \ell_{0}^{-1/2,2}$ let
\[
  \Tc_{z} = \setdef{w\in \ell_{0}^{-1/2,2}}{\abs{w_{k}}^{2} = \abs{z_{k}}^{2}\quad \forall k\in \Z}.
\]
Note that for any $z \in \ell_{0}^{s+1/2,p}$, $\Tc_{z}\subset\ell_{0}^{s+1/2,p}$.

\begin{cor}
\label{iso-q}
For any $q\in\Ls_{0}^{s,p}$ with $-1/2\le s \le 0$ and $2\le p <\infty$, $\Iso(q) \subset\Ls_{0}^{s,p}$ and $\Phi(\Iso(q)) = \Tc_{\Phi(q)}$. In particular, $\Iso(q)$ is a compact subset of $\Ls_{0}^{s,p}$.
\end{cor}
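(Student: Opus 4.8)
The plan is to combine the known description of isospectral sets in Birkhoff coordinates with the mapping properties supplied by Theorem~\ref{main2}, and then to prove compactness directly on the model torus $\Tc_{\Phi(q)}$, transporting it back along $\Phi^{-1}$. Throughout write $z = \Phi(q)$, which by Theorem~\ref{main2} lies in $\ell_0^{s+1/2,p}$ since $q \in \Ls_0^{s,p}$. The three assertions of the corollary will be read off from the single identity $\Phi(\Iso(q)) = \Tc_z$ once it is established at the level of $\ell_0^{s+1/2,p}$.

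First I would record the identity $\Phi(\Iso(q)) = \Tc_{\Phi(q)}$ as subsets of $\ell_0^{-1/2,2}$. The inclusion $\subseteq$ is immediate: each action $I_n = z_n z_{-n}$ is built from the discriminant $\Dl(\lm,\cdot)$ and hence is a function of the periodic spectrum alone, so $\tilde q \in \Iso(q)$ forces $|z_n(\tilde q)|^2 = I_n(\tilde q) = I_n(q) = |z_n(q)|^2$ for every $n \ge 1$, which together with the reality relation $z_{-n} = \ob{z_n}$ means $\Phi(\tilde q) \in \Tc_z$. The reverse inclusion rests on the fact that the actions form a complete set of isospectral invariants — equivalently, that the periodic spectrum of $L(q)$ is a function of the actions alone — which is the spectral content underlying the construction of the Birkhoff map in \cite{Kappeler:2003up,Kappeler:2005fb}. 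This is the only genuinely spectral ingredient, and I expect it to be the main obstacle were one to seek a self-contained proof; granting it, any $w \in \Tc_z$ has preimage $\tilde q := \Phi^{-1}(w) \in H_0^{-1}$ with the same actions, hence the same periodic spectrum, so $\tilde q \in \Iso(q)$.

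Next I would transport this identity to the Fourier--Lebesgue scale. Since $z \in \ell_0^{s+1/2,p}$, the observation made just before the statement of the corollary gives $\Tc_z \subseteq \ell_0^{s+1/2,p}$. As $\Phi\colon \Ls_0^{s,p} \to \ell_0^{s+1/2,p}$ is a bijection by Theorem~\ref{main2} and is the restriction of the injective map $\Phi$ on $H_0^{-1}$, every $w \in \Tc_z$ has its unique $H_0^{-1}$-preimage $\Phi^{-1}(w)$ already lying in $\Ls_0^{s,p}$. Consequently $\Iso(q) = \Phi^{-1}(\Tc_z) \subseteq \Ls_0^{s,p}$ and $\Phi(\Iso(q)) = \Tc_z$, now understood as an identity of subsets of $\ell_0^{s+1/2,p}$; this yields the first two assertions.

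Finally, for compactness I would show that $\Tc_z$ is compact in $\ell_0^{s+1/2,p}$ by a tightness argument and then pull back along the continuous map $\Phi^{-1}$. Every $w \in \Tc_z$ satisfies $\sum_{k\in\Z}\langle k\rangle^{(s+1/2)p}|w_k|^p = \n{z}_{s+1/2,p}^p$, so the tails $\sum_{|k|>N}\langle k\rangle^{(s+1/2)p}|w_k|^p$ are dominated by the fixed convergent tail of $z$ and tend to $0$ as $N\to\infty$ uniformly in $w \in \Tc_z$; since in addition each coordinate $w_k$ ranges over the compact circle $\{|w_k| = |z_k|\}$, a diagonal extraction produces, from any sequence in $\Tc_z$, a subsequence converging in $\n{\cdot}_{s+1/2,p}$ to a limit that again satisfies $|w_k| = |z_k|$ and the reality relation, hence lies in $\Tc_z$. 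Thus $\Tc_z$ is compact, and as $\Phi^{-1}\colon \ell_0^{s+1/2,p} \to \Ls_0^{s,p}$ is continuous by Theorem~\ref{main2}, the set $\Iso(q) = \Phi^{-1}(\Tc_z)$ is compact in $\Ls_0^{s,p}$, completing the proof.
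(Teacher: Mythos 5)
Your proposal is correct and follows essentially the same route as the paper: the identity $\Phi(\Iso(q)) = \Tc_{\Phi(q)}$ on $H_0^{-1}$ (which the paper simply cites as \cite[Theorem 1.2]{Kappeler:2005fb} and you rightly identify as the one genuinely spectral ingredient), compactness of the torus $\Tc_{\Phi(q)}$ in $\ell_0^{s+1/2,p}$, and transport back through the diffeomorphism of Theorem~\ref{main2}. You merely expand the two steps the paper compresses into a citation and a ``by definition'', and your tails-plus-diagonal argument for compactness is a correct way to fill in that second step.
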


\begin{proof}
By \cite[Theorem 1.2]{Kappeler:2005fb} one has $\Phi(\Iso(q)) = \Tc_{\Phi(q)}$ for any $q\in H_{0}^{-1}$. By definition, for any $q\in \Ls_{0}^{s,p}$, $\Tc_{\Phi(q)}$ is a compact subset of $\ell_{0}^{s+1/2,p}$. As a consequence,
$
  \Iso(q) = \Phi^{-1}(\Tc_{\Phi(q)})\subset\Ls_{0}^{s,p}
$
and $\Iso(q)$ is compact in $\Ls_{0}^{s,p}$.~\qed
\end{proof}

%
%


\appendix

\section{Auxiliary estimates}
\label{a:aux}

In this appendix we collect auxiliary estimates needed in Section~\ref{section3}.

\begin{lem}
\label{hilbert-sum}
For any $1/2< \sg < \infty$ there exists a constant $C_{\sg} > 0$ so that for any $n\ge 1$,
$\sum_{\abs{m}\neq n} \frac{1}{\abs{m^{2}-n^{2}}^{\sg}}$ is bounded by $C_{\sg}/n^{2\sg-1}$ if $1/2 < \sg < 1$, $C_{\sg} \frac{\log \w{n}}{n}$ if $\sg = 1$, and $C_{\sg}/n^{\sg}$ if $\sg > 1$.\fish
\end{lem}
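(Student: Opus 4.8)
The plan is to estimate the sum $\sum_{\abs{m}\neq n} \abs{m^2-n^2}^{-\sg}$ by splitting it according to the size of $\abs{m}$ relative to $n$ and comparing each piece to an integral. First I would factor $\abs{m^2-n^2} = \abs{m-n}\,\abs{m+n}$ and organize the sum over $m\in\Z\setminus\{\pm n\}$ by distance from the singular points. The key observation is that the two ``close'' regions, where $\abs{m}$ is near $n$, contribute the dominant terms, while the ``far'' region where $\abs{m}\ge 2n$ (say) is comparable across all three ranges of $\sg$ and contributes only a tail that is always of lower or equal order.

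The main step is the near-diagonal estimate. For $m$ with $\abs{\abs{m}-n}=:k$ small and $\abs{m}\le 2n$ one has $\abs{m+n}\asymp n$, so the summand is comparable to $n^{-\sg} k^{-\sg}$. Summing over $1\le k \lesssim n$ gives a factor $\sum_{k=1}^{n} k^{-\sg}$, which is the source of the three cases: this partial sum is $\asymp n^{1-\sg}$ when $1/2<\sg<1$, $\asymp \log\w{n}$ when $\sg=1$, and $\asymp 1$ (bounded) when $\sg>1$. Multiplying by the prefactor $n^{-\sg}$ yields the bounds $C_\sg n^{1-2\sg}$, $C_\sg (\log\w{n})/n$, and $C_\sg n^{-\sg}$ respectively, matching the claim. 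I would carry out this comparison with an integral $\int_1^{n} x^{-\sg}\,\dx$ to make the dependence on $\sg$ explicit and to produce a constant $C_\sg$ independent of $n$. The far region $\abs{m}\ge 2n$ is handled separately: there $\abs{m^2-n^2}\ge \tfrac34 m^2$, so the contribution is bounded by $\sum_{\abs{m}\ge 2n} (3m^2/4)^{-\sg} \lesssim n^{1-2\sg}$ by a standard integral comparison (using $2\sg>1$), which is dominated by the near-diagonal bound in every case.

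The only genuine obstacle is bookkeeping: one must keep the constant $C_\sg$ uniform in $n$ and correctly track which of the three regimes of $\sg$ governs the partial sum $\sum_{k=1}^n k^{-\sg}$. There is a mild subtlety in that the hypothesis requires $1/2<\sg$, which is exactly what guarantees the far tail converges and stays of the stated order; I would flag this explicitly. Since every estimate reduces to comparing a decreasing positive sequence with its integral, no delicate cancellation is involved, and the proof is a direct, if slightly case-heavy, computation. I expect the write-up to be short, with the three cases presented in parallel.
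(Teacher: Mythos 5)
Your proposal is correct and follows essentially the same route as the paper's proof: both factor $\abs{m^2-n^2}=\abs{m-n}\,\abs{m+n}$, reduce the near-singular part to $n^{-\sg}\sum_{k\le n}k^{-\sg}$ (whose three regimes produce the three stated bounds), and control the far tail $\abs{m}\gtrsim 2n$ using $2\sg>1$ via integral comparison. The only cosmetic difference is that the paper first reduces to the range $m\ge n+1$ by the symmetry inequality $n^2-(n-k)^2\ge\tfrac13((n+k)^2-n^2)$ before splitting at $m=2n$, whereas you split directly at $\abs{m}=2n$; the resulting estimates are identical.
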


\begin{proof}
Note that for $n\ge 1$,
\[
  \sum_{\abs{m}\neq n} \frac{1}{\abs{m^{2}-n^{2}}^{\sg}} = \frac{1}{n^{2\sg}} + 
  2\sum_{m=1}^{n-1} \frac{1}{(n^{2}-m^{2})^{\sg}} + 
  2\sum_{m=n+1}^{\infty} \frac{1}{(m^{2}-n^{2})^{\sg}}.
\]
As $n^{2}-(n-k)^{2} \ge \frac{1}{3}((n+k)^{2} - n^{2})$ for any $0\le k\le n$ it follows that
\[
  \sum_{\abs{m}\neq n} \frac{1}{\abs{m^{2}-n^{2}}^{\sg}} \le 5\cdot 3^{\sg}\sum_{m = n+1}^{\infty}
  \frac{1}{(m^{2}-n^{2})^{\sg}}.
\]
Furthermore,
\begin{align*}
  \sum_{m=n+1}^{\infty} \frac{1}{(m^{2}-n^{2})^{\sg}}
   &\le \frac{1}{n^{\sg}} + \sum_{m=n+2}^{2n} \frac{1}{n^{\sg}}\frac{1}{(m-n)^{\sg}}
   + \sum_{m=2n+1}^{\infty} \frac{1}{(m-n)^{2\sg}}\\
   &\le \frac{1}{n^{\sg}} + \frac{1}{n^{\sg}} \int_{n+1}^{2n} \frac{1}{(x-n)^{\sg}}\,\dx
   + \int_{2n}^{\infty} \frac{1}{(x-n)^{2\sg}}\,\dx.
\end{align*}
One computes,
\[
  \int_{2n}^{\infty} \frac{1}{(x-n)^{2\sg}}\,\dx = \frac{1}{2\sg-1}\frac{1}{n^{2\sg-1}}.
\]
whereas for $\sg > 1/2$ with $\sg \neq 1$
\[
  \int_{n+1}^{2n} \frac{1}{(x-n)^{\sg}}\,\dx \le \frac{1}{\abs{\sg-1}}
  \p*{ \frac{1}{n^{\sg-1}} + 1}
\]
and  $\int_{n+1}^{2n} \frac{1}{x-n}\,\dx = \log n$. Combining these estimates yields the stated ones.~\qed
\end{proof}

The next result concerns estimates for the convolution of sequences.

\begin{lem}
\label{convolution}
\begin{renum}
\item Let $-1 \le t < -1/2$. For $a = (a_{m})_{m\in \Z}\in \ell_{\C}^{t,2}$ and $b=(b_{m})_{m\in\Z}\in \ell_{\C}^{1,2}$, the convolution $a*b = (\sum_{m\in\Z} a_{n-m}b_{m})_{n\in\Z}$ is well defined and
\[
  \n{a*b}_{t,2} \le C_{t}\n{a}_{t,2}\n{b}_{1,2}.
\]
\item Let $-1/2 \le s \le 0$, $2 \le p < \infty$, and $-s-3/2 < t < 0$. For any $a = (a_{m})_{m\in \Z}\in \ell_{\C}^{s,p}$ and $b=(b_{m})_{m\in\Z}\in \ell_{\C}^{t+2,p}$,
\[
  \n{a*b}_{s,p} \le C_{s,t}\n{a}_{s,p}\n{b}_{t+2,p}.\fish
\]
\end{renum}
\end{lem}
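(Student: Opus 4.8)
The plan is to prove both parts by duality together with a Littlewood--Paley--type splitting of the convolution according to the relative sizes of the two input frequencies. Replacing the entries by their absolute values in $(a*b)_n=\sum_m a_{n-m}b_m$ and using that $\n{a*b}_{t,2}$ (respectively $\n{a*b}_{s,p}$) is the supremum of $\abs{\sum_{k,m}a_k b_m\,\ob{c_{k+m}}}$ over all $c$ in the unit ball of the dual space $\ell_\C^{\abs{t},2}$ (respectively $\ell_\C^{\abs{s},p'}$, with $1/p+1/p'=1$), it suffices to bound the trilinear sum $S=\sum_{k,m}\abs{a_k}\,\abs{b_m}\,\abs{c_{k+m}}$. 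I factor out the weights as $\abs{a_k}=\langle k\rangle^{\abs{t}}\alpha_k$, $\abs{b_m}=\langle m\rangle^{-1}\beta_m$ and $\abs{c_j}=\langle j\rangle^{-\abs{t}}\gamma_j$ with $\alpha,\beta,\gamma\in\ell^2$ (and the analogous $\ell^p,\ell^p,\ell^{p'}$ factorisation in part~(ii)), and then split the summation into the three regions $\abs{k}\ge 2\abs{m}$, $\abs{m}\ge 2\abs{k}$, and $\tfrac12\abs{m}<\abs{k}<2\abs{m}$, in which the first factor, the second factor, or both factors carry the high frequency.

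In the two unbalanced regions one input frequency dominates the output frequency $k+m$, so that the output weight cancels against the weight of the dominant factor and the estimate collapses to a shift--invariant Cauchy--Schwarz. For $\abs{k}\ge 2\abs{m}$ one has $\langle k+m\rangle\approx\langle k\rangle$, the factors $\langle k\rangle^{\abs{t}}\langle k+m\rangle^{-\abs{t}}$ are bounded, $\sum_k\alpha_k\gamma_{k+m}\le\n{\alpha}_{0,2}\n{\gamma}_{0,2}$ for each fixed $m$, and the remaining sum $\sum_m\abs{b_m}\le C\n{b}_{1,2}$ converges by Cauchy--Schwarz since $1>1/2$. For $\abs{m}\ge 2\abs{k}$ one argues symmetrically, the surviving factor now being $\langle m\rangle^{-1}$, so that $\sum_m\langle m\rangle^{-1}\beta_m\le C\n{\beta}_{0,2}$. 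Both contributions are thus $\le C\n{a}_{t,2}\n{b}_{1,2}$.

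The comparable--frequency region is the heart of the argument and the \emph{main obstacle}: there $k+m$ can be small while $\abs{k}\approx\abs{m}$ are both large---the high--high--to--low interaction---so the output weight $\langle k+m\rangle^{-\abs{t}}$ is \emph{large} and yields no decay, and all decay must be extracted from the inputs. Using $\langle k\rangle\approx\langle m\rangle$ I estimate $\abs{a_k}\abs{b_m}\approx\langle k\rangle^{\abs{t}-1}\alpha_k\beta_m\le\alpha_k\beta_m$, where the bound $\langle k\rangle^{\abs{t}-1}\le 1$ uses $\abs{t}\le 1$, i.e.\ $t\ge-1$. Writing $j=k+m$ and applying Cauchy--Schwarz in $k$ leaves $\sum_j\langle j\rangle^{-\abs{t}}\gamma_j\le\big(\sum_j\langle j\rangle^{-2\abs{t}}\big)^{1/2}\n{\gamma}_{0,2}$, and this last series converges precisely because $2\abs{t}>1$, i.e.\ $t<-1/2$. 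In this way the two endpoint hypotheses $-1\le t<-1/2$ reappear exactly as the two convergence thresholds, which finishes part~(i).

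Part~(ii) follows the same three--region scheme, with Cauchy--Schwarz replaced throughout by H\"older's inequality in the summation variables (with exponents $p,p,p'$) and by Young's inequality $\ell^p*\ell^1\hookrightarrow\ell^p$ in the unbalanced regions; the resulting scalar weight series are summed by Lemma~\ref{hilbert-sum}. Exactly as in part~(i), the only genuine difficulty is the high--high interaction, where the output weight $\langle k+m\rangle^{-\abs{s}}$ provides no decay, and the admissibility conditions $-s-3/2<t<0$ (together with $2\le p<\infty$) enter exactly in order to guarantee convergence of the corresponding weighted series. I therefore expect the matching of the admissible range of $t$ to the convergence thresholds of these high--high weight sums to be the main technical point, the contributions of the unbalanced regions being routine.
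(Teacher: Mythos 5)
Your part (i) is correct and is in substance the paper's own argument. The paper estimates $\n{a*b}_{t,2}^{2}$ directly (no dualization) and splits the inner sum into only two regions, $\abs{n-m}\le 2\abs{m}$ and $\abs{n-m}>2\abs{m}$; on the first it uses $\w{n-m}^{\abs{t}}\le 2\w{m}$ (this is where $t\ge -1$ enters) followed by Cauchy--Schwarz, leaving the output weight $\sum_{n}\w{n}^{-2\abs{t}}$, which converges because $t<-1/2$; on the second it uses $\w{n-m}^{\abs{t}}\le 2\w{n}^{\abs{t}}$ and Cauchy--Schwarz against $\sum_m\w{m}^{-2}$. Your first region is the paper's second, and your balanced and $b$-dominant regions are merged into the paper's first; the two endpoint hypotheses appear for exactly the reasons you identify. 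So (i) is fine.

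Part (ii) is where your proposal has a genuine gap, and it sits precisely in the step you dismiss as routine. Replacing Cauchy--Schwarz by H\"older with exponents $(p,p')$ forces you, at the point where the $b$-sum must be made absolutely summable, to bound $\sum_m\w{m}^{\abs{s}}\abs{b_m}\le\n{b}_{t+2,p}\bigl(\sum_m\w{m}^{-(t+2-\abs{s})p'}\bigr)^{1/p'}$, and the hypothesis $-s-3/2<t$ only guarantees $t+2-\abs{s}>1/2$, hence $(t+2-\abs{s})p'>p'/2$, which is $\le 1$ for every $p\ge 2$ and $<1$ for $p>2$: the weight series diverges. This is not an artifact of your particular decomposition -- the inequality with $\n{b}_{t+2,p}$ on the right-hand side is in fact \emph{false} for $p>2$. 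For instance, with $s=0$, $t+2=0.51$ and $p$ large, the sequences $a_k=\w{k}^{-1/p}\bigl(\log(2+\abs{k})\bigr)^{-2/p}\in\ell_{\C}^{0,p}$ and $b_m=\w{m}^{-(t+2)-1/p}\bigl(\log(2+\abs{m})\bigr)^{-2/p}\in\ell_{\C}^{t+2,p}$ give $(a*b)_0=\sum_m a_{-m}b_m=+\infty$. What the paper's displayed proof actually establishes -- and what is used later, since Lemma~\ref{regularity-inhomogeneous} applies the lemma to $b=f\in\Ls_{\star,\C}^{r+2,2}$ -- is the estimate with $\n{b}_{t+2,2}$ in place of $\n{b}_{t+2,p}$. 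For that version no frequency splitting is needed at all: submultiplicativity $\w{n-m}^{\abs{s}}\le\w{n}^{\abs{s}}\w{m}^{\abs{s}}$ together with Young's inequality $\ell^{p}*\ell^{1}\hookrightarrow\ell^{p}$ gives $\n{a*b}_{s,p}\le\n{a}_{s,p}\sum_m\w{m}^{\abs{s}}\abs{b_m}$, and the last sum is finite by Cauchy--Schwarz with exponent $2$ (not $p$) because $2(2-\abs{t}-\abs{s})>1$. You should prove this $\ell^{2}$-based version; as proposed, your $(p,p,p')$ scheme does not close.
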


\begin{proof}
(i) First note that for any $a\in \ell_{\C}^{t,2}$, $b\in \ell_{\C}^{1,2}$
\[
  \sum_{n\in\Z}\frac{1}{\w{n}^{2\abs{t}}}
  \p*{ \sum_{m\in\Z} \abs{a_{n-m}}\abs{b_{m}} }^{2}
  \le 2^{2}I + 2^{2}II,
\]
where
\[
  I = \sum_{n\in\Z}\frac{1}{\w{n}^{2\abs{t}}}\p*{ \sum_{m\in\Z} \mathbf{1}_{\setd{\abs{n-m} \le 2\abs{m}}} \abs{a_{n-m}}\abs{b_{m}} }^{2}
\]
and
\[
  II = \sum_{n\in\Z}\frac{1}{\w{n}^{2\abs{t}}}\p*{ \sum_{m\in\Z} \mathbf{1}_{\setd{\abs{n-m} > 2\abs{m}}} \abs{a_{n-m}}\abs{b_{m}} }^{2}.
\]
To estimate $I$, use that $(\w{n-m}^{\abs{t}}/\w{m})\mathbf{1}_{\setd{\abs{n-m} \le 2\abs{m}}} \le 2$ to conclude by the Cauchy-Schwarz inequality that
\[
  I \le \sum_{n\in\Z} \frac{4}{\w{n}^{2\abs{t}}}\n{a}_{t,2}^{2}\n{b}_{1,2}^{2}.
\]
Towards $II$, note that for $\abs{n-m} > 2\abs{m}$, $\abs{n} \ge \abs{n-m} - \abs{m} \ge \abs{m}$, and hence $\abs{n-m} \le \abs{n}+\abs{m} \le 2\abs{n}$ yielding the estimate
\[
  \frac{\w{n-m}^{\abs{t}}}{\w{n}^{\abs{t}}}\mathbf{1}_{\setd{\abs{n-m} > 2\abs{m}}} \le 2.
\]
Hence by the Cauchy-Schwarz inequality
\[
  II \le \sum_{n\in\Z} \p*{\sum_{m\in\Z} \frac{4}{\w{m}^{2}}}
  \sum_{m\in \Z} \p*{\frac{\abs{a_{n-m}}}{\w{n-m}^{\abs{t}}}\w{m}\abs{b_{m}}}^{2}
  \le \sum_{m\in\Z} \frac{4}{\w{m}^{2}} \n{a}_{t,2}^{2}\n{b}_{1,2}^{2}.
\]
This proves item (i). Towards (ii), write
\[
  \n{a*b}_{s,p}^{p} \le \sum_{n\in\Z} \p*{ \sum_{m\in\Z} \frac{\w{n-m}^{\abs{s}}}{\w{n}^{\abs{s}}\w{m}^{\abs{s}}}\frac{\abs{a_{n-m}}}{\w{n-m}^{\abs{s}}}\w{m}^{\abs{s}}\abs{b_{m}} }^{p}.
\]
Use that $\w{n-m} \le \w{n}\w{m}$ and Young's inequality to conclude
\[
  \n{a*b}_{s,p} \le \p*{\sum_{m\in\Z} \w{m}^{\abs{s}} \abs{b_{m}}}\n{a}_{s,p}.
\]
By the Cauchy-Schwarz inequality we get
\[
  \sum_{m\in\Z} \w{m}^{\abs{s}} \abs{b_{m}} \le 
  \p*{ \sum_{m} \w{m}^{2(2-\abs{t})}\abs{b_{m}}^{2} }^{1/2}
  \p*{ \sum_{m} \frac{1}{\w{m}^{(2-\abs{t}-\abs{s})2}} }^{1/2}.
\]
Since $-3/2-s < t \le 0$, it follows that $2-\abs{t}-\abs{s} > 1/2$ and therefore
\[
  C_{s,t} \defl \p*{\sum_{m\in\Z} \frac{1}{\w{m}^{(2-\abs{t}-\abs{s})2}}}^{1/2} < \infty
\]
implying the claimed result.\qed
\end{proof}

Finally, we prove the following result on embeddings of sequence spaces.

\begin{lem}
\label{ell-embedding}
Suppose $-1/2 \le s \le 0$ and $2\le p < \infty$, then $\ell_{\C}^{s,p}\opento \ell_{\C}^{t,2}$ for any $t < s-1/2+1/p$. In particular, $\ell_{\C}^{s,p} \opento \ell_{\C}^{-1+1/2p,2}$ for any $-1/2\le s\le 0$.\fish
\end{lem}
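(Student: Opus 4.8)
The plan is to obtain the embedding from a single weighted Hölder inequality, arranged so that its convergence condition reproduces the threshold $t < s - 1/2 + 1/p$ exactly. First I would normalize notation: given $z = (z_n)_{n\in\Z} \in \ell_\C^{s,p}$, set $a_n \defl \w{n}^{s}\abs{z_n}$, so that $\n{(a_n)_n}_{\ell^p} = \n{z}_{s,p}$, where $\n{(a_n)_n}_{\ell^p} = \p*{\sum_n \abs{a_n}^p}^{1/p}$. Since $2 \le p < \infty$ forces $1/p \le 1/2$, the hypothesis $t < s - 1/2 + 1/p$ gives $t < s$, hence $t - s < 0$, and I may rewrite
\begin{equation*}
  \n{z}_{t,2}^2 = \sum_{n\in\Z} \w{n}^{2t}\abs{z_n}^2 = \sum_{n\in\Z} \w{n}^{2(t-s)} a_n^2 .
\end{equation*}

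Next I would treat the boundary exponent $p = 2$ separately, since the Hölder step below degenerates there. In that case the hypothesis reads $t < s$, and the embedding is immediate from monotonicity of the weights: $\w{n} \ge 1$ gives $\w{n}^{t} \le \w{n}^{s}$ for all $n$, whence $\n{z}_{t,2} \le \n{z}_{s,2}$ for every $t \le s$. For $2 < p < \infty$ I would apply Hölder's inequality to the last sum with the conjugate pair $p/2$ and $p/(p-2)$, pairing $(a_n^2)_n$ against the weight $(\w{n}^{2(t-s)})_n$:
\begin{align*}
  \sum_{n\in\Z} \w{n}^{2(t-s)} a_n^2
   &\le \p*{ \sum_{n} a_n^p }^{2/p}\p*{ \sum_{n} \w{n}^{2(t-s)\frac{p}{p-2}} }^{\frac{p-2}{p}}\\
   &= \n{z}_{s,p}^{2}\,\p*{ \sum_{n} \w{n}^{2(t-s)\frac{p}{p-2}} }^{\frac{p-2}{p}} .
\end{align*}
The series $\sum_{n\in\Z}\w{n}^{\alpha}$ converges precisely when $\alpha < -1$, so the weight factor is finite exactly when $2(t-s)\frac{p}{p-2} < -1$. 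Writing $d \defl s - t > 0$, this reads $2d\,\frac{p}{p-2} > 1$, i.e. $d > \frac12\p*{1 - \frac2p} = \frac12 - \frac1p$, which is precisely $t < s - \frac12 + \frac1p$. Thus under the hypothesis the right-hand side is finite and $\n{z}_{t,2} \le C_{s,t,p}\n{z}_{s,p}$, with $C_{s,t,p}$ the $\frac{p-2}{2p}$-th power of the convergent weight series; this establishes $\ell_\C^{s,p}\opento\ell_\C^{t,2}$.

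Finally, for the displayed special case I would verify that the single exponent $t = -1 + \frac1{2p}$ falls below the threshold uniformly in $s \in [-1/2,0]$. The map $s \mapsto s - \frac12 + \frac1p$ is increasing, so over $[-1/2,0]$ its minimum is attained at $s = -1/2$ and equals $-1 + \frac1p$; since $\frac1{2p} < \frac1p$, one has $-1 + \frac1{2p} < -1 + \frac1p \le s - \frac12 + \frac1p$ for every admissible $s$, and the first part applies. I do not expect any genuine obstacle: the content is entirely the weighted Hölder estimate, and the only points demanding care are excluding $p = 2$ from that step (where the conjugate exponent blows up) and checking that the convergence condition $2(t-s)\frac{p}{p-2} < -1$ is algebraically identical to $t < s - \frac12 + \frac1p$, so that the embedding holds on the full stated open range rather than on a strictly smaller one.
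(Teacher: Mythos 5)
Your proof is correct and follows essentially the same route as the paper's: the case $p=2$ is handled trivially, and for $p>2$ one applies H\"older's inequality with the conjugate pair $p/2$, $p/(p-2)$ to the weighted sum, the convergence condition on the resulting weight series being exactly $t < s - 1/2 + 1/p$. The only cosmetic difference is that you work with the difference $s-t$ directly while the paper phrases the same computation in terms of $\abs{t}-\abs{s}$.
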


\begin{proof}
For $p=2$ the claim clearly holds. So it remains to consider the case $2 < p < \infty$.
Note that for $s$ and $p$ in the given ranges, $s-1/2+1/p \le 0$. Thus $t < 0$ and $\abs{t}-\abs{s} > 1/2 - 1/p > 0$. Let $(a_{m})_{m\in\Z}\in \ell_{\C}^{s,p}$. By Hölder's inequality with $p/2$, $p''/2$ where $1/p+1/p'' = 1/2$
\begin{align*}
  \sum_{n\in\Z} \frac{\abs{a_{n}}^{2}}{\w{n}^{2\abs{t}}}
  \le \sum_{n\in\Z} \frac{\abs{a_{n}}^{2}}{\w{n}^{2\abs{s}}}\frac{1}{\w{n}^{2(\abs{t}-\abs{s})}}
  \le \p*{ \sum_{n\in\Z} \frac{\abs{a_{n}}^{p}}{\w{n}^{\abs{s}p}} }^{2/p}
  \p*{ \sum_{n\in\Z} \frac{1}{\w{n}^{(\abs{t}-\abs{s})p''}} }^{2/p''}.
\end{align*}
Since $(\abs{t}-\abs{s})p'' > (1/2-1/p)(1/2-1/p)^{-1} = 1$, one has $\n{a}_{t,2}\le C\n{a}_{s,p}$ with $C = C_{t,s,p} < \infty$.\qed
\end{proof}

\section{Schrödinger Operators}
\label{a:hill-op}

In this appendix we review definitions and properties of Schrödinger operators $-\partial_{x}^{2}+q$ with a singular potential $q$ used in Section~\ref{section3} -- see e.g. \cite{Kappeler:2001bi}.

\emph{Boundary conditions.} Denote by $H^{1}_{\C}[0,1] = H^{1}([0,1],\C)$ the Sobolev space of functions $f\colon [0,1]\to \C$ which together with their distributional derivative $\partial_{x}f$ are in $L^{2}_{\C}[0,1]$. On $H^{1}_{\C}[0,1]$ we define the following three boundary conditions (bc),
\[
  (\per+)\quad f(1) = f(0);\quad
  (\per-)\quad  f(1) = -f(0);\quad
  (\dir)\quad   f(1) = f(0) = 0.
\]
The corresponding subspaces of $H^{1}_{\C}[0,1]$ are defined by
\[
  H_{bc}^{1} = \setdef{f\in H^{1}_{\C}[0,1]}{f\text{ satisfies (bc)}},
\]
and their duals are denoted by $H_{bc}^{-1} \defl (H_{bc}^{1})'$. Note that $H_{\per+}^{1}$ can be canonically identified with the Sobolev space $H^{1}(\R/\Z,\C)$ of 1-periodic functions $f\colon \R\to \C$ which together with their distributional derivative are in $L^{2}_{\loc}(\R,\C)$. Analogously, $H_{\per-}^{1}$ can be identified with the subspace of $H^{1}(\R/2\Z,\C)$ consisting of functions $f\colon \R\to \C$ with $f,\partial_{x}f\in L_{\loc}^{2}(\R,\C)$ satisfying $f(x+1) = -f(x)$ for all $x\in \R$. In the sequel we will not distinguish these pairs of spaces.
Furthermore, note that $H_{\dir}^{1}$ is a subspace of $H^{1}_{\per+}$ as well as of $H^{1}_{\per-}$. Denote by $\lin{\cdot,\cdot}_{bc}$ the extension of the $L^{2}$-inner product $\spi{f,g} = \int_{0}^{1} f\ob{g}\,\dx$ to a sesquilinear pairing of $H_{bc}^{-1}$ and $H_{bc}^{1}$. Finally, we record that the multiplication
\begin{equation}
  \label{H1-mul}
  H^{1}_{bc}\times H^{1}_{bc}\to H^{1}_{\per+},\qquad (f,g) \mapsto fg,
\end{equation}
and the complex conjugation $H_{bc}^{1}\to H_{bc}^{1}$, $f\mapsto \ob{f}$ are bounded operators.

\emph{Multiplication operators.}
For $q\in H_{\per+}^{-1}$ define the operator $V_{bc}$ of multiplication by $q$, $V_{bc}\colon H^{1}_{bc}\to H^{-1}_{bc}$ as follows: for any $f\in H_{bc}^{1}$, $V_{bc}f$ is the element in $H_{bc}^{-1}$ given by
\[
  \lin{V_{bc}f,g}_{bc} \defl \lin{q,\ob{f}g}_{\per+},\qquad g\in H_{bc}^{1}.
\]
In view of~\eqref{H1-mul}, $V_{bc}$ is a well defined bounded linear operator.

\begin{lem}
\label{V-dir-V-per}
Assume that $q\in H^{-1}_{\per+}$. Then for any $g\in H_{\dir}^{1}$
\[
  (V_{\per\pm }g)\big|_{H_{\dir}^{1}} = V_{\dir}g.\fish
\]
\end{lem}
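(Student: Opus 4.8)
The plan is to prove the asserted identity of elements of $H^{-1}_{\dir} = (H^1_{\dir})'$ by unwinding the defining relation of the multiplication operators and observing that both sides reduce to one and the same $\per+$-pairing. The starting point is the structural fact recorded just above the statement: since $H^1_{\dir}$ is a subspace of both $H^1_{\per+}$ and $H^1_{\per-}$, the element $V_{\per\pm}g \in H^{-1}_{\per\pm}$ is well-defined for $g \in H^1_{\dir}$, and its restriction $(V_{\per\pm}g)\big|_{H^1_{\dir}}$ is, by definition of restriction of a functional, the element of $H^{-1}_{\dir}$ obtained by evaluating $V_{\per\pm}g$ on the test functions $h$ lying in the subspace $H^1_{\dir} \subset H^1_{\per\pm}$.

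First I would fix an arbitrary $h \in H^1_{\dir}$ and compute the value of $(V_{\per\pm}g)\big|_{H^1_{\dir}}$ on $h$. Because $h \in H^1_{\dir} \subset H^1_{\per\pm}$ and the $\dir$-pairing is the restriction of the $\per\pm$-pairing, this value equals $\lin{V_{\per\pm}g, h}_{\per\pm}$, which by the defining relation of $V_{\per\pm}$ is exactly $\lin{q, \ob{g}h}_{\per+}$. On the other side, the defining relation of $V_{\dir}$ gives directly $\lin{V_{\dir}g, h}_{\dir} = \lin{q, \ob{g}h}_{\per+}$. Thus both functionals take the common value $\lin{q, \ob{g}h}_{\per+}$ on $h$; since $h \in H^1_{\dir}$ was arbitrary, the two elements of $H^{-1}_{\dir}$ coincide, which is the claim.

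The only point requiring care — and the nearest thing to an obstacle in an otherwise purely definitional argument — is to confirm that the right-hand pairing $\lin{q, \ob{g}h}_{\per+}$ is meaningful and is genuinely the shared value of both sides. Here I would invoke the multiplication property \eqref{H1-mul}: as $g, h \in H^1_{\dir}$ both lie in $H^1_{\per\pm}$ and complex conjugation is a bounded operator on $H^1_{\per\pm}$, the product $\ob{g}h$ belongs to $H^1_{\per+}$, so the sesquilinear pairing of $q \in H^{-1}_{\per+}$ with $\ob{g}h$ is well-defined and independent of whether it arose through the $\per\pm$- or the $\dir$-bookkeeping. This guarantees that the two defining relations produce identical $\per+$-pairings, completing the argument; the whole proof is thus a consistency check verifying that the three boundary-condition pairings are all restrictions of the single $\per+$-pairing through which the multiplication operators are defined.
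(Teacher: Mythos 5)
Your proof is correct and follows essentially the same route as the paper's: both arguments simply unwind the defining relation $\lin{V_{bc}f,h}_{bc} = \lin{q,\ob{f}h}_{\per+}$ for $bc \in \{\per\pm,\dir\}$ and observe that for $h \in H^1_{\dir}$ both sides evaluate to the common quantity $\lin{q,\ob{g}h}_{\per+}$. Your added remark that $\ob{g}h \in H^1_{\per+}$ by \eqref{H1-mul} is a harmless (and correct) explicit justification of a point the paper leaves implicit.
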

\begin{proof}
Since any $h\in H_{\dir}^{1}$ is also in $H^{1}_{\per+}$, the definitions of $V_{\per+}$ and $V_{\dir}$ imply
\[
  \lin{V_{\per+}g,h}_{\per+} = \lin{q,\ob{g}h}_{\per+} = \lin{V_{\dir}g,h}_{\dir},
\]
which gives $(V_{\per+}g)\big|_{H_{\dir}^{1}} = V_{\dir}g$. Similarly,  one sees that $V_{\per-}g\big|_{H_{\dir}^{1}} = V_{\dir}g$.~\qed
\end{proof}

It is convenient to introduce also the space $H_{\per+}^{1}\oplus H_{\per-}^{1}$ and define the multiplication operator $V$ of multiplication by $q$
\[
  V\colon H_{\per+}^{1}\oplus H_{\per-}^{1}\to H_{\per+}^{-1}\oplus H_{\per-}^{-1},\qquad
  (f,g) \mapsto (V_{\per+}f,V_{\per-}g).
\]
We note that $H_{\per+}^{1}\oplus H_{\per-}^{1}$ can be canonically identified with $H^{1}(\R/2\Z,\C)$,
\[
  H^{1}(\R/2\Z,\C) \to H_{\per+}^{1}\oplus H_{\per-}^{1},\qquad f\mapsto (f^{+},f^{-}),
\]
where $f^{+}(x) = \frac{1}{2}(f(x) + f(x+1))$ and $f^{-}(x) = \frac{1}{2}(f(x) - f(x+1))$.
Its dual is denoted by $H^{-1}(\R/2\Z,\C)$.

\emph{Fourier basis.} The spaces $H_{\per\pm}^{1}$, $H^{1}(\R/2\Z,\C)$ and $H_{\dir}^{1}$ and their duals admit the following standard Fourier basis.

\underline{Basis for $H_{\per+}^{1}$, $H_{\per+}^{-1}$.} Any element $f\in H^{1}_{\per+}$ [$H^{-1}_{\per+}$] can be represented as $f = \sum_{m\in\Z} f_{m}e_{m}$ where $(f_{m})_{m\in\Z}\in \ell_{\C}^{1,2}$ [$\ell_{\C}^{-1,2}$] and
\[
  f_{2m} = \lin{f,e_{2m}}_{\per+},\qquad f_{2m+1} = 0,\qquad \forall m\in \Z.
\]
Furthermore, for any $q = \sum_{m\in\Z} q_{m}e_{m}\in H^{-1}_{\per+}$,
\[
  V_{\per+}f = \sum_{n\in\Z}\p*{ \sum_{m\in \Z} q_{n-m}f_{m} }e_{n} \in H_{\per+}^{-1}.
\]
Note that by Lemma~\ref{convolution}, $(\sum_{m\in \Z} q_{n-m}f_{m} )_{n\in\Z}$ is in $\ell_{\C}^{-1,2}$.

\underline{Basis for $H_{\per-}^{1}$, $H_{\per-}^{-1}$.} Any element $f\in H^{1}_{\per-}$ [$H^{-1}_{\per-}$] can be represented as $f = \sum_{m\in\Z} f_{m}e_{m}$ where $(f_{m})_{m\in\Z}\in \ell_{\C}^{1,2}$ [$\ell_{\C}^{-1,2}$] and
\[
  f_{2m+1} = \lin{f,e_{2m+1}}_{\per-},\qquad f_{2m} = 0,\qquad \forall m\in \Z.
\]
Similarly, for any $q = \sum_{m\in\Z} q_{m}e_{m}\in H^{-1}_{\per+}$,
\[
  V_{\per-}f = \sum_{n\in\Z}\p*{ \sum_{m\in \Z} q_{n-m}f_{m} }e_{n} \in H_{\per-}^{-1}.
\]

\underline{Basis for $H^{1}(\R/2\Z,\C)$, $H^{-1}(\R/2\Z,\C)$.} Any element $f\in H^{1}(\R/2\Z,\C)$ [$H^{-1}(\R/2\Z,\C)$] can be represented as $f = \sum_{m\in\Z} f_{m}e_{m}$ where $f_{m} = \spii{f,e_{m}}$. Here $\spii{\cdot,\cdot}$ denotes the normalized $L^{2}$-inner product on $[0,2]$ extended to a sesquilinear pairing between $H^{1}(\R/2Z,\R)$ and its dual. In particular, for $f\in H^{1}(\R/2\Z,\C)$, and $m\in\Z$,
\[
  \spii{f,e_{m}} = \frac{1}{2}\int_{0}^{2} f(x)\e^{-\ii 2m\pi x}\,\dx.
\]
For any $q = \sum_{m\in\Z} q_{m}e_{m}\in H_{\per+}^{-1}\opento H^{1}(\R/2\Z,\C)$
\[
  Vf = \sum_{n\in\Z} \p*{ \sum_{m\in\Z} q_{n-m} f_{m} }e_{n}\in H^{-1}(\R/2\Z,\C).
\]

\underline{Basis for $H_{\dir}^{1}$, $H_{\dir}^{-1}$:} Any element $f\in H_{\dir}^{1}$ can be represented as
\[
  f(x) = \sum_{m\ge 1} \spi{f,\sqrt{2}s_{m}}\sqrt{2}s_{m}(x) = 
  \sum_{m\in \Z} f_{m}^{\sin}s_{m}(x),\qquad s_{m}(x) = \sin(m\pi x),
\]
where $f_{m}^{\sin} = \int_{0}^{1} f(x)s_{m}(x)\,\dx$. For any element $g\in H_{\dir}^{-1}$ one gets by duality
\[
  g = \sum_{m\in\Z} g_{m}^{\sin} s_{m},\qquad g_{m}^{\sin} = \lin{g,s_{m}}_{\dir}.
\]
One verifies that $g_{-m}^{\sin} = -g_{m}^{\sin}$ for all $m\in\Z$ and $\sum_{m\in \Z} \w{m}^{-2}\abs{g_{m}^{\sin}}^{2} < \infty$.
For any $q\in H_{0,\C}^{-1}$ with $\n{q}_{t,2} < \infty$ and $-1 < t < -1/2$, we need to expand for a given $f\in H_{\dir}^{1}$, $V_{\dir}f\in H_{\dir}^{-1}$ in its sine series $\sum_{m\in \Z} (V_{\dir}f)_{m}^{\sin} s_{m}$ where by the definition of $V_{\dir}$
\[
  (V_{\dir}f)_{m}^{\sin} = \lin{V_{\dir}f,s_{m}}_{\dir} = 
  \lin{q,\ob{f}s_{m}}_{\per+} = \sum_{n\in\Z} f_{n}^{\sin}\lin{q,s_{n}s_{m}}_{\per+}.
\]
Using that $f_{-n}^{\sin} = -f_{n}^{\sin}$ for any $n\in\Z$ and that
\[
  s_{m}(x)s_{n}(x) = \frac{1}{2}(\cos((m-n)\pi x) - \cos((m+n)\pi x))
\]
it follows that
\[
  \sum_{m-n\text{ even}} f_{n}^{\sin} \lin{q,s_{n}s_{m}}_{\per+} = 
  \sum_{m-n\text{ even}} f_{n}^{\sin} \lin{q,\cos((m-n)\pi x)}_{\per+}.
\]
Note that $\lin{q,\cos((m-n)\pi x)}_{\per+}$ is well defined as $\cos((m-n)\pi x)\in H_{\per+}^{1}$ if $m-n$ is even. If $m-n$ is odd, we decompose the difference of the cosines in $H_{\per+}^{1}$ as follows
\begin{align*}
  &\cos((m-n)\pi x) - \cos((m+n)\pi x) \\
  &\qquad = 
  \p*{\cos((m-n)\pi x) - \cos(\pi x)} - \p*{\cos((m+n)\pi x) - \cos(\pi x)}
\end{align*}
and then obtain, using again that $f_{-n}^{\sin} = -f_{n}^{\sin}$ for all $n\in\Z$,
\[
  \sum_{m-n\text{ odd}} f_{n}^{\sin} \lin{q,s_{n}s_{m}}_{\per+} = 
  \sum_{m-n\text{ odd}} f_{n}^{\sin} \lin{q,\cos((m-n)\pi x) - \cos(\pi x)}_{\per+}.
\]
Altogether we have shown that
\[
  V_{\dir}f = \sum_{m\in \Z} \p*{\sum_{n\in\Z} q_{m-n}^{\cos}f_{n}^{\sin}} s_{m},
\]
where
\begin{equation}
  \label{q-cos-coeff}
  q_{k}^{\cos} = 
  \begin{cases}
  \lin{q,\cos(k\pi x)}_{\per+}, & \text{if }k\in\Z\text{ even},\\
  \lin{q,\cos(k\pi x)-\cos(\pi x)}_{\per+}, & \text{if }k\in\Z\text{ odd}.
  \end{cases}
\end{equation}
Taking into account that $\n{q}_{t,2} < \infty$ with $-1 < t < -1/2$, one argues as in \cite[Proposition 3.4]{Kappeler:2001bi}, using duality and interpolation, that
\begin{equation}
  \label{H-dir-per-est}
  \p*{ \sum_{m\in\Z} \w{m}^{2t}\abs{q_{m}^{\cos}}^{2} }^{1/2} \le C_{t}\n{q}_{t,2}.
\end{equation}

\emph{Schrödinger operators with singular potentials.}
For any $q\in H_{0,\C}^{-1}$ denote by $L(q)$ the unbounded operator $-\partial_{x}^{2} + V$ acting on $H^{-1}(\R/2\Z,\C)$ with domain $H^{1}(\R/2\Z,\C)$. As $H^{1}(\R/2\Z,\C) = H^{1}_{\per+}\oplus H^{1}_{\per-}$ and $V = V_{\per+}\oplus V_{\per-}$ the operator $L(q)$ leaves the spaces $H_{\per\pm}^{1}$ invariant and $L(q) = L_{\per+}(q)\oplus L_{\per-}(q)$ with $L_{\per\pm}(q) = -\partial_{x}^{2} + V_{\per\pm}$. Hence the spectrum $\spec(L(q))$ of $L(q)$, also referred to as spectrum of $q$, is the union $\spec(L_{\per+}(q))\cup \spec(L_{\per-}(q))$ of the spectra $\spec(L_{\per\pm}(q))$ of $L_{\per\pm}(q)$. 
It is known to be discrete and to consist of complex eigenvalues which, when counted with multiplicities and ordered lexicographically, satisfy
\[
  \lm_{0}^{+} \lex \lm_{1}^{-} \lex \lm_{1}^{+} \lex \dotsb,\qquad
  \lm_{n}^{\pm} = n^{2}\pi^{2} + n\ell_{n}^{2},
\]
-- see e.g. \cite{Kappeler:2003vh}. For any $q\in H_{0,\C}^{-1}$ there exists $N\ge 1$ so that
\[
  \abs{\lm-n^{2}\pi^{2}} \le n/2,\quad n\ge N,\qquad
  \abs{\lm_{n}^{+}} \le (N-1)^{2}\pi^{2}+N/2,\quad n < N,
\]
where $N$ can be chosen locally uniformly in $q$. Since for $q=0$ and $n\ge 0$, $\Dl(\lm_{2n}^{+}(0),0) = 2$ and $\Dl(\lm_{2n+1}^{+}(0),0) = -2$, all $\lm_{2n}^{+}(0)$ are 1-periodic and all $\lm_{2n+1}^{+}(0)$ are 1-antiperiodic eigenvalues of $q = 0$. By considering the compact interval $[0,q] = \setdef{tq}{0\le t\le 1}\subset H_{0,\C}^{-1}$ it then follows after increasing $N$, if necessary, that for any $n\ge N$
\begin{equation}
  \label{ev-per-antiper}
  \lm_{n}^{+}(q),\lm_{n}^{-}(q) \in \spec(L_{\per+}(q)),\;[\spec(L_{\per-}(q))]
  \quad \text{if }n\text{ even [odd]}.
\end{equation}

For any $q\in H_{0,\C}^{-1}$ and $n\ge N$ the following Riesz projectors are thus well defined on $H^{-1}(\R/2\Z,\C)$
\[
  P_{n,q} \defl \frac{1}{2\pi\ii}\int_{\abs{\lm - n^{2}\pi^{2 }} = n} (\lm-L(q))^{-1}\,\dlm.
\]
For $n\ge N$ even [odd], the range of $P_{n,q}$ is contained in $H_{\per+}^{1}$ [$H_{\per-}^{1}$]. If $q = 0$, we write $P_{n}$ for $P_{n,0}$.

Similarly, for any $q\in H_{0,\C}^{-1}$ denote by $L_{\dir}(q)$ the unbounded operator $-\partial_{x}^{2}+ V_{\dir}$ acting on $H_{\dir}^{-1}$ with domain $H_{\dir}^{1}$. Its spectrum $\spec(L_{\dir}(q))$ is known to be discrete and to consist of complex eigenvalues which, when counted with multiplicities and ordered lexicographically, satisfy
\[
  \mu_{1}\lex \mu_{2}\lex \dotsb,\qquad \mu_{n} = n^{2}\pi^{2} + n\ell_{n}^{2},
\]
-- see e.g. \cite{Kappeler:2003vh}. By increasing $N$ as chosen above, if necessary, we can thus assume that
\[
  \abs{\mu_{n}-n^{2}\pi^{2}} < n/2,\quad n\ge N,\qquad
  \abs{\mu_{n}} \le (N-1)^{2}\pi^{2} + N/2,\quad n\le N.
\]
In particular, for any $n\ge N$, $\mu_{n}$ is simple and the corresponding Riesz projector
\[
  \Pi_{n,q} \defl \frac{1}{2\pi\ii} \int_{\abs{\lm-n^{2}\pi^{2}}=n} (\lm-L_{\dir}(q))^{-1}\,\dlm
\]
is well defined on $H_{\dir}^{-1}$. If $q=0$, we write $\Pi_{n}$ for $\Pi_{n,0}$.

\smallskip

\emph{Regularity of solutions.}
In Section~\ref{section3} we consider solutions $f$ of the equation $(L(q)-\lm)f = g$ in $\Ls_{\star,\C}^{s,p}$ and need to know their regularity.

\begin{lem}
\label{regularity-inhomogeneous}
For any $q\in \Ls_{0,\C}^{s,p}$ with $-1/2\le s \le 0$ and $2\le p < \infty$ the following holds: For any $g\in \Ls_{\star,\C}^{s,p}$ and any $\lm\in\C$, a solution $f\in H^{1}(\R/2\Z,\C)$ of the inhomogeneous equation $(L(q)-\lm)f = g$ is an element in $\Ls_{\star,\C}^{s+2,p}$.\fish
\end{lem}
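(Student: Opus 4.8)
The plan is to rewrite the equation in Fourier coefficients and run a two--step bootstrap, combining the convolution estimates of Lemma~\ref{convolution} with the embedding of Lemma~\ref{ell-embedding}. Recall that $V$ acts as convolution, $(Vf)_n=\sum_{m\in\Z}q_{n-m}f_m$, so that in Fourier variables the equation $(L(q)-\lm)f=g$ reads $A_{\lm}f=Vf-g$, where $A_{\lm}$ is the Fourier multiplier with symbol $\lm-m^2\pi^2$. Fix $N\ge1$ so large that $\abs{\lm-m^2\pi^2}\ge\tfrac12 m^2\pi^2$ for all $\abs{m}\ge N$; then, since $A_{\lm}$ commutes with $R_N$, the operator $A_{\lm}^{-1}R_N$ is well defined and gains two weights, mapping $\ell_{\C}^{r,p}$ into $\ell_{\C}^{r+2,p}$ for every $r\in\R$ and $1\le p<\infty$. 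As $f-R_Nf$ is a finite linear combination of the $e_m$ and hence lies in every $\Ls_{\star,\C}^{r,q}$, membership of $f$ in $\Ls_{\star,\C}^{s+2,p}$ is equivalent to $R_Nf\in\Ls_{\star,\C}^{s+2,p}$; and from $A_{\lm}R_Nf=R_N(Vf-g)$ we get $R_Nf=A_{\lm}^{-1}R_N(Vf-g)$. Thus everything reduces to controlling the regularity of $Vf-g$.

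The starting point is $f\in H^1(\R/2\Z,\C)=\Ls_{\star,\C}^{1,2}$, i.e. $(f_m)\in\ell_{\C}^{1,2}$. First I would improve the $\ell^2$-regularity of $f$. By Lemma~\ref{ell-embedding} one may choose $\theta\in(-1,-\tfrac12)$ with $\theta<s-\tfrac12+\tfrac{1}{p}$; such a $\theta$ exists for all $s\in[-\tfrac12,0]$, $p\in[2,\infty)$, because $s-\tfrac12+\tfrac{1}{p}>-1$. Then $q,g\in\ell_{\C}^{s,p}\opento\ell_{\C}^{\theta,2}$, and since $-1\le\theta<-\tfrac12$, Lemma~\ref{convolution}(i) with $a=q$, $b=f$ gives $Vf=q*f\in\ell_{\C}^{\theta,2}$, hence $Vf-g\in\ell_{\C}^{\theta,2}$. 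Applying $A_{\lm}^{-1}R_N$ yields $R_Nf\in\ell_{\C}^{\theta+2,2}$ and therefore $f\in\ell_{\C}^{\rho,2}$ with $\rho\defl\theta+2\in(1,\tfrac32)$.

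In the second step I would pass to the $\ell^p$-scale and reach the target. Since $p\ge2$, the same-weight inclusion $\ell_{\C}^{\rho,2}\opento\ell_{\C}^{\rho,p}$ gives $f\in\ell_{\C}^{\rho,p}$. Writing $\rho=t+2$ with $t=\rho-2\in(-1,-\tfrac12)$, the index conditions of Lemma~\ref{convolution}(ii) hold: $t<0$, while $t>-1\ge-s-\tfrac32$ (using $s\ge-\tfrac12$), the lower bound being strict because $\rho>1\ge\tfrac12-s$. Hence Lemma~\ref{convolution}(ii) with $a=q\in\ell_{\C}^{s,p}$ and $b=f\in\ell_{\C}^{t+2,p}$ gives $Vf=q*f\in\ell_{\C}^{s,p}$, so $Vf-g\in\ell_{\C}^{s,p}$. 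A final application of $A_{\lm}^{-1}R_N$ produces $R_Nf\in\ell_{\C}^{s+2,p}$, whence $f\in\Ls_{\star,\C}^{s+2,p}$.

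I expect the main obstacle to be the endpoint case $s=-\tfrac12$. The naive one-step argument, applying Lemma~\ref{convolution}(ii) directly to $f\in\ell_{\C}^{1,2}\opento\ell_{\C}^{1,p}$ with $t=-1$, fails precisely there, since it would require the excluded endpoint $t>-s-\tfrac32=-1$. The first step, which genuinely exploits the $\ell^2$-based estimate Lemma~\ref{convolution}(i), is exactly what lifts $f$ to $\ell_{\C}^{\rho,2}$ with $\rho>1$ and so creates the slack needed for part (ii) to apply. For $s>-\tfrac12$ the first step could be omitted, but the two-step scheme treats the full range $-\tfrac12\le s\le0$ uniformly.
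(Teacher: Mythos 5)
Your argument is correct and follows essentially the same route as the paper: a two-step bootstrap in which one first uses Lemma~\ref{ell-embedding} to place $q$ and $g$ in an $\ell^{t,2}$ space with $-1<t<-1/2$, applies Lemma~\ref{convolution}~(i) to gain regularity on the $\ell^2$-scale, and then applies Lemma~\ref{convolution}~(ii) on the $\ell^p$-scale to reach $\Ls_{\star,\C}^{s+2,p}$. Your version is in fact slightly more careful than the paper's on two minor points --- you keep the intermediate exponent strictly below $-1/2$ so that part~(i) applies verbatim (the paper's choice $r=s-1/2+1/2p$ can exceed $-1/2$), and you handle the finitely many modes where $\lm-m^{2}\pi^{2}$ may vanish via the tail projection $R_{N}$ --- but these are refinements of the same proof, not a different one.
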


\begin{proof}
Write $(L-\lm)f = g$ as $A_{\lm}f = Vf - g$ where $A_{\lm} = \partial_{x}^{2} + \lm$. Since $q\in \Ls_{0,\C}^{s,p}$, Lemma~\ref{ell-embedding} implies that $q$ and $g$ are in $\Ls_{\star,\C}^{r,2}$ with $r = s-1/2+1/2p$. By Lemma~\ref{convolution} (i), $Vf\in \Ls_{\star,\C}^{r,2}$ and hence $A_{\lm}f = Vf-g\in\Ls_{\star,\C}^{r,2}$ implying that $f\in \Ls_{\star,\C}^{r+2,2}$. Since $-s-3/2 \le -1 < r \le 0$, Lemma~\ref{convolution} (ii) applies. Therefore, $Vf\in \Ls_{\star,\C}^{s,p}$ and using the equation $A_{\lm}f = Vf - g$ once more one gets $f\in \Ls_{\star,\C}^{s+2,p}$ as claimed.\qed
\end{proof}

For any $q\in \Ls_{0,\C}^{s,p}$ with $-1/2\le s \le 0$, $2\le p < \infty$, and $n\ge n_{s,p}$ as in Lemma~\ref{Sn-roots}, introduce
\[
  E_{n} \equiv E_{n}(q) \defl 
  \begin{cases}
  \Null(L(q)-\lm_{n}^{+})\oplus \Null(L(q)-\lm_{n}^{-}), & \lm_{n}^{+}\neq \lm_{n}^{-},\\
  \Null(L(q)-\lm_{n}^{+})^{2}, & \lm_{n}^{+} = \lm_{n}^{-}.
  \end{cases}
\]
Then $E_{n}$ is a two-dimensional subspace of $H_{\per+}^{1}$ [$H_{\per-}^{1}$] if $n$ is even [odd]. The following result shows that elements in $E_{n}$ are more regular.

\begin{lem}
\label{regularity-En}
For any $q\in \Ls_{0,\C}^{s,p}$ with $-1/2\le s \le 0$, $2\le p <\infty$ and for any $n\ge n_{s,p}$ $E_{n}(q)\subset\Ls_{\star,\C}^{s+2,p}\cap H_{\per+}^{1}$ [$\Ls_{\star,\C}^{s+2,p}\cap H_{\per-}^{1}$] if $n$ is even [odd].\fish
\end{lem}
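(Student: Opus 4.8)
The plan is to use Lemma~\ref{regularity-inhomogeneous} as a regularity bootstrap. The statement has two parts: the membership of $E_n(q)$ in $H_{\per+}^{1}$ for $n$ even and in $H_{\per-}^{1}$ for $n$ odd, and the stronger regularity $E_n(q)\subset\Ls_{\star,\C}^{s+2,p}$. The first part is essentially already recorded above: by \eqref{n-s-p} of Corollary~\ref{Q-soln}, for $n\ge n_{s,p}$ the eigenvalues $\lm_n^{\pm}$ are $1$-periodic when $n$ is even and $1$-antiperiodic when $n$ is odd, and since $L(q)=L_{\per+}(q)\oplus L_{\per-}(q)$ leaves the spaces $H_{\per\pm}^{1}$ invariant, the corresponding (generalized) eigenspaces lie in $H_{\per+}^{1}$ resp.\ $H_{\per-}^{1}$. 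Hence it only remains to establish the regularity, and for this the parity of $n$ plays no role: one simply works with $f\in H^{1}(\R/2\Z,\C)$, the common domain of $L(q)$.

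First I would treat genuine eigenfunctions. If $\lm\in\setd{\lm_n^{+},\lm_n^{-}}$ and $f\in\Null(L(q)-\lm)$, then $f$ lies in the domain $H^{1}(\R/2\Z,\C)$ of $L(q)$ and solves the inhomogeneous equation $(L(q)-\lm)f=0$ with right-hand side $g=0\in\Ls_{\star,\C}^{s,p}$. Lemma~\ref{regularity-inhomogeneous} then yields $f\in\Ls_{\star,\C}^{s+2,p}$. This already settles the case $\lm_n^{+}\neq\lm_n^{-}$, in which $E_n(q)=\Null(L(q)-\lm_n^{+})\oplus\Null(L(q)-\lm_n^{-})$ is spanned by eigenfunctions.

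For the degenerate case $\lm_n^{+}=\lm_n^{-}$, where $E_n(q)=\Null(L(q)-\lm_n^{+})^{2}$, I would apply the lemma twice. Given $f\in E_n(q)$, set $g\defl(L(q)-\lm_n^{+})f$. By the definition of $\Null(L(q)-\lm_n^{+})^{2}$, both $f$ and $g$ lie in $H^{1}(\R/2\Z,\C)$, and $g\in\Null(L(q)-\lm_n^{+})$ is an eigenfunction, hence $g\in\Ls_{\star,\C}^{s+2,p}$ by the previous step. Since $\w{m}^{s}\le\w{m}^{s+2}$ gives the trivial inclusion $\Ls_{\star,\C}^{s+2,p}\subset\Ls_{\star,\C}^{s,p}$, the function $g$ is an admissible right-hand side, and a second application of Lemma~\ref{regularity-inhomogeneous} to $(L(q)-\lm_n^{+})f=g$ gives $f\in\Ls_{\star,\C}^{s+2,p}$. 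Combining the two parts yields the claim. There is no real obstacle here once Lemma~\ref{regularity-inhomogeneous} is in hand; the only point requiring care is the two-step bootstrap for the generalized eigenfunctions, namely checking that the intermediate datum $g$ is regular enough to re-enter the lemma, which it is by the trivial embedding just noted.
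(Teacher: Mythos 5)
Your proposal is correct and follows essentially the same route as the paper: both reduce the claim to Lemma~\ref{regularity-inhomogeneous}, first with $g=0$ for genuine eigenfunctions and then a second time for the Jordan-block case, with the parity statement coming from \eqref{ev-per-antiper} and the splitting $L(q)=L_{\per+}(q)\oplus L_{\per-}(q)$. The only cosmetic difference is that the paper separates the degenerate case by geometric multiplicity (one vs.\ two), whereas your uniform two-step bootstrap via $g=(L(q)-\lm_n^{+})f$ covers both at once.
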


\begin{proof}
By Lemma~\ref{regularity-inhomogeneous} with $g=0$, any eigenfunction $f$ of an eigenvalue $\lm$ of $L(q)$ is in $\Ls_{\star,\C}^{s+2,p}$. Hence if $\lm_{n}^{+} \neq \lm_{n}^{-}$ or if $\lm_{n}^{+} = \lm_{n}^{-}$ and has geometric multiplicity two, then $E_{n}\subset\Ls_{\star,\C}^{s+2,p}$. Finally, if $\lm_{n}^{+} = \lm_{n}^{-}$ is a double eigenvalue of geometric multiplicity $1$ and $g$ is an eigenfunction corresponding to $\lm_{n}^{+}$, there exists an element $f\in H^{1}(\R/2\Z,\C)$ so that $(L-\lm_{n}^{+})f = g$. As $g$ is an eigenfunction it is in $\Ls_{\star,\C}^{s+2,p}$ and by applying Lemma~\ref{regularity-inhomogeneous} once more, it follows that $f\in \Ls_{\star,\C}^{s+2,p}$. Clearly, $E_{n} = \mathrm{span}(g,f)$ and hence $E_{n}\subset \Ls_{\star,\C}^{s+2,p}$ also in this case. By the consideration above one knows that $\lm_{n}^{\pm}$ are 1-periodic [1-antiperiodic] eigenvalues of $q$ if $n$ is even [odd]. Hence $E_{n}\subset \Ls_{\star,\C}^{s+2,p}\cap H_{\per+}^{1}$ [$\Ls_{\star,\C}^{s+2,p}\cap H_{\per-}^{1}$] if $n$ is even [odd] as claimed.\qed
\end{proof}

\emph{Estimates for projectors.}
The projectors $P_{n,q}$ [$\Pi_{n,q}$] with $n\ge N$ are defined on $H^{-1}(\R/2\Z,\C)$ [$H_{\dir}^{-1}$] and have range in $H^{1}(\R/2\Z,\C)$ [$H^{1}_{\dir}$]. The following result concerns estimates for the restriction of $P_{n,q}$ [$\Pi_{n,q}$] to $L^{2} = L^{2}([0,2],\C)$ [$L^{2}(\Ic) = L^{2}([0,1],\C)$] needed in Section~\ref{section3} for getting the asymptotics of $\mu_{n}-\tau_{n}$.

\begin{lem}
\label{proj-bound}
Assume that $q\in H_{0,\C}^{-1}$ with $\n{q}_{t,2} < \infty$ and $-1 < t < -1/2$. Then there exist constants $C_{t} > 0$ (only depending on $t$) and $N'\ge N$ (with $N$ as above) so that
\begin{renum}
\item
\[
  \n{P_{n,q}-P_{n}}_{L^{2}\to L^{\infty}}
  \le C_{t} \frac{(\log n)^{2}}{n^{1-\abs{t}}}\n{q}_{t,2}
\]
\item
\[
  \n{\Pi_{n,q}-\Pi_{n}}_{L^{2}(\Ic)\to L^{\infty}(\Ic)}
  \le C_{t} \frac{(\log n)^{2}}{n^{1-\abs{t}}}\n{q}_{t,2}
\]
\end{renum}
The constant $N'$ can be chosen locally uniformly in $q$.\fish
\end{lem}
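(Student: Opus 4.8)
The plan is to compare the two Riesz projectors through the resolvents $R(\lm)\defl(\lm-L(q))^{-1}$ and $R_{0}(\lm)\defl(\lm-L(0))^{-1}=A_{\lm}^{-1}$, the latter being the explicit Fourier multiplier $e_{m}\mapsto(\lm-m^{2}\pi^{2})^{-1}e_{m}$, and to integrate along $\Gm_{n}\defl\setdef{\lm}{\abs{\lm-n^{2}\pi^{2}}=n}$. Writing $L(q)=L(0)+V$ with $V$ the multiplication by $q$ (acting as convolution by $(q_{m})$ in Fourier space) and using the second resolvent identity $R-R_{0}=R_{0}VR$ together with one further substitution $R=R_{0}+R_{0}VR$, I would split
\[
  P_{n,q}-P_{n}=\frac{1}{2\pi\ii}\int_{\Gm_{n}}R_{0}VR_{0}\,\dlm
               +\frac{1}{2\pi\ii}\int_{\Gm_{n}}R_{0}VR_{0}VR\,\dlm .
\]
Since the target is the $L^{2}\to L^{\infty}$ norm, throughout I would bound the $L^{\infty}$ norm of an output by the $\ell^{1}$ norm of its Fourier coefficients and the $L^{2}$ norm of an input by the $\ell^{2}$ norm of its coefficients.

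First I would treat the leading term $M_{n}\defl\frac{1}{2\pi\ii}\int_{\Gm_{n}}R_{0}VR_{0}\,\dlm$, which carries the main contribution. Evaluating the contour integral by residues, the only pole inside $\Gm_{n}$ (for $n\ge N'$ large) sits at $\lm=n^{2}\pi^{2}$; the double-pole contributions, where both resolvent factors are resonant, vanish because the integral of an exact form is zero and because $q_{0}=0$, leaving the two families of simple-pole residues. This reduces $M_{n}$ to the explicit weighted convolution sums $\sum_{m\ne\pm n}\frac{q_{j-m}f_{m}}{(n^{2}-m^{2})\pi^{2}}$ for $j\in\setd{\pm n}$ and $\frac{q_{j-n}f_{n}+q_{j+n}f_{-n}}{(j^{2}-n^{2})\pi^{2}}$ for $j\notin\setd{\pm n}$. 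Splitting $\abs{q_{k}}=\w{k}^{\abs{t}}\big(\w{k}^{t}\abs{q_{k}}\big)$ so as to recover $\n{q}_{t,2}$ from the weighted factor, and applying the Cauchy--Schwarz inequality together with the Hilbert-type sum estimates of Lemma~\ref{hilbert-sum}, I expect the dominant contribution (localized near the resonance $j-n\approx-2n$) to be of size $n^{\abs{t}-1}\n{q}_{t,2}$; the borderline character of these sums, reflecting that at $\abs{t}\in(1/2,1)$ the operator $V$ only just maps $H^{1}$ into $H^{t}$ (cf. Lemma~\ref{convolution}(i)), is what produces the logarithmic loss $(\log n)^{2}$.

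For the remainder $\frac{1}{2\pi\ii}\int_{\Gm_{n}}R_{0}VR_{0}VR\,\dlm$ I would factor out the explicit operator $R_{0}VR_{0}$, estimated by the same Hilbert-type sums as above, and compose it with a uniform resolvent bound $\sup_{\lm\in\Gm_{n}}\n{R(\lm)}=O(1)$ valid for $n\ge N'$. Since the eigenvalues of $L(q)$ satisfy $\lm_{n}^{\pm}=n^{2}\pi^{2}+n\ell^2_{n}$, the contour $\Gm_{n}$ stays at distance of order $n$ from the spectrum; securing the attendant resolvent bound uniformly on $\Gm_{n}$ at the present regularity $-1<t<-1/2$ --- which lies below the range $s\ge-1/2$ treated in Section~\ref{section3} --- is the genuinely delicate point, and I would obtain it by the $Q_{n}$-reduction and shifted-norm contraction of Section~\ref{section3} (which removes the resonant modes $\pm n$ and inverts $\Id-VA_{\lm}^{-1}Q_{n}$), re-deriving the contraction estimate of Lemma~\ref{Tn-est} directly at the weight $t$. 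As all thresholds in Section~\ref{section3} can be chosen locally uniformly in $q$, the resulting bound on $\n{P_{n,q}-P_{n}}_{L^{2}\to L^{\infty}}$ is locally uniform as well, which proves~(i).

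Finally, part~(ii) follows by the same scheme applied to $L_{\dir}(q)=L_{\dir}(0)+V_{\dir}$ on the sine basis $(s_{m})$. Here $V_{\dir}$ is convolution by the cosine coefficients $q_{k}^{\cos}$ of~\eqref{q-cos-coeff}, and the substitute for the bound on $\n{q}_{t,2}$ is the estimate~\eqref{H-dir-per-est}; Lemma~\ref{V-dir-V-per} guarantees that $V_{\dir}$ is the restriction of $V_{\per\pm}$, so that the residue computation and the Hilbert-sum estimates transfer with only notational changes. I expect the main obstacle to be, as already indicated, the bookkeeping of the borderline log-producing sums in the leading term together with the uniform resolvent bound on $\Gm_{n}$ at the low regularity $t\in(-1,-1/2)$; the summation of the remainder is then routine.
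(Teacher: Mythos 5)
Your overall architecture --- iterated second resolvent identity, residue calculus for the leading term $\frac{1}{2\pi\ii}\int_{\Gm_{n}}R_{0}VR_{0}\,\dlm$, and a uniform resolvent bound for the remainder --- is genuinely different from the paper's argument, and your analysis of the leading term (including the vanishing of the double-pole contributions via $q_{0}=0$ and the size $n^{\abs{t}-1}\n{q}_{t,2}$) is plausible. The gap sits exactly at the step you yourself flag as delicate: the uniform bound for $R(\lm)=(\lm-L(q))^{-1}$ on $\Gm_{n}$ at the regularity $-1<t<-1/2$. You propose to obtain it by re-deriving the shifted-norm contraction of Lemma~\ref{Tn-est} for the one-sided operator $T_{n}(\lm)=VA_{\lm}^{-1}Q_{n}$ at the weight $t$. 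This does not go through: the proof of Lemma~\ref{Tn-est} uses the inequality $\w{k+n}^{2\abs{s}}/\abs{n+k}\le 2$, which fails for $\abs{t}>1/2$, and the key Hilbert-type sum $\sum_{\abs{k}\neq n}\abs{n^{2}-k^{2}}^{-2(1-\abs{t})}$ has exponent $2(1-\abs{t})\le 1/2$ and therefore diverges once $\abs{t}\ge 3/4$. So the one-sided operator $VA_{\lm}^{-1}Q_{n}$ is not small in any shifted $\ell^{t,2}$ norm on the whole range $t\in(-1,-1/2)$, and your remainder term is left unbounded on part of that range.

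The paper's proof avoids this by symmetrizing the perturbation: it factors $\lm-L=D_{\lm}^{1/2}(I_{\lm}-S_{\lm})D_{\lm}^{1/2}$ with $S_{\lm}(l,m)=q_{l-m}\abs{\lm-l^{2}\pi^{2}}^{-1/2}\abs{\lm-m^{2}\pi^{2}}^{-1/2}$, so that the resolvent decay is split evenly between the row and the column index. By \cite[Lemma 1.2]{Kappeler:2001bi} this yields $\n{S_{\lm}}_{L^{2}\to L^{2}}\le C\frac{\log n}{n^{1-\abs{t}}}\n{q}_{t,2}$ for \emph{all} $-1<t<-1/2$, so $(I_{\lm}-S_{\lm})^{-1}$ is uniformly bounded by a Neumann series, and the single identity $(I_{\lm}-S_{\lm})^{-1}-I_{\lm}^{-1}=(I_{\lm}-S_{\lm})^{-1}S_{\lm}I_{\lm}^{-1}$ already carries the full difference of projectors; the bound then follows from $\n{D_{\lm}^{-1/2}}_{L^{2}\to L^{\infty}}=O((\log n)^{1/2}n^{-1/2})$ and $\n{D_{\lm}^{-1/2}}_{L^{2}\to L^{2}}=O(n^{-1/2})$, with no residue computation and no two-term expansion needed. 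This symmetric splitting is the ingredient your proposal is missing, and it is also what makes part (ii) routine: one repeats the argument in the sine basis with the coefficients $q_{k}^{\cos}$ of \eqref{q-cos-coeff} and the bound \eqref{H-dir-per-est}, exactly as you anticipate.
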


\begin{rem}
We will apply Lemma~\ref{proj-bound} for potentials $q\in\Ls_{0,\C}^{s,p}$ with $-1/2\le s\le 0$ and $2\le p < \infty$ using the fact that by the Sobolev embedding theorem there exists $-1 < t < -1/2$ so that $\Ls_{0,\C}^{s,p} \opento \Ls_{0,\C}^{t,2}$.
\end{rem}

\begin{proof}
The proof follows the line of arguments of~\cite{Kappeler:2001bi}. Let us first prove (i). Suppose $\abs{\lm-m^{2}\pi^{2}} \ge \w{m}$ for a given $m\ge 0$. Then we may write with $L\equiv L(q)$
\[
  \lm-L = D_{\lm}^{1/2}(I_{\lm} - S_{\lm})D_{\lm}^{1/2},
\]
where the operators $D_{\lm}$, $I_{\lm}$ and $S_{\lm}$ are defined with respect to the $L^{2}$-orthonormal basis $(e_{m})_{m\in\Z}$ of $L^{2}([0,2],\C)$ by ($l,m\in\Z$)
\begin{align*}
  &D_{\lm}^{1/2}(l,m) = \abs{\lm-l^{2}\pi^{2}}^{1/2}\dl_{lm},
  \qquad
  I_{\lm}(l,m) = \frac{\lm - l^{2}\pi^{2}}{\abs{\lm-l^{2}\pi^{2}}}\dl_{lm},\\
  &S_{\lm}(l,m) = \frac{q_{l-m}}{\abs{\lm-l^{2}\pi^{2}}^{1/2}\abs{\lm-m^{2}\pi^{2}}^{1/2}}.
\end{align*}
Then there exists $N'$ with $N'\ge N$ so that any $\lm\in\C$ with $\abs{\lm-n^{2}\pi^{2}} = n$ and $n\ge N'$ is in the resolvent set of $L$. Hence $I_{\lm}-S_{\lm} = D_{\lm}^{-1/2}(\lm-L)D_{\lm}^{-1/2}\colon L^{2}([0,2],\C)\to L^{2}([0,2],\C)$ is a boundedly invertible operator and
\[
  P_{n,q}-P_{n}
   = \frac{1}{2\pi\ii}\int_{\abs{\lm-n^{2}\pi^{2}} = n} 
   D_{\lm}^{-1/2}(I_{\lm} - S_{\lm})^{-1}S_{\lm}I_{\lm}^{-1}D_{\lm}^{-1/2}\,\dlm.
\]
For any $f\in L^{2}([0,2],\C)$ and $\lm\in \C$ with $\abs{\lm-n^{2}\pi^{2}} = n$, $n\ge N'$, one gets by Cauchy-Schwarz and Lemma~\ref{hilbert-sum},
\begin{align*}
    \n{D_{\lm}^{-1/2}f}_{L^{\infty}}
    \le \sum_{m\in\Z} \frac{\abs{f_{m}}}{\abs{\lm-m^{2}\pi^{2}}^{1/2}}
   &\le \left(\frac{2}{n} + \sum_{\abs{m}\neq n} \frac{1}{\abs{n^{2}-m^{2}}}\right)^{1/2}\n{f}_{L^{2}}\\
   &\le 4 \frac{(\log n)^{1/2}}{n^{1/2}}\n{f}_{L^{2}}.
\end{align*}
Similarly, one has
\[
  \n{D_{\lm}^{-1/2}f}_{L^{2}}
    \le \p*{\sum_{m\in\Z} \frac{\abs{f_{m}}^{2}}{\abs{\lm-m^{2}\pi^{2}}}}^{1/2}
    \le \frac{1}{n^{1/2}}\n{f}_{L^{2}},
\]
and by~\cite[Lemma 1.2]{Kappeler:2001bi} using that $\n{q}_{t,2} < \infty$ with $-1 < t < -1/2$, $\abs{\lm-n^{2}\pi^{2}} = n$ and $n\ge N'$,
\begin{equation}
  \label{Slm-est}
  \n{S_{\lm}}_{L^{2}\to L^{2}} \le C \frac{\log n}{n^{1-\abs{t}}}\n{q}_{t,2}.
\end{equation}
By increasing $N'$, if necessary, it follows that $\n{S_{\lm}}_{L^{2}\to L^{2}}\le 1/2$ and hence
\[
  \n{P_{n,q}-P_{n}}_{L^{2}\to L^{\infty}}
   \le C_{t}\sup_{\abs{\lm-n^{2}\pi^{2}}=n} (\log n)^{1/2}\n{S_{\lm}}_{L^{2}\to L^{2}},\qquad
   \forall n\ge N'.
\]
Combined with~\eqref{Slm-est}, this yields the estimate of (i). 

(ii) The estimate for the Dirichlet projectors are proved in a similar way: Suppose that $\abs{\lm-m^{2}\pi^{2}} \ge \w{m}$ for a given $m\ge 0$. Then with $L_{\dir}\equiv L_{\dir}(q)$
\[
  \lm - L_{\dir} = \tilde D_{\lm}^{1/2}(\tilde I_{\lm} - \tilde S_{\lm})\tilde D_{\lm}^{1/2}
\]
where the operators $\tilde D_{\lm}$, $\tilde I_{\lm}$, $\tilde S_{\lm}$ are defined in terms of their coefficients with respect to the sine basis $s_{n}$, $n\ge 1$. For any $l,m\in\Z$,
\begin{align*}
  &\tilde D_{\lm}^{1/2}(l,m) = \abs{\lm-l^{2}\pi^{2}}^{1/2}\dl_{lm},
  &&\tilde I_{\lm}(l,m) = \frac{\lm-l^{2}\pi^{2}}{\abs{\lm-l^{2}\pi^{2}}}\dl_{lm},\\
  &\tilde S_{\lm}(l,m) = \frac{q_{l-m}^{\cos}}{\abs{\lm-l^{2}\pi^{2}}^{1/2}\abs{\lm-m^{2}\pi^{2}}^{1/2}},
\end{align*}
with $q_{l-m}^{\cos}$ given by \eqref{q-cos-coeff}. By increasing $N'$, if necessary, we can assure that any $\lm\in\C$ with $\abs{\lm-n^{2}\pi^{2}} = n$ and $n\ge N'$ is in the resolvent set of $L_{\dir}$. Hence
\[
  (\tilde I_{\lm} - \tilde S_{\lm}) = \tilde D_{\lm}^{-1/2} (\lm - L_{\dir}) \tilde D_{\lm}^{-1/2}
  \colon L^{2}([0,1],\C)\to L^{2}([0,1],\C)
\]
is a boundedly invertible operator and
\[
  \Pi_{n,q} - \Pi_{n} = \frac{1}{2\pi \ii} \int_{\abs{\lm-n^{2}\pi^{2}} = n} 
  \tilde D_{\lm}^{-1/2}(\tilde I_{\lm} - \tilde S_{\lm})^{-1}\tilde S_{\lm}\tilde I_{\lm}^{-1}\tilde D_{\lm}^{-1/2}\,\dlm.
\]
Using the same arguments as in the proof of (i) together with \eqref{H-dir-per-est}, the estimate (ii) then follows.

Going through the arguments of the proofs of (i) and (ii) one sees that $N'$ can be chosen locally uniformly in $q$ and $C_{t}$ independently of $q$.\qed
\end{proof}


\small

\bibliography{library}

\end{document}